\newtheorem{theorem}{Theorem}[section]
\newtheorem{definition}{Definition}[section]
\newtheorem{lemma}{Lemma}[section]
\newtheorem{proposition}{Proposition}[section]
\newtheorem{remark}{Remark}[section]
\newtheorem{corollary}{Corollary}[section]
\newtheorem{assumption}{Assumption}[section]
\newtheorem{method}{Method}[section]
\title{
Block Coordinate Descent 
Only Converge to Minimizers
}
\author{Enbin Song$^*$,
Zhubin Shen$^*$ and
Qingjiang Shi$^\dag$\footnote{Email: e.b.song@163.com (E. B. Song);
qing.j.shi@gmail.com (Q. Shi)}
\\ \small \it  $^*$\hspace*{-0.5mm}Department of Mathematics, Sichuan
University, Chengdu, China\\
 \small \it  $^\dag$\hspace*{-0.5mm}College of EIE, Nanjing University of Aeronautics and Astronautics, Nanjing, China
}
\def\comment#1{}
\newcounter{mylineno}
\let\oldtabcr\@tabcr
\def\mynewline{\refstepcounter{mylineno}%
                \llap{\footnotesize\arabic{mylineno}\hspace{5pt}}%
               }
\gdef\@tabcr{\@stopline \@ifstar{\penalty%
            \@M \@xtabcr}\@xtabcr\mynewline}
\def\mymatrix#1{\null\,\vcenter{\normalbaselines\m@th
    \ialign{\hfil$##$\hfil&&\quad\hfil$##$\hfil\crcr
      \mathstrut\crcr\noalign{\kern-\baselineskip}
      #1\crcr\mathstrut\crcr\noalign{\kern-\baselineskip}}}\,}
\begin{document}
\begin{frontmatter}


\begin{abstract}
Given a non-convex twice continuously differentiable cost
function with Lipschitz continuous gradient, we prove that
all of block coordinate gradient descent, block mirror descent
and proximal block coordinate descent
converge to a local minimizer, almost surely with
random initialization. 
Furthermore, we show that these results also hold true even
for the cost functions with non-isolated critical points.
\end{abstract}

\begin{keyword}
Block coordinate gradient descent, block mirror descent,
 proximal block coordinate descent, saddle points, local minimum, non-convex
\end{keyword}
\end{frontmatter}

\section{Introduction}\label{Introduction}
A main source of difficulty for non-convex
optimization over continuous spaces is
the proliferation of saddle points. Actually,
it is easy to find some instances where
bad initialization of the gradient descent
converges to unfavorable saddle point
\citep[Section 1.2.3]{Nesterov2004Introductory}.
Although there exist such worst case instances
 in theory, many simple algorithms including
the  first order algorithms and their variants,
 perform extremely well in terms
of the quality of solutions of
continuous optimization.

\subsection{\bf Related work}
Recently, a
milestone result of the gradient descent was established by
 \cite{JLeeandMSimchowitz2016}. That is, the gradient descent method converges to a local
 minimizer, almost surely with random initialization,
 by resorting to a tool from topology of dynamical systems.
 \cite{JLeeandMSimchowitz2016} assume that a cost function satisfies the
 {\it strict saddle  property}. Equivalently, each critical
 point\footnote{$x$ is a critical point of $f$ if
 $\nabla f(x)={\bf 0}$.} $x$ of $f$ is either a local
 minimizer, or a ``strict saddle", i.e.,
 $\nabla^2 f(x)$ has at least
 one strictly negative eigenvalue.
They  demonstrated that if
  $f:\mathbb{R}^n\rightarrow \mathbb{R}$ is
  a twice continuously differentiable function
  whose gradient is Lipschitz continuous
  with constant $L$, then
  the gradient descent with a sufficiently
small constant step-size
  $\alpha$ (i.e.,
  $x_{k+1}=x_k-\alpha \nabla f(x_k)$
  and $0<\alpha<\frac{1}{L}$)
and a random initialization
converges to a local minimizer
or negative infinity almost surely.

There is a followup work given by
\cite{Panageas2016Gradient}, which  firstly
proven that
the results in \cite{JLeeandMSimchowitz2016}
do hold true even for cost function $f$
with non-isolated critical points. One key
tool they used is that
for every open cover there is a countable
subcover in $\mathbb{R}^n$. Moreover,
\cite{Panageas2016Gradient} have shown
the globally Lipschitz assumption can be
circumvented as long as the domain is convex and
forward invariant with respect to gradient descent.
In addition, they also provided an upper bound on the
allowable step-size (such that those results hold true).

There are some prior works showing that
first-order descent methods can indeed escape strict
saddle points with the assistance of near isotropic
noise. Specifically, \cite{Pemantle1990Nonconvergence} established
convergence of the Robbins-Monro stochastic approximation to local
minimizers for strict saddle functions
and \cite{Kleinberg2009Multiplicative} demonstrated that
the perturbed versions of multiplicative weights
algorithm can converge to local minima in generic potential
 games. In particular,
 \cite{Ge2015Escaping} quantified the convergence rate
 of noise-added stochastic gradient descent to local minima.
Note that the aforementioned methods requires the
assistance of isotropic noise, which can significantly slowdown the
convergence rate when the problem parameters dimension
is large. In contrast, our
setting is deterministic and corresponds to
simple implementations of block coordinate gradient descent, block mirror descent
and proximal block coordinate descent.

\subsection{\bf Our contribution}

In this paper, under the assumption that $f$ is twice
continuously differentiable function
with a Lipschitz continuous gradient
over $\mathbb{R}^n$,  we prove that
all the following block
coordinate descent methods with constant
step size,
  \begin{enumerate}[(1)]
\item block coordinate gradient descent;
\item block mirror descent; and
\item proximal block coordinate descent,
\end{enumerate}
 converge to a local minimizer, almost
surely with random initialization. This result even holds for
cost function with non-isolated critical points.

In other words, we not only affirmatively answer the
open questions whether mirror descent or block coordinate
descent does not converge to saddle points in
 \cite{JLeeandMSimchowitz2016}, but also show that the same results hold true for
 block mirror descent (including mirror descent as a special case) and proximal block coordinate descent.
In addition, these results also hold true even
for the cost functions with non-isolated critical points,
which generalizes the results in
\cite{Panageas2016Gradient} as well.




\subsection{\bf Outline of the proof}
Recall the main ideas in
\cite{JLeeandMSimchowitz2016}.
Suppose that $g:\mathbb{R}^n\rightarrow
\mathbb{R}^n$
is an iterative mapping of an optimization method
and the fixed point of $g$ is the critical point
of $f$ as well.
Then the following key properties of
$g$,
\begin{enumerate}[(i)]
\item $g$ is a diffeomorphism;
\item  $x^*$ is a fixed point of $g$, then
 there is at least one eigenvalue of the
Jacobian $Dg(x^*)$,
whose magnitude is strictly greater than one,
\end{enumerate}
imply the results in \cite{JLeeandMSimchowitz2016}
hold true.

Although the basic idea of the proof
presented in this paper comes from \cite{JLeeandMSimchowitz2016},
answering the underlying questions for block coordinate type algorithms is not easy and
the existing analysis methods are not applicable.
In particular, the eigenvalue analysis of
the Jacobian of the iterative mapping
needs a nontrivial argument.

As compared to Property (ii), Property (i) is easier to verify since we can decompose
the entire iterative mapping into multiple one-block updating case and then using the chain rule of diffeomorphism,
we prove the entire updating is a diffeomorphism. In contrast, Property (ii) is very challenging because the Jocabian $Dg(x^*)$
of block updating fashion at a saddle point is a non-symmetric matrix and a complex polynominal function of the original $\nabla f(x^*)$ with degree $p$ (the number of the blocks of the decision variables). We overcome the above difficulties by the following two steps. Precisely, the first step is to
transform the  original Jacobian
$Dg_\text{\tiny$\alpha f$}(x^*)$  into a simpler form which can be handled easily.
Next, based on the simple form of
$Dg_\text{\tiny$\alpha f$}(x^*)$, the second step is
to prove that $Dg(x^*)$ has at least
one eigenvalue with magnitude strictly
greater than one by resorting to Lemma
\ref{Zero Lemma} in appendices which follows
essentially from Rouche's Theorem in complex
analysis.
%
%

\subsection{\bf Notations and organization}
{\bf Notations.}
Denote complex number $z$ as $z=a + bi$, where $a$ and $b$ are real numbers
and $i$ is the imaginary unit with
$i^2=-1$. We also denote $a=\mbox{Re}(z)$
and  $b=\mbox{Im}(z)$ as the
real part and  the imaginary part of $z$, respectively.
 For a matrix $X$, we denote $\mbox{eig}(X)$ as
the set of eigenvalues of $X$, $X^T$ as the
transpose of $X$, $X^H$ as the  conjugate transpose
or Hermitian transpose of $X$,
$\rho(X)$ as the spectral radius of
$X$ (i.e., the maximum modulus of the eigenvalues of
$X$), and $\|X\|$ as the spectral norm of $X$. When $X$ is a real symmetric matrix,
let $\lambda_{\max}(X)$ and
$\lambda_{\min}(X)$ denote the maximum and minimum
eigenvalues of $X$, respectively.
Moreover,  for two real symmetric
matrices $X_1$ and $X_2$, $X_1\succ X_2$
(resp. $X_1 \succeq X_2$) means $X_1-X_2$ is positive
definite (resp. positive semi-definite). We use
$I_n$ to denote the identity matrix with dimension $n$,
and we will simply use $I$ when it is clear from
context what the dimension is. For square matrices
$X_s\in \mathbb{R}^{n_s\times n_s}$,
$s=1, 2, \ldots, p$, we denote $\mbox{Diag}
\left(X_1, X_2, \ldots, X_p\right)$
 as the block-diagonal matrix with $X_s$
being the $s$-th diagonal block.
For square matrices
$X_s\in \mathbb{R}^{n\times n}$,
$s=1, \ldots, p$, and $t, k\in\left\{1, 2, \ldots, p\right\}$,
 we use
$\prod\limits_{s=t}^{k}
X_s$
to denote the continued products
$X_t\cdot X_{t+1}\ldots X_{k-1}\cdot X_k$ if $t \leq k$
and
$X_t\cdot X_{t-1}\ldots X_{k+1}\cdot X_k$ if $t> k$,
respectively.
$\mathbb{P}_\nu$ denotes the probability
 with respect to a prior measure $\nu$, which is assumed to be
 absolutely continuous with respect to Lebesgue measure.\\

{\bf Organization.}
In section 2, we introduce the basic setting and
definitions used throughout the paper. Section 3
provides the main results for block coordinate gradient
descent method. The main results for block mirror descent and proximal block coordinate descent
 are given in Section 4 and Section 5, respectively.  Section 6 provides several lemmas. Finally, we conclude this paper in Section 7. The detailed proofs of some lemmas and propositions are presented in Section 9 .
\\

\section{Preliminaries}
\label{Preliminaries}
We consider optimization model
\begin{equation}
\label{Model}
\min\left\{f\left(x \right): x \in \mathbb{R}^n\right\},
\end{equation}
where we make the following blanket assumption:
\begin{assumption}\label{Assumption f}
$f$ is twice continuously differentiable
function whose gradient is Lipschitz  continuous over $\mathbb{R}^n$,
i.e.,
there exists a parameter $L>0$ such that
\begin{equation}\label{Lipschitz}
\left\|\nabla f(x)-\nabla f(y)\right\|
\leq L \left\|x-y\right\|
\hspace*{4mm}\mbox{for every} \hspace*{1mm}
x,
\, y\,\in \mathbb{R}^n.
\end{equation}
\end{assumption}
Throughout this paper, in order to introduce
the block coordinate descent  method,
we assume the vector of decision variables
 $x$ has the following partition:
\begin{equation}\label{x Partion}
x=\left(\begin{array}{c}
x\text{\scriptsize$(1)$}\\
x\text{\scriptsize$(2)$}\\
\vdots\\
x\text{\scriptsize$(p)$}
\end{array}
\right),
\end{equation}
where
$x\text{\scriptsize$(s)$}\in \mathbb{R}^{n_s}$,  $n_1$, $n_2$, $\ldots$,
$n_p$ are $p$
positive integer numbers satisfying
$\sum\limits_{s=1}^{p}n_s=n$.

Moreover, we use the notations in
\cite{Nesterov2012Effici} and define matrices
$U_s\in \mathbb{R}^{n \times n_s}$,
 the $s$-th block-column of $I_n$,
$s=1, \ldots, p$,  such that
\begin{equation}\label{DefUi}
\left(\begin{array}{cc}
U_1, U_2, \ldots, U_p
\end{array}
\right)=I_n.
\end{equation}
Clearly, according to our notations, we have
$x\text{\scriptsize$(s)$}=U^T_sx$ for every $x\in \mathbb{R}^n$,
$s=1, \ldots, p$.
Consequently,
$x=\sum\limits_{s=1}^p U_sx\text{\scriptsize$(s)$}$ and the derivative
corresponding variables in the vector $x\text{\scriptsize$(s)$}$
can be expressed as
$$\nabla_s f(x)\equiv
U_s^T\nabla f(x),
\hspace*{4mm}s=1, \ldots, p.$$

Below we  give some necessary
definitions as appeared in
\cite{JLeeandMSimchowitz2016}
and \cite{Panageas2016Gradient}.
\begin{definition}\mbox{}\par
\begin{enumerate}[1.]
\item A point $x^*$ is a critical point of $f$
if $\nabla f(x^*)={\bf 0}$. We denote
$C=\left\{x: \nabla f(x)={\bf 0}\right\}$ as
the set of critical points
(can be uncountably many).
\item A critical point $x^{\ast}$ is isolated if
there is a neighborhood $U$ around $x^{\ast},$
and $x^{\ast}$ is the only
critical point in $U$\footnote{If the critical points are
isolated then they are countably many or finite.}.
Otherwise is called non-isolated.
\item A critical point is a local minimum
if there is a neighborhood $U$ around $x^{\ast}$
such that $f(x^{\ast}) \leq f(x)$ for all $x\in U$,
and a local maximum if $f(x^{\ast}) \geq f(x)$.
\item  A critical point is a saddle point if for all
neighborhoods $U$ around $x^{\ast}$,
there are $x,y\in U$
such that $f(x)\leq f(x^{\ast}) \leq f(y)$.
\end{enumerate}
\end{definition}
\begin{definition}[Strict Saddle] A critical point $x^{\ast}$ of $f$ is a strict
saddle if\\ $\lambda_{\text{min}}(\nabla^{2}f(x^{\ast}))<0$.
\end{definition}
\begin{definition}[Global Stable Set]\label{Global Stable Set} The global stable
set $W^{s}(x^{\ast})$ of a critical point $x^{\ast}$
is the set of initial conditions of
an iterative mapping $g$ of an optimization
method  that converge
to $x^{\ast}$:
$$W^{s}(x^{\ast})=\{x:\lim_{k} g^{k}(x)=x^{\ast}\}.$$
\end{definition}

\section{The BCGD  method}\label{BCGD Sec}
In this section, we will prove that the BCGD method does not
converge to saddle  points under appropriate
choice of step size, almost surely with random initialization. This lays the ground for the analysis of the
BMD and
PBCD methods.

\subsection{\bf The BCGD method description}
For ease of later reference and also for the sake
of clarity, based on notations in Section \ref{Preliminaries},
we present a  detailed description of the BCGD
method \citep{Beck2013On} for problem \eqref{Model} below.
\medskip

\noindent \fbox{\begin{minipage}{14cm}
\begin{method}[BCGD]
\label{BCGD 3.1}
\end{method}
{\bf  Input:} $\alpha<\frac{1}{L}$.\\
{\bf  Initialization:}
$x_0\in \mathbb{R}^{n}.$\\
{\bf General Step} $\left(k=0, 1, \ldots\right)$:
Set $x_k^0=x_k$ and define recursively \\
\begin{equation*}\label{Algorithm eq1}
x_k^s=x_k^{s-1}-\alpha U_s \nabla_s f(x_k^{s-1}),
\hspace*{3mm} s=1, \ldots, p.
\end{equation*}
Set $x_{k+1}=x_k^p$.
\end{minipage}}
\\

In what follows, for a given step size $\alpha>0$,
we use  $g_{\text{\tiny$\alpha f$}}^s$ to denote
the corresponding gradient mapping with respect to
 $x\text{\scriptsize$(s)$}$,
i.e.,
\begin{equation}\label{Degi}
g_{\text{\tiny$\alpha f$}}^s(x)
\triangleq x-\alpha U_s \nabla_s f(x),
 \hspace*{3mm} s=1, \ldots, p.
\end{equation}
 It is clear that, given $x_k$, the above BCGD method generates
 $x_{k+1}$ in the following manner,
  \begin{equation}
x_{k+1}=g_{\text{\tiny$\alpha f$}}\left(x_{k}\right),
\end{equation}
where composite mapping
 \begin{equation}\label{Define g}
g_{\text{\tiny$\alpha f$}}(x)
\triangleq g_{\text{\tiny$\alpha f$}}^p
\circ
g_{\text{\tiny$\alpha f$}}^{p-1}
\circ
\cdots
\circ
g_{\text{\tiny$\alpha f$}}^2
\circ
g_{\text{\tiny$\alpha f$}}^1
\left(x\right).
\end{equation}
By simple computation, the Jacobian of
 $g_{\alpha f}^s$ is given by
 \begin{equation}\label{Jacob giG}
 Dg_{\alpha f}^s(x)=I_n-\alpha \nabla^2 f(x)U_{s}U_s^T,
 \end{equation}
 where $
 \nabla^2 f(x)=
 \left(
 \frac{\partial^2 f(x)}{\partial x\text{\scriptsize$(s)$}
\partial x\text{\scriptsize$(t)$}}
\right)_{1\leq s,\, t\leq p}.
$\\
By using the chain rule, we obtain the following Jacobian
 of the mapping $g$, i.e.,
  \begin{equation}\label{Chain Rule}
Dg_{\text{\tiny$\alpha f$}}\left(x\right)
=
Dg_{\text{\tiny$\alpha f$}}^{1}\left(y_1\right)
\times
Dg_{\text{\tiny$\alpha f$}}^{2}\left(y_2\right)
\times \cdots \cdot \times
Dg_{\text{\tiny$\alpha f$}}^{p-1}\left(y_{p-1}\right)
\times
Dg_{\text{\tiny$\alpha f$}}^{p}\left(y_p\right),
\end{equation}
 where $y_1=x$, and $y_s=g_{\text{\tiny$\alpha f$}}^{s-1}(y_{s-1})$,
 $s=2, \ldots, p$.

Given the above basic notations, as mentioned in
Section \ref{Introduction}, it is sufficient
for us to prove that the iterative mapping
$g_{\text{\tiny$\alpha f$}}$
admits the following two key properties:
\begin{enumerate}[(i)]
\item $g_{\text{\tiny$\alpha f$}}$ is a diffeomorphism;
\item  $x^*$ is a fixed point of $g$, then
 there is at least one eigenvalue of the
Jacobian $Dg_{\text{\tiny$\alpha f$}}(x^*)$,
whose magnitude is strictly greater than one.
\end{enumerate}
In what follows,  we will mainly concern with
proving the above two properties of $g_{\text{\tiny$\alpha f$}}$.
Specifically, Subsections \ref{SubSecDiff}
and \ref{BCGDSubSecJacobia} present their proofs,
respectively.

\subsection{{\bf The iterative mapping
 $g_\text{\tiny$\alpha f$}$  of BCGD method is a
 diffeomorphism}}\label{SubSecDiff}
In this subsection, we first present the following
Lemma \ref{g12Diff} which shows that
$g_{\text{\tiny$\alpha f$}}^s, s=1, \ldots, p,$
are diffeomorphisms. Based on this lemma, then we
further prove that $g_\text{\tiny$\alpha f$}$  is
a diffeomorphism as well.

\begin{lemma}\label{g12Diff}
Under Assumption \ref{Assumption f},
if step size $\alpha<\frac{1}{L}$,
then the iterative mappings
$g_{\text{\tiny$\alpha f$}}^s$ defined
by \eqref{Degi}, $s=1, \ldots, p,$ are diffeomorphisms.
\end{lemma}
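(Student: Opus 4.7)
The plan is to establish that each $g^{s}_{\alpha f}$ is a $C^{1}$ bijection of $\mathbb{R}^{n}$ with everywhere-invertible Jacobian; by the inverse function theorem applied pointwise, this gives a global diffeomorphism. The map $g^{s}_{\alpha f}(x)=x-\alpha U_{s}\nabla_{s}f(x)$ is $C^{1}$ (in fact $C^{1}$-smooth) because $f$ is twice continuously differentiable, so only bijectivity and nonsingularity of $Dg^{s}_{\alpha f}$ need to be verified.

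For injectivity, I would suppose $g^{s}_{\alpha f}(x)=g^{s}_{\alpha f}(y)$. Since $U_{s}U_{s}^{T}$ acts only on the $s$-th block, comparing the non-$s$ blocks forces $x(t)=y(t)$ for every $t\neq s$, while the $s$-th block yields $x(s)-y(s)=\alpha\bigl(\nabla_{s}f(x)-\nabla_{s}f(y)\bigr)$. The Lipschitz assumption \eqref{Lipschitz} on $\nabla f$ then implies $\|\nabla_{s}f(x)-\nabla_{s}f(y)\|\le L\|x-y\|=L\|x(s)-y(s)\|$, so $\|x(s)-y(s)\|\le \alpha L\|x(s)-y(s)\|$. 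Because $\alpha L<1$, this forces $x(s)=y(s)$, hence $x=y$. For surjectivity, given any target $z\in\mathbb{R}^{n}$, the non-$s$ blocks of the preimage are forced to equal $z(t)$, $t\neq s$; the $s$-th block must satisfy $u=z(s)+\alpha\nabla_{s}f\bigl(z(1),\ldots,u,\ldots,z(p)\bigr)$. The map on the right is an $\alpha L$-contraction on $\mathbb{R}^{n_{s}}$ by the same Lipschitz argument, so Banach's fixed-point theorem supplies a unique $u$, establishing surjectivity.

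For nonsingularity of the Jacobian, recall from \eqref{Jacob giG} that $Dg^{s}_{\alpha f}(x)=I_{n}-\alpha\nabla^{2}f(x)U_{s}U_{s}^{T}$. The matrix $\nabla^{2}f(x)U_{s}U_{s}^{T}$ has nonzero entries only in the $s$-th block column, so after permuting the blocks to place the $s$-th block last, $Dg^{s}_{\alpha f}(x)$ becomes block upper triangular with diagonal blocks $I_{n_{t}}$ for $t\neq s$ and $I_{n_{s}}-\alpha U_{s}^{T}\nabla^{2}f(x)U_{s}$ in the $s$-th slot. Hence
\begin{equation*}
\det\bigl(Dg^{s}_{\alpha f}(x)\bigr)=\det\bigl(I_{n_{s}}-\alpha U_{s}^{T}\nabla^{2}f(x)U_{s}\bigr).
\end{equation*}
The diagonal block $U_{s}^{T}\nabla^{2}f(x)U_{s}$ is real symmetric and its spectral norm is bounded by $\|\nabla^{2}f(x)\|\le L$ (a consequence of the Lipschitz gradient assumption). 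Therefore every eigenvalue of $I_{n_{s}}-\alpha U_{s}^{T}\nabla^{2}f(x)U_{s}$ lies in $[1-\alpha L,\,1+\alpha L]\subset(0,2)$, which makes the determinant strictly positive, and in particular $Dg^{s}_{\alpha f}(x)$ is invertible for every $x\in\mathbb{R}^{n}$.

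Putting these pieces together, $g^{s}_{\alpha f}$ is a $C^{1}$ bijection of $\mathbb{R}^{n}$ whose Jacobian is nowhere singular; by the inverse function theorem, its inverse is $C^{1}$ in a neighborhood of every point, hence $C^{1}$ globally, so $g^{s}_{\alpha f}$ is a diffeomorphism. I expect the main subtlety to lie in the Jacobian-invertibility step: one has to recognize the block-column structure of $\nabla^{2}f(x)U_{s}U_{s}^{T}$ so that the determinant collapses to that of a symmetric $n_{s}\times n_{s}$ matrix, after which $\alpha L<1$ finishes the argument cleanly; the injectivity/surjectivity half is essentially a restatement of the contraction principle in the $s$-th block.
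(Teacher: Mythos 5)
Your proof is correct and follows the same four-part skeleton as the paper (injectivity, surjectivity, Jacobian invertibility, inverse function theorem), but two of the three substantive steps are carried out by genuinely different means. For surjectivity, the paper constructs the preimage as the unique minimizer of a proximal-point problem $\min_{x(s)}\tfrac{1}{2}\|x(s)-y(s)\|^2-\alpha\hat f(x(s);y_-(s))$, relying on the fact that $\alpha<1/L$ makes this objective strongly convex and then reading the preimage off the KKT condition; you instead solve the same block equation directly by the Banach fixed-point theorem, observing that $u\mapsto z(s)+\alpha\nabla_s f(z(1),\dots,u,\dots,z(p))$ is an $\alpha L$-contraction. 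Both arguments turn the same Lipschitz bound into existence and uniqueness of the block update, so they are essentially interchangeable; your contraction argument is slightly more elementary, while the paper's proximal formulation aligns with the optimization framing used elsewhere. For Jacobian invertibility, the paper invokes two results from Horn--Johnson (Theorem 1.3.20 to reduce $\mathrm{eig}(\nabla^2 f(x)U_sU_s^T)$ to $\mathrm{eig}(U_s^T\nabla^2 f(x)U_s)\cup\{0\}$, and Theorem 4.3.15 to bound the eigenvalues of the principal submatrix) together with Lemma 7 of Panageas et al.; you reach the identical reduced determinant $\det\bigl(I_{n_s}-\alpha U_s^T\nabla^2 f(x)U_s\bigr)$ by noting the block-column sparsity of $\nabla^2 f(x)U_sU_s^T$ and permuting to block-triangular form, then bound the symmetric block's spectral norm by $\|\nabla^2 f(x)\|\le L$. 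This avoids the Horn--Johnson citations and is arguably cleaner, while the paper's version makes the eigenvalue containment $\mathrm{eig}(\alpha\nabla^2 f(x)U_sU_s^T)\subseteq(-1,1)$ explicit (which the paper reuses to justify the necessity of $\alpha<1/L$ in its remark). Your injectivity step is effectively the same as the paper's.
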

\begin{proof}
The proof of Lemma \ref{g12Diff} is lengthy and has
been relegated to the Appendix.
\hfill\end{proof}

Given the above lemma, we immediately obtain the
following result.
\begin{proposition}\label{gDiff}
Under Assumption \ref{Assumption f}, the mapping
$g_{\text{\tiny$\alpha f$}}
 $ defined by \eqref{Define g}
with step size
$\alpha<\frac{1}{L}$ is a diffeomorphism.
\end{proposition}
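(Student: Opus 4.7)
The plan is to deduce Proposition \ref{gDiff} directly from Lemma \ref{g12Diff} by invoking the elementary fact that a finite composition of diffeomorphisms on $\mathbb{R}^n$ is itself a diffeomorphism. Concretely, since $g_{\text{\tiny$\alpha f$}}$ is defined in \eqref{Define g} as the composite
\[
g_{\text{\tiny$\alpha f$}}=g_{\text{\tiny$\alpha f$}}^{p}\circ g_{\text{\tiny$\alpha f$}}^{p-1}\circ\cdots\circ g_{\text{\tiny$\alpha f$}}^{1},
\]
and each factor $g_{\text{\tiny$\alpha f$}}^{s}$ is already known (by Lemma \ref{g12Diff}) to be a bijection of $\mathbb{R}^{n}$ onto itself with a $C^{1}$ inverse, the composite $g_{\text{\tiny$\alpha f$}}$ is a bijection whose inverse is the reverse composition $(g_{\text{\tiny$\alpha f$}}^{1})^{-1}\circ\cdots\circ(g_{\text{\tiny$\alpha f$}}^{p})^{-1}$.

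Smoothness of $g_{\text{\tiny$\alpha f$}}$ and its inverse then follows from the chain rule, since compositions of $C^{1}$ maps are $C^{1}$; the Jacobian identity \eqref{Chain Rule} makes this explicit and, as a byproduct, shows $Dg_{\text{\tiny$\alpha f$}}(x)$ is invertible at every $x$ (being a product of invertible Jacobians from Lemma \ref{g12Diff}). There is no real obstacle here: the substantive analytic content---that each one-block update $g_{\text{\tiny$\alpha f$}}^{s}$ is a diffeomorphism whenever $\alpha<1/L$---has been absorbed into Lemma \ref{g12Diff}, and the present proposition is just the assembly of those $p$ facts via the chain rule. Accordingly, the proof will consist of a two-sentence invocation of Lemma \ref{g12Diff} together with the composition rule for diffeomorphisms.
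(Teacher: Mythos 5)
Your proof is correct and takes essentially the same route as the paper: Lemma \ref{g12Diff} gives that each one-block map $g_{\text{\tiny$\alpha f$}}^{s}$ is a diffeomorphism, and the proposition follows because a finite composition of diffeomorphisms is a diffeomorphism (the paper cites Proposition 2.15 of \cite{Lee2013} for this). Your added remarks about the inverse being the reverse composition and the chain rule giving invertibility of the Jacobian are correct elaborations of that same argument.
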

\begin{proof}
It follows from Proposition 2.15 in \cite{Lee2013} that the
composition of two diffeomorphisms is also a diffeomorphism.
Using this fact and the previous Lemma \ref{g12Diff},
the proof is completed.
\hfill\end{proof}

\begin{remark}
Note that, in order to guarantee  invertibility of
$Dg_{\text{\tiny$\alpha f$}}(x)$,
Eq. \eqref{rangleLL} in the the appendix must hold, which clearly implies
that $\alpha<\frac{1}{L}$ is  necessary.

\end{remark}
\subsection{{\bf Eigenvalue analysis of the Jacobian of
$g_\text{\tiny$\alpha f$}$ at a strict saddle point}}
\label{BCGDSubSecJacobia}
In this subsection, we will analyze the eigenvalues
 of the Jacobian of $g_\text{\tiny$\alpha f$}$
at a strict saddle point and show that it has at least
one eigenvalue with magnitude greater than one, which is a crucial part in our
entire proof. The proof mainly involves
two steps.

More specifically, the first step is to
transform the  original Jacobian
$Dg_\text{\tiny$\alpha f$}(x^*)$  into a simpler form which can be dealt with more easily.
Furthermore, based on the simple form of
$Dg_\text{\tiny$\alpha f$}(x^*)$, the second step is
to prove that $Dg(x^*)$ has at least
one eigenvalue with magnitude strictly
greater than one by resorting to Lemma
\ref{Zero Lemma} in Appendix which follows
essentially from Rouche's Theorem in complex
analysis.

In what follows, we assume that
$x^*\in\mathbb{R}^n$ is a strict saddle point.
Hence, $g_\text{\tiny$\alpha f$}^s(x^*)=x^*$, $s=1, 2, \ldots, p,$.
By  chain rule
\eqref{Chain Rule}, we have
\begin{equation}\label{Dgx*}
Dg_\text{\tiny$\alpha f$}(x^*)
=Dg_\text{\tiny$\alpha f$}^1(x^*)
\times
Dg_\text{\tiny$\alpha f$}^{2}\left(x^*\right)
\times \cdots  \times
Dg_\text{\tiny$\alpha f$}^{p-1}(x^*)\times
Dg_\text{\tiny$\alpha f$}^{p}
\left(x^*\right).
\end{equation}

Since $\mbox{eig}\left(Dg_\text{\tiny$\alpha f$}(x^*)\right)=
\mbox{eig}\left(\left(Dg_\text{\tiny$\alpha f$}(x^*)\right)^T\right)$,
 it is sufficient for us to
analyze the eigenvalues of matrix
$\left(Dg_\text{\tiny$\alpha f$}(x^*)\right)^T$.
In addition,  Eq. \eqref{Dgx*} leads to
\begin{equation}\label{Dgx*T}
\begin{split}
\left(Dg_\text{\tiny$\alpha f$}(x^*)\right)^T
&=\left(Dg_\text{\tiny$\alpha f$}^p(x^*)\right)^T
\times
\left(Dg_\text{\tiny$\alpha f$}^{p-1}(x^*)\right)^T
\times \cdots \times
\left(Dg_\text{\tiny$\alpha f$}^2(x^*)\right)^T
\times
\left(Dg_\text{\tiny$\alpha f$}^1(x^*)\right)^T
\\
&=
\left(I_n-\alpha U_pU_p^T \nabla^2 f(x^*)\right)
\times
\left(I_n-\alpha U_{p-1}U_{p-1}^T \nabla^2 f(x^*)\right)
\\
&\hspace*{4mm}\times \cdots\times
\left(I_n-\alpha U_2U_2^T \nabla^2 f(x^*)\right)
\times
\left(I_n-\alpha U_1U_1^T \nabla^2 f(x^*)\right),
\end{split}
\end{equation}
where the second equality is due to
\eqref{Jacob giG}.
For the sake of simplicity, we
denote matrix $\nabla^2 f(x^*)$ as matrix
$A\in\mathbb{R}^{n\times n}$ with $p\times p$ blocks.
Specifically,
\begin{equation}\label{Def A}
A\triangleq\left(A_{st}\right)_{ 1\leq s,\, t\leq p},
\end{equation}
and its $(s, t)$-th block is defined as
\begin{equation}\label{Def A_ij}
A_{st}\triangleq\frac{\partial^2 f(x^*)}
{\partial x^*\text{\scriptsize$(s)$}
\partial x^*\text{\scriptsize$(t)$}},
\hspace*{3mm} 1\leq s,\, t\leq p.
\end{equation}
Furthermore, we denote the $s$-th block-row of
$A$ as
\begin{equation}\label{Def A_i}
A_s\triangleq\left(A_{st}\right)_{ 1\leq t\leq p},
\hspace*{3mm}s=1, \ldots,p.
\end{equation}
Based on the above notations, we have
\begin{equation}\label{UiA}
U_s^T\nabla^2 f(x^*)=A_s,\hspace*{3mm}
 1\leq s\leq p,
\end{equation}
and
\begin{equation}\label{AiUj}
A_sU_t=A_{st},\hspace*{3mm}
 1\leq s, \,t\leq p.
\end{equation}
Combining \eqref{Dgx*T} and \eqref{UiA}, we have
 \begin{equation}\label{Dgx*T1}
\begin{split}
\left(Dg_\text{\tiny$\alpha f$}(x^*)\right)^T
&=
\left(I_n-\alpha U_pA_p\right)
\times
\left(I_n-\alpha U_{p-1}A_{p-1}\right)
\\
&\hspace*{4mm}\times \cdots\times
\left(I_n-\alpha U_2A_2\right)
\times
\left(I_n-\alpha U_1A_1\right).
\end{split}
\end{equation}
In order to analyze the eigenvalues of the above matrix,
we furthermore define a matrix below:
 \begin{equation}\label{Def G}
G\triangleq\frac{1}{\alpha}
\left[
I_n-
\left(Dg_\text{\tiny$\alpha f$}(x^*)\right)^T
\right],
\end{equation}
or equivalently,
 \begin{equation}\label{DGT G}
 \left(Dg_\text{\tiny$\alpha f$}(x^*)\right)^T=
 I_n-\alpha G.
\end{equation}
The above relation \eqref{DGT G} clearly means that
\begin{equation}\label{Eigrela}
\lambda
 \in\mbox{eig}
 \left(G\right)\Leftrightarrow
 1-\alpha \lambda \in
  \mbox{eig}\left(\left(Dg_\text{\tiny$\alpha f$}(x^*)\right)^T\right).
\end{equation}

In the rest of this subsection,  our focus is on
analyzing the eigenvalues of $G$. We first
introduce the following Lemma \ref{Property G} and Lemma \ref{Property G1} which
assert a particular relation
between $G$ and $A$.
\begin{lemma}\label{Property G}
Assume that
$x^*\in\mathbb{R}^n$ is a strict saddle point,
$G$ is given by \eqref{Def G},
$U_s$ and $A_s$ are defined by \eqref{DefUi} and
\eqref{Def A_i}, respectively, $s=1, \ldots,p$.
Then,
\begin{equation}\label{Porperty UiG}
U_s^TG
= A_s
-\alpha\sum\limits_{t=1}^{s-1}A_{st}U_t^TG,
\hspace*{3mm}s=1, \ldots,p.
\end{equation}
\end{lemma}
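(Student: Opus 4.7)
The plan is to exploit the block structure encoded by the relations $U_s^\top U_t = \delta_{st} I_{n_s}$ and $A_s = \sum_{t=1}^{p} A_{st} U_t^\top$ (the latter following from $A_s U_t = A_{st}$ together with $\sum_t U_t U_t^\top = I_n$). Set $M_s := I_n - \alpha U_s A_s$ and, for $s = 0, 1, \ldots, p$, define the partial products
\begin{equation*}
P_s := M_s M_{s-1} \cdots M_1 \quad (P_0 := I_n), \qquad G_s := \tfrac{1}{\alpha}(I_n - P_s),
\end{equation*}
so that $G_p = G$ by \eqref{Dgx*T1} and \eqref{Def G}. The telescoping identity $P_s = P_{s-1} - \alpha U_s A_s P_{s-1}$ immediately gives the recurrence
\begin{equation*}
G_s = G_{s-1} + U_s A_s (I_n - \alpha G_{s-1}).
\end{equation*}

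First I would observe the two key ``block annihilation'' consequences of $U_s^\top U_t = 0$ for $s \neq t$: (a) for every $t \neq s$, $U_s^\top M_t = U_s^\top$, so left-multiplying a product of $M_t$'s by $U_s^\top$ simply drops all factors with index $\neq s$; (b) consequently, $U_s^\top P_{s-1} = U_s^\top$, i.e.\ $U_s^\top G_{s-1} = 0$, and also $U_s^\top G_p = U_s^\top G_s$ (the factors $M_{s+1}, \ldots, M_p$ on the left do not touch $U_s^\top$). Applying $U_s^\top$ to the recurrence and using $U_s^\top U_s = I_{n_s}$ then yields
\begin{equation*}
U_s^\top G \;=\; U_s^\top G_s \;=\; U_s^\top G_{s-1} + A_s(I_n - \alpha G_{s-1}) \;=\; A_s - \alpha A_s G_{s-1}.
\end{equation*}

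Next I would rewrite $A_s G_{s-1}$ using $A_s = \sum_{t=1}^{p} A_{st} U_t^\top$:
\begin{equation*}
A_s G_{s-1} \;=\; \sum_{t=1}^{p} A_{st}\, U_t^\top G_{s-1}.
\end{equation*}
For $t \geq s$, the argument of (b) applied with index $t$ shows $U_t^\top P_{s-1} = U_t^\top$, so $U_t^\top G_{s-1} = 0$ and these terms drop out. For $t \leq s-1$, the same block-annihilation shows $U_t^\top G_{s'} = U_t^\top G_t$ for every $s' \geq t$, and in particular $U_t^\top G_{s-1} = U_t^\top G_t = U_t^\top G$. Substituting gives the desired identity
\begin{equation*}
U_s^\top G \;=\; A_s - \alpha \sum_{t=1}^{s-1} A_{st}\, U_t^\top G.
\end{equation*}

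The main obstacle I anticipate is purely bookkeeping: one has to keep clearly in mind which factors $M_t$ sit to the left versus the right of a given cursor when applying $U_s^\top$, and to verify in both directions that the mixed cross terms vanish because distinct block-columns of $I_n$ are orthogonal. Everything else is routine manipulation once the two observations $U_s^\top G_{s-1} = 0$ and $U_s^\top G_s = U_s^\top G$ are in hand.
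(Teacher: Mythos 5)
Your proof is correct and takes essentially the same route as the paper: both define the partial products $G_s$ (the paper's $G[s]$), exploit the block-orthogonality $U_s^\top U_t = \delta_{st} I_{n_s}$ to annihilate factors with index $\neq s$, and use the resulting facts $U_s^\top G_{s-1} = 0$ and $U_s^\top G = U_s^\top G_s$ together with the one-step telescoping recurrence. Your rewriting of $A_s G_{s-1}$ via $A_s = \sum_t A_{st} U_t^\top$ is the same as the paper's insertion of $\sum_t U_t U_t^\top = I_n$, so the arguments differ only in presentation.
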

\begin{proof}
The proof of Lemma \ref{Property G} is lengthy and has
been relegated to the Appendix.
\hfill\end{proof}

In order to give a clear and simple expression
of $G$, we further define the strictly block
lower triangular  matrix based on $A$ below:
 \begin{equation}\label{Def bar A}
 \check{A}\triangleq
 \left(\check{A}_{st}\right)_{1\leq s,\,t\leq p}
 \end{equation}
 with $p\times p$ blocks and its $(s, t)$-th
 block is given by
 \begin{equation}\label{LsigmabarA}
\check{A}_{st}
=\begin{cases}
\begin{array}{lr}
    A_{st},  & s>t,\\
    \mathbf{0},& s\leq t.
  \end{array}
  \end{cases}
\end{equation}
Similarly, we denote the $s$-th block-row of
$\check{A}$ as
\begin{equation}\label{Def check A_i}
\check{A}_s\triangleq
\left(\check{A}_{st}\right)_{ 1\leq t\leq p},
\hspace*{3mm}s=1, \ldots,p.
\end{equation}

The following lemma plays an important role in this subsection
because it gives a simple expression  of $G$ in terms
of $A$ and $\check{A}$, which allows us to analyze
the eigenvalues of $G$ more easily.
\begin{lemma}\label{Property G1}
Let
$x^*\in\mathbb{R}^n$ be a strict saddle point. Assume that $G$, $A$ and
$\check{A}$ are defined by \eqref{Def G}, \eqref{Def A} and \eqref{Def bar A},
respectively. Then,
\begin{equation}\label{G Form}
G=\left(I_n+ \alpha \check{A}\right)^{-1}A.
\end{equation}
\end{lemma}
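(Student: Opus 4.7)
The plan is to assemble the per-block identities from Lemma \ref{Property G} into a single matrix equation and then invert the resulting triangular factor.

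First, I would start from the conclusion of Lemma \ref{Property G}: for each $s=1,\ldots,p$,
\begin{equation*}
U_s^T G + \alpha\sum_{t=1}^{s-1} A_{st}\, U_t^T G \;=\; A_s.
\end{equation*}
The key observation is that $U_s^T G$ is precisely the $s$-th block-row of $G$ (of size $n_s\times n$), and since $(U_1,\ldots,U_p)=I_n$, stacking $U_1^T G,\ldots,U_p^T G$ vertically recovers $G$ itself. Similarly, stacking $A_1,\ldots,A_p$ recovers $A$ by \eqref{Def A_i}.

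Next I would interpret the sum $\sum_{t=1}^{s-1} A_{st} U_t^T G$ as the $s$-th block-row of the product $\check{A}\,G$. Indeed, the $s$-th block-row of $\check{A}$ is $\check{A}_s=(\check{A}_{s1},\ldots,\check{A}_{sp})$ with $\check{A}_{st}=A_{st}$ for $t<s$ and $\check{A}_{st}=\mathbf{0}$ for $t\geq s$ by \eqref{LsigmabarA}, so multiplying on the right by the block-column stack of $(U_1^T G,\ldots,U_p^T G)^T = G$ yields exactly $\sum_{t=1}^{s-1} A_{st} U_t^T G$. Stacking the identities over $s$ thus gives the clean matrix equation
\begin{equation*}
G + \alpha\,\check{A}\, G \;=\; A,\qquad \text{i.e.,}\qquad (I_n+\alpha\check{A})\,G \;=\; A.
\end{equation*}

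Finally, I would observe that $\check{A}$ is strictly block lower triangular by construction, so $I_n+\alpha\check{A}$ is block lower triangular with identity blocks on the diagonal; hence its determinant equals $1$ and it is invertible, yielding $G=(I_n+\alpha\check{A})^{-1}A$. The only mildly delicate point is the bookkeeping in the stacking step, verifying that the block partitions of $G$, $A$ and $\check{A}$ line up so that the equation derived row-by-row from Lemma \ref{Property G} really does amount to one global matrix identity; once the $U_s$ are recognized as the block-column partition of $I_n$, this is routine.
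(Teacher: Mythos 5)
Your proposal is correct and follows essentially the same route as the paper: it invokes Lemma \ref{Property G} block-row by block-row, recognizes $U_s^T G$ as the $s$-th block-row of $G$ and the sum $\sum_{t<s}A_{st}U_t^T G$ as the $s$-th block-row of $\check{A}G$, stacks the identities into $(I_n+\alpha\check{A})G=A$, and inverts the unit block lower triangular factor. No gaps.
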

\begin{proof}
We assume that $G$ has the following partition:
\begin{equation}\nonumber
G=\left(\begin{array}{c}
G_1\\
G_2\\
\vdots\\
G_p
\end{array}
\right),
\end{equation}
where  $G_s\in \mathbb{R}^{n_s\times n}$
is the $s$-th block-row of $G$, $s=1, 2, \ldots, p.$
Consequently, we have
\begin{equation}\nonumber
\begin{split}
G_s&=U_s^TG
= A_s
-\alpha\sum\limits_{t=1}^{s-1}A_{st}U_t^TG
= A_s
-\alpha\sum\limits_{t=1}^{s-1}A_{st}G_t
= A_s
-\alpha \check{A}_sG, \hspace*{1mm}s=1, 2, \ldots, p,
\end{split}
\end{equation}
where in the second equality we use  Lemma \ref{Property G} and
the last equality is due to the definition of
$\check{A}_s$ in
\eqref{Def check A_i}.
Since the above equality holds for any $s$,
it immediately follows that
\begin{equation}\nonumber
G=A-\alpha \check{A}G,
\end{equation}
which is equivalent to
\begin{equation}\nonumber
\left(I_n+\alpha \check{A}\right)G=A.
\end{equation}
Note that $I_n+\alpha \check{A}$ is an invertible matrix
because $\check{A}$ is a block strictly lower triangular
matrix.
Premultiplying both sides of the above equality
by $\left(I_n+\alpha \check{A}\right)^{-1}$,
we arrive at \eqref{G Form}. Therefore, the lemma
is proved.
\hfill\end{proof}


The following proposition plays a central role in
 this subsection
because it provides a sufficiently exact description
of the distribution of the eigenvalues of $G$.
More importantly,
it leads  immediately to the subsequent Proposition
\ref{Jaco g neg eig} which asserts that,
 there is at least one eigenvalue of Jacobian of
iterative mapping $g_\text{\tiny$\alpha f$}$
at a strict saddle point,
whose magnitude is strictly greater than one.

\begin{proposition}\label{Eig G in Omega}
Assume that $x^*\in\mathbb{R}^n$ is a strict saddle point,
and $G$ is defined by \eqref{Def G} with
$\alpha\in\left(0, \frac{1}{L}\right)$, where $L$
is determined by \eqref{Lipschitz}. Then,
there exists at least one eigenvalue of
$G$ which
 lies in
closed left half complex plane excluding origin, i.e.,
\begin{equation}\label{Key0}
\forall\, \beta\in
\left(0, \frac{1}{L}\right)
 \Rightarrow
\exists \, \lambda\in \left[\mbox{eig}
\left(G\right)
\text{\scriptsize$\bigcap$}\, \Omega\right],
\end{equation}
where
\begin{equation}\label{Omega0}
\Omega\triangleq\left\{a+bi\big{|}a, b\in
\mathbb{R}, a\leq0,~(a,b)\neq(0,0),
i=\sqrt{-1} \right\}.
\end{equation}
\end{proposition}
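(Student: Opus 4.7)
The plan is to recast the eigenvalue problem for $G$ as a root-location problem for a single polynomial. Because $\det(I+\alpha\check A)=1$ (as $\check A$ is strictly block lower triangular), setting $G_t := (I+t\check A)^{-1}A$ for $t \in [0,\alpha]$ gives
\[
\det(\lambda I - G_t) = \det\bigl(\lambda(I+t\check A) - A\bigr) =: \phi_t(\lambda),
\]
a polynomial of degree $n$ in $\lambda$ whose coefficients depend continuously on $t$. At $t=0$ this is the characteristic polynomial of $A = \nabla^2 f(x^*)$, so its roots are real and, by the strict saddle hypothesis $\lambda_{\min}(A)<0$, include at least one strictly negative value $\lambda_\star \in \Omega$. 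I will track the roots of $\phi_t$ continuously as $t$ grows from $0$ to $\alpha$ and argue that the one starting at $\lambda_\star$ cannot escape $\Omega$.

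The substantive step---and what I expect to be the main obstacle---is to exclude nonzero purely imaginary roots of $\phi_t$ for every $t\in(0,1/L)$. Suppose $G_t v = bi\,v$ with $v\neq 0$ and $b\neq 0$; then $Av = bi(I+t\check A)v$, and pairing with $v^H$ yields $v^H A v = bi\,\|v\|^2 + tbi\,v^H\check A v$. The left-hand side is real, so equating imaginary parts and using $\mathrm{Re}(v^H\check A v)=\tfrac{1}{2}v^H(\check A+\check A^T)v$ forces
\[
\|v\|^2 + \tfrac{t}{2}\,v^H(\check A + \check A^T)v = 0.
\]
Since $\check A + \check A^T = A - A_d$ with $A_d = \mathrm{Diag}(A_{11},\dots,A_{pp})$, and since $\|A\|\le L$ together with the fact that principal submatrices of a symmetric matrix are bounded in spectral norm by the parent matrix gives $\|A_d\|\le L$ and $\|A-A_d\|\le 2L$, the equation rearranges to $\|v\|^2 \le tL\,\|v\|^2$, contradicting $t<1/L$.

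It then remains to close the continuity argument. When $A$ is nonsingular, $\det G_t = \det(A)/\det(I+t\check A) = \det A \neq 0$, so $0$ is never a root of $\phi_t$; combined with the previous step, no root of $\phi_t$ can cross the imaginary axis as $t$ varies in $[0,\alpha]$, and the root starting at $\lambda_\star$ stays in the open left half-plane, hence in $\Omega$. The singular case, where $0$ is always a root of $\phi_t$, is the delicate part: I plan to invoke Lemma \ref{Zero Lemma} from the appendix (the promised Rouche-based counting lemma) on a contour bounding $\Omega$ intersected with a large disk and indented around the origin; the constancy of $\det G_t$ together with the absence of nonzero imaginary roots will guarantee that the number of roots of $\phi_t$ inside this contour is constant in $t$, preventing a negative real root of $\phi_0$ from being absorbed at the origin and reemerging with positive real part. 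This yields at least one root of $\phi_\alpha$, i.e.\ an eigenvalue of $G$, lying in $\Omega$, as required.
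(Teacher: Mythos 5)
Your reparametrization $G_t=(I+t\check A)^{-1}A$, $t\in[0,\alpha]$, together with the observation that $\det(I+t\check A)=1$ gives $\det(\lambda I-G_t)=\det(\lambda(I+t\check A)-A)$, is a cleaner way to set up the same homotopy the paper runs (the paper instead works with $B^{-1}(I+t\beta\check B)$ in its Lemma~\ref{Key Lemma BCGD}). Your imaginary-axis exclusion is correct: pairing $Av=bi(I+t\check A)v$ with $v^H$ and taking imaginary parts gives $\|v\|^2+\tfrac{t}{2}v^H(\check A+\check A^T)v=0$, and since $\check A+\check A^T=A-A_d$ with $\|A\|\le L$ and $\|A_d\|\le L$ (by Cauchy interlacing on the diagonal blocks), the contradiction with $t<1/L$ follows; this is exactly the content of the paper's Lemmas~\ref{Bound etaecheckBeta} and~\ref{Re neq 0 Lem}, and your nonsingular case is complete.

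The gap is in the singular case, and it is not cosmetic. You appeal to ``the constancy of $\det G_t$'' to justify that the root count inside an indented contour is constant in $t$, but when $A$ is singular $\det G_t\equiv0$ conveys only that $0$ is always a root; it says nothing about its \emph{multiplicity}. To make the indented-contour argument work you must exhibit a fixed $\epsilon>0$ such that for \emph{all} $t\in[0,\alpha]$ no root of $\phi_t$ lies on the arc $\{|z|=\epsilon,\ \mathrm{Re}\,z\le 0\}$. This fails a priori if, as $t$ grows, a nonzero root in the left half-plane drifts into the origin and the algebraic multiplicity of $0$ jumps above $m=\dim\ker A$ (the geometric multiplicity is always $m$ since $\ker G_t=\ker A$, but a Jordan block could form). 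Ruling this out is precisely what the paper's Lemma~\ref{Multi Zero Lem} does, by an explicit Schur-complement computation of $\phi_t$ showing the factor $\lambda^m$ splits off with a cofactor that is nonzero at $\lambda=0$ whenever $t\beta<1/\rho(B)$; that nonvanishing again uses the bound from Lemma~\ref{Bound etaecheckBeta}. The paper's own Lemma~\ref{Key Lemma BCGD} avoids the contour indentation entirely: it perturbs $B\mapsto B+\epsilon I$ to reduce to the nonsingular case and then closes via a contradiction/limit argument, still invoking the multiplicity-constancy lemma for the limit. So you should either prove the analogue of Lemma~\ref{Multi Zero Lem} (the algebraic multiplicity of $0$ in $\phi_t$ is exactly $m$ for all $t\in[0,1/L)$) before invoking the indented contour, or replace the indentation with the paper's perturbation device; as written, the singular case is asserted rather than established.
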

\begin{proof}
It follows from Lemma \ref{Property G1} that  $G$ has the
following expression:
\begin{equation}\label{Re G Form}
G=\left(I_n+ \alpha \check{A}\right)^{-1}A,
\end{equation}
where $A$ and $\check{A}$ are defined by \eqref{Def A}
and \eqref{Def bar A}, respectively.
Since $A=\nabla^2 f\left(x^*\right)$ and $x^*$ is a strictly
saddle point, $A$ has at least one negative eigenvalue.
Furthermore, by applying Lemma \ref{Key Lemma BCGD} in Section
\ref{Several Lemmas} 
with identifications $A\sim B$, $\check{A}\sim\check{B}$,
$\alpha\sim \beta$ and $\rho(A)\sim \rho(B)$, we have
\begin{equation}\label{Key0 proof}
\forall\, \beta\in
\left(0, \frac{1}{\rho(A)}\right)
 \Rightarrow
\exists \, \lambda\in \left[\mbox{eig}
\left(G\right)
\text{\scriptsize$\bigcap$}\, \Omega\right],
\end{equation}
where $\Omega$ is defined by \eqref{Omega0}.

In addition, Lemma 7 in \cite{Panageas2016Gradient} implies
that the gradient Lipschitz  continuous
constant $L\geq \rho(A)=\rho\left(\nabla f (x^*)\right)$, which amounts to
$\alpha \in \left(0, \frac{1}{L}\right)
\subseteq\left(0, \frac{1}{\rho(A)}\right)$.
Hence, the above
\eqref{Key0 proof} leads immediately to \eqref{Key0}.
\hfill
\end{proof}

Based on the above Proposition \ref{Eig G in Omega},
we immediately obtain the key proposition in this
subsection.
\begin{proposition}\label{Jaco g neg eig}
Assume  Assumption \ref{Assumption f} holds.
Suppose that BCGD iterative mapping $g_\text{\tiny$\alpha f$}$ is defined
by \eqref{Define g} with
$\alpha\in \left(0,\,\frac{1}{L}\right)$,
 $L$ is determined by \eqref{Lipschitz}, and
$x^*\in\mathbb{R}^n$ is a strict saddle point. Then
$Dg_\text{\tiny$\alpha f$}(x^*)$ has at least one eigenvalue whose magnitude is
strictly greater than one.
\end{proposition}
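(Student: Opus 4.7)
The plan is to combine Proposition \ref{Eig G in Omega} with the affine relation \eqref{DGT G} that identifies $(Dg_\text{\tiny$\alpha f$}(x^*))^T$ with $I_n - \alpha G$, and then perform an elementary geometric argument in the complex plane. Since any square matrix and its transpose share the same spectrum, it suffices to exhibit an eigenvalue of $(Dg_\text{\tiny$\alpha f$}(x^*))^T$ whose modulus exceeds one.

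The first step is to invoke Proposition \ref{Eig G in Omega} to obtain some $\lambda \in \mbox{eig}(G)\cap \Omega$, i.e., $\lambda = a+bi$ with $a\leq 0$ and $(a,b)\neq(0,0)$. By the equivalence \eqref{Eigrela}, the number $\mu \triangleq 1 - \alpha\lambda$ is then an eigenvalue of $(Dg_\text{\tiny$\alpha f$}(x^*))^T$, and hence also of $Dg_\text{\tiny$\alpha f$}(x^*)$ itself.

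The second step is a short modulus calculation: $|\mu|^2 = (1-\alpha a)^2 + \alpha^2 b^2$. Because $\alpha \in \left(0,\,\frac{1}{L}\right)$, two cases exhaust the possibilities. If $a<0$, then $1-\alpha a > 1$, so already $|\mu|^2 > 1$. If $a=0$, then the condition $(a,b)\neq(0,0)$ forces $b\neq 0$, whence $|\mu|^2 = 1 + \alpha^2 b^2 > 1$. In either case $|\mu|>1$, which is exactly the desired conclusion.

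The real obstacle in this result is not in the present deduction—which is essentially a one-paragraph consequence of Proposition \ref{Eig G in Omega}—but rather in having that proposition at our disposal (its proof requires the structural rewrite $G = (I_n+\alpha\check{A})^{-1}A$ from Lemma \ref{Property G1} and the Rouch\'{e}-type Lemma \ref{Key Lemma BCGD}). Here one only needs to keep track that the affine map $\lambda\mapsto 1-\alpha\lambda$ is a bijection of $\mathbb{C}$, so that the correspondence between $\mbox{eig}(G)$ and $\mbox{eig}((Dg_\text{\tiny$\alpha f$}(x^*))^T)$ is exact, and that the step-size hypothesis $\alpha<1/L$ here matches the one under which Proposition \ref{Eig G in Omega} was established.
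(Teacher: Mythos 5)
Your proposal is correct and follows essentially the same approach as the paper: invoke Proposition \ref{Eig G in Omega} to produce $\lambda = a+bi \in \mbox{eig}(G)\cap\Omega$, pass through the affine correspondence \eqref{Eigrela} to obtain the eigenvalue $1-\alpha\lambda$ of $(Dg_\text{\tiny$\alpha f$}(x^*))^T$, and compute its modulus. The paper handles the modulus bound in a single chain of inequalities $\sqrt{1-2\alpha a+\alpha^2 a^2+\alpha^2 b^2}\geq\sqrt{1+\alpha^2(a^2+b^2)}>1$ rather than your two-case split on $a<0$ versus $a=0$, but the content is identical.
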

\begin{proof}
Recalling Eq. \eqref{Eigrela}, we have
\begin{equation}\label{Recal Eigrela}
\lambda
 \in\mbox{eig}
 \left(G\right)\Leftrightarrow
 1-\alpha \lambda \in
  \mbox{eig}\left(\left(Dg_\text{\tiny$\alpha f$}
  (x^*)\right)^T\right),
\end{equation}
where 
$G=\frac{1}{\alpha}
\left[
I_n-
\left(Dg_\text{\tiny$\alpha f$}(x^*)\right)^T
\right]
$ is defined by \eqref{Def G}.

Combined with \eqref{Recal Eigrela},
Proposition  \ref{Eig G in Omega} implies that
there exists at least one eigenvalue of
$\left(Dg_\text{\tiny$\alpha f$}(x^*)\right)^T$ which can be
expressed as
 \begin{equation}
1-\alpha \left(a+bi\right),
\end{equation}
 where $a+bi$ belongs to $\Omega$ defined by \eqref{Omega}, or
 equivalently,
 \begin{equation}
 a\leq0 \hspace*{2mm}\text{and}\hspace*{2mm} \left(a, b\right)
\neq \left(0, 0\right).
 \end{equation}
 Consequently, its magnitude is
\begin{equation}\nonumber
\begin{split}
&\left|1-\alpha \left(a+bi\right)\right|
=\sqrt{1-2\alpha a+\alpha^2 a^2+\alpha^2 b^2}
\geq\sqrt{1+\alpha^2\left(a^2+b^2\right)}
>1,
\end{split}
\end{equation}
where the first inequality is due to $a\leq0$ and $\alpha>0$;
and the second inequality thanks to $\left(a, b\right)
\neq \left(0, 0\right)$. Note that the eigenvalues
of $Dg_\text{\tiny$\alpha f$}(x^*)$
are the same as those of
$\left(Dg_\text{\tiny$\alpha f$}(x^*)\right)^T$. Thus, the proof is finished.
\hfill\end{proof}
\subsection{\bf Main results of BCGD}\label{Main Subsec of BCGD}

We first introduce the following proposition, which asserts that
the limit point of the sequence generated by the
BCGD method \ref{BCGD 3.1} is a critical point of $f$.
\begin{proposition}\label{limit=critical point BCGD}
Under Assumption \ref{Assumption f}, if
  $\left\{x_k\right\}_{k\geq 0}$
   is generated by the BCGD method \ref{BCGD 3.1}
with $0<\alpha<\frac{1}{L}$, $\lim\limits_{k}{x_k}$
exists and denote it as $x^*$, then $x^*$ is a
critical point of $f$, i.e., $\nabla f(x^*)
={\bf 0}$.
\end{proposition}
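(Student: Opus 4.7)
The plan is to use the standard descent-lemma argument adapted to the block updates, combined with continuity of $f$ and $\nabla f$.

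First, I would fix $k$ and apply the Lipschitz descent inequality to each inner update $x_k^s = x_k^{s-1} - \alpha U_s \nabla_s f(x_k^{s-1})$. Because $\|x_k^s - x_k^{s-1}\|^2 = \|U_s \nabla_s f(x_k^{s-1})\|^2 = \|\nabla_s f(x_k^{s-1})\|^2$ and $\langle \nabla f(x_k^{s-1}), U_s \nabla_s f(x_k^{s-1})\rangle = \|\nabla_s f(x_k^{s-1})\|^2$, the descent lemma gives
\begin{equation*}
f(x_k^s) \;\le\; f(x_k^{s-1}) - \alpha\left(1 - \tfrac{L\alpha}{2}\right)\|\nabla_s f(x_k^{s-1})\|^2.
\end{equation*}
Since $\alpha < 1/L$, the constant $c := \alpha(1 - L\alpha/2)$ is strictly positive. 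Telescoping over $s=1,\ldots,p$ yields
\begin{equation*}
f(x_{k+1}) \;\le\; f(x_k) - c\sum_{s=1}^{p}\|\nabla_s f(x_k^{s-1})\|^2.
\end{equation*}

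Next, I would exploit the assumed convergence $x_k \to x^*$. Because $f$ is continuous, $f(x_k) \to f(x^*)$ and hence $f(x_k) - f(x_{k+1}) \to 0$. The telescoped inequality then forces
\begin{equation*}
\sum_{s=1}^{p}\|\nabla_s f(x_k^{s-1})\|^2 \;\longrightarrow\; 0 \quad \text{as } k \to \infty,
\end{equation*}
so in particular $\|\nabla_s f(x_k^{s-1})\| \to 0$ for every block index $s$.

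The final step is to identify the limit of each $\nabla_s f(x_k^{s-1})$. From the block update,
\begin{equation*}
\|x_k^{s-1} - x_k\| \;\le\; \alpha \sum_{t=1}^{s-1}\|\nabla_t f(x_k^{t-1})\|,
\end{equation*}
which tends to zero by the previous step; hence $x_k^{s-1} \to x^*$ as $k\to\infty$ for every $s$. Continuity of $\nabla f$ then gives $\nabla_s f(x_k^{s-1}) \to \nabla_s f(x^*)$, and combined with $\nabla_s f(x_k^{s-1})\to 0$ we conclude $\nabla_s f(x^*) = U_s^T \nabla f(x^*) = 0$ for each $s$. Since the $U_s$ together make up $I_n$, this yields $\nabla f(x^*)=\mathbf{0}$.

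I do not expect any serious obstacle: the only subtlety is keeping track of the intermediate iterates $x_k^{s-1}$ rather than $x_k$ when invoking continuity of $\nabla f$, which is handled by the triangle-inequality bound above. Everything else is the classical sufficient-decrease argument, applicable block by block thanks to $\alpha < 1/L$.
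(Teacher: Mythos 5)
Your proof is correct, but it takes a genuinely different route from the paper. The paper's own argument is a fixed-point argument: since the composite map $g_{\alpha f}$ is already known to be a diffeomorphism (Proposition \ref{gDiff}), in particular it is continuous, so $x_k\to x^*$ and $x_{k+1}=g_{\alpha f}(x_k)$ force $x^*=g_{\alpha f}(x^*)$; unrolling the composition and using the fact that the $s$-th inner step changes only block $s$, one gets $\nabla_s f(x^*)=0$ for each $s$ by induction. You instead run the classical sufficient-decrease argument: apply the descent lemma to each block update, telescope across $s$ to get $f(x_{k+1})\le f(x_k)-c\sum_s\|\nabla_s f(x_k^{s-1})\|^2$ with $c=\alpha(1-L\alpha/2)>0$, use $f(x_k)-f(x_{k+1})\to 0$ to force the block gradients to vanish, and then (and this is the step you correctly flag as the one needing care) show $x_k^{s-1}\to x^*$ so that continuity of $\nabla f$ lets you pass to the limit. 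Both proofs are sound. Yours is more elementary and self-contained --- it does not lean on the diffeomorphism machinery at all, and it actually works for the wider range $\alpha<2/L$ --- whereas the paper's is a one-liner once Proposition \ref{gDiff} is in hand. One small remark: you write $\|x_k^{s-1}-x_k\|\le \alpha\sum_{t<s}\|\nabla_t f(x_k^{t-1})\|$; since the $U_t$ have orthogonal ranges you in fact have equality with $\alpha\bigl(\sum_{t<s}\|\nabla_t f(x_k^{t-1})\|^2\bigr)^{1/2}$, but the looser triangle-inequality bound is perfectly adequate.
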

\begin{proof}
Since $\left\{x_k\right\}_{k\geq 0}$ is generated
by the BCGD method \ref{BCGD 3.1},
 $x_k=g^k_\text{\tiny$\alpha f$}(x_0)
\footnote
{$g^k_\text{\tiny$\alpha f$}$
denotes the composition of
$g_\text{\tiny$\alpha f$}$
with itself $k$ times.}
$,
where $g_{\text{\tiny$\alpha f$}}$ is defined by
\eqref{Degi}. Hence, $\lim\limits_kx_k=\lim\limits_k
g^k_\text{\tiny$\alpha f$}(x_0)=x^*$. Since
$g_{\text{\tiny$\alpha f$}}$ is a  diffeomorphism,
we immediately know that $x^*$ is a
fixed point of $g_{\text{\tiny$\alpha f$}}$.
It follows easily from the definition \eqref{Define g} of
$g_{\text{\tiny$\alpha f$}}$ that
$\nabla f(x^*)={\bf 0}$. Thus the proof is finished. \hfill
\end{proof}
Armed with the results established in  previous
subsections and the above Proposition
\ref{limit=critical point BCGD}, we now state and prove
our main theorems of BCGD. Specifically, similar to
the proof of Theorem 4
in \cite{JLeeandMSimchowitz2016},
 the center-stable manifold theorem in
 \cite{Smale1967Differentiable,Michael1987,
 hirsch1977invariant} is a primary tool
 because it gives a local characterization of the stable
 set. Hence, we first rewrite it as follows.
\begin{theorem}\cite[Theorem III. 7]{Michael1987}
\label{Sabl Mani Theorem}
Let 0 be a fixed point for the $C^r$ local
differomorphism $\phi:U\longrightarrow E$ where
$U$ is a neighborhood of zero in the Banach space
$E$ and
$\infty >r\geq 1$. Let $E_{sc}\bigoplus  E_u$
be the invariant splitting of $R^n$ into
the generalized eigenspaces of $Df(0)$
corresponding to eigenvalues of absolute
value less than or equal to one, and greater
than one. Then there is a local $\phi$ invariant
$C^r$ embedded disc $W^{sc}_{loc}$ tangent
 to $E_{sc}$ at $0$ and a ball B around zero
 in an adapted norm such that
$\phi(W^{sc}_{loc})\bigcap B\subset W^{sc}_{loc}$,
and $\bigcap\phi^{-k}(B)\subset W^{sc}_{loc}$.
\end{theorem}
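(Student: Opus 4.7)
The plan is to sketch the classical proof of the center-stable manifold theorem using the graph transform (Hadamard) method, since the theorem is quoted from \cite{Michael1987} and has a well-established proof strategy. First, I would use the spectral gap in the hypothesis to set up an adapted norm on $E = E_{sc} \oplus E_u$. Writing $D\phi(0)$ in block form as $\mathrm{Diag}(A_{sc}, A_u)$, one can choose an equivalent norm so that $\|A_{sc}\| \le \mu$ and $\|A_u^{-1}\| \le \mu^{-1} \cdot (1-\epsilon)$ for some $\mu$ slightly larger than the spectral radius of $A_{sc}$ and $\epsilon > 0$; this is always possible by the spectral-radius formula applied to each block separately. Here $A_u$ is invertible precisely because its eigenvalues have modulus strictly greater than one.

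Next I would seek $W^{sc}_{loc}$ as the graph of a $C^r$ map $h: B_{sc} \to E_u$ with $h(0) = 0$ and $Dh(0) = 0$, where $B_{sc}$ is a small ball in $E_{sc}$. The construction is via a fixed point of the \emph{graph transform} operator $\Gamma$: given a candidate graph $\mathrm{graph}(h)$, one pushes it forward by $\phi$ and asks for it to be re-expressible as a graph over $E_{sc}$. Concretely, writing $\phi(x_s, x_u) = (\phi_s(x_s, x_u), \phi_u(x_s, x_u))$, the image of $\mathrm{graph}(h)$ consists of pairs $(\phi_s(x_s, h(x_s)), \phi_u(x_s, h(x_s)))$; solving the first coordinate for $x_s$ in terms of a new base point (using the implicit function theorem, which is where the expansion of $A_u$ enters) produces $(\Gamma h)$.

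Step three is to verify that $\Gamma$ is a contraction on the closed subset of Lipschitz maps $h$ with $h(0) = 0$, $\mathrm{Lip}(h)$ small, and $\|h\|_\infty$ small, equipped with the sup-norm. The contraction constant comes directly from the spectral gap: differences $h_1 - h_2$ get contracted by a factor roughly $\mu^{-1}$, times a correction that is small when the nonlinear remainder $\phi - D\phi(0)$ has small Lipschitz constant on a sufficiently small ball. Banach's fixed point theorem then produces the unique invariant Lipschitz graph. Local invariance ($\phi(W^{sc}_{loc}) \cap B \subset W^{sc}_{loc}$) and the backward-orbit characterization ($\bigcap_k \phi^{-k}(B) \subset W^{sc}_{loc}$) follow from uniqueness: any point whose forward orbit stays in $B$ must lie on the invariant graph, since the complementary dynamics in $E_u$ is expanding.

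The main obstacle, and the reason the original proofs are lengthy, is the \textbf{regularity bootstrap} from Lipschitz to $C^r$. The fixed point argument naturally lives in a Banach space of Lipschitz maps, but it does not directly give $C^1$ regularity, let alone $C^r$. The standard remedy is the fiber contraction theorem applied to the formal derivative: one sets up an auxiliary contraction on the bundle of candidate $(h, Dh)$ pairs over the base of Lipschitz graphs, shows it has a fixed point, and then argues that this fixed point is in fact the genuine derivative of $h$. Iterating this procedure $r$ times (with increasingly delicate choices of function spaces involving H\"older norms at the top level) upgrades the regularity all the way to $C^r$. Tangency to $E_{sc}$ at $0$ then reduces to the identity $Dh(0) = 0$, which is immediate from the fact that the zero map is a fixed point of the linearized graph transform.
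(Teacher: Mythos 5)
You are reproving the center-stable manifold theorem from scratch, but the paper does not attempt that: the statement is quoted verbatim from Shub (1987), Theorem III.7, and is then used as a black box in the subsequent arguments (Theorems \ref{Main Theorem BCGD}, \ref{Main Theorem BMD}, \ref{Main Theorem PBCD}). There is therefore no proof in the paper to compare against.

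As a sketch of the classical Hadamard graph-transform argument your outline is broadly sound, and it is in fact the strategy Shub employs: adapted norm, graph transform as a contraction on Lipschitz sections, then a fiber-contraction / $C^r$-section bootstrap for regularity. Two details you elide deserve flagging. First, since $E_{sc}$ may contain eigenvalues of modulus exactly one, the base dynamics $\phi_{sc}$ need not strictly contract a fixed ball, so the push-forward of a graph over a ball in $E_{sc}$ is not automatically re-expressible as a graph over the same ball; the standard remedy, and the one Shub uses, is to first \emph{globalize} the map by cutting off the nonlinear part of $\phi$ outside a small neighborhood of the origin so that $\phi$ agrees with $D\phi(0)$ at infinity, run the graph transform on globally defined sections, and restrict to a ball only at the end. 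Second, the sup-norm contraction constant is most naturally $\|A_u^{-1}\|$ in the adapted norm; your phrasing ``roughly $\mu^{-1}$'' is misleading because $\mu$ is pinned to $\rho(A_{sc})$, which can equal $1$. Your hypothesis $\|A_u^{-1}\|\le \mu^{-1}(1-\epsilon)$ does deliver $\|A_u^{-1}\|<1$, so the argument survives, but the contraction genuinely comes from the strict expansion of $A_u$, not from the $E_{sc}$ block. Neither point is fatal to the sketch, but both would have to be handled carefully in a complete proof.
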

The following theorem is similar to
 Theorem 4 in \cite{JLeeandMSimchowitz2016}.

\begin{theorem}\label{Main Theorem BCGD}
Let $f$ be a $C^2$ function and $x^*$ be a strict
saddle. If  $\left\{x_k\right\}$ is generated
by the BCGD method \ref{BCGD 3.1} with $0<\alpha<\frac{1}{L}$, then
$$\mathbb{P}_\nu\left[\lim_{k}{x_k}=x^*\right]=0.$$
\end{theorem}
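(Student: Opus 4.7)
The plan is to combine the two structural properties of $g_\text{\tiny$\alpha f$}$ established in the previous subsections, namely that it is a diffeomorphism (Proposition \ref{gDiff}) and that its Jacobian at the strict saddle $x^*$ has a spectral radius strictly greater than one (Proposition \ref{Jaco g neg eig}), with the center-stable manifold Theorem \ref{Sabl Mani Theorem} and a countable-cover argument, in direct analogy with Theorem 4 of \cite{JLeeandMSimchowitz2016}.

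First I would recall the global stable set from Definition \ref{Global Stable Set}: $W^s(x^*)=\{x:\lim_k g_\text{\tiny$\alpha f$}^k(x)=x^*\}$. The goal is to show that $W^s(x^*)$ has Lebesgue measure zero, which, since $\nu$ is absolutely continuous with respect to the Lebesgue measure, immediately gives $\mathbb{P}_\nu[\lim_k x_k = x^*]=0$. Apply the center-stable manifold theorem to the local diffeomorphism $g_\text{\tiny$\alpha f$}$ at its fixed point $x^*$ (translated to the origin for the statement of Theorem \ref{Sabl Mani Theorem}). By Proposition \ref{Jaco g neg eig}, the unstable eigenspace $E_u$ is nontrivial, so the invariant embedded disc $W^{sc}_{loc}$ tangent to $E_{sc}$ has dimension strictly less than $n$, and therefore has Lebesgue measure zero.

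Next I would show the inclusion $W^s(x^*)\subset \bigcup_{k=0}^{\infty} g_\text{\tiny$\alpha f$}^{-k}(W^{sc}_{loc})$. Indeed, if $x\in W^s(x^*)$, then $g_\text{\tiny$\alpha f$}^k(x)\to x^*$, so there exists $K$ with $g_\text{\tiny$\alpha f$}^j(x)\in B$ for all $j\geq K$, where $B$ is the adapted-norm ball from Theorem \ref{Sabl Mani Theorem}. The theorem's conclusion $\bigcap_j g_\text{\tiny$\alpha f$}^{-j}(B)\subset W^{sc}_{loc}$ applied to the shifted orbit then yields $g_\text{\tiny$\alpha f$}^K(x)\in W^{sc}_{loc}$, hence $x\in g_\text{\tiny$\alpha f$}^{-K}(W^{sc}_{loc})$. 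Since $g_\text{\tiny$\alpha f$}$ is a global diffeomorphism by Proposition \ref{gDiff}, each preimage $g_\text{\tiny$\alpha f$}^{-k}(W^{sc}_{loc})$ is a $C^1$-image of a null set and is therefore itself Lebesgue-null. A countable union of null sets is null, so $W^s(x^*)$ has Lebesgue measure zero, completing the argument.

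The main obstacle is the passage from the local conclusion of the stable manifold theorem to the global statement about $W^s(x^*)$, which is what forces us to use both the diffeomorphism property (to preserve null sets under $g_\text{\tiny$\alpha f$}^{-k}$) and the countability of the cover over $k\in\mathbb{N}$. Verifying the inclusion $W^s(x^*)\subset \bigcup_k g_\text{\tiny$\alpha f$}^{-k}(W^{sc}_{loc})$ rigorously requires care in applying the local-invariance statement $\phi(W^{sc}_{loc})\cap B\subset W^{sc}_{loc}$ together with the tail-in-$B$ property of convergent orbits; everything else is routine once Propositions \ref{gDiff} and \ref{Jaco g neg eig} are in hand.
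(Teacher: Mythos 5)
Your proof is correct and follows the same route as the paper: both use Propositions \ref{gDiff} and \ref{Jaco g neg eig} to invoke the center-stable manifold Theorem \ref{Sabl Mani Theorem} at $x^*$, conclude that $W^{sc}_{loc}$ is a null set of positive codimension, and then pass to the global stable set via the countable cover $W^s(x^*)\subset\bigcup_k g_{\text{\tiny$\alpha f$}}^{-k}(W^{sc}_{loc})$, exactly the argument the paper delegates to \cite{JLeeandMSimchowitz2016}. You have merely written out explicitly the step the paper leaves to the reader.
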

\begin{proof}
Proposition \ref{limit=critical point BCGD} implies that,
if $\lim\limits_{k}{x_k}$ exists then it must be a critical
point. Hence, we consider calculating the Lebesgue measure
(or probability with respect to the prior measure $\nu$)
of the set $\left[\lim\limits_{k}{x_k}=x^*\right]
 =W^s\left(x^*\right)$ (see Definition
 \ref{Global Stable Set}).

Furthermore, since Proposition \ref{gDiff} means
 the BCGD iterative mapping $g_\text{\tiny$\alpha f$}$  is
 a diffeomorphism, we replace $\phi$ and fixed point
 by $g_\text{\tiny$\alpha f$}$ and the strict
 point $x^*$ in the above Stable Manifold  Theorem
\ref{Sabl Mani Theorem}, respectively.
Then  the manifold $W_{loc}^{s}(x^*)$
has strictly positive codimension because of
Proposition \ref{Jaco g neg eig} and $x^*$ being a strict
saddle point. Hence, $W_{loc}^{s}(x^*)$ has measure zero.

In what follows,
we are able to apply the same arguments
as in \cite{JLeeandMSimchowitz2016}
to finish the proof of the theorem. Since
the proof follows a similar pattern,
it is omitted.
\hfill\end{proof}

Given the above Theorem \ref{Main Theorem BCGD},
we immediately  obtain the following
Theorem \ref{key theorem BCGD} and its
corollary by the same arguments as in the proofs
of Theorem 2 and Corollary 12 in
 \cite{Panageas2016Gradient}.
Therefore, we omit their proofs.
\begin{theorem}[Non-isolated]\label{key theorem BCGD}
Let $f: \mathbb{R}^n\rightarrow\mathbb{R}$ be a twice
continuously differentiable function
and $\sup\limits_{x\in\mathbb{R}^n}\left\|
\nabla f(x)\right\|\leq L<\infty$. The set of initial
points $x\in \mathbb{R}^n$, for each of which the BCGD
method \ref{BCGD 3.1}
with step size $0<\alpha<\frac{1}{L}$ converges
to a strict saddle point, is of (Lebesgue) measure zero, without assumption that critical points are isolated.
\end{theorem}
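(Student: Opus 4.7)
The plan is to transfer the argument of Panageas and Piliouras to the BCGD setting, leveraging the two key tools already established: Proposition \ref{gDiff} ensures that $g_{\alpha f}$ is a $C^1$ diffeomorphism of $\mathbb{R}^n$ whenever $\alpha < 1/L$, and Proposition \ref{Jaco g neg eig} guarantees that $Dg_{\alpha f}(x^*)$ has at least one eigenvalue of modulus strictly greater than one at every strict saddle $x^*$. These are precisely the ingredients that drive the non-isolated version of the argument, so the main work is repackaging them correctly.

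First, at each strict saddle $x^*$ I would apply the Center-Stable Manifold Theorem \ref{Sabl Mani Theorem} with $\phi = g_{\alpha f}$. This yields an open neighborhood $B_{x^*}$ and a $C^2$-embedded disc $W^{sc}_{\mathrm{loc}}(x^*)$ tangent to the non-unstable generalized eigenspace of $Dg_{\alpha f}(x^*)$, satisfying
\begin{equation*}
\bigcap_{k\geq 0} g_{\alpha f}^{-k}(B_{x^*}) \;\subseteq\; W^{sc}_{\mathrm{loc}}(x^*).
\end{equation*}
Because $Dg_{\alpha f}(x^*)$ possesses an eigenvalue of modulus strictly greater than one, the unstable subspace at $x^*$ is nontrivial, so $W^{sc}_{\mathrm{loc}}(x^*)$ has positive codimension and therefore Lebesgue measure zero.

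Next, by the Lindel\"of property of $\mathbb{R}^n$, the open cover of the set of strict saddles by $\{B_{x^*}\}$ admits a countable subcover $\{B_{x_i^*}\}_{i\in\mathbb{N}}$. If a BCGD trajectory $x_k = g_{\alpha f}^k(x_0)$ converges to some strict saddle $y$, then $y$ lies in some $B_{x_i^*}$, so from an index $K = K(x_0,i)$ onward all iterates remain in $B_{x_i^*}$; the trapping clause above then forces $g_{\alpha f}^K(x_0) \in W^{sc}_{\mathrm{loc}}(x_i^*)$. Consequently, the set of initial points that converge to \emph{some} strict saddle is contained in
\begin{equation*}
\bigcup_{i\in\mathbb{N}}\;\bigcup_{K\geq 0} g_{\alpha f}^{-K}\bigl(W^{sc}_{\mathrm{loc}}(x_i^*)\bigr).
\end{equation*}
Since $g_{\alpha f}$ is a diffeomorphism, it preserves Lebesgue-null sets; a countable union of null sets is null; and the absolute continuity of $\nu$ with respect to Lebesgue measure then transfers the conclusion to $\mathbb{P}_\nu$.

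The main obstacle, exactly as in Panageas and Piliouras, is handling non-isolatedness without enumerating the strict saddles. The crux is the Lindel\"of step: replacing a potentially uncountable union over all strict saddles by a countable union of neighborhoods, together with verifying that any trajectory converging to \emph{some} strict saddle must eventually be trapped inside \emph{one} of the chosen $B_{x_i^*}$ so that the trapping clause of Theorem \ref{Sabl Mani Theorem} applies along the tail. Once this is justified, the remaining pieces---diffeomorphism preservation of Lebesgue-null sets, countable subadditivity of measure, and absolute continuity of $\nu$---are routine, which is why the paper can defer to the same arguments as in Panageas and Piliouras and omit the explicit proof.
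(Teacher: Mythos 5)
Your proposal is correct and takes essentially the same route the paper intends: the paper explicitly defers the proof to the argument of Theorem 2 in Panageas and Piliouras, whose two ingredients are precisely the ones you use — the Center-Stable Manifold Theorem applied at each strict saddle (made applicable by Propositions \ref{gDiff} and \ref{Jaco g neg eig}), followed by the Lindel\"of countable-subcover reduction, the trapping argument for tails of convergent orbits, and preservation of Lebesgue-null sets under the diffeomorphism $g_{\alpha f}$. Your reconstruction of the omitted details is accurate, including the observation that convergence to a strict saddle forces the iterates to eventually enter and remain in one ball $B_{x_i^*}$ of the countable subcover, so that $g_{\alpha f}^K(x_0)\in W^{sc}_{\mathrm{loc}}(x_i^*)$ for some $K$, placing the stable set inside a countable union of null sets.
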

A straightforward corollary of Theorem
\ref{key theorem BCGD} is given below:
\begin{corollary}\label{key corolary BCGD}
Assume that the conditions of Theorem \ref{key theorem BCGD}
 are satisfied and all saddle points of $f$
are strict. Additionally, 
assume
$\lim\limits_kg^k_\text{\tiny$\alpha f$}(x)$
exists
for all $x$
in $\mathbb{R}^n$. Then
 $$
\mathbb{P}_\nu
\left[\lim\limits_kg^k_\text{\tiny$\alpha f$}(x)
=x^*\right]=1,$$
where $g_{\text{\tiny$\alpha f$}}$ is defined by
\eqref{Degi} and $x^*$ is a local minimum .
\end{corollary}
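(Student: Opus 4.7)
The plan is to combine Theorem \ref{key theorem BCGD} with a classification of the critical points of $f$. Under the standing hypothesis that $\lim_k g^k_\text{\tiny$\alpha f$}(x)$ exists for every $x \in \mathbb{R}^n$, my first step is to show that this limit must always be a critical point of $f$. Since $g_\text{\tiny$\alpha f$}$ is continuous (indeed a diffeomorphism, by Proposition \ref{gDiff}), the limit is a fixed point of $g_\text{\tiny$\alpha f$}$; and by inspection of \eqref{Degi} and \eqref{Define g}, the fixed points of $g_\text{\tiny$\alpha f$}$ are precisely the points at which every block gradient $\nabla_s f$ vanishes, i.e., the critical points of $f$. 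This mirrors the reasoning in Proposition \ref{limit=critical point BCGD}, now applied pointwise in $x$ rather than to a single convergent sequence.

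Next I would partition the critical points into local minima and non-minima and argue that, under the hypothesis that all saddle points of $f$ are strict, every non-minimum critical point is a strict saddle. The key observation is that any critical point $x^{\ast}$ which fails to be a local minimum automatically satisfies the saddle-point condition in Definition 2.1: for every neighborhood $U$ of $x^{\ast}$ there exists $x \in U$ with $f(x) < f(x^{\ast})$, and taking $y = x^{\ast}$ yields $f(x) \le f(x^{\ast}) \le f(y)$. By hypothesis such a saddle is strict, so $\lambda_{\min}(\nabla^{2} f(x^{\ast})) < 0$.

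The remainder of the argument is direct. Let
$$E = \bigl\{\, x \in \mathbb{R}^n : \lim_k g^k_\text{\tiny$\alpha f$}(x) \text{ is not a local minimum of } f \,\bigr\}.$$
By the two preceding paragraphs, $E$ is contained in the set of initial conditions that converge to some strict saddle point of $f$, and Theorem \ref{key theorem BCGD} ensures this latter set has Lebesgue measure zero. Since $\nu$ is absolutely continuous with respect to Lebesgue measure, $\mathbb{P}_\nu(E) = 0$, and taking complements yields the claimed conclusion.

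The main obstacle is the classification step rather than any measure-theoretic work: one must confirm that the stated hypotheses genuinely rule out degenerate non-minimum critical points with $\lambda_{\min}(\nabla^2 f(x^{\ast})) = 0$. The observation above plugs this gap (such a degenerate point would itself be a non-strict saddle, contradicting the assumption), after which the corollary reduces to an immediate application of Theorem \ref{key theorem BCGD}, exactly in the spirit of Corollary 12 in \cite{Panageas2016Gradient}.
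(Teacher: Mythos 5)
Your proof is correct and mirrors the argument the paper invokes by reference (Corollary 12 of Panageas--Piliouras): the limit is a fixed point of $g_{\text{\tiny$\alpha f$}}$, hence a critical point of $f$; under the strict-saddle hypothesis every critical point that fails to be a local minimizer is a strict saddle; and Theorem~\ref{key theorem BCGD} then gives a Lebesgue-null (hence $\nu$-null) set of exceptional initializations. One point worth recording for your own reference: your classification step exploits the non-strict $\le$ in Definition 2.1(4) by taking $y = x^{\ast}$, which is fine given the paper's literal wording, but read that way the definition also makes every local minimum a ``saddle point'' (take $x = y = x^{\ast}$), which would render the hypothesis ``all saddle points are strict'' incompatible with the very conclusion you are proving. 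The intended reading is the usual \emph{strict saddle property}: every critical point is either a local minimizer or satisfies $\lambda_{\min}(\nabla^2 f) < 0$. Under the strict-inequality version of the saddle definition, your step does not go through verbatim --- a degenerate local maximum with $\nabla^2 f(x^{\ast}) = 0$ is neither a local minimum nor a saddle, so ``all saddle points are strict'' would not force negative curvature there --- but with the strict saddle property (which is what the final step of your argument actually uses) the proof is complete and follows exactly the referenced route.
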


\section{The BMD method}\label{BMD sec}
In this section, we will extend the above results
to the BMD method in \cite{Beck2003Mirror} and
\cite{Juditsky2014Deterministic}. In other words, the BMD method, based on Bregman's divergence, converges to minimizers as well, almost surely with random initialization.
\subsection{{\bf The BMD method description}}
For clarity of notation,
recall  the vector of decision variables
 $x$ has been assumed to have the following
 partition (see \eqref{x Partion}):
\begin{equation}
x=\left(\begin{array}{c}
x\text{\scriptsize$(1)$}\\
x\text{\scriptsize$(2)$}\\
\vdots\\
x\text{\scriptsize$(p)$}
\end{array}
\right),
\end{equation}
where
$x\text{\scriptsize$(t)$}\in \mathbb{R}^{n_t}$, and $n_1$, $n_2$, $\ldots$,
$n_p$ are $p$
positive integer numbers satisfying
$\sum\limits_{t=1}^{p}n_t=n$.

Correspondingly,  we assume a set of variables $x_k^s$ have the following partition as well:
\begin{equation}\label{genexksBMDA}
\begin{split}
x_k^s&\triangleq
\left(\begin{array}{c}
x_{k}^s\text{\scriptsize$(1)$}\\
x_{k}^s\text{\scriptsize$(2)$}\\
\vdots\\
x_{k}^s\text{\scriptsize$(p)$}
\end{array}
\right),  \hspace*{3mm} s=1, \ldots, p;
\hspace*{1mm}k=0, 1, \ldots,
\end{split}
\end{equation}
where $x_k^s\text{\scriptsize$(t)$}\in
\mathbb{R}^{n_t}$;
 $t, s=1, \ldots, p; \,k=0, 1, \ldots$.

In order to introduce the BMD method,
we assume there are $p$ strictly convex
 and continuously differentiable functions
$\varphi_t:
\,\mathbb{R}^{n_t}\rightarrow \mathbb{R}$, $t=1,2, \ldots, p$.
Furthermore, we make the following assumption.
\begin{assumption}\label{Assumption phis}
$\varphi_t$ is a strongly convex and
twice continuously differentiable function
with parameter $\mu_t>0$, i.e.,  for any
$y\text{\scriptsize$(t)$}$ and $
 x\text{\scriptsize$(t)$}\in\mathbb{R}^{n_t}$,
\begin{equation}\label{Strongly Convex}
\varphi_t\left(y\text{\scriptsize$(t)$}\right)
\geq \varphi\left(x\text{\scriptsize$(t)$}\right)
+\left\langle
\nabla \varphi_t\left(x\text{\scriptsize$(t)$}\right),
 y\text{\scriptsize$(t)$}
 -
 x\text{\scriptsize$(t)$}
  \right\rangle
+\frac{\mu_t}{2}\left\|y\text{\scriptsize$(t)$}
 -
 x\text{\scriptsize$(t)$}\right\|^2,
 \hspace*{3mm} t=1,2, \ldots, p.
\end{equation}
\end{assumption}
The Bregman divergences of the above
strongly convex functions, $B_{\varphi_t}: \mathbb{R}^{n_t}
\times \mathbb{R}^{n_t}\to \mathbb R^{+}$,
are defined as
\begin{equation}\label{Def Bvarphis}
\begin{split}
B_{\varphi_t}\left(x\text{\scriptsize$(t)$},
y\text{\scriptsize$(t)$}\right)
\triangleq \varphi_t
\left(x\text{\scriptsize$(t)$}\right)-
\varphi_t\left(
y\text{\scriptsize$(t)$}
\right)-
\left\langle x\text{\scriptsize$(t)$}
-y\text{\scriptsize$(t)$},
 \nabla \varphi_t\left(
 y\text{\scriptsize$(t)$}
 \right)\right\rangle,
\hspace*{3mm} t=1,2, \ldots, p.
\end{split}
\end{equation}
The Bregman divergence was initially
studied by
\cite{Bregman1967The} and later by many others
(see
\cite{Auslender2006Interior,
Bauschke2006Bregman,
Teboulle1997Convergence}
and references therein).
\begin{remark}
We should mention that Bregman divergence is
generally defined for a continuously differentiable and
strongly convex function which is not necessarily twice
differentiable. Here, the twice differentiability of
$\varphi_s$  seems a necessary and reasonable condition  for
our subsequent analysis because the existence of the
Jacobian of  the iterative mapping
depends directly  on $\nabla^2\varphi_t$
(see \eqref{D MBDA psis}).
\end{remark}
Let
\begin{equation}\label{Def mu}
\mu = \min\left\{\mu_1, \mu_2,\ldots, \mu_p
\right\}.
\end{equation}

Given the above notations,
the detailed description of the block mirror descent
algorithm for problem \eqref{Model} is given below.
\\
\medskip

\noindent \fbox{\begin{minipage}{14.3cm}
\begin{method}[BMD]\label{BMD 4.1}
\end{method}
{\bf  Input:} $\alpha<\frac{\mu}{L}$.\\
{\bf  Initialization:}
$x_0\in \mathbb{R}^{n}.$\\
{\bf General Step} $\left(k=0, 1, \ldots\right)$:
Set $x_k^0=x_k$ and define recursively
for $s=1$, $2$, $\ldots$, $p:$\\
$t=1, 2, \ldots, p,$\\
{\bf If} $t=s$,
\begin{equation}\label{xksBMD}
x_k^s\text{\scriptsize$(t)$}=
\arg\min\limits_{x\text{\scriptsize$(t)$}}
\left\langle x\text{\scriptsize$(t)$},
\nabla_t f\left(x_k^{s-1}\right)
\right\rangle
+
\frac{1}{\alpha}B_{\varphi_t}\left(x\text{\scriptsize$(t)$},
x_k^{s-1}\text{\scriptsize$(t)$}\right).
\end{equation}
\bf{Else}
\begin{equation}\label{xktneqsBMD}
x_{k}^s\text{\scriptsize$(t)$}
=
x_{k}^{s-1}\text{\scriptsize$(t)$}.
\end{equation}
{\bf End}\\
Set $x_{k+1}=x_k^p$.
\end{minipage}}
\\
\\

Note that  $\varphi_s$
is a strongly
convex function. Then it is easily seen from
\eqref{Def Bvarphis} that
$ B_{\varphi_s}\left(x\text{\scriptsize$(s)$},
y\text{\scriptsize$(s)$}\right)$  is a strongly
convex function with respect to
$x\text{\scriptsize$(s)$}$
if $y\text{\scriptsize$(s)$}$
is fixed. Hence, let
$x^s_k\text{\scriptsize$(s)$}$
be the unique solution of problem \eqref{xksBMD}.
The KKT condition implies that
\begin{equation}
{\bf 0}=
\nabla_s f\left(x_k^{s-1}\right)
+\frac{1}{\alpha}
\left(\nabla \varphi_s
\left(x^s_k\text{\scriptsize$(s)$}\right)
-
\nabla \varphi_s
\left(x_k^{s-1}\text{\scriptsize$(s)$}\right)
\right),
\end{equation}
which is equivalent to
 \begin{equation}
 \nabla \varphi_s
\left(x_k^s\text{\scriptsize$(s)$}\right)
=
\nabla \varphi_s
\left(x_k^{s-1}\text{\scriptsize$(s)$}\right)
-\alpha
\nabla_s f\left(x_k^{s-1}\right).
\end{equation}

Combined with the assumption about $\varphi_s$,
 Lemma \ref{Phi Diff} in Appendix leads
immediately to the existence of the inverse
mapping of $\nabla \varphi_s$.
Let $\left[\nabla \varphi_s\right]^{-1}$ denote its inverse.
 Then
$x_k^s\text{\scriptsize$(s)$}$
can be expressed in terms of
 $x_k^{s-1}$ as
\begin{equation}\label{xks update BMD}
x_k^s\text{\scriptsize$(s)$}
=\left[\nabla \varphi_s\right]^{-1}
\left(
\nabla \varphi_s
\left(x_k^{s-1}\text{\scriptsize$(s)$}\right)
-\alpha
\nabla_s f\left(x_k^{s-1}\right)
\right).
\end{equation}
It follows from Eqs. \eqref{genexksBMDA},
\eqref{xksBMD}, and \eqref{xktneqsBMD} that
\begin{equation}\label{MBDA psi}
x_k^s=\left(I_n-U_sU_s^T\right)x_k^{s-1}
+U_s\left[\nabla\varphi_s\right]^{-1}
\left(\nabla\varphi_s
\left(x_k^{s-1}\text{\scriptsize$(s)$}\right)
-\alpha\nabla_sf\left(x_k^{s-1}\right)\right),
\hspace*{2mm}s=1, 2, \ldots, p,
\end{equation}
where $U_s$ is defined by \eqref{DefUi}.

In what follows, we define $\psi_s: \mathbb{R}^{n_s}
\to \mathbb{R}^{n_s}$ as
\begin{equation}\label{MBDA psi1}
\psi_s\left(x\right)\triangleq
\left(I_n-U_sU_s^T\right)x
+U_s\left[\nabla\varphi_s\right]^{-1}
\left(\nabla\varphi_s\left(x\text{\scriptsize$(s)$}\right)
-\alpha\nabla_sf\left(x\right)\right),
\hspace*{2mm}s=1, 2, \ldots, p.
\end{equation}
It is clear that, given $x_k$, the above BMD method generates
 $x_{k+1}$ in the following manner,
  \begin{equation}
x_{k+1}=\psi\left(x_{k}\right),
\end{equation}
where the composite mapping
 \begin{equation}\label{Define psi}
\psi(x)\triangleq \psi_p\circ \psi_{p-1}\circ \cdots\circ \psi_{2}\circ \psi_{1}\left(x\right).
\end{equation}

Additionally, it follows from \eqref{MBDA psi1}
that,  
for each $s=1, 2, \ldots, p,$
the Jacobian of $\psi_s$ is given below:
\begin{equation}\label{D MBDA psis}
\begin{split}
&D\psi_s\left(x\right)
\\
&=\left(I_n-U_sU_s^T\right)
+D\left\{\left[\nabla\varphi_s\right]^{-1}
\left(\nabla\varphi_s\left(x\text{\scriptsize$(s)$}\right)
-\alpha\nabla_sf\left(x\right)\right)\right\}U_s^T
\\
&=\left(I_n-U_sU_s^T\right)
\\
&\hspace*{4mm}+D\left\{\nabla\varphi_s\left(x\text{\scriptsize$(s)$}\right)
-\alpha\nabla_sf\left(x\right)\right\}
\times\left\{
\nabla^2\varphi_s
\left\{\left[\nabla \varphi_s\right]^{-1}
\left(\nabla\varphi_s\left(x\text{\scriptsize$(s)$}\right)
-\alpha\nabla_sf\left(x\right)\right)\right\}
\right\}^{-1}U_s^T
\\
&=\left(I_n-U_sU_s^T\right)
+\\
&\hspace*{4mm}\left\{U_s\nabla^2\varphi_s\left(x\text{\scriptsize$(s)$}\right)
-\alpha\nabla^2f\left(x\right)U_s\right\}
\left\{
\nabla^2\varphi_s
\left\{\left[\nabla \varphi_s\right]^{-1}
\left(\nabla\varphi_s\left(x\text{\scriptsize$(s)$}\right)
-\alpha\nabla_sf\left(x\right)\right)\right\}
\right\}^{-1}U_s^T,\end{split}
\end{equation}
where the first equality is due to chain rule; the second
equality holds because of chain rule and  inverse
function theorem in \cite{Spivak1965Calculus};
the last equality follows from the definition
of $U_s$ and
$
 \nabla^2 f(x)=
 \left(
 \frac{\partial^2 f(x)}{\partial x\text{\scriptsize$(s)$}
\partial x\text{\scriptsize$(t)$}}
\right)_{1\leq s,\, t\leq p}.
$

By using the chain rule, we obtain the following Jacobian
 of the mapping $\psi$, i.e.,
  \begin{equation}\label{Chain Rule psi}
D\psi\left(x\right)
=
D\psi_1\left(y_1\right)
\times
Dg_2\left(y_2\right)
\times \cdots \cdot \times
Dg_{p-1}\left(y_{p-1}\right)
\times
Dg_{p}\left(y_p\right),
\end{equation}
 where $y_1=x$, and $y_s=\psi_{s-1}(y_{s-1})$,
 $s=2, \ldots, p$.
\subsection{\bf The iterative mapping
$\psi$ of BMD is a diffeomorphism}
In this subsection, we first present the following
Lemma \ref{LpsiDiff} which shows that
$\psi_s, s=1, \ldots, p,$
are diffeomorphisms. Based on this lemma, then we
further prove that $\psi$  is
a diffeomorphism as well.

 \begin{lemma}\label{LpsiDiff}
If the step size $\alpha<\frac{\mu}{L}$,  then the
mappings $\psi_s$ defined
by \eqref{MBDA psi1}, $s=1, \ldots, p,$
 are diffeomorphisms.
\end{lemma}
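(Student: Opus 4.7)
The plan is to show, for each fixed $s$, that $\psi_s$ is a $C^1$ bijection of $\mathbb{R}^n$ with an everywhere-invertible Jacobian, so that the inverse function theorem promotes it to a global diffeomorphism. The structure of the proof parallels that of Lemma \ref{g12Diff} but has to accommodate the extra layer coming from $\nabla\varphi_s$ and its inverse.

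The first observation from \eqref{MBDA psi1} is that $\psi_s$ acts as the identity on every block $t\neq s$ and only changes the $s$-th block through
$$\phi_s(x)\;=\;[\nabla\varphi_s]^{-1}\!\bigl(\nabla\varphi_s(x\text{\scriptsize$(s)$}) - \alpha\nabla_s f(x)\bigr).$$
So bijectivity of $\psi_s$ on $\mathbb{R}^n$ reduces to the following: given an arbitrary target $y\in\mathbb{R}^n$, set $x\text{\scriptsize$(t)$}=y\text{\scriptsize$(t)$}$ for $t\neq s$, and show that the equation $\nabla\varphi_s(z)-\alpha\nabla_s f(y\text{\scriptsize$(1)$},\ldots,z,\ldots,y\text{\scriptsize$(p)$})=\nabla\varphi_s(y\text{\scriptsize$(s)$})$ has a unique solution $z\in\mathbb{R}^{n_s}$. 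I would introduce $F_s(z):=\nabla\varphi_s(z)-\alpha\nabla_s f(\ldots,z,\ldots)$ (other blocks frozen) and prove that $F_s$ is a bijection of $\mathbb{R}^{n_s}$.

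The key analytic step is strong monotonicity of $F_s$ under $\alpha<\mu/L$. For $u,v\in\mathbb{R}^{n_s}$, strong convexity of $\varphi_s$ (Assumption \ref{Assumption phis}, parameter $\mu_s\geq\mu$) gives $\langle\nabla\varphi_s(u)-\nabla\varphi_s(v),u-v\rangle\geq\mu\|u-v\|^2$. Since the other blocks are fixed, the Lipschitz hypothesis \eqref{Lipschitz} gives $\|\nabla_s f(\ldots,u,\ldots)-\nabla_s f(\ldots,v,\ldots)\|\leq L\|u-v\|$, hence by Cauchy--Schwarz
$$\langle F_s(u)-F_s(v),u-v\rangle \;\geq\; (\mu-\alpha L)\|u-v\|^2 \;>\;0.$$
Strong monotonicity yields injectivity and coercivity ($\|F_s(z)\|\geq(\mu-\alpha L)\|z\|-\|F_s(0)\|$), so combined with continuity of $F_s$ the Browder--Minty theorem (or a standard degree/coercivity argument) gives surjectivity. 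Hence $F_s$, and therefore $\psi_s$, is a bijection of $\mathbb{R}^n$.

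For the diffeomorphism conclusion I still need $D\psi_s(x)$ invertible at every $x$. The identity action on non-$s$ blocks gives a block-triangular Jacobian whose determinant is $\det(\partial\phi_s/\partial x\text{\scriptsize$(s)$})$. Using \eqref{D MBDA psis} and the inverse function theorem applied to $\nabla\varphi_s$, this quantity factors as $\{\nabla^2\varphi_s(\phi_s(x))\}^{-1}\bigl(\nabla^2\varphi_s(x\text{\scriptsize$(s)$})-\alpha[\nabla^2 f(x)]_{ss}\bigr)$. The first factor is invertible by strong convexity of $\varphi_s$, and the bound $\nabla^2\varphi_s(x\text{\scriptsize$(s)$})\succeq\mu I$ together with $\|[\nabla^2 f(x)]_{ss}\|\leq L$ gives $\nabla^2\varphi_s(x\text{\scriptsize$(s)$})-\alpha[\nabla^2 f(x)]_{ss}\succeq(\mu-\alpha L)I\succ 0$. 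So $D\psi_s(x)$ is invertible everywhere, and the inverse function theorem upgrades the continuous bijection $\psi_s$ to a $C^1$ diffeomorphism. I expect the main obstacle to be the surjectivity argument for $F_s$; everything else is a block-structure calculation driven by the same inequality $\alpha L<\mu$ that underlies strong monotonicity.
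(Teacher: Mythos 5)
Your argument is correct and follows essentially the same outline as the paper's: reduce to the single active block, use the strong-convexity constant $\mu$ against the Lipschitz constant $L$ with $\alpha L<\mu$ to get a margin, and then upgrade to a diffeomorphism via the inverse function theorem once the Jacobian is shown invertible. The one place you diverge in flavor is surjectivity: the paper proves it constructively by exhibiting the preimage $x_y\text{\scriptsize$(1)$}$ as the unique minimizer of the strongly convex problem $\min_{x\text{\scriptsize$(1)$}} B_{\varphi_1}\left(x\text{\scriptsize$(1)$},y\text{\scriptsize$(1)$}\right)-\alpha\bar f\left(x\text{\scriptsize$(1)$};y_{_-}\text{\scriptsize$(1)$}\right)$ and reading off the KKT condition, whereas you note that $F_s$ is strongly monotone and invoke Browder--Minty (or coercivity plus degree). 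Since $F_s$ is itself the gradient of the strongly convex function $z\mapsto\varphi_s(z)-\alpha f(\ldots,z,\ldots)$, the two arguments are equivalent in substance; the paper's version is more self-contained (it avoids citing monotone operator theory and produces the inverse explicitly), while yours is shorter and emphasizes where the hypothesis $\alpha<\mu/L$ is actually used, namely to make $F_s$ $(\mu-\alpha L)$-strongly monotone. Your Jacobian computation and the positive-definiteness bound $\nabla^2\varphi_s(x\text{\scriptsize$(s)$})-\alpha[\nabla^2 f(x)]_{ss}\succeq(\mu-\alpha L)I\succ 0$ match steps (c)--(d) of the paper's proof.
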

\begin{proof}
The proof is lengthy and has been relegated to the Appendix.
\hfill\end{proof}
\begin{proposition}\label{psiDiff}
The mapping
  $\psi$ defined by \eqref{Define psi}
with step size
$\alpha<\frac{\mu}{L}$ is a diffeomorphism.
\end{proposition}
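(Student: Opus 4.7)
The plan is to mirror the argument already used for Proposition \ref{gDiff} in the BCGD section, since the structure is identical: we have a composition of one-block update maps, and each factor has been shown separately to be a diffeomorphism. Specifically, by \eqref{Define psi}, the iterative mapping decomposes as $\psi = \psi_p \circ \psi_{p-1} \circ \cdots \circ \psi_1$, a $p$-fold composition of the single-block mappings $\psi_s$ from \eqref{MBDA psi1}.

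The first step is to invoke Lemma \ref{LpsiDiff}, which already asserts that for $\alpha < \mu/L$ each $\psi_s$ is a diffeomorphism from $\mathbb{R}^n$ to $\mathbb{R}^n$. Because $\mu = \min\{\mu_1,\ldots,\mu_p\}$ by \eqref{Def mu}, the single step-size bound $\alpha < \mu/L$ simultaneously satisfies the hypothesis of Lemma \ref{LpsiDiff} for every index $s \in \{1, \ldots, p\}$, so the lemma applies uniformly. The second step is to pass from individual factors to the composition by appealing to the standard fact, quoted earlier as Proposition 2.15 in \cite{Lee2013}, that the composition of two diffeomorphisms is again a diffeomorphism. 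Iterating this $p-1$ times yields that $\psi_p \circ \psi_{p-1} \circ \cdots \circ \psi_1$ is a diffeomorphism, which is exactly the claim.

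No genuine obstacle is anticipated at this stage; all of the analytic difficulty (global invertibility of the inverse gradient $[\nabla \varphi_s]^{-1}$, well-definedness of the composition with the gradient step, and non-singularity of the Jacobian \eqref{D MBDA psis}) has been absorbed into Lemma \ref{LpsiDiff}. The proposition itself is a short corollary, essentially a one-line chain-rule/closure-under-composition argument, and the only thing worth remarking is that the uniform step-size condition $\alpha < \mu/L$ cleanly subsumes the per-block conditions $\alpha < \mu_s/L$.
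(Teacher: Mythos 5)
Your proposal is correct and follows essentially the same route as the paper: invoke Lemma \ref{LpsiDiff} to obtain that each $\psi_s$ is a diffeomorphism under $\alpha<\mu/L$, then apply Proposition 2.15 of \cite{Lee2013} (closure of diffeomorphisms under composition) iteratively to conclude that $\psi=\psi_p\circ\cdots\circ\psi_1$ is a diffeomorphism. Nothing further is needed.
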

\begin{proof}
It follows from Proposition 2.15 in \cite{Lee2013} that the
composition of two diffeomorphisms is also a diffeomorphism.
Using this fact and previous Lemma \ref{LpsiDiff},
the proof is completed.
\hfill\end{proof}
\subsection{{\bf Eigenvalue analysis of the Jacobian of
$\psi$ at a strict saddle point}}
\label{SubSecJacobia}

In this subsection, we will analyze the eigenvalues of the Jacobian of $\psi$ at a
strict saddle point and show it has at least one eigenvalue with magnitude greater
than one, which is a crucial part.

Suppose that $x^*\in\mathbb{R}^n$ is a strict saddle point.
Then $\nabla f(x^*)={\bf 0}$ and
$\psi_s(x^*)=x^*$, $s=1, 2, \ldots, p$.
Hence,
\begin{equation}\label{D MBDA psis 1}
\begin{split}
&D\psi_s\left(x^*\right)\\
&=\left(I_n-U_sU_s^T\right)
+
\\
&\left\{U_s\nabla^2\varphi_s\left(x^*\text{\scriptsize$(s)$}\right)
-\alpha\nabla^2f\left(x^*\right)U_s\right\}
\left\{
\nabla^2\varphi_s
\left\{\left[\nabla \varphi_s\right]^{-1}
\left(\nabla\varphi_s\left(x^*\text{\scriptsize$(s)$}\right)
-\alpha\nabla_sf\left(x^*\right)\right)\right\}
\right\}^{-1}U_s^T\\
&=\left(I_n-U_sU_s^T\right)
\\
&\hspace*{4mm}+\left\{U_s\nabla^2\varphi_s\left(x^*\text{\scriptsize$(s)$}\right)
-\alpha\nabla^2f\left(x^*\right)U_s\right\}
\left\{
\nabla^2\varphi_s
\left\{\left[\nabla \varphi_s\right]^{-1}
\left(\nabla\varphi_s\left(x^*\text{\scriptsize$(s)$}\right)
\right)\right\}
\right\}^{-1}U_s^T\\
&=\left(I_n-U_sU_s^T\right)
+
\left\{U_s\nabla^2\varphi_s\left(x^*\text{\scriptsize$(s)$}\right)
-\alpha\nabla^2f\left(x^*\right)U_s\right\}
\left\{
\nabla^2\varphi_s
\left(x^*\text{\scriptsize$(s)$}\right)
\right\}^{-1}U_s^T\\
&=I_n
-\alpha\nabla^2f\left(x^*\right)U_s
\left\{
\nabla^2\varphi_s
\left(x^*\text{\scriptsize$(s)$}\right)
\right\}^{-1}U_s^T,
\end{split}
\end{equation}
where the second equality is due to
$\nabla f(x^*)={\bf 0}$; the third equality thanks to
$\left[\nabla \varphi_s\right]^{-1}
\left(\nabla\varphi_s\left(x^*\text{\scriptsize$(s)$}\right)
\right)=x^*\text{\scriptsize$(s)$}$.

Furthermore, since the eigenvalues of $D\psi\left(x^*\right)$
are the same as those of
$\left\{D\psi\left(x^*\right)\right\}^T$,
it suffices to analyze the eigenvalues of
$\left\{D\psi\left(x^*\right)\right\}^T$.
It follows from
\eqref{Chain Rule psi} that
\begin{equation}\label{BMDpsix*T}
\begin{split}
&\left\{D\psi\left(x^*\right)\right\}^T
\\
&=\left\{D\psi_1(x^*)
\times
D\psi_{2}\left(x^*\right)
\times \cdots  \times
D\psi_{p-1}(x^*)\times
D\psi_{p}
\left(x^*\right)
\right\}^T\\
&=\left\{D\psi_p(x^*)\right\}^T
\times
\left\{D\psi_{p-1}\left(x^*\right)\right\}^T
\times \cdots  \times
\left\{D\psi_{2}(x^*)\right\}^T\times
\left\{D\psi_{1}
\left(x^*\right)
\right\}^T\\
&=\prod\limits_{s=p}^1
\left\{I_n
-\alpha\nabla^2f\left(x^*\right)U_s
\left\{
\nabla^2\varphi_s
\left(x^*\text{\scriptsize$(s)$}\right)
\right\}^{-1}U_s^T\right\}^T\\
&=
\prod\limits_{s=p}^1
\left\{I_n
-\alpha U_s
\left\{
\nabla^2\varphi_s
\left(x^*\text{\scriptsize$(s)$}\right)
\right\}^{-1}U_s^T\nabla^2f\left(x^*\right)
\right\}\\
&=
\prod\limits_{s=p}^1
\left\{I_n
-\alpha U_s
\left\{
\nabla^2\varphi_s
\left(x^*\text{\scriptsize$(s)$}\right)
\right\}^{-1}A_s
\right\},
\end{split}
\end{equation}
where the third equality uses \eqref{D MBDA psis 1}
and the last equality is due to definition
\eqref{Def A_i} of $A_s$.

Similar to the case of BCGD, we furthermore define
 \begin{equation}\label{Def WTG}
\widetilde{G}\triangleq\frac{1}{\alpha}
\left[
I_n-
\left\{D\psi(x^*)\right\}^T
\right],
\end{equation}
or equivalently,
 \begin{equation}\label{DGT WTG}
 \left(D\psi(x^*)\right)^T=
 I_n-\alpha \widetilde{G}.
\end{equation}
The above relation \eqref{DGT WTG} clearly means that
\begin{equation}\label{Eigrela WTG}
\lambda
 \in\mbox{eig}
 \left(\widetilde{G}\right)\Leftrightarrow
 1-\alpha \lambda \in
  \mbox{eig}\left(\left(D\psi(x^*)\right)^T\right).
\end{equation}
In order to achieve a simple form of  $\widetilde{G}$,
we first introduce the following notations.
Define
\begin{equation}\label{Def Psi}
\Psi\triangleq\mbox{Diag}
\left(
\nabla^2\varphi_1
\left(x^*\text{\scriptsize$(1)$}\right)
,
\nabla^2\varphi_2
\left(x^*\text{\scriptsize$(2)$}\right),
 \ldots,
\nabla^2\varphi_p
\left(x^*\text{\scriptsize$(p)$}\right)
\right).
\end{equation}
Then
\begin{equation}\label{Def Psiinv}
\Psi^{-1}=\mbox{Diag}
\left(\left\{
\nabla^2\varphi_1
\left(x^*\text{\scriptsize$(1)$}\right)
\right\}^{-1},
\left\{
\nabla^2\varphi_2
\left(x^*\text{\scriptsize$(2)$}\right)
\right\}^{-1},
 \ldots,
 \left\{
\nabla^2\varphi_p
\left(x^*\text{\scriptsize$(p)$}\right)
\right\}^{-1}
\right).
\end{equation}
Note that  $A$, $A_{st}$ and
$A_s$ have been defined by \eqref{Def A}, \eqref{Def A_ij}
and \eqref{Def A_i}, respectively. Then
\begin{equation}
A=\left(A_{st}\right)_{1\leq s, t \leq p}=
\left(\begin{array}{c}
A_1\\
A_2\\
\vdots\\
A_p
\end{array}\right),
\end{equation}
where
$A_{st}\in\mathbb{R}^{n_s\times n_t}$
and $A_s\in\mathbb{R}^{n_s\times n}$
 denote  the $(s, t)$-th block and
 the $s$-th block-row of $A$, respectively.
We further define
\begin{equation}\label{Def T}
\begin{split}
T
&\triangleq
\Psi^{-1} A=
\left(\left\{
\nabla^2\varphi_1
\left(x^*\text{\scriptsize$(s)$}\right)
\right\}^{-1} A_{st}\right)_{1\leq s, t\leq p}
=\left(\begin{array}{c}
\left\{
\nabla^2\varphi_1
\left(x^*\text{\scriptsize$(1)$}\right)
\right\}^{-1} A_1\\
\left\{
\nabla^2\varphi_2
\left(x^*\text{\scriptsize$(2)$}\right)
\right\}^{-1} A_2\\
\vdots\\
\left\{
\nabla^2\varphi_p
\left(x^*\text{\scriptsize$(p)$}\right)
\right\}^{-1} A_p
\end{array}\right)=
\left(\begin{array}{c}
T_1\\
T_2\\
\vdots\\
T_p
\end{array}\right)
\end{split},
\end{equation}
where
$\left\{
\nabla^2\varphi_s
\left(x^*\text{\scriptsize$(s)$}\right)
\right\}^{-1}A_{st}\in\mathbb{R}^{n_s\times n_t}$
and $\left\{
\nabla^2\varphi_s
\left(x^*\text{\scriptsize$(s)$}\right)
\right\}^{-1}A_s\in\mathbb{R}^{n_s\times n}$
 denote  the $(s, t)$-th block and
the $s$-th block-row of $T$, respectively.
Given the above notations,
we further denote the strictly block lower triangular
matrix based on $T$ as
 \begin{equation}\label{Def check T}
 \check{T}
 \triangleq
 \left(\check{T}_{st}\right)_{1\leq s,\,t\leq p}
 \end{equation}
 with $p\times p$ blocks and its $(s, t)$-th
 block is given by
 \begin{equation}\label{CheckTst}
\begin{split}
\check{T}_{st}
&=
\begin{cases}
\begin{array}{lr}
    T_{st},  & s>t,\\
    \mathbf{0},& s\leq t
  \end{array}
  \end{cases}
  \\
  &=\begin{cases}
\begin{array}{lr}
    \left\{
\nabla^2\varphi_s
\left(x^*\text{\scriptsize$(s)$}\right)
\right\}^{-1}
A_{st},  & s>t,\\
    \mathbf{0},& s\leq t.
  \end{array}
  \end{cases}
  \end{split}
\end{equation}
Substituting $T_s$ (see its definition \eqref{Def T})
into \eqref{BMDpsix*T}, we deduce that
\begin{equation}\label{Dpsix*T}
\begin{split}
\left\{D\psi\left(x^*\right)\right\}^T
=\prod\limits_{s=p}^1
\left\{I_n
-\alpha U_s
T_s
\right\}.
\end{split}
\end{equation}

The following Lemma shows that $\widetilde{G}$
still has a form similar to $G$ defined by \eqref{Def G}
. In fact, it just replaces
$A$ and $\check{A}$ in \eqref{Def G}
by $T$ and $\check{T}$, respectively.
\begin{lemma}\label{Property WG}
Let $x^*\in\mathbb{R}^n$ be a strict saddle
point. Assume that $\widetilde{G}$, $T$ and $\check{T}$ are
defined by \eqref{Def WTG}, \eqref{Def T} and
\eqref{Def check T}, respectively. Then
\begin{equation}\label{WideG Form}
\widetilde{G}=\left(I_n+ \alpha
\check{T}\right)^{-1}T.
\end{equation}
\end{lemma}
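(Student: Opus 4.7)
The plan is to mirror the proof of Lemma \ref{Property G1} exactly, replacing the pair $(A, \check{A})$ with $(T, \check{T})$ throughout. The structural parallel between \eqref{Dgx*T1} for BCGD and \eqref{Dpsix*T} for BMD makes this feasible: both Jacobian transposes factor as an ordered product of matrices of the form $I_n - \alpha U_s \cdot (\text{row}_s)$, and the chain of arguments in Section \ref{BCGDSubSecJacobia} only used this shape. Accordingly, the only genuinely new work is to establish the BMD analog of Lemma \ref{Property G}, namely
\begin{equation*}
U_s^T \widetilde{G} = T_s - \alpha \sum_{t=1}^{s-1} T_{st}\, U_t^T \widetilde{G}, \qquad s=1, \ldots, p,
\end{equation*}
where $T_{st} = \bigl\{\nabla^2\varphi_s(x^*\text{\scriptsize$(s)$})\bigr\}^{-1} A_{st}$ is the $(s,t)$-block of $T$.

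To prove this recurrence, I would start from \eqref{Def WTG} and \eqref{Dpsix*T} and exploit the orthogonality relations $U_s^T U_t = I_{n_s}\delta_{st}$ together with $T_s U_t = T_{st}$ (the latter following from $T = \Psi^{-1}A$ and $A_s U_t = A_{st}$). Writing $M_r \triangleq \prod_{t=r}^{1}(I_n - \alpha U_t T_t)$ with $M_0 = I_n$, each factor $(I_n - \alpha U_t T_t)$ with $t > s$ satisfies $U_s^T(I_n - \alpha U_t T_t) = U_s^T$, so propagating $U_s^T$ through $M_p$ collapses the leftmost $p-s$ factors and gives $U_s^T M_p = (U_s^T - \alpha T_s)\, M_{s-1}$. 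This already yields $U_s^T \widetilde{G} = T_s\, M_{s-1}$. Expanding $M_{s-1}$ one factor at a time from the right, using $T_s U_r = T_{sr}$ together with the already-established identity $T_r M_{r-1} = U_r^T \widetilde{G}$ for $r < s$, peels off exactly the terms $-\alpha T_{sr}\, U_r^T \widetilde{G}$ appearing in the sum.

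With the recurrence in hand, the remainder of the proof is a verbatim rerun of Lemma \ref{Property G1}. Partitioning $\widetilde{G}$ into block-rows $\widetilde{G}_s = U_s^T \widetilde{G}$ and recognizing $\sum_{t=1}^{s-1} T_{st}\, \widetilde{G}_t = \check{T}_s\, \widetilde{G}$ by definition \eqref{CheckTst}, one assembles the $p$ identities into $\widetilde{G} = T - \alpha \check{T}\,\widetilde{G}$, equivalently $(I_n + \alpha \check{T})\,\widetilde{G} = T$. Because $\check{T}$ is block strictly lower triangular, $I_n + \alpha \check{T}$ is invertible, and premultiplying by its inverse yields \eqref{WideG Form}. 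The main obstacle is the recurrence itself: the factors in \eqref{Dpsix*T} do not commute, so propagating $U_s^T$ through them requires careful index tracking. Once the mixed terms $U_s^T U_t = 0$ (for $s \neq t$) are exploited to kill the leftmost factors, what remains is routine linear algebra.
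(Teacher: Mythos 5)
Your proposal is correct and matches the paper's intended approach: the paper's own proof simply declares that the identifications $\widetilde{G}\sim G$, $T\sim A$, $\check{T}\sim\check{A}$ let one re-run the proof of Lemma~\ref{Property G1}, and implicitly this requires also re-running the proof of Lemma~\ref{Property G} to obtain the recurrence $U_s^T\widetilde{G}=T_s-\alpha\sum_{t=1}^{s-1}T_{st}U_t^T\widetilde{G}$. You carry out exactly that. The only stylistic difference is in how the recurrence is derived: the paper's appendix proof of Lemma~\ref{Property G} introduces the auxiliary quantities $G[s]$ and shows $U_s^TG[q]=U_s^TG[s]$ for $s<q$, then manipulates $U_s^T(\alpha G)=U_s^T[I_n-(I_n-\alpha U_sA_s)(I_n-\alpha G[s-1])]$; you instead first establish $U_s^T\widetilde{G}=T_sM_{s-1}$ for every $s$ directly by pushing $U_s^T$ through the factor product, and then telescope $T_sM_{s-1}$ by repeatedly substituting $T_tM_{t-1}=U_t^T\widetilde{G}$ for the inner indices $t<s$. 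Both use the same mechanism (the orthogonality $U_s^TU_t=0$ for $s\neq t$ to collapse commuting factors), and your version is arguably a bit cleaner, so this is a streamlined rather than genuinely different route. One small wording issue: ``expanding $M_{s-1}$ one factor at a time from the right'' is imprecise---the factor actually peeled at each step is $(I_n-\alpha U_{s-1}T_{s-1})$, the leftmost factor of $M_{s-1}$ (equivalently, the one adjacent to $T_s$); peeling the rightmost factor $(I_n-\alpha U_1T_1)$ does not work because $(I_n-\alpha U_tT_t)U_1\neq U_1$ for $t>1$. The underlying computation you describe is nevertheless the correct one.
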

\begin{proof}
With the identifications \eqref{Def WTG} $\sim$
\eqref{Def G},  \eqref{Dpsix*T} $\sim$
\eqref{Dgx*T1}, \eqref{Def T} $\sim$ \eqref{Def A} and
\eqref{Def check T} $\sim$ \eqref{Def bar A}, we
are able to apply the same arguments as in the
proof of Lemma \ref{Property G1} to
obtain \eqref{WideG Form}. Since the proof follows
a similar pattern, it is therefore omitted.
\hfill\end{proof}

Based on the above Lemma \ref{Property WG},
along with definitions \eqref{Def Psiinv}, \eqref{Def T}, \eqref{Def check T},
\eqref{Def A} and \eqref{Def bar A}, we further
have
\begin{equation}\label{WideG Trans}
\begin{split}
\widetilde{G}&=\left(I_n+ \alpha
\Psi^{-1}\check{A}\right)^{-1}\Psi^{-1} A\\
&=\left(\Psi+ \alpha
\check{A}\right)^{-1} A\\
&=\left[\Psi^{\frac{1}{2}}
\left(I_n+ \alpha
\Psi^{-\frac{1}{2}}
\check{A}
\Psi^{-\frac{1}{2}}
\right)
\Psi^{\frac{1}{2}}
\right]^{-1} A\\
&=\Psi^{-\frac{1}{2}}
\left(I_n+ \alpha
\Psi^{-\frac{1}{2}}
\check{A}
\Psi^{-\frac{1}{2}}
\right)
^{-1}\Psi^{-\frac{1}{2}} A\\
&=\left(\Psi^{-\frac{1}{2}}\right)^T
\left[I_n+ \alpha
\Psi^{-\frac{1}{2}}
\check{A}
\left(\Psi^{-\frac{1}{2}}\right)^T
\right]
^{-1}\Psi^{-\frac{1}{2}} A,
\end{split}
\end{equation}
where the second equality holds because $\Psi^{-\frac{1}{2}}$
denotes the unique symmetric, positive definite square
root matrix of the symmetric, positive definite matrix
$\Psi^{-1}$.

Switching the order of the products \eqref{WideG Trans}
by moving the first component to the last, we get a new
matrix
\begin{equation}\label{Def overline G}
\overline{G}\triangleq
\left[
I_n+\alpha\Psi^{-\frac{1}{2}}
\check{A}
\left(\Psi^{-\frac{1}{2}}\right)^T
\right]^{-1}
\Psi^{-\frac{1}{2}}
 A
 \left(\Psi^{-\frac{1}{2}}\right)^T.
\end{equation}
Note that $\mbox{eig}\left(XY\right)
=\mbox{eig}\left(YX\right)$
for any two square matrices, thus
\begin{equation}\label{Eig WideG}
\mbox{eig}\left(\widetilde{G}\right)=
\mbox{eig}\left(\overline{G}\right),
\end{equation}
which shows that it suffices to analyze  eigenvalues of
$\overline{G}$.

Before presenting Lemma \ref{Eig wideG in Omega}, the grand result of this subsection,
 we first provide
two special properties of $\Psi^{-\frac{1}{2}}A
\left(\Psi^{-\frac{1}{2}}\right)^T$ which will be
used in the subsequent analysis.
\begin{lemma}\label{PsicheckA}
Let $x^*\in\mathbb{R}^n$ be a strict
saddle point. Assume that $A$
and $\Psi$ are define by \eqref{Def A}
and \eqref{Def Psi}, respectively. Then,
\begin{enumerate}[(i)]
 \item $\Psi^{-\frac{1}{2}}A
 \left(\Psi^{-\frac{1}{2}}\right)^T$
 has at least one negative eigenvalue.
 \item The spectral radius of the symmetric matrix
 $\Psi^{-\frac{1}{2}}
 A\left(\Psi^{-\frac{1}{2}}\right)^T
 $ is upper bounded by $\frac{\mu}{L}$.
 \end{enumerate}
\end{lemma}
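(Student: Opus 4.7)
The plan is to treat the two parts independently, each by a short application of standard matrix-theoretic facts once the symmetric square root of $\Psi$ is in play. I would rely on two structural observations: $\Psi$ is symmetric positive definite (from Assumption \ref{Assumption phis}) and $A=\nabla^2 f(x^*)$ is symmetric (from Assumption \ref{Assumption f}), so the matrix $\Psi^{-1/2}A\,(\Psi^{-1/2})^T$ is a symmetric congruence of $A$.

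For part (i) the idea is to apply Sylvester's law of inertia. Each block $\nabla^2\varphi_s(x^*\text{\scriptsize$(s)$})$ is symmetric and $\succeq \mu_s I_{n_s}$, so the block-diagonal matrix $\Psi$ in \eqref{Def Psi} is symmetric positive definite and admits a unique symmetric positive-definite square root $\Psi^{1/2}$ whose inverse $\Psi^{-1/2}$ is itself symmetric and nonsingular. In particular $(\Psi^{-1/2})^T=\Psi^{-1/2}$, so $\Psi^{-1/2}A(\Psi^{-1/2})^T=\Psi^{-1/2}A\,\Psi^{-1/2}$ is exactly the congruence of $A$ by the nonsingular $\Psi^{-1/2}$. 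Since $x^*$ is a strict saddle, $\lambda_{\min}(A)<0$, so $A$ has at least one negative eigenvalue; Sylvester's law preserves the number of negative eigenvalues under a nonsingular congruence, which gives the claim.

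For part (ii) the matrix in question is symmetric, so its spectral radius coincides with its spectral norm, and I would bound the latter by submultiplicativity:
\begin{equation*}
\rho\!\left(\Psi^{-1/2}A\,(\Psi^{-1/2})^T\right)
=\left\|\Psi^{-1/2}A\,\Psi^{-1/2}\right\|
\le \left\|\Psi^{-1/2}\right\|^{2}\|A\|.
\end{equation*}
The strong-convexity parameter gives $\Psi\succeq\mu I_n$ through \eqref{Def mu}, so $\|\Psi^{-1/2}\|^{2}=\|\Psi^{-1}\|=1/\lambda_{\min}(\Psi)\le 1/\mu$. The Lipschitz-gradient hypothesis gives $\|A\|=\|\nabla^2 f(x^*)\|\le L$, via the bound $\rho(\nabla^2 f(x^*))\le L$ already invoked from \cite{Panageas2016Gradient} in Section~\ref{BCGDSubSecJacobia}. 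Assembling these two ingredients establishes the stated upper bound $\mu/L$ on $\rho\!\left(\Psi^{-1/2}A\,(\Psi^{-1/2})^T\right)$.

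There is no conceptual obstacle; the proof is really an assembly of standard ingredients. The only point worth flagging is that one must use the \emph{symmetric} square root of $\Psi$, so that the matrix $\Psi^{-1/2}A(\Psi^{-1/2})^T$ appearing in the statement genuinely agrees with $\Psi^{-1/2}A\,\Psi^{-1/2}$ (hence is symmetric, so that the spectral-radius/spectral-norm identity applies in (ii), and so that Sylvester's law of inertia applies verbatim in (i)). Once this symmetry is secured both parts reduce to a single-line estimate.
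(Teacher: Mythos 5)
Your argument is essentially the paper's own proof: part (i) is verbatim the congruence/Sylvester inertia argument, and part (ii) uses exactly the same two ingredients ($\Psi\succeq\mu I_n$ from strong convexity of the $\varphi_t$, and $\|A\|=\rho(\nabla^2 f(x^*))\le L$ from the Lipschitz gradient via Lemma 7 of Panageas et al.), packaged through norm submultiplicativity $\|\Psi^{-1/2}A\Psi^{-1/2}\|\le\|\Psi^{-1/2}\|^2\|A\|$ instead of the paper's Loewner-order manipulation $-\tfrac{L}{\mu}\Psi\preceq A\preceq\tfrac{L}{\mu}\Psi$ followed by congruence. Both routes are equivalent in substance.

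One point needs fixing, though. Your computation yields $\rho\bigl(\Psi^{-1/2}A(\Psi^{-1/2})^T\bigr)\le \tfrac{1}{\mu}\cdot L=\tfrac{L}{\mu}$, yet your closing sentence asserts this "establishes the stated upper bound $\mu/L$." That is a non sequitur: $\le L/\mu$ does not imply $\le \mu/L$ unless $\mu\ge L$, which is not assumed, and indeed the bound $\mu/L$ is false in general (take $\varphi_t=\tfrac{\mu}{2}\|\cdot\|^2$ and $\rho(A)$ close to $L$ with $\mu<L$). The discrepancy originates in the lemma statement itself, which reads "$\mu/L$" while the paper's own proof establishes $L/\mu$; the latter is what the subsequent argument actually uses, since Lemma \ref{Eig wideG in Omega} needs $\alpha\in(0,\mu/L)\subseteq\bigl(0,1/\rho\bigl(\Psi^{-1/2}A(\Psi^{-1/2})^T\bigr)\bigr)$, i.e.\ precisely $\rho\le L/\mu$. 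So your derivation is the correct one; simply state the conclusion as $\rho\le L/\mu$ rather than echoing the typo.
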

\begin{proof}
(i) Since $\Psi^{-\frac{1}{2}}A
 \left(\Psi^{-\frac{1}{2}}\right)^T$
 is a  congruent transformation of $A$,
 they have same index of inertia.
In addition, $x^*$ is a strict saddle point implies that
$A=\nabla^2 f(x^*)$ (see its definition \eqref{Def A})
 has at least one negative eigenvalue.
Hence, $\Psi^{-\frac{1}{2}}A
 \left(\Psi^{-\frac{1}{2}}\right)^T$
 has at least one negative eigenvalue as well.
\\
(ii) In what follows, we will prove that
 \begin{equation}
 \rho\left(\Psi^{-\frac{1}{2}}
 A
 \left(\Psi^{-\frac{1}{2}}\right)^T\right)
 \leq \frac{L}{\mu}.
 \end{equation}
Obviously, it suffices to prove that
$-\frac{L}{\mu} I_n\preceq
\Psi^{-\frac{1}{2}}
 A
 \left(\Psi^{-\frac{1}{2}}\right)^T
 \preceq \frac{L}{\mu}I_n $
holds. It  follows easily from \eqref{Lipschitz}
and  Lemma 7 in \cite{Panageas2016Gradient} that
\begin{equation}\label{-LAL}
-L I_n\preceq \nabla^2 f(x^*)=A \preceq L I_n.
\end{equation}
In addition, Eqs. \eqref{Strongly Convex} means that
\begin{equation}
\nabla^2\varphi_s
\left(x^*\text{\scriptsize$(s)$}\right)
\succeq \mu_s I_{n_s}\succeq \mu I_{n_s}, \hspace*{3mm} s=1 ,2, \ldots, p,
\end{equation}
where the last inequality is due to the definition
of $\mu$ (see it definition in \eqref{Def mu}).
Consequently, combined with the definition
\eqref{Def Psi}, the above inequalities imply that
\begin{equation}
\frac{1}{\mu}\Psi\succeq I_n,
\end{equation}
which, combined with \eqref{-LAL} further implies
\begin{equation}\label{-LmuALmu}
-\frac{L}{\mu} \Psi I_n\preceq
 A
 \preceq \frac{L}{\mu} \Psi I_n.
\end{equation}
Multiplying the left-hand side of the
above inequalities  by $\Psi^{-\frac{1}{2}}$
 and the right-hand side by
 $\left(\Psi^{-\frac{1}{2}}\right)^T$, respectively,
we arrive at
\begin{equation}
-\frac{L}{\mu} I_n\preceq
\Psi^{-\frac{1}{2}}
 A
 \left(\Psi^{-\frac{1}{2}}\right)^T
 \preceq \frac{L}{\mu}I_n.
\end{equation}
Thus, the proof is finished.
\hfill\end{proof}

Based on the above Lemma \ref{PsicheckA} and
Lemma \ref{Key Lemma BCGD} in Section
\ref{Several Lemmas}, we will prove that there exists
at least one eigenvalue of $\overline{G}$
defined by \eqref{Def overline G} which belongs
to $\Omega$ defined by \eqref{Omega}.
\begin{lemma}\label{Eig wideG in Omega}
Suppose  that $x^*\in\mathbb{R}^n$ is a strict
saddle point, and
\begin{equation}
\overline{G}=
\left[
I_n+\alpha\Psi^{-\frac{1}{2}}
\check{A}
\left(\Psi^{-\frac{1}{2}}\right)^T
\right]^{-1}
\Psi^{-\frac{1}{2}}
 A
 \left(\Psi^{-\frac{1}{2}}\right)^T
\end{equation}
is given  by \eqref{Def overline G} with $\alpha\in
\left(0, \frac{\mu}{L}\right)$, where $L$ is determined by
\eqref{Lipschitz}; $\mu$, $A$ and $\check{A}$ are given
by \eqref{Def mu}, \eqref{Def A}  and \eqref{Def bar A},
respectively; and $\Psi^{-\frac{1}{2}}$ denotes the unique
 symmetric, positive definite square root matrix of the symmetric,
 positive definite matrix $\Psi^{-1}$ defined by
 \eqref{Def Psiinv}. Then, there exists at  least one
 eigenvalue of the $\overline{G}$ which belongs
 to $\Omega$ defined by \eqref{Omega0}.
\end{lemma}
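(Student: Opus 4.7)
The plan is to mirror the proof of Proposition \ref{Eig G in Omega} from the BCGD section, exploiting the fact that $\overline{G}$ has exactly the same structural form $(I_n + \alpha \check{B})^{-1} B$ appearing in Lemma \ref{Key Lemma BCGD}, with the role of $A$ and $\check{A}$ now played by the congruently transformed matrices $B \triangleq \Psi^{-\frac{1}{2}} A (\Psi^{-\frac{1}{2}})^T$ and $\check{B} \triangleq \Psi^{-\frac{1}{2}} \check{A} (\Psi^{-\frac{1}{2}})^T$, respectively. The goal is therefore to verify that these transformed matrices satisfy every hypothesis of Lemma \ref{Key Lemma BCGD}, and then read off the conclusion.

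First I would observe that $\Psi$ in \eqref{Def Psi} is block-diagonal with the block partition conformal to that of $A$, so $\Psi^{-\frac{1}{2}}$ is also block-diagonal with the same partition. This is the decisive structural fact: conjugation of $\check{A}$ by a conformal block-diagonal matrix preserves the strictly block lower triangular structure, so $\check{B}$ is itself strictly block lower triangular. Moreover, since $A$ is symmetric, the congruent transform $B$ is also symmetric. Next, Lemma \ref{PsicheckA}(i) guarantees that $B$ inherits a negative eigenvalue from $A$, which is the essential ``strict saddle" ingredient needed to apply Lemma \ref{Key Lemma BCGD}.

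The step-size condition would be handled as follows. Lemma \ref{PsicheckA}(ii) yields $\rho(B) \leq L/\mu$, hence $1/\rho(B) \geq \mu/L$. Since by hypothesis $\alpha \in (0, \mu/L)$, we have $\alpha \in (0, 1/\rho(B))$, which exactly matches the step-size requirement of Lemma \ref{Key Lemma BCGD} under the identifications $B \sim B$, $\check{B} \sim \check{B}$, $\alpha \sim \beta$ and $\rho(B) \sim \rho(B)$. Applying that lemma then produces at least one eigenvalue of
\begin{equation*}
(I_n + \alpha \check{B})^{-1} B = \left[I_n + \alpha \Psi^{-\frac{1}{2}} \check{A} (\Psi^{-\frac{1}{2}})^T\right]^{-1} \Psi^{-\frac{1}{2}} A (\Psi^{-\frac{1}{2}})^T = \overline{G}
\end{equation*}
lying in $\Omega$, which is exactly the claim.

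The only step that requires real care, and which I expect to be the main obstacle, is justifying that $\check{B}$ is still strictly block lower triangular after conjugation. This relies on both the conformality between the partition defining $\check{A}$ and the block-diagonal partition of $\Psi$, and on the fact that $\Psi^{-\frac{1}{2}}$ itself is block-diagonal (as opposed to merely symmetric positive definite). One could belabor this by multiplying out $\Psi^{-\frac{1}{2}} \check{A} (\Psi^{-\frac{1}{2}})^T$ block-by-block and checking that the $(s,t)$ block vanishes for $s \leq t$, which follows immediately because the only surviving term in each block product is $\{\nabla^2 \varphi_s\}^{-\frac{1}{2}} \check{A}_{st} \{\nabla^2 \varphi_t\}^{-\frac{1}{2}}$, and $\check{A}_{st} = \mathbf{0}$ for $s \leq t$ by \eqref{LsigmabarA}. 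Once this structural point is settled, the rest of the proof reduces to invoking Lemma \ref{PsicheckA} and Lemma \ref{Key Lemma BCGD} in sequence, just as in Proposition \ref{Eig G in Omega}, so the proof can be kept short.
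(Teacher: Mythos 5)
Your proposal is correct and follows essentially the same route as the paper's proof: set $B=\Psi^{-\frac{1}{2}}A(\Psi^{-\frac{1}{2}})^T$ and $\check B=\Psi^{-\frac{1}{2}}\check A(\Psi^{-\frac{1}{2}})^T$, observe via Lemma \ref{PsicheckA} that $B$ has a negative eigenvalue and $\rho(B)\le L/\mu$, and invoke Lemma \ref{Key Lemma BCGD} after verifying $\alpha\in(0,\mu/L)\subseteq(0,1/\rho(B))$. The one point the paper merely asserts — that $\check B$ is precisely the strictly block lower triangular part of $B$, which hinges on $\Psi^{-\frac{1}{2}}$ being block-diagonal conformally with the partition — you spell out explicitly, which is a welcome added detail rather than a deviation.
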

\begin{proof}
Since $x^*$ is a strict saddle point and
$A=\nabla^2 f\left(x^*\right)$ is given by
\eqref{Def A}, Lemma \ref{PsicheckA} clearly
implies that $\Psi^{-\frac{1}{2}}A
 \left(\Psi^{-\frac{1}{2}}\right)^T$
 has at least one negative eigenvalue
 and the spectral radius of the symmetric matrix
 $\Psi^{-\frac{1}{2}}
 A\left(\Psi^{-\frac{1}{2}}\right)^T
 $ is upper bounded by $\frac{\mu}{L}$.
\\
Further,  note that $\overline{G}=
\left[
I_n+\alpha\Psi^{-\frac{1}{2}}
\check{A}
\left(\Psi^{-\frac{1}{2}}\right)^T
\right]^{-1}
\Psi^{-\frac{1}{2}}
 A
 \left(\Psi^{-\frac{1}{2}}\right)^T
$
and $\Psi^{-\frac{1}{2}}\check{A}
\left(\Psi^{-\frac{1}{2}}\right)^T$
is a strictly block lower triangle matrix based on
$\Psi^{-\frac{1}{2}}A
\left(\Psi^{-\frac{1}{2}}\right)^T$,
where $\check{A}$ is defined by \eqref{Def bar A} .
Therefore, by applying Lemma
\ref{Key Lemma BCGD}
in Section \ref{Several Lemmas} with
identifications $\Psi^{-\frac{1}{2}}
 A
 \left(\Psi^{-\frac{1}{2}}\right)^T\sim B$,
 $\Psi^{-\frac{1}{2}}
 \check{A}
 \left(\Psi^{-\frac{1}{2}}\right)^T\sim\check{B}$,
$\alpha\sim \beta$ and
$\rho\left(\Psi^{-\frac{1}{2}}A
 \left(\Psi^{-\frac{1}{2}}\right)^T\right)$
 $\sim$ $\rho(B)$, we know, for any
 $\alpha$ $\in \left(0, \frac{\mu}{L}\right)$, there exists at
least one eigenvalue of $\bar{G}$ belonging
to $\Omega$ defined by \eqref{Omega0}.

In addition, Lemma \ref{PsicheckA} implies that
gradient Lipschitz  continuous constant
$\frac{L}{\mu}\geq$ $\rho\left(\Psi^{-\frac{1}{2}}A
 \left(\Psi^{-\frac{1}{2}}\right)^T\right)$, which
  leads immediately  to
$\alpha \in \left(0, \frac{\mu}{L}\right)
\subseteq\left(0, \frac{1}{\rho\left(\Psi^{-\frac{1}{2}}A
 \left(\Psi^{-\frac{1}{2}}\right)^T\right)}\right)$.
Then, the above arguments clearly imply
that the proposition holds true.
\hfill\end{proof}

The following proposition shows that the
Jacobian of the BMD iterative mapping $\psi$ defined
by \eqref{Define psi} at a strict saddle point
admits at least one eigenvalue whose magnitude is strictly
greater than one.

\begin{proposition}\label{Jaco psi neg eig}
Assume that BMD iterative mapping $\psi$ is defined
by \eqref{Define psi} with
$\alpha\in \left(0,\,\frac{\mu}{L}\right)$, and
$x^*\in\mathbb{R}^n$ is a strict saddle point. Then
$D\psi\left(x^*\right)$ has at least one eigenvalue whose magnitude is
strictly greater than one.
\end{proposition}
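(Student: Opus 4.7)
The plan is to mirror the argument used for BCGD in Proposition \ref{Jaco g neg eig}, but feeding in the BMD-side spectral result (Lemma \ref{Eig wideG in Omega}) instead of Proposition \ref{Eig G in Omega}. The chain linking eigenvalues of $D\psi(x^*)$ back to those of $\overline{G}$ has already been assembled in the preceding subsection, so the proof reduces to tracing that chain and performing a short modulus estimate.

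First, I would invoke Lemma \ref{Eig wideG in Omega}: since $\alpha \in (0, \mu/L)$ and $x^*$ is a strict saddle, there is at least one eigenvalue $\lambda = a + bi \in \mbox{eig}(\overline{G}) \cap \Omega$, i.e.\ with $a \leq 0$ and $(a,b) \neq (0,0)$. Next, by \eqref{Eig WideG}, $\mbox{eig}(\overline{G}) = \mbox{eig}(\widetilde{G})$, so $\lambda \in \mbox{eig}(\widetilde{G})$ as well. Then the defining identity \eqref{Eigrela WTG} promotes this to an eigenvalue of the transposed Jacobian:
\begin{equation*}
1 - \alpha(a + bi) \in \mbox{eig}\bigl((D\psi(x^*))^T\bigr).
\end{equation*}

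The final step is the same modulus calculation as in the BCGD case: using $\alpha > 0$ and $a \leq 0$,
\begin{equation*}
\bigl|1 - \alpha(a + bi)\bigr|^2 = 1 - 2\alpha a + \alpha^2 (a^2 + b^2) \geq 1 + \alpha^2(a^2 + b^2) > 1,
\end{equation*}
where the strict inequality uses $(a,b) \neq (0,0)$. Because the eigenvalues of $D\psi(x^*)$ coincide with those of $(D\psi(x^*))^T$, the existence of an eigenvalue with modulus strictly greater than one for $D\psi(x^*)$ follows.

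There is no real obstacle here since the substantive work has been done upstream: the algebraic form of $\widetilde{G}$ was established in Lemma \ref{Property WG}, the similarity-type reduction to $\overline{G}$ was carried out in \eqref{WideG Trans}--\eqref{Eig WideG}, and the spectral localization in $\Omega$ was proved in Lemma \ref{Eig wideG in Omega}. The only place to be careful is to verify that the hypothesis $\alpha \in (0, \mu/L)$ is what is needed to apply Lemma \ref{Eig wideG in Omega} (it is, by construction), and to record that the $\mbox{eig}((D\psi)^T) = \mbox{eig}(D\psi)$ step gives an eigenvalue of the Jacobian itself, not merely of its transpose. The proof will therefore be short and essentially a translation of the proof of Proposition \ref{Jaco g neg eig} with $G$ replaced by $\widetilde{G}$ (via $\overline{G}$) and $(0,1/L)$ replaced by $(0,\mu/L)$.
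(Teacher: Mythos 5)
Your proposal is correct and matches the paper's own proof, which likewise invokes Lemma \ref{Eig wideG in Omega} (via the eigenvalue identity $\mbox{eig}(\widetilde{G})=\mbox{eig}(\overline{G})$ from \eqref{Eig WideG}) and then defers to the modulus calculation of Proposition \ref{Jaco g neg eig} by the identifications $D\psi(x^*)\sim Dg_\text{\tiny$\alpha f$}(x^*)$, $\widetilde{G}\sim G$, $\frac{L}{\mu}\sim L$. You simply wrote out the modulus estimate that the paper elects to omit by reference.
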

\begin{proof}
Note that \eqref{Eig WideG} and \eqref{Def overline G}
imply that $\widetilde{G}$ and $\overline{G}$ have
same eigenvalues. Consequently, Lemma \ref{Eig wideG in Omega} means
there exists at least one eigenvalue of the
$\widetilde{G}$ lying in $\Omega$ defined by \eqref{Omega}.

In what follows, with the identifications
$D\psi\left(x^*\right)$ $\sim$
$Dg_\text{\tiny$\alpha f$}(x^*)$,
$\widetilde{G}$ $\sim$ $G$ and $\frac{L}{\mu}$ $\sim$ $L$,
we are able to apply the same arguments  as in the proof
of Proposition \ref{Jaco g neg eig} in Subsection
\ref{BCGDSubSecJacobia} to finish the proof. Since the proof
follows a similar pattern, it is therefore omitted.
\hfill\end{proof}
\subsection{\bf Main results of BMD method}
We first introduce the following proposition, which asserts that
the limit point of the sequence generated by the
BMD method \ref{BMD 4.1} is a critical point of $f$.
\begin{proposition}\label{limit=critical point BMD}
Under Assumption \ref{Assumption f}, if
  $\left\{x_k\right\}_{k\geq 0}$
   is generated by the BMD method \ref{BMD 4.1}
with $0<\alpha<\frac{\mu}{L}$, $\lim\limits_{k}{x_k}$
exists and denote it as $x^*$, then $x^*$ is a
critical point of $f$, i.e., $\nabla f(x^*)
={\bf 0}$.
\end{proposition}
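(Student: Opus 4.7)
The plan is to imitate the proof of Proposition \ref{limit=critical point BCGD} for BCGD, exploiting the fact that the BMD iterative mapping $\psi$ has been shown to be a diffeomorphism (Proposition \ref{psiDiff}) and is therefore continuous. First, I would observe that iterating $x_{k+1}=\psi(x_k)$ gives $x_k=\psi^k(x_0)$, and then take the limit $k\to\infty$ in $x_{k+1}=\psi(x_k)$. Continuity of $\psi$ and the assumption $\lim_k x_k = x^*$ yield $x^*=\psi(x^*)$, i.e.\ $x^*$ is a fixed point of $\psi$.

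Next, I would show that any fixed point of $\psi$ is a critical point of $f$. Set $y_0=x^*$ and $y_s=\psi_s(y_{s-1})$ for $s=1,\ldots,p$, so that $y_p=\psi(x^*)=x^*$. From the defining formula \eqref{MBDA psi1}, $\psi_s$ preserves every block except block $s$. In particular, the first block of $x^*=y_p$ equals the first block of $y_1$, since $\psi_2,\ldots,\psi_p$ do not modify block $1$. Using \eqref{xks update BMD} with $x_k^{s-1}=x^*$ for $s=1$, this reads
\begin{equation*}
x^*\text{\scriptsize$(1)$}=[\nabla\varphi_1]^{-1}\bigl(\nabla\varphi_1(x^*\text{\scriptsize$(1)$})-\alpha\nabla_1 f(x^*)\bigr).
\end{equation*}
Applying $\nabla\varphi_1$ to both sides (which is well-defined and injective by Assumption \ref{Assumption phis} and Lemma \ref{Phi Diff}) and cancelling $\nabla\varphi_1(x^*\text{\scriptsize$(1)$})$ gives $\nabla_1 f(x^*)=\mathbf{0}$. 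Substituting this identity back into \eqref{MBDA psi1} also yields $y_1=x^*$.

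Third, I would proceed by induction on $s$. Suppose $y_{s-1}=x^*$ and $\nabla_t f(x^*)=\mathbf{0}$ for all $t<s$. Because $\psi_{s+1},\ldots,\psi_p$ leave block $s$ unchanged, the $s$-th block of $x^*=y_p$ equals the $s$-th block of $y_s$. Applying the same argument as above, but with index $s$ in place of $1$, we conclude $\nabla_s f(x^*)=\mathbf{0}$ and $y_s=x^*$. After $p$ such steps, $\nabla_s f(x^*)=\mathbf{0}$ for every $s=1,\ldots,p$, hence $\nabla f(x^*)=\mathbf{0}$.

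No genuinely new obstacle arises beyond the BCGD case: the only subtle point is that the block-$s$ update involves $[\nabla\varphi_s]^{-1}$ rather than a simple gradient step, so one has to invoke strong convexity of $\varphi_s$ (Assumption \ref{Assumption phis}) together with Lemma \ref{Phi Diff} to legitimate applying $\nabla\varphi_s$ to both sides of the fixed-point equation. Once that is in place, the inductive unravelling of the block structure proceeds exactly as in Proposition \ref{limit=critical point BCGD}.
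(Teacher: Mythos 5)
Your proposal is correct and follows essentially the same route as the paper: pass to the limit in $x_{k+1}=\psi(x_k)$ using continuity of the diffeomorphism $\psi$ to get $\psi(x^*)=x^*$, then peel off the block updates to force $\nabla_s f(x^*)=\mathbf{0}$ for every $s$ via strong convexity of $\varphi_s$ and Lemma \ref{Phi Diff}. In fact your version is slightly more careful than the paper's printed proof: the paper jumps directly from the definition of $\psi_s$ to equation \eqref{xks update BMD x*} for all $s$, which tacitly requires $\psi_s(x^*)=x^*$ for each $s$ rather than merely $\psi(x^*)=x^*$; your induction on the block index --- exploiting that $\psi_s$ alters only block $s$, so the $s$-th block of $x^*$ is touched exactly once in the composite --- is exactly the missing justification for that step.
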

\begin{proof}
First, since $\left\{x_k\right\}_{k\geq 0}$ is generated
by the BMD method \ref{BMD 4.1},
then $x_k=\psi^k(x_0)
\footnote
{$\psi^k$
denotes the composition of
$\psi$
with itself $k$ times.}
$,
where the BMD iterative mapping $\psi$ is defined by
\eqref{Def Psi}. Hence, $\lim\limits_kx_k=\lim\limits_k
\psi^k(x_0)=x^*$. Since $\psi$ is a  diffeomorphism,
we immediately know that $x^*$ is a fixed point of $\psi$.

Second, notice that $\psi$ and $\psi_s$ are defined by
\eqref{Define psi} and \eqref{MBDA psi1}, respectively.
If $x^*$ is a fixed point of $\psi$, then
\begin{equation}
\psi(x^*)=x^*,
\end{equation}
which implies that
\begin{equation}\label{MBDA psi1 x*}
\psi_s\left(x^*\right)=
\left(I_n-U_sU_s^T\right)x^*
+U_s\left[\nabla\varphi_s\right]^{-1}
\left(\nabla\varphi_s\left(x^*\text{\scriptsize$(s)$}\right)
-\alpha\nabla_sf\left(x^*\right)\right),
\hspace*{2mm}s=1, 2, \ldots, p.
\end{equation}
Consequently,
\begin{equation}\label{xks update BMD x*}
x^*\text{\scriptsize$(s)$}
=\left[\nabla \varphi_s\right]^{-1}
\left(
\nabla \varphi_s
\left(x^*\text{\scriptsize$(s)$}\right)
-\alpha
\nabla_s f\left(x^*\right)
\right),
\hspace*{2mm}s=1, 2, \ldots, p.
\end{equation}
Since Lemma \ref{Phi Diff} in Appendix asserts
that $\nabla\varphi_s$  is a diffeomorphism,
then $\left[\nabla\varphi_s\right]^{-1}$ is a
diffeomorphism as well. Thus it follows from
\eqref{xks update BMD x*} that
\begin{equation}\label{xks update BMD x* Phi}
\nabla \varphi_s
\left(x^*\text{\scriptsize$(s)$}\right)
=
\nabla \varphi_s
\left(x^*\text{\scriptsize$(s)$}\right)
-\alpha
\nabla_s f\left(x^*\right),
\hspace*{2mm}s=1, 2, \ldots, p.
\end{equation}
We arrive at
\begin{equation}\label{nablax*=0}
\nabla_s f\left(x^*\right)
={\bf 0},
\hspace*{2mm}s=1, 2, \ldots, p,
\end{equation}
or equivalently, $x^*$ is a stationary point of $f$.
Thus the proof is finished. \hfill
\end{proof}

Armed with the results established in  previous
subsections, we now state and prove our main
theorem of the BMD method, whose proof is similar to
that of Theorem \ref{Main Theorem BCGD} in
Subsection \ref{Main Subsec of BCGD}. However,
its proof is still given as follows in detail
for the sake of completeness.
\begin{theorem}\label{Main Theorem BMD}
Let $f$ be a $C^2$ function and $x^*$ be a strict
saddle point. If  $\left\{x_k\right\}$ is generated
by the BMD method \ref{BMD 4.1} with $0<\alpha<\frac{\mu}{L}$,  then $$\mathbb{P}_\nu\left[\lim_{k}{x_k}=x^*\right]=0.$$
\end{theorem}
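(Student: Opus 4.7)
The plan is to mimic the proof of Theorem \ref{Main Theorem BCGD} in Subsection \ref{Main Subsec of BCGD}, with the BCGD iterative mapping $g_\text{\tiny$\alpha f$}$ replaced by the BMD iterative mapping $\psi$. The key structural ingredients are already in place: (i) Proposition \ref{limit=critical point BMD} reduces the event $\{\lim_k x_k = x^*\}$ to the global stable set $W^s(x^*) = \{x : \lim_k \psi^k(x) = x^*\}$ of a critical point of $f$; (ii) Proposition \ref{psiDiff} asserts that $\psi$ is a diffeomorphism whenever $0<\alpha<\frac{\mu}{L}$; and (iii) Proposition \ref{Jaco psi neg eig} guarantees that the Jacobian $D\psi(x^*)$ at a strict saddle has at least one eigenvalue of magnitude strictly greater than one.

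First I would observe that, by Proposition \ref{limit=critical point BMD}, the event $\left[\lim_k x_k = x^*\right]$ coincides with the initial conditions in $W^s(x^*)$ (recall Definition \ref{Global Stable Set}). Next, since $\psi$ is a diffeomorphism by Proposition \ref{psiDiff}, I would invoke the Center-Stable Manifold Theorem \ref{Sabl Mani Theorem} with $\phi$ and its fixed point replaced by $\psi$ and the strict saddle $x^*$. This produces a local $\psi$-invariant $C^r$ embedded disc $W_{loc}^{s}(x^*)$ tangent to the generalized eigenspace $E_{sc}$ of $D\psi(x^*)$ corresponding to eigenvalues of modulus at most one, together with a neighborhood $B$ of $x^*$ such that $\bigcap_{k\geq 0}\psi^{-k}(B) \subset W_{loc}^{s}(x^*)$.

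The crucial use of Proposition \ref{Jaco psi neg eig} comes next: because $D\psi(x^*)$ admits at least one eigenvalue with modulus strictly greater than one, the unstable subspace $E_u$ is nontrivial, so $\dim E_{sc}<n$. Consequently $W_{loc}^{s}(x^*)$ is an embedded disc of dimension strictly less than $n$, and thus has Lebesgue measure zero (and $\nu$-measure zero by absolute continuity of $\nu$). Finally, to pass from the local to the global stable set, I would argue exactly as in \cite{JLeeandMSimchowitz2016}: any $x\in W^s(x^*)$ must eventually enter and remain in $B$ under forward iteration, so
\begin{equation*}
W^s(x^*) \;\subseteq\; \bigcup_{k\geq 0} \psi^{-k}\!\left(W_{loc}^{s}(x^*)\right).
\end{equation*}
Since $\psi$ is a diffeomorphism, each $\psi^{-k}(W_{loc}^{s}(x^*))$ has measure zero, and a countable union of measure-zero sets is still measure zero. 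Therefore $\mathbb{P}_\nu[\lim_k x_k = x^*]\leq \mathbb{P}_\nu[W^s(x^*)]=0$, as claimed.

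I do not anticipate any genuine obstacle beyond what has already been overcome in Section \ref{BCGD Sec}: the real work---establishing that $\psi$ is a diffeomorphism and that the Jacobian at a strict saddle has a spectral radius exceeding one---was handled in Propositions \ref{psiDiff} and \ref{Jaco psi neg eig}. The only point requiring mild care is to verify that the hypotheses of the Center-Stable Manifold Theorem apply at $x^*$ (i.e.\ $\psi$ is a $C^r$ local diffeomorphism at $x^*$ with $r\geq 1$), which is immediate from Assumptions \ref{Assumption f} and \ref{Assumption phis} since $\nabla f$ and $\nabla\varphi_s$ are $C^1$, and from the fact that $[\nabla\varphi_s]^{-1}$ is $C^1$ by the inverse function theorem. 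Given the parallel to Theorem \ref{Main Theorem BCGD}, the authors' choice to omit the repetitive concluding argument is natural.
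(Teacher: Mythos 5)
Your proof follows exactly the same structure as the paper's: reduce to the global stable set via Proposition \ref{limit=critical point BMD}, invoke the Center-Stable Manifold Theorem \ref{Sabl Mani Theorem} with $\psi$ (justified by Proposition \ref{psiDiff}), use Proposition \ref{Jaco psi neg eig} to obtain positive codimension, and pass to the global stable set by the countable-union-of-preimages argument from \cite{JLeeandMSimchowitz2016}. The only difference is that you spell out the final local-to-global step $W^s(x^*)\subseteq\bigcup_{k\geq 0}\psi^{-k}\!\left(W_{loc}^{s}(x^*)\right)$, which the paper leaves implicit, and this is done correctly.
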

\begin{proof}
First, Proposition \ref{limit=critical point BMD} implies that, if
$\lim\limits_{k}{x_k}$ exists then
it must be a critical point. Hence, we consider calculating the Lebesgue measure
(or probability with respect to the prior measure $\nu$)
of the set $\left[\lim\limits_{k}{x_k}=x^*\right]
 =W^s\left(x^*\right)$ (see Definition
 \ref{Global Stable Set}).

 Second, since Proposition \ref{psiDiff} means the BMD iterative
 mapping $\psi$ is a diffeomorphism, we replace
 $\phi$ and fixed point by $\psi$ and the strict saddle
 point $x^*$ in the above Stable Manifold
 Theorem \ref{Sabl Mani Theorem} in
 Subsection \ref{Main Subsec of BCGD}, respectively.
 Then  the manifold $W_{loc}^{s}(x^*)$
 has strictly positive codimension because of
Proposition \ref{Jaco psi neg eig} and $x^*$ being a strict
saddle point. Hence, $W_{loc}^{s}(x^*)$ has measure zero.

In what follows,
we are able to apply the same arguments
as in \cite{JLeeandMSimchowitz2016}
to finish the proof of the theorem. Since
the proof follows a similar pattern,
it is therefore omitted.
\hfill\end{proof}

Given the above Theorem \ref{Main Theorem BMD},
we immediately  obtain the following
Theorem \ref{key theorem BMD} and its
corollary by the same arguments as in the proofs
of Theorem 2 and Corollary 12 in
 \cite{Panageas2016Gradient}.
Therefore, we omit their proofs.
\begin{theorem}[Non-isolated]\label{key theorem BMD}
Let $f: \mathbb{R}^n\rightarrow\mathbb{R}$ be a twice
continuously differentiable function
and $\sup\limits_{x\in\mathbb{R}^n}\left\|
\nabla f(x)\right\|_2\leq L<\infty$. The set of initial
conditions $x\in \mathbb{R}^n$ so that the
BMD method \ref{BMD 4.1} with step size
$0<\alpha<\frac{\mu}{L}$ converges to a strict
saddle point is of (Lebesgue) measure zero,
without assumption that critical points are
isolated.
\end{theorem}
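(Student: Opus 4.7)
The plan is to adapt the argument of \cite{Panageas2016Gradient} (their Theorem 2) to the BMD iteration mapping $\psi$ defined by \eqref{Define psi}, leveraging the two ingredients already in hand: Proposition \ref{psiDiff}, which guarantees that $\psi$ is a diffeomorphism on all of $\mathbb{R}^n$ for the allowed step size $0<\alpha<\mu/L$, and Proposition \ref{Jaco psi neg eig}, which ensures $D\psi(x^*)$ has at least one eigenvalue of modulus strictly greater than one whenever $x^*$ is a strict saddle. Write $\mathcal{A}^*$ for the set of strict saddles of $f$ and $W = \{x\in\mathbb{R}^n : \lim_k \psi^k(x) \in \mathcal{A}^*\}$ for the set of initial conditions whose BMD trajectories converge to some strict saddle. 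The goal is to show $\mu_{\mathrm{Leb}}(W)=0$, without assuming that the critical set is countable.

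First, I would invoke the Stable Manifold Theorem (Theorem \ref{Sabl Mani Theorem}) at each $x^*\in \mathcal{A}^*$ with $\phi = \psi$. Since $D\psi(x^*)$ has an eigenvalue outside the closed unit disk, the local center-stable manifold $W^{sc}_{\text{loc}}(x^*)$ is an embedded $C^2$ disc whose dimension is strictly less than $n$, hence it is a Lebesgue null set. The theorem also furnishes an adapted open ball $B_{x^*}$ around $x^*$ with the trap property $\bigcap_{k\geq 0}\psi^{-k}(B_{x^*}) \subset W^{sc}_{\text{loc}}(x^*)$, so any trajectory of $\psi$ that is eventually trapped in $B_{x^*}$ must originate in a preimage of $W^{sc}_{\text{loc}}(x^*)$.

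Second, the family $\{B_{x^*}\}_{x^*\in\mathcal{A}^*}$ covers $\mathcal{A}^*$, and since $\mathbb{R}^n$ is second countable (equivalently, Lindel\"of), one can extract a countable subcover $\{B_{x^*_j}\}_{j\in\mathbb{N}}$. Any point $x\in W$ has its iterates $\psi^k(x)$ eventually entering and remaining in one of the balls $B_{x^*_j}$, so
\begin{equation*}
W \;\subseteq\; \bigcup_{j\in\mathbb{N}} \bigcup_{k\in\mathbb{N}}\psi^{-k}\!\left(W^{sc}_{\text{loc}}(x^*_j)\right).
\end{equation*}
Because $\psi$ is a $C^1$ diffeomorphism (Proposition \ref{psiDiff}), so is every iterate $\psi^k$, and diffeomorphisms of $\mathbb{R}^n$ carry Lebesgue null sets to Lebesgue null sets. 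Hence each set in the double countable union is null, and countable subadditivity of Lebesgue measure yields $\mu_{\mathrm{Leb}}(W)=0$, which translates to $\mathbb{P}_\nu(W)=0$ since $\nu$ is assumed absolutely continuous with respect to Lebesgue measure.

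The main technical obstacle is the step that replaces the countability of critical points by the Lindel\"of covering argument: it requires that the Stable Manifold Theorem be applicable uniformly at every strict saddle, with radii of the adapted balls $B_{x^*}$ possibly shrinking as one traverses $\mathcal{A}^*$. This is where the hypothesis $\sup_x \|\nabla f(x)\|\leq L$ plays its role in place of the global Lipschitz assumption used earlier: it keeps the BMD mapping $\psi$ globally well-defined as a diffeomorphism (Lemma \ref{LpsiDiff} and Proposition \ref{psiDiff} were proven using only that $\alpha<\mu/L$ with $L$ bounding $\nabla^2 f$ near critical points, which is implied here by the Lipschitz-type control at saddle points obtained through Lemma 7 of \cite{Panageas2016Gradient}). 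Once $\psi\in\mathrm{Diff}^1(\mathbb{R}^n)$ is available globally, the rest of the argument is the verbatim transcription of the \cite{Panageas2016Gradient} scheme, which is why the detailed proof may be omitted.
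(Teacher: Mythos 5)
Your proof is correct and is essentially the same argument the paper invokes (the Panageas--Piliouras Lindel\"of covering scheme applied to the BMD iteration map $\psi$, using Proposition \ref{psiDiff} for the diffeomorphism property and Proposition \ref{Jaco psi neg eig} to make each local center-stable manifold a proper, hence Lebesgue-null, submanifold, followed by pulling back through $\psi^{-k}$ and countable subadditivity). One caveat concerning your last paragraph: a bound on $\sup_x\|\nabla f(x)\|$ does not by itself bound $\nabla^2 f$, so you cannot ``derive'' the Hessian bound needed by Lemma \ref{LpsiDiff} from the stated hypothesis via Lemma 7 of \cite{Panageas2016Gradient}; the hypothesis $\sup_x\|\nabla f(x)\|_2\le L$ in the theorem statement is best read as a typo for the Hessian/Lipschitz-gradient bound $\sup_x\|\nabla^2 f(x)\|\le L$ that both \cite{Panageas2016Gradient} and the earlier lemmas actually use, rather than as something your argument should attempt to reconstruct.
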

A straightforward corollary of Theorem
\ref{key theorem BMD} is given below:
\begin{corollary}\label{key corolary BMD}
Assume that the conditions of Theorem
\ref{key theorem BMD}
 are satisfied and all saddle points of $f$
are strict. Additionally, 
assume
$\lim\limits_k\psi^k(x)$
exists for all $x$
in $\mathbb{R}^n$. Then
 $$
\mathbb{P}_\nu
\left[\lim\limits_k\psi^k(x)
=x^*\right]=1,$$
where $\psi$ is defined by \eqref{Def Psi} and
$x^*$ is a local minimum.
\end{corollary}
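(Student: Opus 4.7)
The plan is to assemble Corollary \ref{key corolary BMD} as a direct measure-theoretic consequence of Proposition \ref{limit=critical point BMD} and Theorem \ref{key theorem BMD}, mirroring the pattern of Corollary 12 in \cite{Panageas2016Gradient}. First I would observe that because $\lim_k \psi^k(x)$ is assumed to exist for every $x \in \mathbb{R}^n$, Proposition \ref{limit=critical point BMD} forces this limit to lie in the critical set $C = \{x : \nabla f(x) = \mathbf{0}\}$. The goal then reduces to showing that, with $\nu$-probability one, the limit avoids the non-minima in $C$.

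Next I would decompose $C = C_{\min} \cup C_{\mathrm{ss}}$, where $C_{\min}$ is the set of local minima and $C_{\mathrm{ss}}$ denotes the strict saddles in the sense of the paper, i.e.\ critical points whose Hessian has $\lambda_{\min}(\nabla^2 f) < 0$. Under the hypothesis that every saddle of $f$ is strict (and noting that a local maximum with a negative Hessian eigenvalue also satisfies the paper's definition of strict saddle), every non-local-minimum critical point is contained in $C_{\mathrm{ss}}$. Consequently,
\begin{equation*}
\{x : \lim_k \psi^k(x) \notin C_{\min}\} \subseteq \{x : \lim_k \psi^k(x) \in C_{\mathrm{ss}}\}.
\end{equation*}

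Then I would invoke Theorem \ref{key theorem BMD}, which asserts precisely that the latter set has Lebesgue measure zero, even without any isolation hypothesis on the critical points. Since the prior measure $\nu$ is absolutely continuous with respect to Lebesgue measure, the same set has $\nu$-measure zero. Combining this with the fact that the limit lies in $C = C_{\min} \cup C_{\mathrm{ss}}$, we conclude $\mathbb{P}_\nu[\lim_k \psi^k(x) \in C_{\min}] = 1$, which is the asserted statement.

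The only genuinely delicate step in this chain is the passage from ``a single strict saddle has measure-zero stable manifold'' to ``the union of all strict-saddle stable manifolds has measure zero'' when $C_{\mathrm{ss}}$ may be uncountable; but that obstacle has already been resolved inside Theorem \ref{key theorem BMD} via the open-cover / countable-subcover trick of \cite{Panageas2016Gradient} applied to the BMD iterative mapping $\psi$, whose diffeomorphism property (Proposition \ref{psiDiff}) and strict-saddle instability (Proposition \ref{Jaco psi neg eig}) are exactly the ingredients that theorem consumes. Thus no further hard analysis is required for the corollary.
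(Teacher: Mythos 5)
Your argument is correct and is exactly the one the paper points to (and omits): combine Proposition \ref{limit=critical point BMD} to force the limit into the critical set, the strict-saddle hypothesis to reduce the non-minima to strict saddles, Theorem \ref{key theorem BMD} to give that the strict-saddle basin has Lebesgue (hence $\nu$-) measure zero, and note that the uncountable-union issue is already absorbed into Theorem \ref{key theorem BMD} via the countable-subcover argument of \cite{Panageas2016Gradient}. This is the same route as the proof of Corollary 12 in \cite{Panageas2016Gradient} transferred to the BMD map $\psi$, so no further analysis is needed.
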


\section{The PBCD method}\label{PBCD sec}
In this section, we will prove that the PBCD method in \cite{Hong2017Iteration,Fercoq2013Accelerated,Hong2015A} converges to minimizers as well, almost surely with random initialization.

\subsection{\bf The PBCD  method
description}

For clarity of notation, recall the vector of decision
variables $x$ has been assumed to have the following
partition (see \eqref{x Partion}):
\begin{equation}
x=\left(\begin{array}{c}
x\text{\scriptsize$(1)$}\\
x\text{\scriptsize$(2)$}\\
\vdots\\
x\text{\scriptsize$(p)$}
\end{array}
\right),
\end{equation}
where
$x\text{\scriptsize$(t)$}\in \mathbb{R}^{n_t}$,
$n_1$, $n_2$, $\ldots$,
$n_p$ are $p$
positive integer numbers satisfying
$\sum\limits_{s=1}^{t}n_t=n$.
Correspondingly,  we assume a set of variables $x_k^s$ has the following partition as well:
\begin{equation}\label{genexks}
\begin{split}
x_k^s&\triangleq
\left(\begin{array}{c}
x_{k}^s\text{\scriptsize$(1)$}\\
x_{k}^s\text{\scriptsize$(2)$}\\
\vdots\\
x_{k}^s\text{\scriptsize$(p)$}
\end{array}
\right),  \hspace*{3mm} s=1, \ldots, p;
\hspace*{1mm}k=0, 1, \ldots,
\end{split}
\end{equation}
where $x_k^s\text{\scriptsize$(t)$}\in \mathbb{R}^{n_t}$;
 $t, s=1, \ldots, p; \,k=0, 1, \ldots$.

Given the above notations, the detailed description of the
PBCD method for problem \eqref{Model} is given below.
\medskip

\noindent \fbox{\begin{minipage}{14.3cm}
\begin{method}[PBCD]\label{PBCD 5.1}
\end{method}
{\bf  Input:} $\alpha<\frac{1}{L}$.\\
{\bf  Initialization:}
$x_0\in \mathbb{R}^{n}.$\\
{\bf General Step} $\left(k=0, 1, \ldots\right)$:
Set $x_k^0=x_k$ and define recursively
for $s=1$, $2$, $\ldots$, $p:$\\
$t=1, 2, \ldots, p,$\\
{\bf If} $t=s$,
\begin{equation}\label{xks}
\begin{split}
x_k^s\text{\scriptsize$(t)$}=
\arg\min\limits_{x\text{\scriptsize$(t)$}}
&\Big{\{}f\left(x_k^{s-1}\text{\scriptsize$(1)$},\ldots,
x_k^{s-1}\text{\scriptsize$(t-1)$},
x\text{\scriptsize$(t)$},
x_k^{s-1}\text{\scriptsize$(t+1)$},
x_k^{s-1}\text{\scriptsize$(p)$}\right)
\\
&\hspace*{2mm}+\frac{1}{2\alpha}
\left\|
x\text{\scriptsize$(t)$}
-
x_{k}^{s-1}\text{\scriptsize$(t)$}
\right\|_2^2
\Big{\}}.
\end{split}
\end{equation}
\bf{Else}
\begin{equation}\label{xktneqs}
x_{k}^s\text{\scriptsize$(t)$}
=
x_{k}^{s-1}\text{\scriptsize$(t)$}.
\end{equation}
{\bf End}\\
Set $x_{k+1}=x_k^p$.
\end{minipage}}
\\
\\

It follows from $\alpha<\frac{1}{L}$ that
$$f\left(x_k^{s-1}\text{\scriptsize$(1)$},\ldots,
x_k^{s-1}\text{\scriptsize$(s-1)$},
x\text{\scriptsize$(s)$},
x_k^{s-1}\text{\scriptsize$(s+1)$},
x_k^{s-1}\text{\scriptsize$(p)$}\right)
+\frac{1}{2\alpha}
\left\|
x\text{\scriptsize$(s)$}
-
x_{k}^{s-1}\text{\scriptsize$(s)$}
\right\|_2^2$$ is a strongly convex function with respect
to variable $x\text{\scriptsize$(s)$}$. Hence, let
$x_{k}^{s}\text{\scriptsize$(s)$}$ be
the unique minimizer of problem \eqref{xks}.
Then by the KKT condition,
\begin{equation*}
\begin{split}
{\bf 0}
=\nabla_s f\left(x_k^{s-1}\text{\scriptsize$(1)$},\ldots,
x_k^{s-1}\text{\scriptsize$(s-1)$},
x_k^s\text{\scriptsize$(s)$},
x_k^{s-1}\text{\scriptsize$(s+1)$},
x_k^{s-1}\text{\scriptsize$(p)$}\right)
+\frac{1}{\alpha}\left(
x_k^s\text{\scriptsize$(s)$}
-
x_{k}^{s-1}\text{\scriptsize$(s)$}\right),
\end{split}
\end{equation*}
which is equivalent to
\begin{equation}\label{KKTxks1}
\begin{split}
x_{k}^{s-1}\text{\scriptsize$(s)$}
=x_k^s\text{\scriptsize$(s)$}
+
\alpha \nabla_s f\left(x_k^{s-1}\text{\scriptsize$(1)$},\ldots,
x_k^{s-1}\text{\scriptsize$(s-1)$},
x_k^s\text{\scriptsize$(s)$},
x_k^{s-1}\text{\scriptsize$(s+1)$},
x_k^{s-1}\text{\scriptsize$(p)$}\right).
\end{split}
\end{equation}
Combining Eqs. \eqref{genexks}, \eqref{xktneqs} and
\eqref{KKTxks1}, we have the following
relationships between $x_k^{s-1}$ and $x_k^{s}$,
\begin{equation}\label{xks-1}
x_k^{s-1}=x_k^{s}+\alpha U_s \nabla_s f(x_k^{s}),
 \hspace*{3mm} s=1, \ldots, p.
\end{equation}
Recall that, for a given step size $\alpha>0$,
we have used
$g_{\text{\tiny$\alpha f$}}^s$ (see its
definition in \eqref{Degi}) to denote
the gradient mapping of function $f$ with respect
to variable
$x\text{\scriptsize$(s)$}$,
i.e.,
\begin{equation}\label{RDegi}
g_{\text{\tiny$\alpha f$}}^s(x)
=x-\alpha U_s \nabla_s f(x),
 \hspace*{3mm} s=1, \ldots, p.
\end{equation}
By using same notations, for a given step  size
$\alpha>0$, the gradient mapping of function  $-f$ with
respect to variable $x\text{\scriptsize$(s)$}$  is
\begin{equation}\label{RDegi}
\begin{split}
g_{\text{\tiny$\alpha(-f)$}}^s(x)
=x-\alpha U_s \nabla_s\left(-f(x)\right)
=x-\alpha U_s \left(- \nabla_s f(x)\right)
=x+\alpha U_s \nabla_s f(x).
\end{split}
\end{equation}
It is obvious that Eqs. \eqref{xks-1} and \eqref{RDegi}
imply that
\begin{equation}\label{xks-1Degi}
x_k^{s-1}
=g_{\text{\tiny$\alpha(-f)$}}^s(x_k^{s}),
 \hspace*{3mm} s=1, \ldots, p.
\end{equation}
Note that Lemma \ref{Pg12Diff} in Subsection
\ref{PSubSecDiff}  implies that
$g_{\text{\tiny$\alpha(-f)$}}^s$ is a diffeomorphism
with $\alpha\in\left(0, \frac{1}{L}\right)$.
We denote its inverse as
$\left[g_{\text{\tiny$\alpha(-f)$}}^s\right]^{-1}$ which is
a diffeomorphism as well.
Then, \eqref{xks-1Degi} is further equivalent to
\begin{equation}\label{xks=giinv}
x_k^{s}=
\left[g_{\text{\tiny$\alpha(-f)$}}^s\right]^{-1}(x_k^{s-1}),
 \hspace*{3mm} s=1, \ldots, p.
\end{equation}
\begin{lemma}
Assume that $g_{\text{\tiny$\alpha(-f)$}}^s$ is determined
by \eqref{RDegi}, $s=1, \ldots, p.$
Given $x_k$, the above PBCD method
generates $x_{k+1}$ in the following manner,
  \begin{equation}\label{Def g-f inv}
x_{k+1}=\left[g_{\text{\tiny$\alpha(-f)$}}\right]^{-1}\left(x_{k}\right),
\end{equation}
where the composite iterative mapping $\left[g_{\text{\tiny$\alpha(-f)$}}\right]^{-1}$
denotes the inverse mapping of the following mapping
 \begin{equation}\label{Define g-f}
 \begin{split}
g_{\text{\tiny$\alpha(-f)$}}
&\triangleq
g_{\text{\tiny$\alpha(-f)$}}^1
\circ
g_{\text{\tiny$\alpha(-f)$}}^2
\circ
\cdots
\circ
g_{\text{\tiny$\alpha(-f)$}}^{p-1}
\circ
g_{\text{\tiny$\alpha(-f)$}}^p.
\end{split}
\end{equation}
\end{lemma}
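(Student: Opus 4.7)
The plan is to iterate the single-block relation \eqref{xks=giinv} through $s=1,2,\ldots,p$ and then invoke the standard identity that the inverse of a composition of diffeomorphisms equals the composition of the inverses in reverse order. No heavy machinery is needed; the lemma is essentially bookkeeping on top of what has already been established.

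First, I would invoke Lemma \ref{Pg12Diff} (referenced in the text and proved analogously to Lemma \ref{g12Diff}) to guarantee that for $\alpha\in\left(0,\tfrac{1}{L}\right)$ each per-block mapping $g_{\text{\tiny$\alpha(-f)$}}^s$ is a (global) diffeomorphism of $\mathbb{R}^n$, so that $\left[g_{\text{\tiny$\alpha(-f)$}}^s\right]^{-1}$ is well defined on all of $\mathbb{R}^n$. Without this, the relation \eqref{xks=giinv} would only be a local inversion statement and the composition below would not be justified globally.

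Next, starting from the initialization $x_k^0 = x_k$ and applying \eqref{xks=giinv} recursively for $s=1,2,\ldots,p$, I would obtain
\begin{equation*}
x_k^p
= \left[g_{\text{\tiny$\alpha(-f)$}}^p\right]^{-1}\circ \left[g_{\text{\tiny$\alpha(-f)$}}^{p-1}\right]^{-1}\circ \cdots \circ \left[g_{\text{\tiny$\alpha(-f)$}}^1\right]^{-1}(x_k).
\end{equation*}
Then, using the standard rule $(f\circ g)^{-1}=g^{-1}\circ f^{-1}$ inductively on $p$ diffeomorphisms, the right-hand side simplifies to
\begin{equation*}
\left[g_{\text{\tiny$\alpha(-f)$}}^1\circ g_{\text{\tiny$\alpha(-f)$}}^2\circ \cdots \circ g_{\text{\tiny$\alpha(-f)$}}^p\right]^{-1}(x_k)
= \left[g_{\text{\tiny$\alpha(-f)$}}\right]^{-1}(x_k),
\end{equation*}
where the last equality is just the definition \eqref{Define g-f}. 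Since the PBCD method sets $x_{k+1}=x_k^p$, the claim \eqref{Def g-f inv} follows.

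The argument is essentially algebraic and the only real check is the ordering: the per-block updates are applied in the order $s=1,2,\ldots,p$, so the inverses appear in the order $[g^1]^{-1},[g^2]^{-1},\ldots,[g^p]^{-1}$ reading left-to-right as one moves $x_k^0\to x_k^p$; reversing the order to form the joint inverse produces exactly the composition $g^1\circ g^2\circ\cdots\circ g^p$ that is assigned the name $g_{\text{\tiny$\alpha(-f)$}}$ in \eqref{Define g-f}. This is the only place where one has to be careful, and it is a purely notational issue rather than an analytical one; thus there is no substantive obstacle in the proof.
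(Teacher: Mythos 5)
Your proof is correct and follows essentially the same route as the paper: unwind the per-block inversion relation $x_k^s=\left[g_{\text{\tiny$\alpha(-f)$}}^s\right]^{-1}(x_k^{s-1})$ from $s=1$ to $s=p$, then collapse the chain of inverses into the inverse of the composed map via $(f\circ g)^{-1}=g^{-1}\circ f^{-1}$, exactly as in \eqref{PDefine g}. Your additional appeal to Lemma~\ref{Pg12Diff} to ensure the inverses are globally well defined is sound (the paper states this just before the lemma, not inside the proof), and your displayed composition $\left[g_{\text{\tiny$\alpha(-f)$}}^p\right]^{-1}\circ\cdots\circ\left[g_{\text{\tiny$\alpha(-f)$}}^1\right]^{-1}$ is correct, though the phrase ``reading left-to-right'' in your ordering remark should say order of application rather than order of appearance in the composition.
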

\begin{proof}
According to the PBCD method and \eqref{xks=giinv},
we have $x_k^0=x_k$
and
\begin{equation}\label{PDefine g}
 \begin{split}
x_{k+1}
&=
\left[g_{\text{\tiny$\alpha(-f)$}}^p\right]^{-1}
\circ
\left[g_{\text{\tiny$\alpha(-f)$}}^{p-1}\right]^{-1}
\circ
\cdots
\circ
\left[g_{\text{\tiny$\alpha(-f)$}}^2\right]^{-1}
\circ
\left[g_{\text{\tiny$\alpha(-f)$}}^1\right]^{-1}
\left(x_k\right)\\
&=
\left[g_{\text{\tiny$\alpha(-f)$}}^1
\circ
g_{\text{\tiny$\alpha(-f)$}}^2
\circ
\cdots
\circ
g_{\text{\tiny$\alpha(-f)$}}^{p-1}
\circ
g_{\text{\tiny$\alpha(-f)$}}^p\right]^{-1}
\left(x_k\right).
\end{split}
\end{equation}
Thus the proof is finished.\hfill
\hfill\end{proof}

By simple computation, the Jacobian of
 $g_{\alpha (-f)}^s$ is given by
 \begin{equation}\label{Jacob giP}
 Dg_{\text{\tiny$\alpha(-f)$}}^s(x)=I_n+\alpha \nabla^2 f(x)U_{s}U_s^T,
 \end{equation}
 where \begin{equation}
 \nabla^2 f(x)=
 \left(
 \frac{\partial^2 f(x)}{\partial x\text{\scriptsize$(s)$}
\partial x\text{\scriptsize$(t)$}}
\right)_{1\leq s,\, t\leq p}.
\end{equation}
Since $g_{\text{\tiny$\alpha(-f)$}}(x)$ is defined by
\eqref{Define g-f},  the chain rule implies that
the Jacobian of the mapping $g$ is
 \begin{equation}\label{-fChain Rule}
Dg_{\text{\tiny$\alpha(-f)$}}(x)
=
Dg_{\text{\tiny$\alpha(-f)$}}^p\left(y_p\right)
\times
Dg_{\text{\tiny$\alpha(-f)$}}^{p-1}\left(y_{p-1}\right)
\times
\cdots
\times
Dg_{\text{\tiny$\alpha(-f)$}}^2\left(y_2\right)
\times
Dg_{\text{\tiny$\alpha(-f)$}}^1\left(y_1\right),
\end{equation}
where $y_1=x$, and $y_s=g_{\text{\tiny$\alpha(-f)$}}^{s-1}(y_{s-1})$,
 $s=2, \ldots, p$.

 \subsection{{\bf The iterative mapping
 $\left[g_\text{\tiny$\alpha (-f)$}\right]^{-1}$  of the PBCD method is a
 diffeomorphism}}\label{PSubSecDiff}
 In this subsection, we first present the following
Lemma \ref{Pg12Diff} which shows that
$g_{\text{\tiny$\alpha (-f)$}}^s, s=1, \ldots, p,$
are diffeomorphisms. Based on this lemma, then we
further prove that $\left[g_\text{\tiny$\alpha (-f)$}\right]^{-1}$  is
a diffeomorphism as well.

\begin{lemma}\label{Pg12Diff}
If step size $\alpha<\frac{1}{L}$,  then the
mappings $g_{\text{\tiny$\alpha(-f)$}}^s$ defined
by \eqref{Degi},
$s=1,$ $\ldots, p,$   are diffeomorphisms.
\end{lemma}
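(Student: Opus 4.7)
The plan is to mirror the appendix-relegated proof of Lemma~\ref{g12Diff}, substituting $f$ by $-f$ throughout. Because the gradient of $-f$ has the same Lipschitz constant $L$ as $\nabla f$, the hypothesis $\alpha<\frac{1}{L}$ transfers verbatim, so each step of the earlier argument should go through without modification up to an overall sign flip.

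In more detail, I would proceed in three steps. First, smoothness: since $f\in C^2$ by Assumption~\ref{Assumption f}, the map $g_{\text{\tiny$\alpha(-f)$}}^s$ is automatically $C^1$, with Jacobian
\begin{equation*}
Dg_{\text{\tiny$\alpha(-f)$}}^s(x) = I_n + \alpha\,\nabla^2 f(x)\,U_sU_s^T
\end{equation*}
by a direct computation (cf.~\eqref{Jacob giP}). Second, Jacobian invertibility: note that $\nabla^2 f(x)U_sU_s^T$ and $U_s^T \nabla^2 f(x)U_s = A_{ss}(x)$ share the same nonzero spectrum; since $A_{ss}(x)$ is symmetric with $\|A_{ss}(x)\|\leq\|\nabla^2 f(x)\|\leq L$, its eigenvalues lie in $[-L,L]$. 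Consequently the eigenvalues of $\alpha\nabla^2 f(x)U_sU_s^T$ lie in $[-\alpha L,\alpha L]\subset(-1,1)$, so $Dg_{\text{\tiny$\alpha(-f)$}}^s(x)$ is nonsingular at every $x$, and by the inverse function theorem $g_{\text{\tiny$\alpha(-f)$}}^s$ is a local $C^1$-diffeomorphism.

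The third and crucial step is global bijectivity. Here I would exploit the block-separable structure of the update: $g_{\text{\tiny$\alpha(-f)$}}^s$ acts as the identity on blocks $t\neq s$ and, with those blocks frozen to their current values, updates block $s$ via $z\mapsto z+\alpha\,\nabla_s f(\ldots,z,\ldots)$. This partial map is the $z$-gradient of the auxiliary function $\Phi(z)\triangleq \tfrac{1}{2}\|z\|^2+\alpha\,f(\ldots,z,\ldots)$, whose Hessian $I_{n_s}+\alpha A_{ss}(\ldots)$ is uniformly positive definite with eigenvalues bounded below by $1-\alpha L>0$. Hence $\Phi$ is strongly convex and coercive, so $\nabla_z\Phi$ is a $C^1$-diffeomorphism of $\mathbb{R}^{n_s}$ onto itself by standard convex analysis. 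This forces $g_{\text{\tiny$\alpha(-f)$}}^s$ to be a bijection of $\mathbb{R}^n$; combined with the everywhere-invertible Jacobian from the previous step, the inverse function theorem then yields that $g_{\text{\tiny$\alpha(-f)$}}^s$ is a $C^1$-diffeomorphism for each $s=1,\ldots,p$.

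The main obstacle is precisely this last step: local invertibility of the Jacobian does not automatically imply global bijectivity on $\mathbb{R}^n$. The convex-analytic reformulation above sidesteps the issue cleanly by identifying the block-$s$ update with the gradient of a strongly convex, coercive potential. An alternative route would be a Hadamard-type properness argument based on the lower bound $\|g_{\text{\tiny$\alpha(-f)$}}^s(x)\|\geq(1-\alpha L)\|x\|-\alpha\|\nabla f(0)\|$, which shows that the map is proper, and then invoking the classical fact that a proper local diffeomorphism from $\mathbb{R}^n$ to itself is a covering map and hence, by simple connectedness, a global diffeomorphism.
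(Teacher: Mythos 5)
Your proof is correct and takes essentially the same approach as the paper. The paper's proof is a one-liner --- since $\nabla(-f)$ has the same Lipschitz constant $L$ as $\nabla f$, Lemma~\ref{g12Diff} applies verbatim with $f$ replaced by $-f$ --- and you simply unpack the corresponding appendix argument for $-f$, reformulating the paper's separate Lipschitz-injectivity and proximal-map-surjectivity steps as the single convex-analytic fact that $\nabla\Phi$ is a bijection for the strongly convex potential $\Phi(z)=\tfrac{1}{2}\|z\|^2+\alpha f(\ldots,z,\ldots)$.
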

\begin{proof}
Note that $L$ is also the Lipschitz constant
of gradient  of $-f$.
By replacing $-f$ with $f$, Lemma \ref{g12Diff} in
Subsection \ref{SubSecDiff} implies that
$g_{\text{\tiny$\alpha(-f)$}}^s$ is a diffeomorphism
with $\alpha\in\left(0, \frac{1}{L}\right)$, the proof is completed.
\hfill\end{proof}
\begin{proposition}\label{PgDiff}
The mapping
$\left[g_{\text{\tiny$\alpha (-f)$}}\right]^{-1}$
determined by \eqref{Define g-f} with step size
$\alpha<\frac{1}{L}$ is a diffeomorphism.
\end{proposition}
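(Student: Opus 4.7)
The plan is to mimic the proofs of Proposition \ref{gDiff} and Proposition \ref{psiDiff}, adapting them to the present setting where the iterative mapping is the \emph{inverse} of a composition of individual block mappings. The only new wrinkle is the final inversion step, but this is a triviality since inverses of diffeomorphisms are diffeomorphisms by definition.

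First, I would invoke Lemma \ref{Pg12Diff}, which states that each one-block mapping $g_{\text{\tiny$\alpha(-f)$}}^s$ ($s=1,\ldots,p$) is a diffeomorphism whenever $\alpha<\frac{1}{L}$. Then, applying Proposition 2.15 in \cite{Lee2013} (the same fact used in the proofs of Proposition \ref{gDiff} and Proposition \ref{psiDiff}), which asserts that the composition of two diffeomorphisms is a diffeomorphism, and iterating $p-1$ times on the composite mapping
\begin{equation*}
g_{\text{\tiny$\alpha(-f)$}}
=g_{\text{\tiny$\alpha(-f)$}}^1
\circ
g_{\text{\tiny$\alpha(-f)$}}^2
\circ
\cdots
\circ
g_{\text{\tiny$\alpha(-f)$}}^{p-1}
\circ
g_{\text{\tiny$\alpha(-f)$}}^p
\end{equation*}
defined in \eqref{Define g-f}, I conclude that $g_{\text{\tiny$\alpha(-f)$}}$ itself is a diffeomorphism from $\mathbb{R}^n$ to $\mathbb{R}^n$.

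Finally, since a diffeomorphism is by definition bijective with a smooth inverse, the inverse mapping $\left[g_{\text{\tiny$\alpha(-f)$}}\right]^{-1}$ is automatically a diffeomorphism as well, completing the proof. There is no genuine obstacle here; the heavy lifting has already been done in Lemma \ref{Pg12Diff} (which itself reduces to the proof of Lemma \ref{g12Diff} by replacing $f$ with $-f$, noting that $-f$ shares the same Lipschitz constant $L$ for its gradient). The entire argument should be only a few lines long.
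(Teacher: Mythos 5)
Your proof is correct and follows the same three-step route as the paper: invoke Lemma \ref{Pg12Diff} for the individual block mappings, compose them using Proposition 2.15 in \cite{Lee2013} to get that $g_{\text{\tiny$\alpha(-f)$}}$ is a diffeomorphism (the paper just says ``by a similar argument as in the proof of Proposition \ref{gDiff}''), and finish by noting that the inverse of a diffeomorphism is a diffeomorphism. No meaningful difference.
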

\begin{proof}
Note that Lemma \ref{Pg12Diff} implies that
$g_{\text{\tiny$\alpha(-f)$}}^s, s=1, \ldots, p,$  defined
by \eqref{Degi} are diffeomorphisms and
 $g_{\text{\tiny$\alpha (-f)$}}$ is defined
 \eqref{Define g-f} with $0<\alpha<\frac{1}{L}$.
By a similar argument as in the proof of Proposition
\ref{gDiff} in Subsection \ref{SubSecDiff}, we know
$g_{\text{\tiny$\alpha (-f)$}}$ is a diffeomorphism.
Thus its inverse is a diffeomorphism as well,
the proof is completed.
\hfill\end{proof}
\subsection{{\bf Eigenvalue analysis of the Jacobian of
$\left[g_\text{\tiny$\alpha (-f)$}\right]^{-1}$ at a strict saddle point}}
\label{SubSecJacobia P}
In this subsection, we consider the eigenvalues
 of the Jacobian of
 $\left[g_\text{\tiny$\alpha (-f)$}\right]^{-1}$
at a strict saddle point,  which is a crucial part in our
entire proof.\\

If  $x^*\in\mathbb{R}^n$ is a strict saddle point,
then $\nabla f(x^*)={\bf 0}$. Consequently,
\begin{equation}\label{x*Degi}
x^*=
\left[g_{\text{\tiny$\alpha(-f)$}}^s\right]^{-1}(x^*),
 \hspace*{3mm} s=1, \ldots, p,
\end{equation}
which implies that
\begin{equation}\label{xks=gi}
x^*=g_{\text{\tiny$\alpha(-f)$}}(x^*).
\end{equation}
More importantly, the above Eq. \eqref{xks=gi} implies that
the Jacobian of
$\left[g_{\text{\tiny$\alpha(-f)$}}\right]^{-1}$
at $x^*$ can be expressed as
\begin{equation}\label{xks=Degi}
D\left[g_{\text{\tiny$\alpha(-f)$}}\right]^{-1}(x^*)
=\left(Dg_{\text{\tiny$\alpha(-f)$}}(x^*)\right)^{-1},
\end{equation}
 which is due to inverse function theorem in
\cite{Spivak1965Calculus}.
Hence, in order to show that there is at least one eigenvalue
of $D\left[g_\text{\tiny$\alpha(-f)$}\right]^{-1}(x^*)$
whose magnitude is strictly greater than one, we first argue that $Dg_\text{\tiny$\alpha(-f)$}(x^*)$ still has a similar structure as that of $Dg_\text{\tiny$\alpha f$}(x^*)$ in Section \ref{BCGD Sec}.
%
%

Specifically, chain rule \eqref{-fChain Rule}
and Eq. \eqref{xks=gi} imply
\begin{equation}\label{DPgx*}
Dg_\text{\tiny$\alpha (-f)$}(x^*)
=Dg_\text{\tiny$\alpha (-f)$}^p(x^*)
\times
Dg_\text{\tiny$\alpha (-f)$}^{p-1}\left(x^*\right)
\times \cdots  \times
Dg_\text{\tiny$\alpha (-f)$}^{2}(x^*)\times
Dg_\text{\tiny$\alpha (-f)$}^1
\left(x^*\right).
\end{equation}
Moreover,  the eigenvalues of
$Dg_{\text{\tiny$\alpha(-f)$}}(x^*)$
are the same as those of its transpose. Hence, we compute
\begin{equation}\label{Dg-fx*T}
\begin{split}
&\left(Dg_\text{\tiny$\alpha (-f)$}(x^*)\right)^T
\\
&=\left(Dg_\text{\tiny$\alpha (-f)$}^1(x^*)\right)^T
\times
\left(Dg_\text{\tiny$\alpha (-f)$}^{2}(x^*)\right)^T
\times \cdots \times
\left(Dg_\text{\tiny$\alpha (-f)$}^{p-1}(x^*)\right)^T
\times
\left(Dg_\text{\tiny$\alpha (-f)$}^p(x^*)\right)^T
\\
&=
\left(I_n+\alpha U_1U_1^T \nabla^2 f(x^*)\right)
\times
\left(I_n+\alpha U_2U_2^T \nabla^2 f(x^*)\right)
\\
&\hspace*{4mm}\times \cdots\times
\left(I_n+\alpha U_{p-1}U_{p-1}^T \nabla^2 f(x^*)\right)
\times
\left(I_n+\alpha U_pU_p^T \nabla^2 f(x^*)\right),
\end{split}
\end{equation}
where the second equality is due to
\eqref{Jacob giP}.


Now we define
 \begin{equation}\label{Def H}
H\triangleq\frac{1}{\alpha}
\left[
I_n-
\left(Dg_\text{\tiny$\alpha (-f)$}(x^*)\right)^T
\right],
\end{equation}
or equivalently,
 \begin{equation}\label{DGT H1}
 \left(Dg_\text{\tiny$\alpha (-f)$}(x^*)\right)^T=
 I_n-\alpha H.
\end{equation}
The above relation \eqref{DGT H1} clearly means that
\begin{equation}\label{Eigrela P}
\lambda
 \in\mbox{eig}
 \left(H\right)\Leftrightarrow
 1-\alpha \lambda \in
  \mbox{eig}\left(\left(Dg_\text{\tiny$\alpha (-f)$}(x^*)\right)^T\right).
\end{equation}
For the sake of clarification,
we rewrite $A$ defined by \eqref{Def A}
 below:
\begin{equation}\label{Def A r}
A=\left(A_{st}\right)_{ 1\leq s,\, t\leq p},
\end{equation}
and  its $(s, t)$-th block is given by
\begin{equation}\label{Def A_ij r}
A_{st}=\frac{\partial^2 f(x^*)}
{\partial x^*\text{\scriptsize$(s)$}
\partial x^*\text{\scriptsize$(t)$}},
\hspace*{3mm} 1\leq s,\, t\leq p.
\end{equation}
Similarly, we denote the strictly block upper triangular matrix based
on $A$ as
 \begin{equation}\label{Def hat A}
 \hat{A}\triangleq
 \left(\hat{A}_{st}\right)_{1\leq s,\,t\leq p}
 \end{equation}
 with $p\times p$ blocks and its $(s, t)$-th
 block is given by
 \begin{equation}\label{Upper hat Ast}
\hat{A}_{st}
=\left\{
\begin{array}{lr}
    A_{st},  & s<t,\\
    \mathbf{0},& s\geq t.
  \end{array}
  \right.
\end{equation}

Based on the above notations, we are able to
obtain the following Lemma \ref{Property H}
which is similar to Lemma \ref{Property G1}
in Subsection \ref{BCGDSubSecJacobia}. It gives a simple expression
of $H$ in terms of $A$ and $\hat{A}$.
\begin{lemma}\label{Property H}
Let
$x^*\in\mathbb{R}^n$ be a strict saddle point. Assume that
$H$,
$A$ and $\hat{A}$ are defined by
\eqref{Def H}, \eqref{Def A r} and
\eqref{Def hat A}, respectively.
Then
\begin{equation}\label{H Form}
H=-\left(I_n- \alpha \hat{A}\right)^{-1}A.
\end{equation}
\end{lemma}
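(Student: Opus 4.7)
The plan is to mimic the derivation of Lemma \ref{Property G1}, adapting it to two structural differences between BCGD and PBCD. First, each factor in $(Dg_{\alpha(-f)}(x^*))^T$ now carries a plus sign, $I_n + \alpha U_s A_s$, because $g_{\alpha(-f)}^s$ moves along $+\alpha U_s \nabla_s f$ rather than $-\alpha U_s \nabla_s f$. Second, the composition in \eqref{Define g-f} places $g_{\alpha(-f)}^1$ on the outside, which is the opposite ordering from \eqref{Define g}; consequently, the block-triangular matrix that arises naturally is the strictly block upper triangular part $\hat A$ rather than the strictly block lower triangular part $\check A$. Matching the signs of \eqref{Def G} and \eqref{Def H}, these two changes together should produce an extra overall minus sign and convert $(I_n + \alpha \check A)^{-1} A$ from \eqref{G Form} into $-(I_n - \alpha \hat A)^{-1} A$.

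The first step is to establish the row-wise identity analogous to Lemma \ref{Property G}: for every $s \in \{1, \ldots, p\}$,
\begin{equation}\label{plan row H}
U_s^T H \;=\; -A_s \;+\; \alpha \sum_{t=s+1}^{p} A_{st}\, U_t^T H.
\end{equation}
To prove this, set $N_s \triangleq I_n + \alpha U_s A_s$, so that \eqref{Dg-fx*T} and \eqref{Def H} read $I_n - \alpha H = N_1 N_2 \cdots N_p$. I would left-multiply by $U_s^T$ and use $U_s^T U_t = \delta_{st} I_{n_s}$: the factors $N_1, \ldots, N_{s-1}$ each slide past $U_s^T$ unchanged, while $U_s^T N_s = U_s^T + \alpha A_s$. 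The remaining equality collapses to $U_s^T - \alpha U_s^T H = U_s^T + \alpha A_s N_{s+1} \cdots N_p$, giving $U_s^T H = -A_s N_{s+1} \cdots N_p$. Starting from $s = p$, where the tail is empty and the identity reduces to $U_p^T H = -A_p$, I would then perform a downward induction: at stage $s$, one factor of the tail is peeled off via $A_s N_{s+1} \cdots N_p = A_s N_{s+2} \cdots N_p + \alpha A_{s,s+1} A_{s+1} N_{s+2} \cdots N_p$, and the already-established identity at index $s+1$ rewrites $A_{s+1} N_{s+2} \cdots N_p = -U_{s+1}^T H$. Iterating this elimination yields $A_s N_{s+1} \cdots N_p = A_s - \alpha \sum_{t=s+1}^p A_{st} U_t^T H$, which is exactly \eqref{plan row H}.

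The second step is essentially mechanical. Partition $H$ into block rows $H_s = U_s^T H$ and let $\hat A_s$ denote the $s$-th block-row of $\hat A$ (defined exactly as in \eqref{Def check A_i} but with $\hat A$ in place of $\check A$). Then \eqref{plan row H} reads $H_s = -A_s + \alpha \hat A_s H$; stacking over $s = 1, \ldots, p$ gives $H = -A + \alpha \hat A H$, i.e., $(I_n - \alpha \hat A) H = -A$. Since $\hat A$ is strictly block upper triangular, $I_n - \alpha \hat A$ is unit block upper triangular and therefore invertible. Premultiplying by its inverse yields $H = -(I_n - \alpha \hat A)^{-1} A$, which is \eqref{H Form}.

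The main obstacle is the derivation of \eqref{plan row H}, and within it the downward induction that unfolds the tail $A_s N_{s+1} \cdots N_p$. The bookkeeping is delicate: each elimination of an $N_t$ contributes a term $-\alpha A_{st} U_t^T H$ whose sign arises from the interplay between the plus sign in $N_t$ and the minus sign already present in $U_t^T H = -A_t N_{t+1} \cdots N_p$, and the inductive hypothesis must be applied at \emph{larger} indices before it is used at $s$. Once \eqref{plan row H} is in hand, the remainder of the argument closely parallels the end of the proof of Lemma \ref{Property G1}.
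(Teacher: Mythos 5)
Your proof is correct and follows the route the paper intends: the paper's own proof of Lemma \ref{Property H} simply says ``similar to Lemma \ref{Property G1},'' which in turn rests on the row-wise identity of Lemma \ref{Property G}, and your \eqref{plan row H} is precisely the correct PBCD analogue (reversed ordering and opposite sign giving strictly block \emph{upper} triangular structure). The one presentational difference is that, rather than introducing auxiliary partial-product matrices $H[s]$ analogous to the $G[s]$ of Lemma \ref{Property G}'s appendix proof, you unroll the tail $A_s N_{s+1}\cdots N_p$ directly by downward induction on $s$; this is logically equivalent but arguably cleaner, since the key facts (sliding $U_s^T$ past $N_t$ for $t\neq s$, and re-absorbing $A_{s+1}N_{s+2}\cdots N_p = -U_{s+1}^T H$) are the same ones the paper's machinery encodes.
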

\begin{proof}
The proof is similar to that of
Lemma \ref{Property G1} in Subsection \ref{BCGDSubSecJacobia}.
\hfill\end{proof}

In what follows, we proceed now in a
way similar to the case of BCGD, although some
specific technical difficulties will occur in
the analysis.

The following proposition shows that the
Jacobian of the PBCD iterative mapping $\left[g_\text{\tiny$\alpha (-f)$}\right]^{-1}$ determined
by \eqref{Define g-f} at a strict saddle point
admits at least one eigenvalue whose magnitude is strictly
greater than one, which plays a  key role
in this subection.
\begin{proposition}\label{Jaco g-f eig}
Assume that PBCD iterative mapping
$\left[g_\text{\tiny$\alpha (-f)$}\right]^{-1}$ is determined
by \eqref{Define g-f} and
$x^*\in\mathbb{R}^n$ is a strict saddle point. Then
$D\left[g_{\text{\tiny$\alpha(-f)$}}\right]^{-1}(x^*)$ has at least one eigenvalue whose magnitude is
strictly greater than one.
\end{proposition}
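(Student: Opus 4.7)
The plan is to reduce the problem via the inverse function theorem to an eigenvalue‐localization question for $Dg_{\text{\tiny$\alpha(-f)$}}(x^*)$. By Eq.~\eqref{xks=Degi}, $D[g_{\text{\tiny$\alpha(-f)$}}]^{-1}(x^*)=(Dg_{\text{\tiny$\alpha(-f)$}}(x^*))^{-1}$, so producing an eigenvalue of magnitude strictly greater than one of the PBCD Jacobian is equivalent to producing an eigenvalue $\nu$ of $Dg_{\text{\tiny$\alpha(-f)$}}(x^*)$ with $|\nu|<1$. Combining \eqref{Eigrela P} with \eqref{DGT H1} further reduces this to exhibiting $\lambda\in\mbox{eig}(H)$ satisfying $|1-\alpha\lambda|<1$. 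Expanding $|1-\alpha\lambda|^{2}<1$ gives the equivalent inequality
\[
\mbox{Re}(\lambda)>\frac{\alpha|\lambda|^{2}}{2},
\]
which geometrically means $\lambda$ lies strictly inside the open disk of radius $1/\alpha$ centered at $1/\alpha$ on the real axis---a disk tangent to the imaginary axis at the origin and contained entirely in the open right half-plane. So the entire task boils down to localizing one eigenvalue of $H$ inside this $\alpha$-dependent disk.

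Next, I would substitute the simple form of $H$ provided by Lemma~\ref{Property H}, $H=-(I_n-\alpha\hat{A})^{-1}A$, and apply a block-reversal permutation $P$ (reversing the $p$ blocks). Under this conjugation $\hat{A}$ becomes the strictly block lower-triangular part of $A':=PAP^T$, eigenvalues are preserved, and $H$ becomes similar to $H'=-(I_n-\alpha\check{A'})^{-1}A'$. This exposes the structural analogy with the BCGD matrix $G=(I_n+\alpha\check{A})^{-1}A$ up to the sign switches $G\leftrightarrow -H'$ and $\alpha\leftrightarrow -\alpha$, and makes clear that the target region for $H$ is the image under $\lambda\mapsto-\lambda$ of the analogous region for $G$, matched to the reversed geometry. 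Since $A$ and $A'$ are congruent via $P$, the strict-saddle hypothesis transfers: $A'$ retains at least one strictly negative eigenvalue.

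The final and most delicate step is to produce such an eigenvalue of $H$. I would establish a PBCD analog of Lemma~\ref{Key Lemma BCGD} by applying Rouch\'e's theorem (the variant packaged as Lemma~\ref{Zero Lemma}) to the characteristic equation $\det(A+\lambda(I_n-\alpha\hat{A}))=0$. At $\alpha=0$ this reduces to $\det(A+\lambda I_n)=0$, whose roots are the negatives of the eigenvalues of $A$; the strict-saddle hypothesis then supplies at least one strictly positive real root $\lambda_0$, which trivially lies in the target disk for every $\alpha\in(0,1/L)$. The main obstacle is to promote this starting point into a persistence statement valid for every $\alpha\in(0,1/L)$: unlike the BCGD case, where the target region $\Omega$ is $\alpha$-independent and controlled purely by the sign of the real part, here the target disk depends on $\alpha$ and shrinks as $\alpha$ grows, so the Rouch\'e contour must be chosen $\alpha$-dependently and the bound $\alpha<1/\rho(A)$ has to be recovered from Assumption~\ref{Assumption f} via Lemma~7 of \cite{Panageas2016Gradient}. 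Once the eigenvalue is localized, the chain of equivalences $\mbox{Re}(\lambda)>\alpha|\lambda|^{2}/2\Leftrightarrow |1-\alpha\lambda|<1\Leftrightarrow |(1-\alpha\lambda)^{-1}|>1$ delivers the conclusion and gives the desired eigenvalue of $D[g_{\text{\tiny$\alpha(-f)$}}]^{-1}(x^*)$.
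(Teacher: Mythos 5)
Your proposal is correct and follows the paper's route: after the inverse-function-theorem reduction and the identification of $H$ via Lemma~\ref{Property H}, your disk criterion $\mbox{Re}(\lambda)>\alpha|\lambda|^{2}/2$ is exactly the condition $\mbox{Re}\bigl(1/(\alpha\lambda)\bigr)>1/2$ that the paper targets in Lemmas~\ref{Key Theorem BCGD P}--\ref{Key Theorem BCGD P Mod}, just rewritten through the M\"obius change of variables $\lambda\mapsto 1/(\alpha\lambda)$, and the Rouch\'e persistence you sketch (with the separation ingredient from Lemma~\ref{Re neq 0 Lem P}, the appeal to Lemma~7 of \cite{Panageas2016Gradient}, and the perturbation treatment for singular $A$) is precisely what Lemma~\ref{Key Theorem BCGD P} carries out. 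The block-reversal permutation is a harmless cosmetic aside that exposes the $\check{A}\!\leftrightarrow\!\hat{A}$ symmetry but does not let you reuse Lemma~\ref{Key Lemma BCGD} directly, since the target region is the $\alpha$-dependent disk rather than $\Omega$.
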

\begin{proof}
First, recall the Jacobian of
$\left[g_{\text{\tiny$\alpha(-f)$}}\right]^{-1}$
at $x^*$ can be expressed as
\begin{equation}\label{R xks=Degi}
D\left[g_{\text{\tiny$\alpha(-f)$}}\right]^{-1}(x^*)
=\left(Dg_{\text{\tiny$\alpha(-f)$}}(x^*)\right)^{-1}.
\end{equation}
Second, recall Eqs. \eqref{Def H}-\eqref{Eigrela P} as follows.
 \begin{equation}\label{Rcal Def H}
H\triangleq\frac{1}{\alpha}
\left[
I_n-
\left(Dg_\text{\tiny$\alpha (-f)$}(x^*)\right)^T
\right],
\end{equation}
or equivalently,
 \begin{equation}\label{Recal DGT G1}
 \left(Dg_\text{\tiny$\alpha (-f)$}(x^*)\right)^T=
 I_n-\alpha H.
\end{equation}
It follows from Lemma \ref{Property H} that  $H$ has the
following expression:
\begin{equation}\label{Re H Form}
\begin{split}
H&=-\left(I_n- \alpha \hat{A}\right)^{-1}A
=\left(I_n+\alpha\left(-\hat{A}\right)\right)^{-1}\left(-A\right),
\end{split}
\end{equation}
where $A$ and $\hat{A}$ are defined by
 \eqref{Def A r}
and \eqref{Def hat A}.
Combining Eqs. \eqref{R xks=Degi} and \eqref{Recal DGT G1}, we have
\begin{equation}\label{Recal Eigrela P}
\begin{split}
&\mbox{eig}\left(
D\left[g_{\text{\tiny$\alpha(-f)$}}
\right]^{-1}(x^*)
\right)
\\
&=\mbox{eig}\left(
\left\{
D\left[g_{\text{\tiny$\alpha(-f)$}}
\right]^{-1}(x^*)
\right\}^T
\right)
\\
&=\mbox{eig}\left(
\left\{
\left(Dg_{\text{\tiny$\alpha(-f)$}}(x^*)
\right)^T\right\}^{-1}
\right)
\\
&=\mbox{eig}\left(
\left(
I_n-\alpha H\right)^{-1}
\right)
\\
&=\mbox{eig}\left(
\left\{
I_n-\alpha
\left[I_n+\alpha\left(-\hat{A}\right)\right]^{-1}
\left(-A\right)
\right\}^{-1}
\right),
\end{split}
\end{equation}
where the last equality is due to \eqref{Re H Form}.

Since $A=\nabla^2 f\left(x^*\right)$ and $x^*$ is a strictly
saddle point, $A$ has at least one negative eigenvalue. Hence,
$-A$ has at least one positive eigenvalue.
Consequently, by applying Lemma \ref{Key Theorem BCGD P Mod}
with identifications $-A\sim B$, $-\hat{A}\sim\hat{B}$,
$\alpha\sim \beta$ and  $\rho(A)\sim \rho(B)$, we know
, for any $\alpha \in \left(0, \frac{1}{\rho(A)}\right)$, there exists at least one eigenvalue $\lambda$
of $\left\{I_n-\alpha
\left[I_n+\alpha\left(-\hat{A}\right)\right]^{-1}\left(-A\right)
\right\}^{-1}$,
whose magnitude is strictly greater than one.

Furthermore, Lemma 7 in \cite{Panageas2016Gradient} implies
that gradient Lipschitz  continuous constant
 $L\geq \rho(A)=\rho\left(\nabla f (x^*)\right)$, which leads to
$\alpha \in \left(0, \frac{1}{L}\right)
\subseteq\left(0, \frac{1}{\rho(A)}\right)$.
Then, the above arguments and  \eqref{Recal Eigrela P} imply
that the proposition holds true.
\hfill\end{proof}

\subsection{\bf Main results of PBCD}

We first introduce the following proposition, which asserts that
the limit point of the sequence generated by the
PBCD method \ref{PBCD 5.1} is a critical point of $f$.
\begin{proposition}\label{limit=critical point PBCD}
Under Assumption \ref{Assumption f}, if
  $\left\{x_k\right\}_{k\geq 0}$
   is generated by the PBCD method \ref{PBCD 5.1}
with $0<\alpha<\frac{1}{L}$, $\lim\limits_{k}{x_k}$
exists and denote it as $x^*$, then $x^*$ is a
critical point of $f$, i.e., $\nabla f(x^*)
={\bf 0}$.
\end{proposition}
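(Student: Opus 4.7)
The plan is to mirror the clean two-step argument already used for BCGD in Proposition~\ref{limit=critical point BCGD}. First I would promote the ordinary iteration $x_{k+1}=\Phi(x_k)$, where $\Phi := [g_{\text{\tiny$\alpha(-f)$}}]^{-1}$, into a statement about fixed points. Since $\Phi$ is a diffeomorphism by Proposition~\ref{PgDiff} (hence in particular continuous), the convergence $x_k\to x^*$ forces
\[
\Phi(x^*)=\lim_k\Phi(x_k)=\lim_k x_{k+1}=x^*.
\]
Applying $g_{\text{\tiny$\alpha(-f)$}}$ to both sides yields the equivalent but more tractable identity $g_{\text{\tiny$\alpha(-f)$}}(x^*)=x^*$.

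Second, I would unfold the composition $g_{\text{\tiny$\alpha(-f)$}} = g_{\text{\tiny$\alpha(-f)$}}^{1}\circ\cdots\circ g_{\text{\tiny$\alpha(-f)$}}^{p}$ from \eqref{Define g-f}. Setting $y_p := x^*$ and recursively
\[
y_{s-1} := g_{\text{\tiny$\alpha(-f)$}}^{s}(y_s) = y_s + \alpha U_s \nabla_s f(y_s), \qquad s=p,p-1,\ldots,1,
\]
the fixed-point equation $g_{\text{\tiny$\alpha(-f)$}}(x^*)=x^*$ reads exactly $y_0=x^*$. Telescoping consecutive differences,
\[
\mathbf{0}=y_0-y_p=\sum_{s=1}^{p}(y_{s-1}-y_s)=\alpha\sum_{s=1}^{p}U_s\nabla_s f(y_s).
\]
Because $U_s\nabla_s f(y_s)$ is supported only in the $s$-th block of $\mathbb{R}^n$, the block-by-block vanishing of this sum forces $\nabla_s f(y_s)=\mathbf{0}$ for every $s=1,\ldots,p$. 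Substituting back into the recursion collapses all intermediate points to $y_0=y_1=\cdots=y_p=x^*$, whence $\nabla_s f(x^*)=\mathbf{0}$ for each $s$, i.e., $\nabla f(x^*)=\mathbf{0}$.

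I do not anticipate a serious obstacle here. The diffeomorphism property of $\Phi$ and the block-column structure of $U_s$ (which guarantees that the telescoped sum decomposes cleanly into disjoint blocks) do all of the work. The only adaptation relative to BCGD is bookkeeping: for PBCD the iteration itself runs $\Phi=[g_{\text{\tiny$\alpha(-f)$}}]^{-1}$ rather than $g_{\text{\tiny$\alpha(-f)$}}$, but passing to the inverse diffeomorphism turns the fixed-point question into exactly the same telescoping exercise.
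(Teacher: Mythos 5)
Your proposal is correct and follows the same line as the paper: pass through continuity of $[g_{\text{\tiny$\alpha(-f)$}}]^{-1}$ to get $x^*$ as a fixed point, transfer to a fixed point of $g_{\text{\tiny$\alpha(-f)$}}$, and then read off $\nabla f(x^*)=\mathbf 0$ from the block structure. The paper compresses the final step into ``it follows easily from the definition of $g_{\text{\tiny$\alpha(-f)$}}$''; your telescoping identity $\mathbf 0 = \alpha\sum_{s=1}^p U_s\nabla_s f(y_s)$, together with the disjoint block supports of the $U_s$ and the ensuing collapse $y_0=\cdots=y_p=x^*$, is exactly the argument that fills in that gap.
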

\begin{proof}
Notice that $\left\{x_k\right\}_{k\geq 0}$ is generated
by the PBCD method \ref{PBCD 5.1}.
We clearly have $x_k=\left\{
\left[g_{\text{\tiny$\alpha(-f)$}}\right]^{-1}
\right\}
^k(x_0)
\footnote
{$\left\{
\left[g_{\text{\tiny$\alpha(-f)$}}\right]^{-1}
\right\}
^k$
denotes the composition of
$
\left[g_{\text{\tiny$\alpha(-f)$}}\right]^{-1}
$
with itself $k$ times.}
$,
where $\left[g_{\text{\tiny$\alpha(-f)$}}\right]^{-1}$
denotes the inverse
mapping of $g_{\text{\tiny$\alpha(-f)$}}$
  defined by \eqref{Define g-f}. Hence,
  $\lim\limits_kx_k=\lim\limits_k
\left\{
\left[g_{\text{\tiny$\alpha(-f)$}}\right]^{-1}
\right\}
^k(x_0)=x^*$. Since
$\left[g_{\text{\tiny$\alpha(-f)$}}\right]^{-1}$
is a  diffeomorphism,
we immediately know that $x^*$ is a
fixed point of
$\left[g_{\text{\tiny$\alpha(-f)$}}\right]^{-1}$.
Equivalently, it is a fixed point of
$g_{\text{\tiny$\alpha(-f)$}}$.
It follows easily from
the definition \eqref{Define g-f} of
$g_{\text{\tiny$\alpha(-f)$}}$ that
$\nabla f(x^*)={\bf 0}$. Thus the proof is finished. \hfill
\end{proof}

Armed with the results established in  previous
subsections and the above Proposition \ref{limit=critical point PBCD}, we now state and prove our
main theorem of PBCD, whose proof is similar to that of
Theorem \ref{Main Theorem BCGD} in Subsection
\ref{Main Subsec of BCGD}. However, its proof is still
given as follows in detail for the sake of completeness.
\begin{theorem}\label{Main Theorem PBCD}
Let $f$ be a $C^2$ function and $x^*$ be a strict
saddle point. If  $\left\{x_k\right\}_{k\geq0}$ is generated
by the PBCD method \ref{PBCD 5.1} with $0<\alpha<\frac{1}{L}$, then
$$\mathbb{P}_\nu\left[\lim_{k}{x_k}=x^*\right]=0.$$
\end{theorem}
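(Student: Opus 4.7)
The plan is to follow exactly the same pipeline as in the proofs of Theorem \ref{Main Theorem BCGD} and Theorem \ref{Main Theorem BMD}, since all three results reduce to the abstract machinery of the Stable Manifold Theorem once three ingredients are in place: (a) limit points of the iteration must be critical points; (b) the one-step iterative mapping is a diffeomorphism; (c) its Jacobian at a strict saddle has at least one eigenvalue of modulus strictly greater than one. In the present setting these ingredients are Proposition \ref{limit=critical point PBCD}, Proposition \ref{PgDiff}, and Proposition \ref{Jaco g-f eig}, respectively, all of which have already been established earlier in Section \ref{PBCD sec}.

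First I would reduce the event $\{\lim_k x_k = x^*\}$ to a measure computation. By Proposition \ref{limit=critical point PBCD} any limit of the PBCD iterates is a critical point of $f$, so it is enough to evaluate $\mathbb{P}_\nu$ on the global stable set $W^s(x^*)$ of Definition \ref{Global Stable Set}, where the underlying iterative mapping is $\left[g_{\text{\tiny$\alpha(-f)$}}\right]^{-1}$ from \eqref{Def g-f inv}. Since $\nu$ is absolutely continuous with respect to Lebesgue measure, it suffices to show that $W^s(x^*)$ has Lebesgue measure zero.

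Next I would invoke the Stable Manifold Theorem \ref{Sabl Mani Theorem}, taking the local diffeomorphism $\phi$ to be $\left[g_{\text{\tiny$\alpha(-f)$}}\right]^{-1}$ and the fixed point to be $x^*$; legality of this substitution is guaranteed by Proposition \ref{PgDiff}, which tells us that $\left[g_{\text{\tiny$\alpha(-f)$}}\right]^{-1}$ is globally (and hence locally) a $C^1$ diffeomorphism on $\mathbb{R}^n$ whenever $0<\alpha<\frac{1}{L}$. Proposition \ref{Jaco g-f eig} supplies an eigenvalue of $D\left[g_{\text{\tiny$\alpha(-f)$}}\right]^{-1}(x^*)$ with modulus strictly larger than one, so the unstable subspace $E_u$ is nontrivial and the local center-stable manifold $W^s_{\mathrm{loc}}(x^*)$ is an embedded $C^1$ disc of codimension at least one. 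In particular $W^s_{\mathrm{loc}}(x^*)$ has Lebesgue measure zero.

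Finally I would globalize, by observing that $W^s(x^*)=\bigcup_{k\geq 0}\left[g_{\text{\tiny$\alpha(-f)$}}^{-1}\right]^{-k}\!\bigl(W^s_{\mathrm{loc}}(x^*)\bigr)$: every trajectory converging to $x^*$ enters and then stays in the local ball around $x^*$ guaranteed by Theorem \ref{Sabl Mani Theorem}. Because each iterate of a diffeomorphism maps Lebesgue-null sets to Lebesgue-null sets, this countable union is Lebesgue-null, so $\mathbb{P}_\nu[W^s(x^*)]=0$. This is precisely the ``same arguments as in \cite{JLeeandMSimchowitz2016}'' step used to close the proofs of Theorem \ref{Main Theorem BCGD} and Theorem \ref{Main Theorem BMD}, and there is no new obstacle here: all heavy lifting (diffeomorphism property and the eigenvalue bound, which are specific to the proximal fixed-point mapping and required the Rouche-type argument through Lemma \ref{Property H} and Lemma \ref{Key Theorem BCGD P Mod}) has already been absorbed into Propositions \ref{PgDiff} and \ref{Jaco g-f eig}.
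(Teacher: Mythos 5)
Your proposal follows exactly the pipeline used in the paper's own proof: reduce via Proposition \ref{limit=critical point PBCD} to computing the measure of $W^s(x^*)$, apply the Stable Manifold Theorem \ref{Sabl Mani Theorem} with $\phi=\left[g_{\text{\tiny$\alpha(-f)$}}\right]^{-1}$ using Propositions \ref{PgDiff} and \ref{Jaco g-f eig}, and conclude with the countable-union globalization argument from Lee et al. The only difference is that you spell out the globalization step that the paper leaves as ``the same arguments as in Lee et al.,'' which is a faithful unpacking rather than a departure.
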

\begin{proof}
First, Proposition \ref{limit=critical point PBCD} implies that, if
$\lim\limits_{k}{x_k}$ exists then
it must be a critical point. Hence, we consider calculating the Lebesgue measure
(or probability with respect to the prior measure $\nu$)
of the set $\left[\lim\limits_{k}{x_k}=x^*\right]
 =W^s\left(x^*\right)$ (see Definition
 \ref{Global Stable Set}).

 Second, since Proposition \ref{PgDiff} means
 PBCD iterative mapping
 $\left[g_\text{\tiny$\alpha (-f)$}\right]^{-1}$  is
 a diffeomorphism, we replace $\phi$ and fixed point
 by $\left[g_\text{\tiny$\alpha (-f)$}\right]^{-1}$
 and the strict saddle point $x^*$ in
 the above Stable Manifold  Theorem
\ref{Sabl Mani Theorem} in Subsection \ref{Main Subsec of BCGD}, respectively.
Then  the manifold $W_{loc}^{s}(x^*)$
has strictly positive codimension because of
Proposition \ref{Jaco g-f eig} and $x^*$ being a strict
saddle point. Hence, $W_{loc}^{s}(x^*)$ has measure zero.

In what follows,
we are able to apply the same arguments
as in \cite{JLeeandMSimchowitz2016}
to finish the proof of the theorem. Since
the proof follows a similar pattern,
it is therefore omitted.
\hfill\end{proof}

Given the above Theorem \ref{Main Theorem PBCD},
we immediately  obtain the following
Theorem \ref{key theorem PBCD} and its
corollary by the same arguments as in the proofs
of Theorem 2 and Corollary 12 in
 \cite{Panageas2016Gradient}.
Therefore, we omit their proofs.

\begin{theorem}[Non-isolated]\label{key theorem PBCD}
Let $f: \mathbb{R}^n\rightarrow\mathbb{R}$ be a twice
continuously differentiable function
and $\sup\limits_{x\in\mathbb{R}^n}\left\|
\nabla f(x)\right\|^2<\infty$. The set of initial
conditions $x\in \mathbb{R}^n$ so that the PBCD
method \ref{PBCD 5.1} with step size
$0<\alpha<\frac{1}{L}$ converges to a strict saddle
point is of (Lebesgue) measure zero,
without assumption that critical points are isolated.
\end{theorem}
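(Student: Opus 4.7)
The plan is to mimic, step by step, the argument that Panageas and Piliouras used to upgrade Theorem 4 of Lee--Simchowitz to their Theorem 2 and Corollary 12, replacing the gradient-descent iterate with the PBCD iterate $T \triangleq [g_{\text{\tiny$\alpha(-f)$}}]^{-1}$. Everything we need has already been built in the preceding subsections: Proposition \ref{PgDiff} says $T$ is a $C^1$ diffeomorphism, Proposition \ref{Jaco g-f eig} says $DT(x^*)$ has an eigenvalue of modulus $>1$ at every strict saddle $x^*$, and Proposition \ref{limit=critical point PBCD} says every limit point of $\{T^k(x_0)\}$ is a critical point of $f$. So the remaining task is purely set-theoretic/measure-theoretic: cover the (possibly uncountable) set of strict saddles by a \emph{countable} collection of local stable manifolds, and use the diffeomorphism property of $T$ to pull back through the iterates.

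First I would fix a strict saddle $x^*$ and apply the Center-Stable Manifold Theorem \ref{Sabl Mani Theorem} with $\phi = T$. Since Proposition \ref{Jaco g-f eig} guarantees that $DT(x^*)$ has at least one eigenvalue of modulus strictly greater than one, the generalized eigenspace $E_u$ of $DT(x^*)$ is nontrivial, so the local center-stable manifold $W^{sc}_{loc}(x^*)$ has codimension $\geq 1$ in $\mathbb{R}^n$ and hence has Lebesgue measure zero. The theorem also produces an open ball $B_{x^*}$ around $x^*$ (adapted norm) with the property that
\begin{equation*}
\bigcap_{k\geq 0} T^{-k}(B_{x^*}) \subset W^{sc}_{loc}(x^*).
\end{equation*}
In particular, any trajectory that converges to $x^*$ must eventually enter $B_{x^*}$ and remain there, so its initial point lies in $\bigcup_{k\geq 0} T^{-k}\!\bigl(W^{sc}_{loc}(x^*)\cap B_{x^*}\bigr)$; by the diffeomorphism property of $T$ (hence of each $T^{-k}$) this is a countable union of sets of Lebesgue measure zero, so it is itself null.

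Next, following Panageas--Piliouras, I would remove the isolation assumption by exploiting second countability of $\mathbb{R}^n$. The collection $\{B_{x^*} : x^* \text{ is a strict saddle}\}$ is an open cover of the set $\mathcal{S}$ of all strict saddles, and every open cover of a subset of $\mathbb{R}^n$ admits a countable subcover (Lindel\"of property). Pick such a countable subcover $\{B_{x^*_j}\}_{j\geq 1}$. Any $x_0$ whose trajectory converges to \emph{some} strict saddle $x^*$ must satisfy $T^k(x_0) \in B_{x^*_j}$ for all large $k$ and some $j$, so
\begin{equation*}
\bigl\{x_0 : \lim_k T^k(x_0)\in \mathcal{S}\bigr\} \;\subset\; \bigcup_{j\geq 1}\bigcup_{k\geq 0} T^{-k}\!\bigl(W^{sc}_{loc}(x^*_j)\cap B_{x^*_j}\bigr),
\end{equation*}
which is a countable union of measure-zero sets and therefore of measure zero. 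Combined with Proposition \ref{limit=critical point PBCD}, this yields the theorem.

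The main obstacle, to the extent there is one, is verifying that all three ingredients from Panageas--Piliouras transplant cleanly to the PBCD setting under the weaker hypothesis $\sup_x\|\nabla f(x)\|^2<\infty$ (in place of global Lipschitz gradient). Proposition \ref{Jaco g-f eig} was proved assuming $\alpha<1/L$ where $L$ is the Lipschitz constant of $\nabla f$; to use it here, one invokes Lemma 7 of \cite{Panageas2016Gradient}, which reduces the requirement to a bound on $\rho(\nabla^2 f(x^*))$ at each critical point, and this bound is supplied by the gradient-norm hypothesis after the appropriate local argument. Once that local spectral bound is in hand, the remainder of the proof is routine countable-union bookkeeping and is genuinely identical to the gradient-descent case; for this reason omitting it, as the authors do, is justifiable.
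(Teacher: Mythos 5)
Your proof is essentially the argument the paper invokes by reference to Theorem 2 of \cite{Panageas2016Gradient}: at each strict saddle, apply the center-stable manifold theorem with the PBCD map $T=[g_{\text{\tiny$\alpha(-f)$}}]^{-1}$ (using Propositions \ref{PgDiff} and \ref{Jaco g-f eig} to guarantee a diffeomorphism with an unstable eigendirection), then extract a countable subcover by the Lindel\"of property of $\mathbb{R}^n$, and conclude that the union of all pullbacks of the local center-stable manifolds is a countable union of null sets. The paper omits the details precisely because they are identical to Panageas--Piliouras once the two propositions are in hand, so the structure and the key countability step are the same.

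One correction to your closing paragraph, though: a global bound on $\|\nabla f\|$ does \emph{not} imply a bound on $\rho(\nabla^2 f)$, locally or otherwise (consider $f(x)=\sin(x^2)$ on a neighborhood of $0$, whose gradient is bounded there but whose second derivative is not as you move out), so the sentence ``this bound is supplied by the gradient-norm hypothesis after the appropriate local argument'' is not a valid step. What actually happens is that the stated hypothesis $\sup_x\|\nabla f(x)\|^2<\infty$ is an apparent typo for the Hessian bound $\sup_x\|\nabla^2 f(x)\|\le L$ (compare the corresponding statements for BCGD and BMD in this paper, and Theorem 2 of Panageas--Piliouras, all of which intend the Hessian), which is exactly what Assumption \ref{Assumption f} gives via Lipschitz continuity of $\nabla f$. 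Under that reading there is nothing to reconcile: Proposition \ref{Jaco g-f eig} applies directly because $L\ge\rho(\nabla^2 f(x^*))$ at every critical point by Lemma 7 of \cite{Panageas2016Gradient}. You should replace the hand-waving with the observation that the hypothesis is the Hessian bound, not try to derive a Hessian bound from a gradient bound.
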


A straightforward corollary of Theorem
\ref{key theorem PBCD} is given below:
\begin{corollary}\label{key corolary PBCD}
Assume that the conditions of Theorem
\ref{key theorem PBCD}
 are satisfied and all saddle points of $f$
are strict. Additionally, 
the composite iterative mapping
$\left[g_{\text{\tiny$\alpha(-f)$}}\right]^{-1}$
denotes the inverse mapping of $g_{\text{\tiny$\alpha(-f)$}}$,
assume
$\lim\limits_k
\left\{
\left[g_{\text{\tiny$\alpha(-f)$}}\right]^{-1}
\right\}
^k
(x)$
exists for all $x$ in $\mathbb{R}^n$. Then
 $$
\mathbb{P}_\nu
\left[\lim\limits_k
\left\{
\left[g_{\text{\tiny$\alpha(-f)$}}\right]^{-1}
\right\}
^k
(x)=x^*\right]=1,$$
where $\left[g_{\text{\tiny$\alpha(-f)$}}\right]^{-1}$
denotes the inverse
mapping of $g_{\text{\tiny$\alpha(-f)$}}$
  defined by \eqref{Define g-f} and
$x^*$ is a local minimum.
\end{corollary}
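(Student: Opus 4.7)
The plan is to derive this corollary directly from Theorem \ref{key theorem PBCD}, combined with Proposition \ref{limit=critical point PBCD} and the structural hypothesis that every non-minimum critical point is a strict saddle. No new machinery is needed, which is exactly why the paper omits the proof and simply refers to Corollary 12 in \cite{Panageas2016Gradient}; my job is to sketch why that transfer is immediate.

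First, I would use the standing assumption that $\lim_k \{[g_{\text{\tiny$\alpha(-f)$}}]^{-1}\}^k(x)$ exists for every $x\in\mathbb{R}^n$, together with Proposition \ref{limit=critical point PBCD}, to conclude that this limit is a critical point of $f$ for every starting point $x$. Thus the map $x\mapsto \lim_k \{[g_{\text{\tiny$\alpha(-f)$}}]^{-1}\}^k(x)$ is everywhere defined and takes values in the critical set $C$, and it suffices to show that for $\mathbb{P}_\nu$-almost every $x$ the limit lies in the subset of $C$ consisting of local minima.

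Next, by the hypothesis "all saddle points of $f$ are strict", $C$ decomposes as the disjoint union of the set of local minima and the set of strict saddles in the sense of Definition 2.2. Theorem \ref{key theorem PBCD} asserts that the set of initial conditions $x$ whose PBCD trajectory converges to some strict saddle point has Lebesgue measure zero, and since $\nu$ is absolutely continuous with respect to Lebesgue measure this event has $\mathbb{P}_\nu$-measure zero as well. Taking the complement inside the (full-probability) event that the limit is a critical point yields $\mathbb{P}_\nu[\lim_k \{[g_{\text{\tiny$\alpha(-f)$}}]^{-1}\}^k(x)=x^*]=1$ with $x^*$ a local minimum, which is the claimed identity.

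The only point at which any subtlety enters is the potential uncountability of the set of strict saddles: to pass from the local, per-saddle measure-zero statement furnished by the stable manifold theorem to a global measure-zero statement, one must invoke the open-cover / countable-subcover argument for $\mathbb{R}^n$ used in \cite{Panageas2016Gradient}. However, this reduction has already been absorbed into Theorem \ref{key theorem PBCD} itself, so at the level of the present corollary there is no real obstacle; the deduction is a one-line complement-taking argument on top of the preceding theorem.
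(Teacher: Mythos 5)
Your proposal is correct and follows essentially the same route the paper intends: the paper omits the proof and points to the argument for Corollary 12 in \cite{Panageas2016Gradient}, which is precisely the complement-taking argument you sketch (limit exists and is a critical point by Proposition \ref{limit=critical point PBCD}; strict-saddle basins are $\nu$-null by Theorem \ref{key theorem PBCD}; hence the limit is almost surely a local minimum). Your remark that the countable-subcover reduction is already absorbed into Theorem \ref{key theorem PBCD}, so nothing new is needed at the corollary level, is exactly the right observation.
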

\section{Several Technical Lemmas}\label{Several Lemmas}
In what follows, we will provide several technical
lemmas (Lemmas \ref{Bound etaecheckBeta}--
\ref{Key Theorem BCGD P Mod}), which provide the basis for proving that Jacobian of iterative mappings including $g_\text{\tiny$\alpha f$}$ defined by \eqref{Define g}
  in Section \ref{BCGD Sec}, $\psi$ defined by \eqref{Def Psi} in Section \ref{BMD sec} and $\left[g_{\text{\tiny$\alpha(-f)$}}\right]^{-1}$ defined by \eqref{Def g-f inv} in Section \ref{PBCD sec}
  at a strict saddle point has at least one eigenvalue with magnitude strictly
greater than one.
In particular, Lemma \ref{Key Lemma BCGD} gives a sufficiently exact
description of the distribution of the eigenvalues of matrix with the kind of
structure as $G$ which is related to $g_\text{\tiny$\alpha f$}$ and $\psi$,
while Lemmas \ref{Key Theorem BCGD P} and \ref{Key Theorem BCGD P Mod} give the similar description
 of that of the eigenvalues of matrix with the kind of
structure as $H$ which is related to $g_\text{\tiny$\alpha (-f)$}$.

Before presenting the grand result of this subsection, we first
introduce two notations which will be used in what follows.
Assume
$B\in\mathbb{R}^{n\times n}$ is symmetric matrix with $p\times p$ blocks.
Specifically,
\begin{equation}\label{Def B}
B\triangleq\left(B_{st}\right)_{1\leq s, \,t\leq p},
\end{equation}
and  its $(s, t)$-th block
\begin{equation}\label{Def B_ij}
B_{st}\in \mathbb{R}^{n_s\times n_t},\hspace*{3mm} 1\leq s,\, t\leq p,
\end{equation}
where $n_1$, $n_2$, $\ldots$, $n_p$ are $p$ positive
integer numbers satisfying
$\sum\limits_{s=1}^{p}n_s=n$.
In addition, we denote the strictly block lower triangular matrix
based on $B$ as
 \begin{equation}\label{Def bar B}
 \check{B}\triangleq
 \left(\check{B}_{st}\right)_{1\leq s,\,t\leq p}
 \end{equation}
 with $p\times p$ blocks and its $(s, t)$-th block is given by
 \begin{equation}\label{LsigmabarB}
\check{B}_{st}
=\begin{cases}
\begin{array}{lr}
    B_{st},      & s>t,\\
    \mathbf{0},  & s\leq t.
  \end{array}
  \end{cases}
\end{equation}
Similarly, we denote the strictly block upper triangular matrix
based on $B$ as
 \begin{equation}\label{hat B}
 \hat{B}\triangleq
 \left(\hat{B}_{st}\right)_{1\leq s,\,t\leq p}
 \end{equation}
 with $p\times p$ blocks and its $(s, t)$-th block is given by
 \begin{equation}\label{hat Bst}
\hat{B}_{st}
=\left\{
\begin{array}{lr}
    B_{st},      & s<t,\\
    \mathbf{0},  & s\geq t.
  \end{array}
  \right.
\end{equation}

Given the above notations, then we have the following
Lemma \ref{Bound etaecheckBeta}, which asserts that, for
any unit vector $\eta\in \mathbb{C}^n$,
the real parts of $\eta^H\check{B}\eta$
and $\eta^H\hat{B}\eta$ are both bounded
by the spectral radius of $B$.

\begin{lemma}\label{Bound etaecheckBeta}
Assume that $B$, $\check{B}$ and $\hat{B}$ are defined by
\eqref{Def B}, \eqref{Def bar B} and \eqref{hat B}, respectively.
Then for an arbitrary $n$ dimensional vector
$\eta\in \mathbb{C}^n$ with $\|\eta\|=1$, we have
\begin{equation}\label{leq Re ch B geq}
 -\rho(B) \leq
 \mbox{Re}\left(\eta^H\check{B}\eta\right)
 \leq \rho(B),
\end{equation}
and
\begin{equation}\label{leq Re ch D geq}
 -\rho(B)\leq
 \mbox{Re}\left(\eta^H\hat{B}\eta\right)
 \leq \rho(B).
\end{equation}
\end{lemma}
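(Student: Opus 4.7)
My plan is a short symmetry argument combined with a Rayleigh-quotient bound. The key observation is that for any $\eta \in \mathbb{C}^n$ and any real matrix $M$, one has $\mbox{Re}(\eta^H M \eta) = \tfrac{1}{2}\eta^H (M + M^T)\eta$, so the two claims really only involve the symmetric parts of $\check{B}$ and $\hat{B}$. Since $B$ is symmetric, $B_{st} = B_{ts}^T$, which forces $\check{B}^T = \hat{B}$. Consequently
\begin{equation}
\check{B} + \check{B}^T \;=\; \check{B} + \hat{B} \;=\; B - D, \qquad D \triangleq \mbox{Diag}(B_{11}, B_{22}, \ldots, B_{pp}),
\end{equation}
and likewise $\hat{B} + \hat{B}^T = B - D$. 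This collapses both inequalities \eqref{leq Re ch B geq} and \eqref{leq Re ch D geq} to the single estimate $|\eta^H(B-D)\eta| \leq 2\rho(B)$.

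The next step is to bound $\eta^H B \eta$ and $\eta^H D \eta$ separately by $\rho(B)$. Since $B$ is real symmetric with spectral radius $\rho(B)$, one has $-\rho(B) I_n \preceq B \preceq \rho(B) I_n$, giving $|\eta^H B \eta| \leq \rho(B) \|\eta\|^2 = \rho(B)$. To control $\eta^H D \eta$, I write $\eta = (\eta_1^T, \ldots, \eta_p^T)^T$ with $\eta_s \in \mathbb{C}^{n_s}$, so that $\eta^H D \eta = \sum_{s=1}^p \eta_s^H B_{ss} \eta_s$. Each diagonal block $B_{ss}$ is a real symmetric principal submatrix of $B$, so restricting the inequality $-\rho(B)I_n \preceq B \preceq \rho(B)I_n$ to the corresponding coordinate subspace (equivalently, applying Cauchy's interlacing theorem) yields $-\rho(B) I_{n_s} \preceq B_{ss} \preceq \rho(B) I_{n_s}$, whence $|\eta_s^H B_{ss} \eta_s| \leq \rho(B)\|\eta_s\|^2$ for every $s$. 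Summing over $s$ and using $\sum_s \|\eta_s\|^2 = \|\eta\|^2 = 1$ gives $|\eta^H D \eta| \leq \rho(B)$.

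Combining the two bounds via the triangle inequality yields
\begin{equation}
\bigl|\mbox{Re}(\eta^H \check{B} \eta)\bigr| \;=\; \tfrac{1}{2}\bigl|\eta^H(B-D)\eta\bigr| \;\leq\; \tfrac{1}{2}\bigl(\rho(B) + \rho(B)\bigr) \;=\; \rho(B),
\end{equation}
and the identical bound for $\hat{B}$ follows from the same identity $\hat{B} + \hat{B}^T = B - D$. The argument is essentially routine; the only mildly delicate point is recognizing that the symmetry of $B$ forces $\hat{B} = \check{B}^T$, which both collapses the two inequalities into a single computation and lets the interlacing estimate on the block-diagonal principal submatrices close the bound.
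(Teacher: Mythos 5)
Your proof is correct and uses essentially the same ingredients as the paper's: the decomposition $B = \check{B} + \tilde{B} + \check{B}^T$ (your $D$ is the paper's $\tilde{B}$), the bound $-\rho(B)I_n \preceq B \preceq \rho(B)I_n$, and the interlacing theorem (the paper's Theorem 4.3.15 in Horn and Johnson) to control the block-diagonal part. Your observation that $\hat{B} = \check{B}^T$ is a small streamlining — it lets you dispatch both inequalities with one computation, where the paper instead says the second claim follows "by the same argument" — but the substance is identical.
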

\begin{proof}
We first define a block diagonal matrix based
on $B$  below:
\begin{equation}\label{DiagB}
\tilde{B}\triangleq
\mbox{Diag}
\left(B_{11},B_{22},\ldots, B_{pp}\right),
\end{equation}
whose main diagonal blocks are the same as those
of $B$. Therefore, $B$ has the following decomposition, i.e.,
\begin{equation}\label{Decomp B}
B=\check{B}+\tilde{B}+\check{B}^T.
\end{equation}
In addition, it is obvious that
\begin{equation}\label{Bound B}
-\rho(B) I_n\preceq B \preceq \rho(B) I_n,
\end{equation}
which, combined with Theorem 4.3.15 in \cite{Horn:1985:MA:5509},
means that
\begin{equation}\label{Bound checkB}
-\rho(B) I_n\preceq \tilde{B} \preceq \rho(B) I_n.
\end{equation}
Assume that $\eta\in \mathbb{C}^n$ and $\|\eta\|=1$.
On the one hand,
\begin{equation}\label{Re etaH cB eta geq}
\begin{split}
2\rho(B)
&
\geq \eta^H \left(\rho(B) I_n+B\right)\eta\\
&=\eta^H
\left[\rho(B) I_n+
\left(\check{B}+\tilde{B}+\check{B}^T\right)
\right]\eta\\
&=\eta^H
\left(\rho(B) I_n+\tilde{B}\right)\eta+
\eta^H\left(\check{B}+\check{B}^T\right)\eta\\
&=\eta^H
\left(\rho(B) I_n+\tilde{B}\right)\eta+
\eta^H\left(\check{B}+\check{B}^H\right)\eta\\
&=\eta^H
\left(\rho(B) I_n+\tilde{B}\right)\eta+2\mbox{Re}\left(
\eta^H\check{B}\eta\right)\\
&\geq2\mbox{Re}\left(
\eta^H\check{B}\eta\right),
\end{split}
\end{equation}
where the first inequality is due to
\eqref{Bound B}; the first equality holds because
of \eqref{Decomp B};  the third equality follows from
the fact that $\check{B}$ is a real matrix;
and \eqref{Bound checkB} amounts to
the last inequality.\\
On the other hand, we also have
\begin{equation}
\label{Re etaH cB eta leq}
\begin{split}
2\rho(B)
&
\geq \eta^H \left(\rho(B) I_n-B\right)\eta\\
&=\eta^H
\left[\rho(B) I_n-
\left(\check{B}+\tilde{B}+\check{B}^T\right)
\right]\eta\\
&=\eta^H
\left(\rho(B) I_n-\tilde{B}\right)\eta-
\eta^H\left(\check{B}+\check{B}^T\right)\eta\\
&=\eta^H
\left(\rho(B) I_n-\tilde{B}\right)\eta-
\eta^H\left(\check{B}+\check{B}^H\right)\eta\\
&=\eta^H
\left(\rho(B) I_n-\tilde{B}\right)\eta-2\mbox{Re}\left(
\eta^H\check{B}\eta\right)\\
&\geq-2\mbox{Re}\left(
\eta^H\check{B}\eta\right),
\end{split}
\end{equation}
where the first inequality is due to
\eqref{Bound B}; the first equality holds because
of \eqref{Decomp B};  the third equality follows from
the fact that $\check{B}$ is a real matrix;
and \eqref{Bound checkB} amounts to
the last inequality.

Clearly, Eqs. \eqref{Re etaH cB eta geq}
and \eqref{Re etaH cB eta leq} lead to
\eqref{leq Re ch B geq}.
By using the same argument, we know
\eqref{leq Re ch D geq} holds true.
\hfill\end{proof}

The following lemma states that,
if $B$ is invertible, the real part of the eigenvalues
of $B^{-1}\left(I_n+\beta \check{B} \right)$ does not equal
zero for any $\beta\in
\left(0, \frac{1}{\rho\left(B\right)}
\right)$.
\begin{lemma}\label{Re neq 0 Lem}
Assume that $B$ and $\check{B}$ are defined by
\eqref{Def B} and \eqref{Def bar B}, respectively.
Moreover, suppose $B$ is invertible.
For any $\beta\in
\left(0, \frac{1}{\rho\left(B\right)}\right)$
and $t\in[0, 1]$,
if $\lambda$ is an eigenvalue of
$B^{-1}\left(I_n+t\beta \check{B} \right)$,
then $\mbox{Re}\left(\lambda\right)\neq 0$.
\end{lemma}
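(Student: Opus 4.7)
Suppose $\lambda$ is an eigenvalue of $B^{-1}(I_n+t\beta\check B)$ with eigenvector $v\in\mathbb{C}^n$, normalized so that $\|v\|=1$. The plan is to argue by contradiction: assume $\operatorname{Re}(\lambda)=0$ and derive an inequality that violates the bound supplied by Lemma~\ref{Bound etaecheckBeta}.

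First, I would dispose of the trivial case $t=0$. Here $B^{-1}(I_n+t\beta\check B)=B^{-1}$; since $B$ is real symmetric and invertible, its eigenvalues are real and nonzero, hence those of $B^{-1}$ are real and nonzero, so $\operatorname{Re}(\lambda)=\lambda\neq 0$.

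For $t\in(0,1]$, rewrite the eigenvalue relation as
\begin{equation}
(I_n+t\beta\check B)v=\lambda Bv,
\end{equation}
and take the Hermitian inner product with $v$ on the left to obtain
\begin{equation}
1+t\beta\, v^H\check B v=\lambda\,(v^HBv).
\end{equation}
Because $B$ is real symmetric, $v^HBv\in\mathbb{R}$. If $\operatorname{Re}(\lambda)=0$, write $\lambda=bi$ with $b\in\mathbb{R}$; then $\lambda(v^HBv)$ is purely imaginary, so taking real parts yields
\begin{equation}
1+t\beta\,\operatorname{Re}\!\left(v^H\check B v\right)=0.
\end{equation}
This is the identity I will contradict.

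By Lemma~\ref{Bound etaecheckBeta}, $\operatorname{Re}(v^H\check B v)\geq-\rho(B)$, so
\begin{equation}
1+t\beta\,\operatorname{Re}\!\left(v^H\check B v\right)\geq 1-t\beta\,\rho(B).
\end{equation}
Since $t\in(0,1]$ and $\beta<1/\rho(B)$, we have $t\beta\,\rho(B)\leq\beta\,\rho(B)<1$, hence the right-hand side is strictly positive, contradicting the previous equation. This forces $\operatorname{Re}(\lambda)\neq0$.

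The argument is essentially a one-line calculation once the key estimate of Lemma~\ref{Bound etaecheckBeta} is in hand, so I do not expect any substantive obstacle; the only points that require a little care are normalizing the eigenvector so the ``$1$'' in the real-part equation appears cleanly, and using the symmetry of $B$ so that $v^HBv$ is real and $\lambda v^HBv$ disappears from the real-part equation. The $t=0$ edge case must be handled separately but is immediate.
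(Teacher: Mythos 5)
Your argument is correct and is essentially the paper's proof: both rewrite the eigenvalue equation as $(I_n+t\beta\check B)\xi=\lambda B\xi$, left-multiply by $\xi^H$, use the symmetry of $B$ so that $\xi^HB\xi$ is real, and invoke Lemma~\ref{Bound etaecheckBeta} to show $\operatorname{Re}(1+t\beta\,\xi^H\check B\xi)>0$, contradicting a purely imaginary $\lambda$. Your separate handling of $t=0$ is harmless but unnecessary, since $t\beta\rho(B)\le\beta\rho(B)<1$ already holds for all $t\in[0,1]$.
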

\begin{proof}
According to assumptions, $B$ is invertible.
Combining with $I_n+t\beta \check{B}$ being invertible with
any $t\in[0, 1]$,
we know $B^{-1}\left(I_n+t\beta \check{B}\right)$ is an
invertible matrix. Let $\lambda$ be
an eigenvalue of $B^{-1}\left(I+t\beta \check{B}\right)$
and $\xi$ be the corresponding eigenvector of
unit length. Then $\lambda\neq0$ and
\begin{equation}\label{Eigenvaluevector0}
B^{-1}\left(I_n+t\beta \check{B}\right)\xi=\lambda\xi,
\end{equation}
which is clearly equivalent to equation:
\begin{equation}\label{Eigenvaluevector1}
\left(I_n+t\beta \check{B}\right)\xi=\lambda B\xi.
\end{equation}
Premultiplying both sides of the above
equality by $\xi^H$, we arrive at
\begin{equation}\label{RealneqReal}
1+t\beta\xi^H\check{B}\xi=\lambda \xi^HB\xi.
\end{equation}
Note that $0<\beta
<\frac{1}{\rho\left(B\right)}$ and $t\in[0, 1]$.
Lemma \ref{Bound etaecheckBeta} implies that
$0<\mbox{Re}\left(1+t\beta\xi^H\check{B}\xi\right)<2$, i.e., the real
part of $1+t\beta\xi^H\check{B}\xi$
is a positive real number. We denote
$1+t\beta\xi^H\check{B}\xi$
as $a+bi$ where $0<a<2$.
If $\lambda=ci$ ($c\neq 0$)
is a purely imaginary number,
then
$\left(\xi^HB\xi\right)ci$ is also a purely imaginary
number because of $B$ being a real symmetric matrix.
Hence,
\eqref{RealneqReal} becomes
\begin{equation}
1+t\beta\xi^H\check B\xi
=a+bi=
\left(\xi^HB\xi\right)ci,
\end{equation}
which is a contradiction.
Hence, $\mbox{Re}(\lambda)\neq0$ is proved.
\hfill\end{proof}

The following Lemma states that,
if $B$ is invertible, the real part of the eigenvalues
of $\left(\beta B\right)^{-1}
\left(I_n+t\beta \hat{B} \right)$
is strictly larger than $\frac{1}{2}$ for any $\beta\in
\left(0, \frac{1}{\rho\left(B\right)}
\right)$ and $t\in[0, 1]$.
\begin{lemma}\label{Re neq 0 Lem P}
Assume that $B$ and $\hat{B}$ are defined by
\eqref{Def B} and \eqref{hat B}, respectively.
Moreover, suppose $B$ is invertible.
For any $\beta\in
\left(0, \frac{1}{\rho\left(B\right)}\right)$
and $t\in\left[0,\, 1\right]$,
if $\lambda$ is an eigenvalue of
$\left(\beta B\right)^{-1}
\left(I_n+t\beta \hat{B}
\right)$
and $\mbox{Re}\left(\lambda\right)>0$,
then $\mbox{Re}\left(\lambda\right)
\geq \frac{1}{2}+\frac{1-\beta\rho(B)}{\beta\rho(B)}
>\frac{1}{2}$.
\end{lemma}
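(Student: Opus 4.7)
The plan is to extract a scalar identity from the eigenvalue equation and then bound its real part via a block-triangular decomposition of $B$ combined with Lemma \ref{Bound etaecheckBeta}. Letting $\xi\in\mathbb{C}^n$ be a unit eigenvector of $(\beta B)^{-1}(I_n+t\beta\hat{B})$ associated with $\lambda$ and premultiplying the equivalent equation $(I_n+t\beta\hat{B})\xi=\lambda\beta B\xi$ by $\xi^H$, I would obtain
\begin{equation}
1 + t\beta\,\xi^H\hat{B}\xi \;=\; \lambda\beta\mu, \qquad \mu:=\xi^H B\xi,
\end{equation}
where $\mu$ is real by the symmetry of $B$.

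Next I would use the decomposition $B=\hat{B}^T+\tilde{B}+\hat{B}$, with $\tilde{B}:=\mbox{Diag}(B_{11},\ldots,B_{pp})$ as in the proof of Lemma \ref{Bound etaecheckBeta}, to obtain the sharper identity $2\,\mbox{Re}(\xi^H\hat{B}\xi)=\xi^H(\hat{B}+\hat{B}^T)\xi=\mu-\tilde\mu$, where $\tilde\mu:=\xi^H\tilde{B}\xi$ is real. Taking real parts in the scalar identity then gives
\begin{equation}
\mbox{Re}(\lambda) \;=\; \frac{1}{\beta\mu} + \frac{t}{2} - \frac{t\tilde\mu}{2\mu}.
\end{equation}
A preliminary sign analysis shows that the hypothesis $\mbox{Re}(\lambda)>0$ forces $\mu>0$: the numerator $1+t\beta\,\mbox{Re}(\xi^H\hat{B}\xi)$ is bounded below by $1-t\beta\rho(B)\geq 1-\beta\rho(B)>0$ by Lemma \ref{Bound etaecheckBeta} and the hypothesis $\beta\rho(B)<1$, so $\mu$ must share the sign of $\mbox{Re}(\lambda)$ and in particular $\mu\neq 0$.

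With $\mu>0$ in hand, I would finish by bounding $|\tilde\mu|\leq\rho(B)$ (which follows from $-\rho(B)I_n\preceq\tilde{B}\preceq\rho(B)I_n$, established inside the proof of Lemma \ref{Bound etaecheckBeta}), using $t\leq 1$, discarding the non-negative term $t/2$, and then minimizing the resulting lower bound $(1/\beta-\rho(B)/2)/\mu$ over $\mu\in(0,\rho(B)]$. Because $\beta\rho(B)<1$ implies $1/\beta-\rho(B)/2>\rho(B)/2>0$, this function is decreasing in $\mu$, so its infimum is attained at $\mu=\rho(B)$ and equals $1/(\beta\rho(B))-1/2=\tfrac{1}{2}+(1-\beta\rho(B))/(\beta\rho(B))$, which is the stated lower bound; the strict inequality $>\tfrac{1}{2}$ is immediate from $\beta\rho(B)<1$.

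The chief obstacle is the extra $1/2$ in the claimed lower bound: applying Lemma \ref{Bound etaecheckBeta} directly to bound $\mbox{Re}(\xi^H\hat{B}\xi)$ from below by $-\rho(B)$ gives only $\mbox{Re}(\lambda)\geq(1-\beta\rho(B))/(\beta\rho(B))$, which is short of the claim by exactly $1/2$. Recovering this $1/2$ is precisely the role of the identity $2\,\mbox{Re}(\xi^H\hat{B}\xi)=\mu-\tilde\mu$, and this sharper use of the block-triangular structure is what distinguishes the present lemma from the analogous but weaker Lemma \ref{Re neq 0 Lem}.
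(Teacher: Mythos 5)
Your proof is correct, and it uses the same basic ingredients as the paper (the scalar identity $\lambda\beta\mu = 1+t\beta\,\xi^H\hat{B}\xi$, the decomposition $B=\hat{B}+\tilde{B}+\hat{B}^T$, the bound $-\rho(B)I_n\preceq\tilde{B}\preceq\rho(B)I_n$, and Lemma \ref{Bound etaecheckBeta}), but it is organized differently. The paper computes $\mbox{Re}(\lambda)-\tfrac{1}{2}$ as a single fraction and then splits into two cases, $t\in[\tfrac{1}{2},1]$ and $t\in[0,\tfrac{1}{2}]$; in the first case it substitutes the decomposition of $B$ in the numerator (the coefficient $2(t-1)$ multiplying $\mbox{Re}(\xi^H\hat{B}\xi)$ is then nonpositive and is bounded via the upper estimate $\mbox{Re}(\xi^H\hat{B}\xi)\leq\rho(B)$), while in the second case it leaves $\xi^H B\xi$ intact and uses the lower estimate $\mbox{Re}(\xi^H\hat{B}\xi)\geq-\rho(B)$. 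Your route instead exploits the scalar identity $2\,\mbox{Re}(\xi^H\hat{B}\xi)=\mu-\tilde\mu$ at the outset to rewrite $\mbox{Re}(\lambda)=\tfrac{1}{\beta\mu}+\tfrac{t}{2}-\tfrac{t\tilde\mu}{2\mu}$, which isolates the additive $\tfrac{t}{2}$ term responsible for the extra $\tfrac{1}{2}$ and lets one pass to the stated lower bound in a single uniform estimate by discarding $\tfrac{t}{2}\geq 0$, bounding $\tilde\mu\leq\rho(B)$ and $0<\mu\leq\rho(B)$, and using $t\leq 1$. The result is the same, but your variant avoids the case split at the cost of committing to the block-triangular decomposition earlier; it also makes the source of the $\tfrac{1}{2}$ transparent, which the paper's computation obscures. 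One minor remark: the paper's second-to-last displayed inequality in the $t\in[\tfrac{1}{2},1]$ case appears to have a typographical slip in the denominator (it should read $2\beta\rho(B)$ rather than $2\rho(B)$ for the dimensions to match); your derivation does not suffer from this.
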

\begin{proof}
The proof is lengthy and has been relegated to the
Appendix.
\hfill\end{proof}

The following Lemma plays a key role because it provides a sufficiently exact description
of the distribution of the eigenvalues of
$\left(I_n+\beta \check{B} \right)^{-1}B$, which
has the the same structure as $G$ defined by
\eqref{Def G}.

\begin{lemma}\label{Key Lemma BCGD}
Assume that $B$ and $\check{B}$ are defined by
\eqref{Def B} and \eqref{Def bar B}, respectively.
Furthermore if $\lambda_{\min}(B)<0$,
then, for an arbitrary
$\beta\in\left(0, \frac{1}{\rho\left(B\right)}\right)$,
there is at least
one eigenvalue $\lambda$ of
$\left(I_n+\beta \check{B} \right)^{-1}B$
which lies in
closed left half complex plane excluding origin, i.e.,
\begin{equation}\label{Key}
\forall\, \beta\in
\left(0, \frac{1}{\rho\left(B\right)}\right)
 \Rightarrow
\exists \, \lambda\in \left[\mbox{eig}
\left(\left(I_n+\beta \check{B} \right)^{-1}B\right)
\text{\scriptsize$\bigcap$}\, \Omega\right],
\end{equation}
where
\begin{equation}\label{Omega}
\Omega\triangleq\left\{a+bi\big{|}a, b\in
\mathbb{R}, a\leq0,~(a,b)\neq(0,0),
i=\sqrt{-1} \right\}.
\end{equation}
\end{lemma}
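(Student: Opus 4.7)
My approach would be a homotopy/continuity argument on the one-parameter family of matrices
\begin{equation*}
M(t) \;\triangleq\; \left(I_n+t\beta\check{B}\right)^{-1}B, \qquad t\in[0,1],
\end{equation*}
connecting the real symmetric matrix $M(0)=B$ to the matrix $M(1)=(I_n+\beta\check{B})^{-1}B$ of interest. The idea is to start from a strictly negative eigenvalue of $B$ (which exists since $\lambda_{\min}(B)<0$) and track it as $t$ moves from $0$ to $1$, showing it cannot cross the imaginary axis along the way.

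The key input, assuming first that $B$ is invertible, is Lemma~\ref{Re neq 0 Lem}: no eigenvalue of $B^{-1}(I_n+t\beta\check{B})$ has zero real part for $t\in[0,1]$ and $\beta\in(0,1/\rho(B))$. Because $M(t)$ is invertible throughout the homotopy, its eigenvalues are the reciprocals of those of $M(t)^{-1}=B^{-1}(I_n+t\beta\check{B})$; since $\mathrm{Re}(z)$ and $\mathrm{Re}(1/z)$ share the same sign whenever $z\neq 0$, Lemma~\ref{Re neq 0 Lem} translates into the statement that no eigenvalue of $M(t)$ is purely imaginary along $[0,1]$. By continuous dependence of the spectrum on $t$, the number of eigenvalues of $M(t)$ lying in the open left half-plane is therefore constant on $[0,1]$. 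This count is at least one at $t=0$, so $M(1)=(I_n+\beta\check{B})^{-1}B$ also has at least one eigenvalue in the open left half-plane, hence in $\Omega$.

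To drop the invertibility assumption on $B$, I would apply the invertible case to the perturbation $B_\varepsilon:=B-\varepsilon I_n$ for small $\varepsilon>0$. Since the shift is diagonal, $\check{B}_\varepsilon=\check{B}$; moreover $B_\varepsilon$ is invertible, $\lambda_{\min}(B_\varepsilon)<0$, and $\beta<1/\rho(B_\varepsilon)$ still holds once $\varepsilon$ is small enough. Hence $(I_n+\beta\check{B})^{-1}B_\varepsilon$ admits an eigenvalue $\lambda_\varepsilon\in\Omega$, and by continuity of spectra a subsequential limit $\lambda^*$ is an eigenvalue of $(I_n+\beta\check{B})^{-1}B$ with $\mathrm{Re}(\lambda^*)\leq 0$.

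The main obstacle I anticipate is precisely the last step: a priori the limit $\lambda^*$ could equal $0$, which is excluded from $\Omega$. To rule this out I would invoke a multiplicity count. The invertible case applied to $B_\varepsilon$ in fact yields $k+d$ eigenvalues of $(I_n+\beta\check{B})^{-1}B_\varepsilon$ in the open left half-plane, where $k\geq 1$ is the negative inertia of $B$ and $d=\dim\ker B$. Because the eigenspace for $0$ of $(I_n+\beta\check{B})^{-1}B$ coincides with $\ker B$ and has dimension $d$, only a bounded number of the perturbed left half-plane eigenvalues can collapse to the origin, so at least $k\geq 1$ of them must converge to a nonzero limit in the closed left half-plane, i.e.\ in $\Omega$. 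This delicate multiplicity-tracking is presumably where the Rouche-based Lemma~\ref{Zero Lemma} from the appendix is used to package the counting argument uniformly via contour integrals and sidestep an ad hoc case analysis.
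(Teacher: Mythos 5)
Your Case~1 argument (the homotopy $M(t)=(I_n+t\beta\check B)^{-1}B$, no imaginary-axis crossing via Lemma~\ref{Re neq 0 Lem}, count in the open left half-plane constant in $t$ and at least one at $t=0$) is in substance the same as the paper's; the paper packages the same continuity argument through a bounded rectangle $\mathscr{D}$ and the Rouch\'e-type Lemma~\ref{Zero Lemma}, which also supplies the uniform bound (the radius $\nu$) that keeps the tracked eigenvalues from escaping to infinity — a point you elide but which is minor.

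Your Case~2, however, has a genuine gap. You bound the number of perturbed left-half-plane eigenvalues that can collapse to the origin by $d=\dim\ker B$, on the grounds that ``the eigenspace for $0$ of $(I_n+\beta\check B)^{-1}B$ coincides with $\ker B$.'' That identifies only the \emph{geometric} multiplicity of the zero eigenvalue. The number of eigenvalue branches $\lambda_s(\epsilon)$ converging to $0$ equals the \emph{algebraic} multiplicity, and for a non-symmetric matrix this can strictly exceed the geometric one even when $B$ itself is symmetric. For instance, with $B=\mathrm{diag}(1,0)$ and $P^{-1}$ the $2\times 2$ transposition, $P^{-1}B=\left(\begin{smallmatrix}0&0\\1&0\end{smallmatrix}\right)$ has geometric multiplicity $1$ but algebraic multiplicity $2$ at the eigenvalue $0$. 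So the claim that at most $d$ branches converge to the origin requires an argument that exploits the specific structure of $(I_n+\beta\check B)^{-1}$ together with $\beta<1/\rho(B)$. This is precisely what the paper's Lemma~\ref{Multi Zero Lem} establishes: after a Schur-complement factorization of the characteristic polynomial, the zero roots are shown to come solely from the explicit $\lambda^m$ factor, because the complementary blocks $\Theta-\lambda(I_{n-m}+\beta\check C_{11})$ and $I_m+\beta\check C_{22}$ are invertible at $\lambda=0$, the latter by the same real-part estimate as in Lemma~\ref{Re neq 0 Lem}. You also attribute the multiplicity control to the Rouch\'e-based Lemma~\ref{Zero Lemma}; that lemma drives Case~1, not this step. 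Apart from this, your Case~2 route (perturb $B$ by $\mp\epsilon I_n$, count, take a limit) differs from the paper's (a proof by contradiction with an explicit asymptotic expansion of the small eigenvalue branches in $\epsilon$); if the algebraic-multiplicity fact is supplied, your counting argument is the cleaner of the two, but both ultimately lean on the same Schur-complement structure of the characteristic polynomial.
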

\begin{proof}
The proof is  lengthy and has
been relegated to the Appendix.
\hfill\end{proof}



Similar to Lemma \ref{Key Lemma BCGD},
the following lemma
plays a key role in this case
because it provides a sufficiently
exact description of the distribution of the eigenvalues of
$\beta\left(I_n+\beta \hat{B} \right)^{-1}B$, which
has the the same structure as $H$
defined by \eqref{Def H}.
\begin{lemma}\label{Key Theorem BCGD P}
Assume that $B$ and $\hat{B}$ are defined by
\eqref{Def B} and \eqref{hat B}, respectively.
Furthermore if $\lambda_{\max}(B)>0$,
then, for an arbitrary
$\beta\in\left(0, \frac{1}{\rho\left(B\right)}\right)$,
there is at least
one nonzero eigenvalue $\lambda$ of
$\beta \left(I_n+\beta \hat{B} \right)^{-1}B$
such that
\begin{equation}\label{Key P}
\frac{1}{\lambda}\in\Xi(\beta, B),
\end{equation}
where
\begin{equation}\label{Xi}
\Xi(\beta, B)\triangleq\left\{a+bi\Big{|}a, b\in
\mathbb{R},
\frac{1}{2}+\frac{1-\beta\rho(B)}{\beta\rho(B)}
\leq a,
i=\sqrt{-1} \right\}.
\end{equation}
\end{lemma}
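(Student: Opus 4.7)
The plan is to mirror the strategy used for Lemma \ref{Key Lemma BCGD} (the BCGD case) but with the right half-plane region $\Xi(\beta,B)$ replacing the left half-plane $\Omega$, using Lemma \ref{Re neq 0 Lem P} in place of Lemma \ref{Re neq 0 Lem} as the ``forbidden region'' tool. The crucial algebraic bridge is that, for invertible $B$, a nonzero eigenvalue $\lambda$ of $M(t):=\beta(I_n+t\beta\hat{B})^{-1}B$ satisfies $1/\lambda\in\mathrm{eig}\bigl((\beta B)^{-1}(I_n+t\beta\hat{B})\bigr)$; this follows from the similarity of $M(t)$ to $\beta B(I_n+t\beta\hat{B})^{-1}$ combined with the reciprocal-eigenvalue property of inverses. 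Thus the constraint supplied by Lemma \ref{Re neq 0 Lem P} can be read as a constraint on the reciprocals of the eigenvalues of $M(t)$.

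First I would establish the starting endpoint of the homotopy. At $t=0$ we have $M(0)=\beta B$, a real symmetric matrix whose eigenvalues are $\{\beta\mu_i\}$ where $\mu_i\in\mathrm{eig}(B)$. By hypothesis $\lambda_{\max}(B)>0$, so $\lambda_{0}:=\beta\lambda_{\max}(B)$ is a nonzero real eigenvalue of $M(0)$ lying in $(0,\beta\rho(B))\subset(0,1)$. Writing $c:=\tfrac{1}{2}+\tfrac{1-\beta\rho(B)}{\beta\rho(B)}=\tfrac{1}{\beta\rho(B)}-\tfrac{1}{2}$, which is strictly positive since $\beta\rho(B)<1<2$, one checks $1/\lambda_0\ge 1/(\beta\rho(B))>c$, so $1/\lambda_0\in\Xi(\beta,B)$.

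Next I would run the homotopy/continuity argument. The coefficients of the characteristic polynomial of $M(t)$ are continuous in $t$, so the eigenvalues of $M(t)$ (equivalently, the reciprocal values $1/\lambda(t)$, viewed as eigenvalues of $(\beta B)^{-1}(I_n+t\beta\hat{B})$ under the assumption that $B$ is invertible) vary continuously in $t$. By Lemma \ref{Re neq 0 Lem P} applied to each $t\in[0,1]$, no such reciprocal $\mu$ satisfies $0<\mathrm{Re}(\mu)<c$; that is, the open vertical strip $\{\mu:0<\mathrm{Re}(\mu)<c\}$ is forbidden for every $t$. Pick a continuous branch $\mu(\cdot)$ with $\mu(0)=1/\lambda_0$, so $\mathrm{Re}(\mu(0))\ge c$. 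If $\mathrm{Re}(\mu(t))$ were to become $\le 0$ for some $t$, by continuity it would have to transit the forbidden strip, a contradiction. Therefore $\mathrm{Re}(\mu(t))\ge c$ for all $t\in[0,1]$, and in particular $\mu(1)\in\Xi(\beta,B)$. Setting $\lambda:=1/\mu(1)$ produces the nonzero eigenvalue of $\beta(I_n+\beta\hat{B})^{-1}B$ with the desired property.

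The main obstacle I anticipate is twofold: first, the bridge ``nonzero eigenvalues of $M(t)$ correspond to eigenvalues of $(\beta B)^{-1}(I_n+t\beta\hat{B})$'' requires $B$ to be invertible, while the hypothesis only supplies $\lambda_{\max}(B)>0$; second, one must justify continuity of an \emph{individual} eigenvalue branch through possible multiplicity changes. For the first issue I would use a perturbation: replace $B$ by $B_\varepsilon:=B+\varepsilon I_n$, which is invertible for small $\varepsilon>0$ and still satisfies $\lambda_{\max}(B_\varepsilon)>0$ and $\rho(B_\varepsilon)\le\rho(B)+\varepsilon$; apply the argument above to $B_\varepsilon$ to obtain $\lambda_\varepsilon$ with $1/\lambda_\varepsilon\in\Xi(\beta,B_\varepsilon)$, and then pass to the limit $\varepsilon\downarrow 0$, noting that $\Xi$ is closed so the membership is preserved. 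For the second issue the standard holomorphic perturbation theory of the characteristic polynomial (e.g.\ Kato) gives continuous selections of eigenvalues, which is all that is needed; alternatively, the same conclusion can be packaged via Rouch\'e's theorem applied to the characteristic polynomial on a contour enclosing $\lambda_0$ inside the closed disc $\{\lambda:|\lambda-1/(2c)|\le 1/(2c)\}$ (the image of $\Xi$ under $\mu\mapsto 1/\mu$), exactly in the spirit of Lemma \ref{Zero Lemma}.
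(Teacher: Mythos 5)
Your Case 1 (invertible $B$) runs in essentially the same way as the paper's: a homotopy from $(\beta B)^{-1}$ at $t=0$ to $(\beta B)^{-1}(I_n+\beta\hat{B})$ at $t=1$, with Lemma \ref{Re neq 0 Lem P} supplying a strip of forbidden real parts so that the Rouch\'e-type argument (Lemma \ref{Zero Lemma}) preserves the eigenvalue count inside a suitable bounded region. Your remark that one may either track a continuous branch or invoke Rouch\'e on a contour is fine; the paper opts for the latter with an explicitly constructed rectangle $\mathscr{D}$, and that is the cleaner way to package it.

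The gap is in Case 2 (singular $B$). You perturb $B\mapsto B_\varepsilon=B+\varepsilon I_n$ with $\varepsilon>0$, apply Case 1, and then ``pass to the limit $\varepsilon\downarrow 0$.'' This does not work as stated, because the eigenvalue $\lambda_\varepsilon$ produced by Case 1 is not guaranteed to stay bounded away from zero. Indeed, if $m\geq 1$ is the multiplicity of the zero eigenvalue of $B$, then by Lemma \ref{Multi Zero Lem} exactly $m$ eigenvalues $\lambda_s^\beta(\varepsilon)$ of $\beta(I_n+\beta\hat{B})^{-1}B_\varepsilon$ converge to $0$ as $\varepsilon\to 0^+$; and for $\varepsilon>0$ one can show (this is the content of the paper's Step~(b) computation, adapted) that these near-zero eigenvalues satisfy $\mathrm{Re}(\lambda_s^\beta(\varepsilon))>0$, hence $\mathrm{Re}(1/\lambda_s^\beta(\varepsilon))>0$, hence by Lemma \ref{Re neq 0 Lem P} already $1/\lambda_s^\beta(\varepsilon)\in\Xi(\beta,B_\varepsilon)$. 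So the eigenvalue that Case 1 hands you may perfectly well be one of these, whose limit is $0$ and therefore not a valid $\lambda$. Closedness of $\Xi$ does not rescue this, because the conclusion you need is about a \emph{nonzero} eigenvalue of the unperturbed matrix. The paper avoids the problem by taking $\varepsilon<0$ (so that the $m$ vanishing eigenvalues have $\mathrm{Re}(\lambda_s^\beta(\varepsilon))<0$ and hence $1/\lambda_s^\beta(\varepsilon)\notin\Xi(\beta,B(\varepsilon))$, forcing the Case 1 eigenvalue onto one of the nonvanishing branches), and by running a contradiction argument that carefully separates the $m$ eigenvalues going to $0$ from the $n-m$ that do not (Steps (a)--(d)). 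If you insist on $\varepsilon>0$ you could still salvage the argument by a sharper Rouch\'e count showing that at least $m+1$ eigenvalues of $(\beta B_\varepsilon)^{-1}(I_n+\beta\hat{B})$ lie in $\mathscr{D}_\varepsilon$ (all of the ones coming from the nonnegative eigenvalues of $B$), so that at least one of the corresponding $\lambda_\varepsilon$ converges to a nonzero limit; but this extra counting, or something equivalent to the paper's Step (b) analysis, is indispensable and is missing from your write-up.
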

\begin{proof}
The proof is lengthy and has been relegated to the
Appendix.\hfill
\end{proof}

Based on the above Lemma, we directly obtain
the following Lemma, which shows
there is at least one nonzero eigenvalue of
$\left[I_n-\beta \left(I_n+\beta \hat{B} \right)^{-1}B
\right]^{-1}$ with the same structure as $\left\{
D\left[g_{\text{\tiny$\alpha(-f)$}}
\right]^{-1}(x^*)
\right\}^T$,
whose magnitude is strictly greater than one.

\begin{lemma}\label{Key Theorem BCGD P Mod}
Assume that $B$ and $\hat{B}$ are defined by
\eqref{Def B} and \eqref{hat B}, respectively.
Furthermore suppose $\lambda_{\max}(B)>0$ and
$I_n-\beta \left(I_n+\beta \hat{B} \right)^{-1}B
$ is invertible,
then, for an arbitrary
$\beta\in\left(0, \frac{1}{\rho\left(B\right)}\right)$,
there is at least
one nonzero eigenvalue of
$\left[I_n-\beta \left(I_n+\beta \hat{B} \right)^{-1}B
\right]^{-1}$,
whose magnitude is strictly greater than one.
\end{lemma}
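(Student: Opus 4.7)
The plan is to reduce the statement directly to Lemma \ref{Key Theorem BCGD P} by a short spectral calculation. By hypothesis $\lambda_{\max}(B)>0$, so Lemma \ref{Key Theorem BCGD P} supplies a nonzero eigenvalue $\lambda$ of $M \triangleq \beta(I_n+\beta\hat{B})^{-1}B$ with $1/\lambda \in \Xi(\beta,B)$, i.e.
\begin{equation*}
\mathrm{Re}\!\left(\tfrac{1}{\lambda}\right) \;\geq\; \tfrac{1}{2} + \tfrac{1-\beta\rho(B)}{\beta\rho(B)} \;>\; \tfrac{1}{2}.
\end{equation*}
Since $I_n-M$ is assumed invertible, $1\notin\mathrm{eig}(M)$, and therefore $\mu \triangleq 1/(1-\lambda)$ is a well-defined nonzero eigenvalue of $(I_n-M)^{-1}$ by the standard spectral mapping for rational functions of a matrix.

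Next I would show $|\mu|>1$. Writing $\lambda=a+bi$, a direct expansion gives
\begin{equation*}
|1-\lambda|^{2} \;=\; (1-a)^{2}+b^{2} \;=\; 1 - 2a + |\lambda|^{2},
\end{equation*}
so $|\mu|>1$ is equivalent to $|\lambda|^{2} < 2\,\mathrm{Re}(\lambda)$. On the other hand, from $1/\lambda = \bar{\lambda}/|\lambda|^{2}$ one has $\mathrm{Re}(1/\lambda) = \mathrm{Re}(\lambda)/|\lambda|^{2}$, so the inequality $\mathrm{Re}(1/\lambda) > 1/2$ already delivered by Lemma \ref{Key Theorem BCGD P} reads precisely $2\,\mathrm{Re}(\lambda) > |\lambda|^{2}$, which is what we needed.

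Combining the two displays yields $|1-\lambda|<1$, hence $|\mu|=|1/(1-\lambda)|>1$, as required. The main conceptual work is already carried out in Lemma \ref{Key Theorem BCGD P}; the only subtlety here is recognizing that the condition $\mathrm{Re}(1/\lambda)>1/2$ is tailored exactly so that the Möbius image $\lambda \mapsto 1/(1-\lambda)$ sends $\lambda$ outside the closed unit disk. No further hypotheses beyond the invertibility of $I_n-M$ (needed to make $\mu$ well-defined) are used.
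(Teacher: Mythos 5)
Your proof is correct and takes essentially the same route as the paper's: both reduce the claim to Lemma \ref{Key Theorem BCGD P}, pass through the Möbius map $\lambda \mapsto 1/(1-\lambda)$, and observe that the bound $\mathrm{Re}(1/\lambda) > 1/2$ is exactly the condition that sends $\lambda$ to a point of modulus greater than one. The only difference is cosmetic — you expand $|1-\lambda|^2$ explicitly rather than manipulating the equivalent inequality $|1/\lambda| > |1/\lambda - 1|$ as the paper does — but the underlying computation is identical.
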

\begin{proof}
According to assumptions,
$I_n-\beta \left(I_n+\beta \hat{B} \right)^{-1}B
$ is invertible. Then
\begin{equation}
\lambda\in \mbox{eig}
\left(
\beta \left(I_n+\beta \hat{B} \right)^{-1}B
\right)\Leftrightarrow
\frac{1}{1-\lambda}\in\mbox{eig}\left(
\left[I_n-\beta \left(I_n+\beta \hat{B} \right)^{-1}B
\right]^{-1}\right).
 \end{equation}
Hence, the statement of the lemma is equivalent to that
there is at least
one  eigenvalue $\lambda$ of
$\beta \left(I_n+\beta \hat{B} \right)^{-1}B
$ such that
\begin{equation}
\left|\frac{1}{1-\lambda}\right|>1
\Leftrightarrow
\left|\frac{
\frac{1}{\lambda}
}{
\frac{1}{\lambda}
-1
}\right|>1.
\end{equation}
The above inequality is clearly equivalent to
\begin{equation}
\left|
\frac{1}{\lambda}
\right|>
\left|
\frac{1}{\lambda}
-1
\right|\Leftrightarrow
\frac{1}{2}<\mbox{Re}\left(\frac{1}{\lambda}\right).
\end{equation}
In addition, under the same assumptions,
Lemma \ref{Key Theorem BCGD P} shows that
there exists at least one eigenvalue $\lambda$ of
$\beta \left(I_n+\beta \hat{B} \right)^{-1}B
$ satisfying
\begin{equation}
\frac{1}{2}<
\left(\frac{1}{2}+\frac{1-\beta\rho(B)}{\beta\rho(B)}\right)
\leq\mbox{Re}\left(\frac{1}{\lambda}\right).
\end{equation}
Thus, the proof is finished.
 \hfill\end{proof}

\section{Conclusion}
In this paper,
given a non-convex twice continuously
differentiable cost
function with Lipschitz continuous gradient,
we prove that all of BCGD, BMD
and PBCD
converge to a local minimizer, almost surely with
random initialization.

As a by-product, it affirmatively answers the
open questions whether mirror descent or block coordinate
descent does not converge to saddle points in
 \cite{JLeeandMSimchowitz2016}.
More importantly, our results also hold true even
for the cost functions with non-isolated critical points,
which generalizes the results in
\cite{Panageas2016Gradient} as well.

By using the similar arguments, it is interesting to further research
 whether other
methods, such as ADMM, BCGD, BMD, PBCD and their variations for the problems with some specially structured constraints,
admit similar results or not.
\section{Acknowledgements}
The authors would like to thank Prof. Zhi-Quan (Tom) Luo
from 
The Chinese University of Hong Kong, Shenzhen,
for the helpful discussions on this work.
\section{Appendix}
\subsection{Proof of Lemma \ref{g12Diff}}
\begin{proof}
Firstly, we prove that $g_{\text{\tiny$\alpha f$}}^{1}$ with step size
$\alpha<\frac{1}{L}$ is a diffeomorphism.
The proof is given by the following four steps.\\
\\
{\bf (a)}
We prove that $g_{\text{\tiny$\alpha f$}}^{1}$ is injective
from $\mathbb{R}^n\rightarrow \mathbb{R}^n$ for
$\alpha<\frac{1}{L}$. Suppose that there
exist $x$ and $y$ such that $g_{\text{\tiny$\alpha f$}}^{1}(x)
=g_{\text{\tiny$\alpha f$}}^{1}(y)$.
Then
$x-y=\alpha U_1\left(\nabla_1 f(x)
-\nabla_1 f(y)\right)$
and \begin{equation}\label{x-y0}
\begin{split}
\|x-y\|&=
\alpha\|
U_1\left(\nabla_1 f(x)
-\nabla_1 f(y)\right)\|
\\
&=
\alpha\|
\nabla_{1}f(x)-\nabla_{1}f(y)\|
\leq\alpha\| \nabla f(x)-\nabla f(y)\|
\leq\alpha L\|x-y\|,
\end{split}
\end{equation}
where the second equality is due to
$U_1^TU_1=I_{n_1}$.
Since $\alpha L<1$, \eqref{x-y0} means $x=y$.\\
\\
{\bf (b)} To show $g_{\text{\tiny$\alpha f$}}^{1}$
is surjective, we construct an explicit inverse function.
Given a point $y$ in $\mathbb{R}^n$, suppose
it has the following partition,
\begin{equation*}
y=\left(\begin{array}{c}
y\text{\scriptsize$(1)$}\\
y\text{\scriptsize$(2)$}\\
\vdots\\
y\text{\scriptsize$(p)$}
\end{array}
\right).
\end{equation*}
Then we define $n-n_1$ dimensional vector
\begin{equation}\label{y^-(1)}
y_{_-}\text{\scriptsize$(1)$}\triangleq\left(\begin{array}{c}
y\text{\scriptsize$(2)$}\\
\vdots\\
y\text{\scriptsize$(p)$}
\end{array}
\right)
\end{equation}
and define a function
$\hat{f}\left(\cdot;{y_{_-}\text{\scriptsize$(1)$}}\right)$ $:
\,\mathbb{R}^{n_1}\rightarrow \mathbb{R}$,
\begin{equation*}
\hat{f}\left(x\text{\scriptsize$(1)$};{y_{_-}\text{\scriptsize$(1)$}}\right)\triangleq
f\left(
\left(\begin{array}{c}
x\text{\scriptsize$(1)$}\\
y_{_-}\text{\scriptsize$(1)$}\\
\end{array}
\right)
\right),
\end{equation*}
which is determined by function $f$ and the remained  block
coordinate vector $y_{_-}\text{\scriptsize$(1)$}$ of $y$.
Then, the proximal point mapping of
$-\hat{f}\left(\cdot;{y_{_-}\text{\scriptsize$(1)$}}\right)$ centered
at $y\text{\scriptsize$(1)$}$ is given by
\begin{equation}\label{x1y1}
x_y\text{\scriptsize$(1)$}
=\arg\min\limits_{x\text{\scriptsize$(1)$}}
\frac{1}{2}\|x\text{\scriptsize$(1)$}-y\text{\scriptsize$(1)$}
\|^2-
\alpha \hat{f}\left(x\text{\scriptsize$(1)$};{y_{_-}\text{\scriptsize$(1)$}}\right)
\end{equation}
For $\alpha<\frac{1}{L}$, the function above
is strongly convex with respect to $x\text{\scriptsize$(1)$}$,
so there is a unique minimizer. Let
$x_y\text{\scriptsize$(1)$}$
be the unique minimizer, then by the KKT condition,
\begin{equation}\label{KKTy(1)}
y\text{\scriptsize$(1)$}
=x_y\text{\scriptsize$(1)$}
-\alpha\nabla
\hat{f}\left(x_y\text{\scriptsize$(1)$}
;{y_{_-}\text{\scriptsize$(1)$}}\right)
=x_y\text{\scriptsize$(1)$}
-\alpha\nabla_1
f\left(\left(\begin{array}{c}
x_y\text{\scriptsize$(1)$}\\
y_{_-}\text{\scriptsize$(1)$}\\
\end{array}
\right)\right),
\end{equation}
where the second equality is due to the definition
of function
$\hat{f}\left(\cdot;y_{_-}\text{\scriptsize$(1)$}\right)$.
Let  $x_{y}$ be defined as
\begin{equation}\label{Defx_y}
x_y\triangleq\left(\begin{array}{c}
x_y\text{\scriptsize$(1)$}\\
y_{_-}\text{\scriptsize$(1)$}\\
\end{array}
\right),
\end{equation}
where $x_y\text{\scriptsize$(1)$}$ is defined by \eqref{x1y1}.
Accordingly,
\begin{equation*}
\begin{split}
y&=
\left(\begin{array}{c}
y\text{\scriptsize$(1)$}\\
y_{_-}\text{\scriptsize$(1)$}\\
\end{array}
\right)
=
\left(\begin{array}{c}
x_y\text{\scriptsize$(1)$}
-\alpha\nabla_1
f\left(\left(\begin{array}{c}
x_y\text{\scriptsize$(1)$}\\
y_{_-}\text{\scriptsize$(1)$}\\
\end{array}
\right)\right)
\\
y_{_-}\text{\scriptsize$(1)$}\\
\end{array}
\right)
=x_y-\alpha U_1\nabla_1
f\left(x_y\right)
=g_{\text{\tiny$\alpha f$}}^{1}(x_y),
\end{split}
\end{equation*}
where the first equality is due to the
definition of $y_{_-}\text{\scriptsize$(1)$}$
 (see \eqref{y^-(1)});
the second equality thanks to \eqref{KKTy(1)};
and the third equality holds because of the
definition of $x_y$ (see \eqref{Defx_y}).

Hence, $x_{y}$ is mapped to $y$ by
the mapping $g_{\text{\tiny$\alpha f$}}^{1}$.\\
\\
{\bf (c)} In addition, combined with
$\nabla^2 f(x)U_1U_1^T\in\mathbb{R}^{n\times n}$
and  $U_1^T\nabla^2 f(x)U_1
\in\mathbb{R}^{n_1\times n_1}$,
Theorem 1.3.20 in \cite{Horn:1985:MA:5509} means
\begin{equation}\label{Conta Zero}
\mbox{eig}\left(\nabla^2 f(x)U_1U_1^T\right)
\subseteq
\mbox{eig}\left(U_1^T\nabla^2 f(x)U_1\right)
\cup{\left\{\bf 0\right\}}.
\end{equation}
Moreover, since
\begin{equation}\label{Submatrix}
U_1^T\nabla^2 f(x)U_1=
\frac{\partial^2 f(x)}{\partial x\text{\scriptsize$(1)$}
\partial x\text{\scriptsize$(1)$}}
\end{equation}
which
 is the $n_1$-by-$n_1$ principal
 submatrix of $\nabla^2 f(x)$, it follows from
 Theorem 4.3.15 in \cite{Horn:1985:MA:5509} that
 \begin{equation}\label{Contains}
 \mbox{eig}\left(\frac{\partial^2 f(x)}
 {\partial x\text{\scriptsize$(1)$}^2}
 \right)\subseteq
 \left[\lambda_{\min}
 \left(\nabla^2 f(x)\right)
 ,
 \lambda_{\max}
 \left(\nabla^2 f(x)\right)\right]
 \subseteq\left[-L, L\right],
 \end{equation}
where the last relation holds because of
Eq. p\eqref{Lipschitz} and
Lemma 7 in \cite{Panageas2016Gradient}. Since $\alpha<\frac{1}{L}$,
Eqs. \eqref{Conta Zero}, \eqref{Submatrix} and
\eqref{Contains} imply that
 \begin{equation}\label{rangleLL}
 \mbox{eig}\left(
 \alpha \nabla^2 f(x)U_1U_1^T
 \right)
\subseteq\left(-1, 1\right).
\end{equation}
Hence, $Dg_{\text{\tiny$\alpha f$}}^{1}(x)
=I-\alpha \nabla^2 f(x)U_1U_1^T$
is invertible for $\alpha<\frac{1}{L}$. \\
\\
{\bf (d)} Note that we have shown that  $g_{\text{\tiny$\alpha f$}}^{1}$
is bijection, and continuously differentiable.
Since $Dg_{\text{\tiny$\alpha f$}}^{1}(x)$
is invertible  for $\alpha<\frac{1}{L}$,
the inverse function theorem guarantees $\left[g_{\text{\tiny$\alpha f$}}^{1}\right]^{-1}$
is continuously differentiable. Thus,
$g_{\text{\tiny$\alpha f$}}^{1}$
is a diffeomorphism.

Secondly, it is obvious that similar arguments
can be applied to verify that, $g_{\text{\tiny$\alpha f$}}^{s},
s=2, \ldots, p,$ are also diffeomorphisms.
\hfill\end{proof}
\subsection{Proof of Lemma \ref{Property G}}
\begin{proof}
First, we define
recursively
 \begin{equation}\label{Def Gs}
G[s]\triangleq \frac{1}{\alpha}
\left[
I_n-
\prod\limits_{t=s}^{1}
\left(I_n-\alpha U_tA_t\right)
\right],
\hspace*{3mm}
 1\leq s \leq p,
\end{equation}
which, combined with
  \eqref{Dgx*T1} and \eqref{Def G}, implies that
  $G=G[p]$.
In addition, it is easily seen that
\begin{equation}\label{PropUiT}
U_s^T\left(I_n-\alpha U_tA_t\right)=
U_s^T-\alpha  U_s^TU_tA_t=U^T_s
\end{equation}
  when $ s\neq t $. If $k<s$, then
  the above \eqref{PropUiT}
 follows that
  \begin{equation}\label{PropUiTGssmal}
  \begin{split}
U_s^TG[k]&=
\frac{1}{\alpha}U_s^T
\left[
I_n-
\prod\limits_{t=k}^{1}
\left(I_n-\alpha U_tA_t\right)
\right]
=
\frac{1}{\alpha}
\left[
U_s^T-
U_s^T\prod\limits_{t=k}^{1}
\left(I_n-\alpha U_tA_t\right)
\right]
={\bf 0}.
\end{split}
\end{equation}
Consequently,   if $1\leq s< q\leq p$, then
 \begin{equation}\label{UTG=UTGs}
 \begin{split}
U^T_{s}\left(\alpha G[q]\right)
&=U_s^T\left[
I_n-
\prod\limits_{t=q}^{1}
\left(I_n-\alpha U_tA_t\right)
\right]\\
&=U_s^T
I_n-
U_s^T\prod\limits_{t=q}^{s+1}
\left(I_n-\alpha U_tA_t\right)
\prod\limits_{t=s}^{1}
\left(I_n-\alpha U_tA_t\right)
\\
&=U_s^T
I_n-
U_s^T
\prod\limits_{t=s}^{1}
\left(I_n-\alpha U_tA_t\right)\\
&=U_s^T
\left[
I_n-
\prod\limits_{t=s}^{1}
\left(I_n-\alpha U_tA_t\right)
\right]\\
&=U^T_s\alpha G[s],
\end{split}
\end{equation}
where the fourth equality is due to
\eqref{PropUiT} and the last equality uses the
definition \eqref{Def Gs} of $G[s]$. Particularly if $q=p$, the above equation becomes
\begin{equation}\label{UTG=UTGs11}
U^T_s\alpha G[s]=U^T_s\alpha G[p]=U^T_s\alpha G.
\end{equation}
From equality \eqref{UTG=UTGs11}, we further have
\begin{align*}
U^T_{s}\left(\alpha G\right)
&=
U^T_{s}\left[I_n-
\left(I_n-\alpha G[s]\right)\right]\\
&\overset{\mbox{(a)}}{=}U^T_{s}\left[I_n-
\left(I_n-\alpha U_{s}A_s\right)
\left(I_n-\alpha G[s-1]\right)\right]\\
&=\alpha U^T_{s} G[{s-1}]
+\alpha A_s-\alpha^2 A_s G[{s-1}]\\
&\overset{\mbox{(b)}}{=}
\alpha A_s-\alpha^2 A_s G[{s-1}]\\
&\overset{\mbox{(c)}}{=}
\alpha A_s-\alpha^2 A_s
\sum\limits_{t=1}^{p}U_tU_t^T G[{s-1}]\\
&\overset{\mbox{(d)}}{=}
\alpha A_s-\alpha^2 A_s
\sum\limits_{t=1}^{s-1}U_tU_t^T G[{s-1}]\\
&\overset{\mbox{(e)}}{=}
\alpha A_s-\alpha^2 A_s
\sum\limits_{t=1}^{s-1}U_tU_t^T G[p]\\
&\overset{\mbox{(f)}}{=}
\alpha A_s-\alpha^2 A_s
\sum\limits_{t=1}^{s-1}U_tU_t^T G\\
&=\alpha A_s-\alpha^2
\sum\limits_{t=1}^{s-1}A_sU_tU_t^T G\\
&=\alpha A_s-\alpha^2
\sum\limits_{t=1}^{s-1}A_{st}U_t^T G
\end{align*}
where (a) uses the definitions of $G[s]$
in \eqref{Def Gs}; (b) is due to \eqref{PropUiTGssmal};
(c) thanks to the definition \eqref{DefUi} of $U_t$; (d) uses
\eqref{PropUiTGssmal} again; (e) holds because of
\eqref{UTG=UTGs}; (f) follows from $G[p]=G$.
Dividing both sides of the above equation by
$\alpha$, we have \eqref{Porperty UiG}.
\hfill\end{proof}
\subsection{Proof of Lemma \ref{Key Lemma BCGD}}
\begin{proof}
We divide the proof into  two cases.

{\bf Case 1: $B$ is an invertible matrix.} 
In this case, we clearly have,
\begin{equation}
\left(
\left(I_n+\beta \check{B} \right)^{-1}B
\right)^{-1}
=B^{-1}\left(I_n+\beta \check{B} \right).
 \end{equation}
 In what follows, we will prove that
\eqref{Key} is true by using
Lemma \ref{Zero Lemma} in Appendix.
Fristly, 
we define an analytic function with $t$ as a parameter:
  \begin{equation}\label{Def Xzt}
\begin{split}
\mathcal{X}_t(z)
 &\triangleq
 \det
 \left\{
 zI_n-\left[(1-t)
 B^{-1}+tB^{-1}
 \left(I_n+\beta \check{B} \right)\right]
 \right\},
  \hspace*{3mm}
  0\leq t\leq1
  \\
  &=
 \det
 \left\{
 zI_n-B^{-1}
 \left(I_n+t \beta \check{B} \right)
 \right\},
  \hspace*{3mm}
  0\leq t\leq1.
  \end{split}
  \end{equation}
Moreover, define a closed rectangle in the complex plane
as
\begin{equation}\label{Def D g}
 \mathscr{D}\triangleq\left\{a+bi|~-2\nu\leq
a\leq 0,~-2\nu\leq b\leq2\nu\right\},
\end{equation}
with $\nu$ being defined below:
\begin{equation}\label{Def nu}
\begin{split}
\nu&\triangleq
\left\|B^{-1}\right\|+\frac{1}{\rho\left(B\right)}
\left\|B^{-1}\check{B}\right\|
\geq \left\|B^{-1}\|
+t\beta\|B^{-1}\check{B}
\right\|
\geq\left\|B^{-1}\left(I_n+t\beta \check{B}\right)
\right\|, \hspace*{3mm}\forall\, t\in[0,1],
\end{split}
\end{equation}
where the first inequality holds because of
$\beta\in\left(0, \frac{1}{\rho\left(B\right)}\right)$
and $t\in [0, 1]$.
Note  that the boundary $\partial\mathscr{D}$ of
$\mathscr{D}$ consists of a finite number
 of smooth curves. Specifically, define
\begin{equation}
\begin{split}
\gamma_1&\triangleq
\left\{a+bi|~a=0,~-2\nu\leq b\leq2\nu\right\},\\
\gamma_2&\triangleq
\left\{a+bi|~a=-2\nu,~-2\nu\leq b\leq2\nu\right\},\\
\gamma_3&\triangleq
\left\{a+bi|~-2\nu\leq
a\leq 0,~b=2\nu\right\},\\
\gamma_4&\triangleq
\left\{a+bi|~-2\nu\leq
a\leq 0,~b=-2\nu\right\},
\end{split}
\end{equation}
then
\begin{equation}\label{Boundary D}
\begin{split}
\partial\mathscr{D}=
\gamma_1\cup\gamma_2\cup\gamma_3\cup\gamma_4.
\end{split}
\end{equation}
In order to apply Lemma \ref{Zero Lemma},
we will show that
\begin{equation}\label{Neq BoundD}
 \mathcal{X}_t(z)\neq0,~\forall\, t\in[0,1],
~\forall\, z\in\partial\mathscr{D}.
\end{equation}
On the one hand, since the spectral norm of a
matrix is lager than or equal to its spectral
radius, the above inequality \eqref{Def nu} yields that,
for any $t\in [0, 1]$,
every eigenvalue of $B^{-1}\left(I+t\beta \check{B}\right)$
has a magnitude less than $\nu$.
Note that for an arbitrary
$z\in \gamma_2\cup\gamma_3\cup\gamma_4$, then
$|z|\geq2\nu$.  Consequently,
\begin{equation}\label{Neq Gamma234}
 \mathcal{X}_t(z)\neq0,~\forall\, t\in[0,1],
~\forall\, z\in \gamma_2\cup\gamma_3\cup\gamma_4.
\end{equation}
On the other hand, it follows from Lemma \ref{Re neq 0 Lem} that,
for any $t\in [0, 1]$,
 if $\lambda$ is an eigenvalue of
$B^{-1}\left(I_n+t\beta \check{B} \right)$,
then $\mbox{Re}\left(\lambda\right)\neq 0$.
Hence, $\lambda\notin\gamma_1$.
As mentioned at the beginning of the proof,
 $B^{-1}\left(I_n+t\beta \check{B} \right)$ is
invertible. Hence, there are no
zero eigenvalues of
$B^{-1}\left(I_n+t\beta \check{B} \right)$
in $\gamma_1$, i.e.,
\begin{equation}\label{Neq Gamma1}
 \mathcal{X}_t(z)\neq0,~\forall\, t\in[0,1],
~\forall\, z\in \gamma_1.
\end{equation}
Combining \eqref{Neq Gamma234} and \eqref{Neq Gamma1}, we obtain
\eqref{Neq BoundD}.

As a result, it follows from Lemma \ref{Zero Lemma}
in Appendix, Eqs. \eqref{Def Xzt}, \eqref{Def D g} and \eqref{Neq BoundD}
that
$\mathcal{X}_{0}(z)
=\det
 \left\{
 zI_n-B^{-1}
 \right\}$
 and
 $\mathcal{X}_{1}(z)
 =\det
 \left\{
 zI_n-B^{-1}
 \left(I_n+\beta \check{B} \right)
 \right\}$
have the  same number of zeros in $\mathscr{D}$.
Note that $\lambda_{\min}\left(B\right)<0$
implies that there is at least one  negative eigenvalue
$\frac{1}{\lambda_{\min}\left(B\right)}$
of $B^{-1}$. Recalling the definition
\eqref{Def nu} of $\nu$, we know
$\left|\frac{1}{\lambda_{\min}\left(B\right)}\right|\leq\nu$.
Thus $\frac{1}{\lambda_{\min}\left(B\right)}$  must
lie inside $\mathscr{D}$.
In other words, the number of zeros of
$\mathcal{X}_{0}(z)$ inside $\mathscr{D}$ is at least one,
which in turn shows the number of zeros of
$\mathcal{X}_{1}(z)$ inside $\mathscr{D}$ is at least
one as well.
Thus, there must exist at least one eigenvalue
of $B^{-1}
\left(I_n+\beta \check{B}\right) $ lying
inside $\mathscr{D}$.
We denote it as $x+yi$, then $-2\nu<x<0$ and
$-2\nu<y<2\nu$.
Consequently,
$\frac{1}{x+yi}=\frac{x-yi}{x^2+y^2}$
is an eigenvalue of
$\left(I_n+\beta\check{B}\right)^{-1}B$ with
real part $\frac{x}{x^2+y^2}<0$. Hence,
$\frac{1}{x+yi}$ lies in $\Omega$ defined by \eqref{Omega}
and the proof is finished in this case.
\\

{\bf Case 2: $B$ is a singular matrix.} In this
case, we will apply  perturbation  theorem based
on the results in {\bf Case 1} to
prove \eqref{Key}.

Suppose the multiplicity of zero eigenvalue of $B$ is
$m$ and $B$ has an eigenvalue decomposition in the form of
\begin{equation}\label{B eig decom}
B=V
\left(
  \begin{array}{cc}
    \Theta & 0 \\
    0 & 0 \\
  \end{array}
\right)V^T=V_1\Theta V_1^T,
\end{equation}
where $\Theta=\mbox{Diag}\left(\theta_1,
\theta_2, \ldots, \theta_{n-m}\right)$,
$\theta_s$, $s=1, \ldots, n-m$,
 are the nonzero eigenvalues of
$B$ and
\begin{equation}\label{Def U}
V=\left(
\begin{array}{cc}
    V_1 & V_2
  \end{array}
\right)
\end{equation}
is an orthogonal matrix and $V_1$ consists of
the first $(n-m)$ columns of $V$.

Denote
\begin{equation}
\delta
\triangleq\min
\left\{\left|\theta_1\right|,
\left|\theta_2\right|, \ldots,
\left|\theta_{n-m}\right|\right\}.
\end{equation}
For any $\epsilon\in\left(0, \,\delta\right)$, we define
\begin{equation}\label{Def Bepsilon0}
B\left(\epsilon\right)
\triangleq B+\epsilon I_n,
\end{equation}
then,
\begin{equation}
\mbox{eig}\left(B\left(\epsilon\right)\right)
=\left\{\theta_1+\epsilon,
\theta_2+\epsilon, \ldots,
\theta_{n-m}+\epsilon, \epsilon\right\}\not\ni0,
\hspace*{3mm}
\forall\, \epsilon\in\left(0, \,\delta\right),
\end{equation}
and
\begin{equation}\label{laminleq0}
\lambda_{\min}\left(B\left(\epsilon\right)\right)=
\lambda_{\min}\left(B\right)+\epsilon
\leq-\delta+\epsilon<0,\hspace*{3mm}
\forall\, \epsilon\in\left(0, \,\delta\right),
\end{equation}
where the first inequality  is due to the definition
of $\delta$ and
$\min
\left\{\theta_1,
\theta_2, \ldots,
\theta_{n-m}\right\}=\lambda_{\min}\left(B\right)<0$.

Since $B$ is defined by \eqref{Def B},
 $B\left(\epsilon\right)$ has
$p\times p$ blocks form as well.
Specifically,
\begin{equation}\label{Def Bepsilon}
B\left(\epsilon\right)
=
\left(B\left(\epsilon\right)_{st}\right)_{1\leq s, \,t\leq p},
\end{equation}
and  its $(s, t)$-th block is given by
\begin{equation}\label{Def Bepsilon_ij}
B\left(\epsilon\right)_{st}
=
\left\{
\begin{array}{lr}
B_{st}+\epsilon I_{n_s},   & s=t,\\
B_{st},                    & s\neq t,
  \end{array}
  \right.
\end{equation}
where  $n_1$, $n_2$, $\ldots$,
$n_p$ are $p$
positive integer numbers satisfying
$\sum\limits_{s=1}^{p}n_s=n$.
Similar to the definition \eqref{Def bar B}
of $\check{B}$, we denote the strictly block lower triangular matrix
based on $B\left(\epsilon\right)$ as
 \begin{equation}\label{Def checkBepsilon}
\check{B}\left(\epsilon\right)\triangleq
 \left(\check{B}\left(\epsilon\right)_{st}\right)_{1\leq s,\,t\leq p}
 \end{equation}
 with $p\times p$ blocks and its $(s, t)$-th block is given by
 \begin{equation}\label{LsigmabarBepsilon}
 \begin{split}
\check{B}\left(\epsilon\right)_{st}
&=\left\{
\begin{array}{lr}
    B\left(\epsilon\right)_{st},      & s>t,\\
    \mathbf{0},  & s\leq t,
  \end{array}
  \right.\\
  &=\left\{
\begin{array}{lr}
    B_{st},      & s>t,\\
    \mathbf{0},  & s\leq t,
  \end{array}
  \right.\\
  &=\check{B}_{st},
  \end{split}
\end{equation}
where the second equality holds because of
\eqref{Def Bepsilon_ij}; the last equality is due to
\eqref{LsigmabarB}.
 It follows easily from Eqs. \eqref{Def bar B}, \eqref{LsigmabarB}
 \eqref{Def checkBepsilon} and
 \eqref{LsigmabarBepsilon} that
\begin{equation}\label{CheckBepsilon=B}
\check{B}\left(\epsilon\right)=
\check{B}.
\end{equation}
Consequently,
\begin{equation}
\begin{split}
\left(I_n+\beta \check{B}\left(\epsilon\right)
 \right)^{-1}B\left(\epsilon\right)
 &=\left(I_n+\beta \check{B}
 \right)^{-1}B\left(\epsilon\right)
  =\left(I_n+\beta \check{B}
 \right)^{-1}\left(B+\epsilon I_n\right),
 \end{split}
 \end{equation}
where the first equality  is due to
\eqref{CheckBepsilon=B} and the second equality holds
because of \eqref{Def Bepsilon0}.
For simplicity, let
\begin{equation}
\lambda^\beta_s\hspace*{-1.5mm}
\left(\epsilon\right),\hspace*{3mm}
s=1, \ldots, n,
\end{equation}
be the eigenvalues of $\left(I_n+\beta \check{B}
 \right)^{-1}\left(B+\epsilon I_n\right)$.

Note that for any $\epsilon\in \left(0, \, \delta\right)$,
$B\left(\epsilon\right)$ is invertible and
$\lambda_{\min}\left(B\left(\epsilon\right)\right)<0$ (see
\eqref{laminleq0}).
According to the definitions of  $B\left(\epsilon\right)$
and $\check{B}\left(\epsilon\right)$,
a similar argument in {\bf Case 1}
can be applied  with the identifications
$B\left(\epsilon\right)\sim B$,
$\check{B}\left(\epsilon\right)\sim \check{B}$,
$\beta \sim \beta$ and
$\rho\left(B(\epsilon)\right)
\sim
\rho(B)$,
to prove that,
for any $\beta\in\left(0,
\frac{1}{\rho(B(\epsilon))}\right)$,
there must exist at least one eigenvalue of
$\left(I_n+\beta \check{B}
\left(\epsilon\right)\right)^{-1}B\left(\epsilon\right)$
which lies in $\Omega$ defined by \eqref{Omega}.
Taking  into account definition \eqref{Def Bepsilon0}, we
have
$\rho\left(B(\epsilon)\right)\leq
\rho\left(B\right)+\epsilon$.
Hence, for any
$\epsilon\in \left(0, \, \delta\right)$
and
$\beta\in \left(0, \,
\frac{1}{\rho\left(B\right)+\epsilon}\right)
\subseteq
\left(0, \,
\frac{1}{\rho\left(B(\epsilon)\right)}\right)$,
 there exists at least one index denoted as
 $s(\epsilon)\in\left\{1, 2, \ldots, n\right\}$
 such that
 \begin{equation}\label{Epsi Belo Ome}
 \lambda^\beta_{s\text{\tiny$(\epsilon)$}}
 \hspace*{-1.5mm}\left(\epsilon\right)\in \Omega.
 \end{equation}

Furthermore, it is well known that
the eigenvalues of a matrix $M$ are continuous functions
of the entries of $M$.
Therefore, for any $\beta\in\left(0, \,
\frac{1}{\rho\left(B\right)}\right)$,
$\lambda^\beta_s\hspace*{-1.5mm}\left(\epsilon\right)$ is a
continuous function of $\epsilon$ and
 \begin{equation}\label{Alllimit=0}
 \lim\limits_{\epsilon\rightarrow 0^+}
 \lambda^\beta_s\hspace*{-1.5mm}\left(\epsilon\right)=
 \lambda^\beta_s\hspace*{-1.5mm}\left(0\right)
 , \hspace*{3mm}
s=1, \ldots, n,
 \end{equation}
 where $\lambda^\beta_s\hspace*{-1.5mm}\left(0\right)$
is the eigenvalue of
$\left(I_n+\beta \check{B}\right)^{-1}B$.

In what follows, we will prove that \eqref{Key} holds true by contradiction.

Suppose for sake of contradiction that,
 there exists a $\beta^*\in\left(0, \,
\frac{1}{\rho\left(B\right)}\right)$
such that, for any $s\in \left\{1, \ldots, n\right\}$,
we have
\begin{equation}\label{Lambda0 notIn Omega}
\lambda^{\beta^*}_s\hspace*{-2mm}\left(0\right)\notin\Omega,
\end{equation}
 where $\lambda^{\beta^*}_s\hspace*{-2mm}\left(0\right)$
is the eigenvalue of
$\left(I_n+\beta^* \check{B}\right)^{-1}B$.

According to Lemma \ref{Multi Zero Lem} in Appendix and the
assumption that the multiplicity of zero eigenvalue of
$B$ is $m$, we know
that the multiplicity of eigenvalue $0$ of
$\left(I_n+\beta^* \check{B}\right)^{-1}B$ is
$m$ as well. Then there are exactly $m$ eigenvalue functions of
$\epsilon$ whose limits are 0 as $\epsilon$
approaches zero from above. Without loss of generality,
we assume
\begin{equation}\label{LimiLambda=0}
 \lim\limits_{\epsilon\rightarrow0^+}
 \lambda^{\beta^*}_s\hspace*{-2mm}\left(\epsilon\right)=
 \lambda^{\beta^*}_s\hspace*{-2mm}\left(0\right)=0, \hspace*{3mm}
 s=1, \ldots, m,
 \end{equation}
and
\begin{equation}\label{LimiLambdaneq0}
 \lim\limits_{\epsilon\rightarrow0^+}
\lambda^{\beta^*}_s\hspace*{-2mm}\left(\epsilon\right)=
\lambda^{\beta^*}_s\hspace*{-2mm}\left(0\right)\neq0, \hspace*{3mm}
 s=m+1, \ldots, n.
\end{equation}

Subsequently, 
 under the assumption \eqref{Lambda0 notIn Omega},
we will first prove that there exists a
$\delta_1^*>0$ such that, for any
$\epsilon\in \left(-\delta_1^*,\, 0\right)$, then
$\beta^*\in\left(0, \frac{1}{\rho(B)+\epsilon}\right)
\subseteq\left(0, \frac{1}{\rho(B(\epsilon))}\right)$
and
there exists no $s\in \left\{1, \ldots, n\right\}$
such that
$\lambda^{\beta^*}_s\hspace*{-2mm}
\left(\epsilon\right)$
belongs to
$\Omega$. This would contradict
\eqref{Epsi Belo Ome}. The proof is given by the following
four steps.\\

{\bf Step (a):} Under the assumption \eqref{Lambda0 notIn Omega},
we first prove that there exists a
$\delta_1^*>0$ such that, for any
$\epsilon\in \left(-\delta_1^*,\, 0\right)$,
$\beta^*\in\left(0, \frac{1}{\rho(B)+\epsilon}\right)
\subseteq\left(0, \frac{1}{\rho(B(\epsilon))}\right)$
and
there does not exist any $s\in \left\{m+1, \ldots, n\right\}$
such that
$\lambda^{\beta^*}_s\hspace*{-2mm}
\left(\epsilon\right)$
belongs to
$\Omega$.

Taking into account the definition of
$\Omega$,  Eq. \eqref{Lambda0 notIn Omega}
 and $\beta^*\in\left(0, \,
\frac{1}{\rho\left(B\right)}\right)$, we
imply that, there exists a $\bar{\delta}$ such that,
for any $\epsilon\in(0, \bar{\delta})$,
\begin{equation}\label{inbeta}
\beta^*\in\left(0, \frac{1}{\rho(B)+\epsilon}\right)
\subseteq\left(0, \frac{1}{\rho(B(\epsilon))}\right)
\end{equation}
and
 \begin{equation}
\mbox{Re}\left(\lambda^{\beta^*}_s\hspace*{-2mm}\left(0\right)\right)>0,
\hspace*{3mm} \forall \, s\in\left\{m+1, \ldots, n\right\}.
\end{equation}
Moreover, note that $\lambda^{\beta^*}_s\hspace*{-2mm}\left(\epsilon\right)$ is a
continuous function of $\epsilon$
and \eqref{LimiLambdaneq0} holds. Combining with the above
inequalities, we know that there exists a
$\delta_1^*>0$ with $\delta_1^*\leq \bar{\delta}$
such that
\begin{equation}\label{Neigh Re Lambda m+1}
\left|
\lambda_s^{\beta^*}\hspace*{-2mm}\left(\epsilon\right)-
\lambda_s^{\beta^*}\hspace*{-2mm}\left(0\right)
\right|<\frac{1}{3}
\mbox{Re}\left(\lambda_s^{\beta^*}\hspace*{-2mm}\left(0\right)\right),
\hspace*{3mm}
 \forall\, s\in \left\{m+1, \ldots, n\right\},
 \, \forall\,
  \epsilon\in \left[0, \, \delta_1^*\right],
\end{equation}
which further means that
\begin{equation}\label{Neigh Re Lambda m+1}
0<\frac{2}{3}
\mbox{Re}\left(\lambda_s^{\beta^*}\hspace*{-2mm}\left(0\right)\right)
<
\mbox{Re}\left(\lambda_s^{\beta^*}\hspace*{-2mm}\left(\epsilon\right)\right),
\hspace*{3mm}
 \forall\, s\in \left\{m+1, \ldots, n\right\},
 \, \forall\,
  \epsilon\in \left[0, \, \delta_1^*\right].
\end{equation}
Hence, we arrive at
\begin{equation}\label{m+1notinOmega}
\lambda_s^{\beta^*}\hspace*{-2mm}\left(\epsilon\right)\notin\Omega,
\hspace*{3mm}
 \forall\, s\in \left\{m+1, \ldots, n\right\},
 \, \forall\,
  \epsilon\in \left[0, \, \delta_1^*\right].
\end{equation}

{\bf Step (b):} In this step,  we will prove that
there exists a
$\delta_2^*>0$ such that, for any
$\epsilon\in \left(0, \, \delta_2^*\right]$ and
$s\in \left\{1, \ldots, m\right\}$,
\begin{equation}
\mbox{Re}\left(\lambda_s^{\beta^*}\hspace*{-2mm}\left(\epsilon\right)
\right)>0,
\end{equation} which immediately implies that
\begin{equation}\label{Lambdas notIn Omega}
\lambda_s^{\beta^*}\hspace*{-2mm}\left(\epsilon\right)\notin\Omega,
\hspace*{3mm}
 \forall\, s\in \left\{1, \ldots, m\right\},
 \, \forall\,
  \epsilon\in \left(0, \, \delta_2^*\right].
\end{equation}
For simplicity, let
\begin{equation}\label{Cij}
\check{C}_{ij}\triangleq V_i^T\check{B}V_j,
\hspace*{2mm} 1\leq i, j\leq2,
\end{equation}
where $V_1$ and $V_2$ are given by \eqref{Def U}.

In what follows, we take an arbitrary
$s\in \left\{1, \ldots, m\right\}$.
 Since we assume that $\lambda_s^{\beta^*}\hspace*{-2mm}\left(\epsilon\right)$ is
 the eigenvalue of $\left(I_n+\beta^* \check{B}\right)^{-1}
 \left(B+\epsilon I_n\right)$,
 then, for any $\epsilon\in \left(0, \, \delta\right)$,
\begin{equation*}
\begin{split}
&\det\left\{
\left(I_n+\beta^* \check{B}\right)^{-1}
 \left(B+\epsilon I_n\right)
 -\lambda_s^{\beta^*}\hspace*{-2mm}\left(\epsilon\right) I_n\right\}
 =0,
\end{split}
\end{equation*}
which is clearly equivalent to
\begin{equation}\label{Multiply Det}
 \det\left\{
\left(I_n+\beta^* \check{B}\right)
\right\}
\det\left\{
\left(I_n+\beta^* \check{B}\right)^{-1}
 \left(B+\epsilon I_n\right)
 -\lambda_s^{\beta^*}\hspace*{-2mm}\left(\epsilon\right) I_n\right\}
 =0, 
 \forall \, \epsilon\in \left(0, \, \delta\right).
\end{equation}
It is easily seen from the above Eq.
\eqref{Multiply Det}
that, for any $\epsilon\in \left(0, \, \delta\right)$,
\begin{equation}\label{Zero Det}
\begin{split}
&0=\det\left\{
 \left(B+\epsilon I_n\right)
 -\lambda_s^{\beta^*}\hspace*{-2mm}\left(\epsilon\right)
 \left(I_n+\beta^* \check{B}\right)\right\}
 \\
 &=
\det\left\{
 \left(
 V
\left(
  \begin{array}{cc}
    \Theta & 0 \\
    0 & 0 \\
  \end{array}
\right)V^T
+\epsilon I_n\right)
 -\lambda_s^{\beta^*}\hspace*{-2mm}\left(\epsilon\right)
 \left(I_n+\beta^* \check{B}\right)\right\}
\\
 &=
\det\left\{
 \left(
  \begin{array}{cc}
    \Theta+\epsilon I_{n-m} & 0 \\
    0 & \epsilon I_m \\
  \end{array}
\right)
 -\lambda_s^{\beta^*}\hspace*{-2mm}\left(\epsilon\right)
 \left(I_n+\beta^* V^T\check{B}V\right)\right\}
 \\
 &=
\det\left\{
 \left(
  \begin{array}{cc}
\Theta+\epsilon I_{n-m}
-\lambda_s^{\beta^*}\hspace*{-2mm}\left(\epsilon\right)
\left(I_{n-m}+\beta^* \check{C}_{11}\right)
& -\lambda_s^{\beta^*}\hspace*{-2mm}\left(\epsilon\right)
\beta^* \check{C}_{12} \\
  -\lambda_s^{\beta^*}\hspace*{-2mm}\left(\epsilon\right)
    \beta^* \check{C}_{21} &
  \epsilon I_m  -\lambda_s^{\beta^*}\hspace*{-2mm}\left(\epsilon\right)
    \left(I_{m}+\beta^* \check{C}_{22}\right) \\
  \end{array}
\right)
 \right\},
\end{split}
\end{equation}
where the second equality is due to
\eqref{B eig decom} and the last equality holds
because of Eqs.  \eqref{Def U} and \eqref{Cij}.

Besides, recalling
\begin{equation}\label{ReLimiLambda=0}
 \lim\limits_{\epsilon\rightarrow0^+}
 \lambda_s^{\beta^*}\hspace*{-2mm}\left(\epsilon\right)=
 \lambda_s^{\beta^*}\hspace*{-2mm}\left(0\right) =0,
 \end{equation}
 we have
 \begin{equation}\label{LimTheta}
 \lim\limits_{\epsilon\rightarrow0^+}
 \left[
\Theta+\epsilon I_{n-m}
-\lambda_s^{\beta^*}\hspace*{-2mm}\left(\epsilon\right)
\left(I_{n-m}+\beta^* \check{C}_{11}\right)
\right]=\Theta,
\end{equation}
which is an invertible matrix because $\Theta$ is
given by \eqref{B eig decom}.
Clearly, the above \eqref{LimTheta}
further implies that,
 there exists a $\delta_1>0$ such that, for any
$\epsilon\in\left[0, \, \delta_1\right]$,
\begin{equation}\label{Inverse}
\Theta+\epsilon I_{n-m}
-\lambda_s^{\beta^*}\hspace*{-2mm}\left(\epsilon\right)
\left(I_{n-m}+\beta^* \check{C}_{11}\right)
\end{equation}
is an invertible matrix as well,
and
\begin{equation}\label{Inv Limit}
\lim\limits_{\epsilon\rightarrow0^+}
 \left[
\Theta+\epsilon I_{n-m}
-\lambda_s^{\beta^*}\hspace*{-2mm}\left(\epsilon\right)
\left(I_{n-m}+\beta^* \check{C}_{11}\right)
\right]^{-1}=\Theta^{-1}.
\end{equation}
Since the inverse of a matrix $M$, $M^{-1}$, is a continuous
function of the elements of $M$, there exists
a $\delta_2>0$ with $\delta_2\leq\delta_1$,
such that, for any
$\epsilon\in\left[0, \, \delta_2\right]$,
\begin{equation}\label{Bound Inv}
 \left\|\left[
\Theta+\epsilon I_{n-m}
-\lambda_s^{\beta^*}\hspace*{-2mm}\left(\epsilon\right)
\left(I_{n-m}+\beta^* \check{C}_{11}\right)
\right]^{-1}
\right\|\leq 2 \left\|\Theta^{-1}\right\|.
\end{equation}
Consequently, for any
$\epsilon\in\left[0, \, \delta_2\right]$, it
follows easily from \eqref{Zero Det} and
\eqref{Inverse} that
\begin{equation}
\begin{split}
0&=\det
\left\{
\Theta+\epsilon I_{n-m}
-\lambda_s^{\beta^*}\hspace*{-2mm}\left(\epsilon\right)
\left(I_{n-m}+\beta^* \check{C}_{11}\right)
\right\}
\\
&\hspace*{4mm}\times\det
\Big{\{}
\epsilon I_m  -\lambda_s^{\beta^*}\hspace*{-2mm}\left(\epsilon\right)
    \left(I_{m}+\beta^* \check{C}_{22}\right)
\\
&
\hspace*{17mm}
-\left(\beta^*\right)^2
\left(\lambda_s^{\beta^*}\hspace*{-2mm}\left(\epsilon\right)\right)^2
\check{C}_{21}
\left[\Theta+\epsilon I_{n-m}
-\lambda_s^{\beta^*}\hspace*{-2mm}\left(\epsilon\right)
\left(I_{n-m}+\beta^* \check{C}_{11}\right)
\right]^{-1}
\check{C}_{12}
\Big{\}},
\end{split}
\end{equation}
which, combined with the fact that
$\Theta+\epsilon I_{n-m}
-\lambda_s^{\beta^*}\hspace*{-2mm}\left(\epsilon\right)
\left(I_{n-m}+\beta^* V_1^T\check{B}V_1\right)
$ is an invertible matrix again
(see \eqref{Inverse}), means that
\begin{equation*}
\epsilon I_m  -\lambda_s^{\beta^*}\hspace*{-2mm}\left(\epsilon\right)
    \left(I_{m}+\beta^* \check{C}_{22}\right)
-\left(\beta^*\right)^2
\left(\lambda_s^{\beta^*}\hspace*{-2mm}\left(\epsilon\right)\right)^2
\check{C}_{21}
\left[\Theta+\epsilon I_{n-m}
-\lambda_s^{\beta^*}\hspace*{-2mm}\left(\epsilon\right)
\left(I_{n-m}+\beta^* \check{C}_{11}\right)
\right]^{-1}
\check{C}_{12}
\end{equation*}
is a singular matrix. Therefore, there exists one nonzero
 vector $\upsilon\left(\epsilon\right)\in \mathbb{C}^m$
 with
 $\left\|\upsilon\left(\epsilon\right)\right\|=1$
 satisfying
\begin{equation}
\begin{split}
&\Big{\{}
\epsilon I_m  -\lambda_s^{\beta^*}\hspace*{-2mm}\left(\epsilon\right)
    \left(I_{m}+\beta^* \check{C}_{22}\right)
\\
&\hspace*{1mm}
-\left(\beta^*\right)^2
\left(\lambda_s^{\beta^*}\hspace*{-2mm}\left(\epsilon\right)\right)^2
\check{C}_{21}
\left[\Theta+\epsilon I_{n-m}
-\lambda_s^{\beta^*}\hspace*{-2mm}\left(\epsilon\right)
\left(I_{n-m}+\beta^* \check{C}_{11}\right)
\right]^{-1}
\check{C}_{12}
\Big{\}}
\upsilon\left(\epsilon\right)={\bf 0}.
\end{split}
\end{equation}
Moreover, for any
$\epsilon\in\left[0, \, \delta_2\right]$,
premultiplying both sides of the above equality by
$\left(\upsilon\left(\epsilon\right)\right)^H$, we have
\begin{equation}
\begin{split}
\left(\upsilon\left(\epsilon\right)\right)^H
&\Big{\{}
\epsilon I_m  -\lambda_s^{\beta^*}\hspace*{-2mm}\left(\epsilon\right)
    \left(I_{m}+\beta^* \check{C}_{22}\right)
\\
&\hspace*{1mm}
-\left(\beta^*\right)^2
\left(\lambda_s^{\beta^*}\hspace*{-2mm}\left(\epsilon\right)\right)^2
\check{C}_{21}
\left[\Theta+\epsilon I_{n-m}
-\lambda_s^{\beta^*}\hspace*{-2mm}\left(\epsilon\right)
\left(I_{n-m}+\beta^* \check{C}_{11}\right)
\right]^{-1}
\check{C}_{12}
\Big{\}}
\upsilon\left(\epsilon\right)=0,
\end{split}
\end{equation}
or equivalently,
\begin{equation}
\begin{split}
&\epsilon -\lambda_s^{\beta^*}\hspace*{-2mm}\left(\epsilon\right)
    \left(1+\beta^*
    \left(\upsilon\left(\epsilon\right)\right)^H
    \check{C}_{22}
    \upsilon\left(\epsilon\right)
    \right)
\\
&=\left(\beta^*\right)^2
\left(\lambda_s^{\beta^*}\hspace*{-2mm}\left(\epsilon\right)\right)^2
\left(\upsilon\left(\epsilon\right)\right)^H
\check{C}_{21}
\left[\Theta+\epsilon I_{n-m}
-\lambda_s^{\beta^*}\hspace*{-2mm}\left(\epsilon\right)
\left(I_{n-m}+\beta^* \check{C}_{11}\right)
\right]^{-1}
\check{C}_{12}
\upsilon\left(\epsilon\right).
\end{split}
\end{equation}
Dividing both sides of the above equation
by $\lambda_s^{\beta^*}\hspace*{-2mm}\left(\epsilon\right)$,
we have, for any
$\epsilon\in\left(0, \, \delta_2\right]$,
\begin{equation}\label{Lim Bound 0}
\begin{split}
0&\leq \left|
\frac{\epsilon}{\lambda_s^{\beta^*}\hspace*{-2mm}\left(\epsilon\right)}
    -\left(1+\beta^*
    \left(\upsilon\left(\epsilon\right)\right)^H
    \check{C}_{22}
    \upsilon\left(\epsilon\right)
    \right)
\right|
\\
&=\left|
\left(\beta^*\right)^2
\lambda_s^{\beta^*}\hspace*{-2mm}\left(\epsilon\right)
\left(\upsilon\left(\epsilon\right)\right)^H
\check{C}_{21}
\left[\Theta+\epsilon I_{n-m}
-\lambda_s^{\beta^*}\hspace*{-2mm}\left(\epsilon\right)
\left(I_{n-m}+\beta^* \check{C}_{11}\right)
\right]^{-1}
\check{C}_{12}
\upsilon\left(\epsilon\right)
\right|\\
&\leq\left(\beta^*\right)^2
\left|
\lambda_s^{\beta^*}\hspace*{-2mm}\left(\epsilon\right)
\right|
\left\|
\check{C}_{21}
\right\|
\left\|
\left[\Theta+\epsilon I_{n-m}
-\lambda_s^{\beta^*}\hspace*{-2mm}\left(\epsilon\right)
\left(I_{n-m}+\beta^* \check{C}_{11}\right)
\right]^{-1}
\right\|
\left\|
\check{C}_{12}
\right\|\\
&\leq 2 \left(\beta^*\right)^2
\left|
\lambda_s^{\beta^*}\hspace*{-2mm}\left(\epsilon\right)
\right|
\left\|
\check{C}_{21}
\right\|
\left\|
\Theta^{-1}
\right\|
\left\|
\check{C}_{12}
\right\|,
\end{split}
\end{equation}
where the second inequality is due to
$\left\|\upsilon\left(\epsilon\right)\right\|=1$
and Cauchy--Schwartz inequality; the last
inequality follows from
\eqref{Bound Inv}.
As $\lim\limits_{\epsilon\rightarrow0^+}
\lambda_s^{\beta^*}\hspace*{-2mm}\left(\epsilon\right)=
 \lambda_s^{\beta^*}\hspace*{-2mm}\left(0\right) =0$, the right-hand
 limit in the above expression  is equal to zero.
Hence, we have
 \begin{equation}\label{Lim}
\begin{split}
&\lim\limits_{\epsilon\rightarrow0^+}
\left|
\frac{\epsilon}{\lambda_s^{\beta^*}\hspace*{-2mm}\left(\epsilon\right)}
    -\left(1+\beta^*
    \left(\upsilon\left(\epsilon\right)\right)^H
    \check{C}_{22}
    \upsilon\left(\epsilon\right)
    \right)
\right|=0.
\end{split}
\end{equation}
Recall that
\begin{equation}\label{ReLowBound}
\begin{split}
&\mbox{Re}
\left(1+\beta^*
    \left(\upsilon\left(\epsilon\right)\right)^H
    \check{C}_{22}
    \upsilon\left(\epsilon\right)
    \right)\\
    &=\mbox{Re}
\left(
1+\beta^*
    \left(\upsilon\left(\epsilon\right)\right)^H
    V_2^T\check{B}V_2
    \upsilon\left(\epsilon\right)
    \right)\\
    &=
    \mbox{Re}\left(
    1+\beta^*
    \left(\upsilon\left(\epsilon\right)\right)^H
    V_2^H\check{B}V_2
    \upsilon\left(\epsilon\right)
    \right)\\
    &=\mbox{Re}\left(
    1+\beta^*
    \left(V_2\upsilon\left(\epsilon\right)\right)^H
   \check{B}V_2\upsilon\left(\epsilon\right)
    \right)\\
    &\geq 1-\beta^*\rho\left(B\right)
    >0,
\end{split}
\end{equation}
where the first equality follows from \eqref{Cij};
the second equality holds because $V_2$ is a real
matrix (see eq. \eqref{B eig decom});
the first inequality follows easily from Lemma \ref{Re neq 0 Lem} and
$\left\|V_2\upsilon\left(\epsilon\right)\right\|=
\left(\upsilon\left(\epsilon\right)\right)^H
V_2^HV_2\upsilon\left(\epsilon\right)=
\left(\upsilon\left(\epsilon\right)\right)^H
\upsilon\left(\epsilon\right)=1
$;  and the last inequality thanks to $\beta^*\in
\left(0,\, \frac{1}{\rho\left(B\right)}\right)$.
Consequently, there exists a $\delta_3$ with $\delta_3\leq
\delta_2$ such that,
for any $\epsilon\in\left(0, \, \delta_3\right]$,
\begin{equation}\label{Lim Bound}
\begin{split}
\frac{1}{3}\left(1-\beta^*\rho\left(B\right)\right)
&\geq\left|
\frac{\epsilon}{\lambda_s^{\beta^*}\hspace*{-2mm}\left(\epsilon\right)}
    -\left(1+\beta^*
    \left(\upsilon\left(\epsilon\right)\right)^H
    \check{C}_{22}
    \upsilon\left(\epsilon\right)
    \right)
\right|\\
&\geq
\left|
\mbox{Re}
\left(
\frac{\epsilon}{\lambda_s^{\beta^*}\hspace*{-2mm}\left(\epsilon\right)}
    -\left(1+\beta^*
    \left(\upsilon\left(\epsilon\right)\right)^H
    \check{C}_{22}
    \upsilon\left(\epsilon\right)
    \right)
    \right)
\right|\\
&=
\left|
\mbox{Re}
\left(
\frac{\epsilon}{\lambda_s^{\beta^*}
\hspace*{-2mm}\left(\epsilon\right)}
\right)
    -\mbox{Re}\left(1+\beta^*
    \left(\upsilon\left(\epsilon\right)\right)^H
    \check{C}_{22}
    \upsilon\left(\epsilon\right)
    \right)
\right|.
\end{split}
\end{equation}
The above inequalities \eqref{ReLowBound}
and
\eqref{Lim Bound} imply that,
for any
$\epsilon\in\left(0, \, \delta_3\right]$,
\begin{equation}
\begin{split}
0&<\frac{2}{3}\left(1-\beta^*\rho\left(B\right)\right)
\leq \mbox{Re}
\left(
\frac{\epsilon}
{\lambda_s^{\beta^*}
\hspace*{-2mm}\left(\epsilon\right)}
\right)
=
\frac{\epsilon}
{\left|
\lambda_s^{\beta^*}\hspace*{-2mm}\left(\epsilon\right)
\right|^2}
\mbox{Re}
\left(
\lambda_s^{\beta^*}\hspace*{-2mm}\left(\epsilon\right)
\right).
\end{split}
\end{equation}
Since the above argument is applied to any
$s\in \left\{1, \ldots, m\right\}$, there exists
a $\delta_2^*>0$ such that,
\begin{equation}
\begin{split}
0<\mbox{Re}
\left(
\lambda_s^{\beta^*}\hspace*{-2mm}\left(\epsilon\right)
\right),
\hspace*{3mm}
 \forall\, s\in \left\{1, \ldots, m\right\},
 \, \forall\,
  \epsilon\in \left(0, \, \delta_2^*\right],
\end{split}
\end{equation}
which further implies that
\begin{equation}\label{mnotinOmega}
\begin{split}
\lambda_s^{\beta^*}\hspace*{-2mm}\left(\epsilon\right)
\notin\Omega,
\hspace*{3mm}
 \forall\, s\in \left\{1, \ldots, m\right\},
 \, \forall\,
  \epsilon\in \left(0, \, \delta_2^*\right].
\end{split}
\end{equation}

{\bf Step (c):}
Combining \eqref{m+1notinOmega} and
\eqref{mnotinOmega},
we arrive at,
\begin{equation}\label{mnotinOmega1}
\begin{split}
\lambda_s^{\beta^*}\hspace*{-2mm}\left(\epsilon\right)
\notin\Omega,
\hspace*{3mm}
 \forall\, s\in \left\{1, \ldots, n\right\},
 \, \forall\,
  \epsilon\in \left(0, \, \delta_3^*\right],
\end{split}
\end{equation}
where $\delta_3^*=\min\left\{\delta_1^*,
\, \delta_2^*\right\}$.

{\bf Step (d):} 
Let
\begin{equation}\label{Def deltastar}
\delta^*=\min\left\{\frac{1}{2}\delta,
 \,\delta_3^*\right\}.
\end{equation}
Then, for any $\epsilon\in\left(0, \,
\delta^*\right)$, we have
\begin{equation}\label{delta}
\epsilon\in\left(0, \,
\delta\right),
\end{equation}
\begin{equation}\label{beta in 0 epsilonstar}
\beta^*\in\left(0, \,
\frac{1}{\rho\left(B\right)
+\epsilon}\right)\subseteq
\left(0, \,
\frac{1}{\rho\left(B(\epsilon)\right)}\right)
\end{equation}
and
\begin{equation}\label{mnotinOmega12}
\begin{split}
\lambda_s^{\beta^*}\hspace*{-2mm}\left(\epsilon\right)
\notin\Omega,
\hspace*{3mm}
 \forall\, s\in \left\{1, \ldots, n\right\},
\end{split}
\end{equation}
where \eqref{delta} uses the definition
 \eqref{Def deltastar}
of $\delta^*$; \eqref{beta in 0 epsilonstar}
 is due to
the definition \eqref{Def deltastar}
of $\delta^*$ (i.e.,
$\delta^*\leq\delta^*_3
\leq\delta_1^*\leq\bar{\delta}$)
and \eqref{inbeta}; and
\eqref{mnotinOmega12} thanks to
the definition \eqref{Def deltastar}
of $\delta^*$ and
 \eqref{mnotinOmega1}.
Clearly, this contradicts \eqref{Epsi Belo Ome}.

Hence, we conclude that \eqref{Key} holds true.
\hfill\end{proof}
\subsection{Proof of Lemma \ref{LpsiDiff}}
\begin{proof}
We first prove that $g_{1}$ with step size
$\alpha<\frac{\mu}{L}$ is a diffeomorphism.
The proof is given by the following four steps.\\
{\bf (a)}
We first prove that $\psi_{1}$ is injective
from $\mathbb{R}^{n_1}\rightarrow \mathbb{R}^{n_1}$ for
$\alpha<\frac{\mu}{L}$. Suppose that there
exist $x$ and $y$ such that $\psi_{1}(x)=\psi_{1}(y)$,
which implies that
\begin{equation}\label{Cases}
\begin{cases}
\left[\nabla\varphi_1\right]^{-1}
\left(
\nabla\varphi_1\left(x\text{\scriptsize$(t)$}\right)
-\alpha\nabla_1f\left(x\right)\right)
=\left[\nabla\varphi_1\right]^{-1}
\left(\nabla\varphi_1\left(y\text{\scriptsize$(t)$}\right)
-\alpha\nabla_1f\left(y\right)\right), & t=1,\\
x\text{\scriptsize$(t)$}=
y\text{\scriptsize$(t)$}, & t=2, 3, \ldots, p.
\end{cases}
\end{equation}
Since Lemma \ref{Phi Diff} in Appendix asserts
that $\nabla\varphi_1$  is a diffeomorphism,
then $\left[\nabla\varphi_1\right]^{-1}$ is a
diffeomorphism as well. Hence, the above  equality \eqref{Cases}
is equivalent to
\begin{equation}\label{Cases1}
\begin{cases}
\nabla\varphi_1\left(x\text{\scriptsize$(t)$}\right)
-\alpha\nabla_1f\left(x\right)
=\nabla\varphi_1\left(y\text{\scriptsize$(t)$}\right)
-\alpha\nabla_1f\left(y\right), & t=1,\\
x\text{\scriptsize$(t)$}=
y\text{\scriptsize$(t)$}, & t=2, 3, \ldots, p.
\end{cases}
\end{equation}
In particular, $
\nabla\varphi_1\left(x\text{\scriptsize$(1)$}\right)
-\alpha\nabla_1f\left(x\right)
=\nabla\varphi_1\left(y\text{\scriptsize$(1)$}\right)
-\alpha\nabla_1f\left(y\right)
$ further implies that
\begin{equation}\label{x-y}
\begin{split}
\left\|x\text{\scriptsize$(1)$}
-y\text{\scriptsize$(1)$}\right\|
&\leq\frac{1}{\mu}\left\|
\nabla\varphi_1\left(x\text{\scriptsize$(1)$}\right)
-\nabla\varphi_1\left(y\text{\scriptsize$(1)$}\right)
\right\|\\
&=\frac{\alpha}{\mu}\left\|
\nabla_1 f\left(x\right)-\nabla_1 f\left(y\right)\right\|
\\
&\leq\frac{\alpha}{\mu}\|\nabla f(x)-\nabla f(y)\|
\\
&\leq\frac{\alpha L}{\mu}\|x-y\|
\\
&=\frac{\alpha L}{\mu}\|x\text{\scriptsize$(1)$}
-y\text{\scriptsize$(1)$}\|,
\end{split}
\end{equation}
where the first inequality is due to strong
convexity (see \eqref{Strongly Convex}); the
third inequality thanks to \eqref{Lipschitz};
the last equality holds because of \eqref{Cases1}.
Since $\alpha L<1$, \eqref{x-y} means
$x\text{\scriptsize$(1)$}
=y\text{\scriptsize$(1)$}$.
Combining with \eqref{Cases1}, we have $x=y$.

{\bf (b)} To show $\psi_1$ is surjective,
we construct an explicit inverse function.
Given a point $y$ in $\mathbb{R}^n$,
suppose it has the following partition,
\begin{equation}
y=\left(\begin{array}{c}
y\text{\scriptsize$(1)$}\\
y\text{\scriptsize$(2)$}\\
\vdots\\
y\text{\scriptsize$(p)$}
\end{array}
\right).
\end{equation}
Then we define $n-n_1$ dimensional vector
\begin{equation}\label{y^-(1)BMD}
y_{_-}\text{\scriptsize$(1)$}\triangleq\left(\begin{array}{c}
y\text{\scriptsize$(2)$}\\
\vdots\\
y\text{\scriptsize$(p)$}
\end{array}
\right)
\end{equation}
and define a function
$\bar{f}\left(\cdot;{y_{_-}\text{\scriptsize$(1)$}}\right)$ $:
\,\mathbb{R}^{n_1}\rightarrow \mathbb{R}$,
\begin{equation}
\bar{f}\left(x\text{\scriptsize$(1)$};{y_{_-}\text{\scriptsize$(1)$}}\right)\triangleq
f\left(
\left(\begin{array}{c}
x\text{\scriptsize$(1)$}\\
y_{_-}\text{\scriptsize$(1)$}\\
\end{array}
\right)
\right),
\end{equation}
which is determined by function $f$ and the
remained  block coordinate vector
$y_{_-}\text{\scriptsize$(1)$}$ of $y$.
Consider the following problem,
\begin{equation}\label{x1y1BMD}
\min\limits_{x\text{\scriptsize$(1)$}}
B_{\varphi_1}\left(x\text{\scriptsize$(1)$},
y\text{\scriptsize$(1)$}\right)-
\alpha \bar{f}\left(x\text{\scriptsize$(1)$};{y_{_-}\text{\scriptsize$(1)$}}\right)
\end{equation}

For $\alpha<\frac{\mu}{L}$, the function above
is strongly convex with respect to
$x\text{\scriptsize$(1)$}$, so there is a unique
minimizer of the problem \eqref{x1y1BMD}.
Let $x_y\text{\scriptsize$(1)$}$
be the unique minimizer ,
then by the KKT condition,
\begin{equation}
\begin{split}
\nabla \varphi_1 \left(y\text{\scriptsize$(1)$}\right)
&=\nabla \varphi_1 \left(
x_y\text{\scriptsize$(1)$}
\right)-\alpha\nabla
\bar{f}\left(x_y\text{\scriptsize$(1)$}
;{y_{_-}\text{\scriptsize$(1)$}}\right),
\end{split}
\end{equation}
which is equivalent to
\begin{equation}
\begin{split}\label{KKTy(1)BMD}
y\text{\scriptsize$(1)$}=
\left[\nabla \varphi_1\right]^{-1} \left(
\nabla \varphi_1 \left(
x_y\text{\scriptsize$(1)$}
\right)
-\alpha\nabla_1
f\left(\left(\begin{array}{c}
x_y\text{\scriptsize$(1)$}\\
y_{_-}\text{\scriptsize$(1)$}\\
\end{array}
\right)\right)
\right).
\end{split}
\end{equation}
Let  $x_{y}$ be defined as
\begin{equation}\label{Defx_y BMD}
x_y\triangleq\left(\begin{array}{c}
x_y\text{\scriptsize$(1)$}\\
y_{_-}\text{\scriptsize$(1)$}\\
\end{array}
\right),
\end{equation}
where $x_y\text{\scriptsize$(1)$}$
is determined by \eqref{KKTy(1)BMD}.
Accordingly,
\begin{equation}
\begin{split}
y&=
\left(\begin{array}{c}
y\text{\scriptsize$(1)$}\\
y_{_-}\text{\scriptsize$(1)$}\\
\end{array}
\right)\\
&=
\left(\begin{array}{c}
\left[\nabla\varphi_1\right]^{-1}
\left(
\nabla \varphi_1 \left(
x_y\text{\scriptsize$(1)$}
\right)
-\alpha\nabla_1
f\left(\left(\begin{array}{c}
x_y\text{\scriptsize$(1)$}\\
y_{_-}\text{\scriptsize$(1)$}\\
\end{array}
\right)\right)
\right)\\
y_{_-}\text{\scriptsize$(1)$}\\
\end{array}
\right)\\
&=\left(I_n-U_1U_1^T\right)x_y
+U_1\left[\nabla\varphi_1\right]^{-1}
\left(\nabla\varphi_1\left(x_y\text{\scriptsize$(1)$}\right)
-\alpha\nabla_1f\left(x_y\right)\right)\\
&=\psi_{1}(x_y),
\end{split}
\end{equation}
where the first equality is due to the definition
of $y_{_-}\text{\scriptsize$(1)$}$
(see \eqref{y^-(1)BMD}); the second equality
thanks to \eqref{KKTy(1)BMD}; and the third
equality holds because of definition of $U_1$
(see \eqref{DefUi}); since $\psi_{1}(x_y)$
is defined by \eqref{MBDA psi1}, the last equality
holds true.\\
\\
Hence, $x_{y}$ is mapped to $y$ by the
mapping $\psi_{1}$.

{\bf (c)} In addition, recalling
\eqref{D MBDA psis}, we have
\begin{equation}\label{D MBDA psi1}
\begin{split}\nonumber
&D\psi_1\left(x\right)
\\
&=\left(I_n-U_1U_1^T\right)+
\\
&\left\{U_1\nabla^2
\varphi_1\left(x\text{\scriptsize$(1)$}\right)
-\alpha\nabla^2f\left(x\right)U_1\right\}
\left\{
\nabla^2\varphi_1
\left\{\left[\nabla \varphi_1\right]^{-1}
\left(\nabla\varphi_1\left(x\text{\scriptsize$(1)$}\right)
-\alpha\nabla_1f\left(x\right)\right)\right\}
\right\}^{-1}U_1^T\\
&=
\left(\begin{array}{cccc}
{\bf 0} & {\bf 0} &  \cdots & {\bf 0} \\
{\bf 0}  & I_{n_2} &  \cdots & {\bf 0} \\
\vdots   & \vdots  &  \ddots & \vdots  \\
{\bf 0}  & {\bf 0} &  \cdots & I_{n_p}
\end{array}
\right)+\\
&
\left(\begin{array}{cccc}
\left\{\nabla^2
\varphi_1\left(x\text{\scriptsize$(1)$}\right)
-\alpha A_{11}\right\}
\left\{
\nabla^2\varphi_1
\left\{\left[\nabla \varphi_1\right]^{-1}
\left(\nabla\varphi_1\left(x\text{\scriptsize$(1)$}\right)
-\alpha\nabla_1f\left(x\right)\right)\right\}
\right\}^{-1}
  & {\bf 0} &  \cdots & {\bf 0} \\
A_{21}
\left\{
\nabla^2\varphi_1
\left\{\left[\nabla \varphi_1\right]^{-1}
\left(\nabla\varphi_1\left(x\text{\scriptsize$(1)$}\right)
-\alpha\nabla_1f\left(x\right)\right)\right\}
\right\}^{-1}
  & {\bf 0} &  \cdots & {\bf 0} \\
\vdots   & \vdots  &  \ddots & \vdots  \\
A_{p1}
\left\{
\nabla^2\varphi_1
\left\{\left[\nabla \varphi_1\right]^{-1}
\left(\nabla\varphi_1\left(x\text{\scriptsize$(1)$}\right)
-\alpha\nabla_1f\left(x\right)\right)\right\}
\right\}^{-1}
  & {\bf 0} &  \cdots & {\bf 0}
\end{array}
\right)
\\
&=
\left(\begin{array}{cccc}
\left\{\nabla^2
\varphi_1\left(x\text{\scriptsize$(1)$}\right)
-\alpha A_{11}\right\}
\left\{
\nabla^2\varphi_1
\left\{\left[\nabla \varphi_1\right]^{-1}
\left(\nabla\varphi_1\left(x\text{\scriptsize$(1)$}\right)
-\alpha\nabla_1f\left(x\right)\right)\right\}
\right\}^{-1}
  & {\bf 0} &  \cdots & {\bf 0} \\
A_{21}
\left\{
\nabla^2\varphi_1
\left\{\left[\nabla \varphi_1\right]^{-1}
\left(\nabla\varphi_1\left(x\text{\scriptsize$(1)$}\right)
-\alpha\nabla_1f\left(x\right)\right)\right\}
\right\}^{-1}
  & I_{n_2} &  \cdots & {\bf 0} \\
\vdots   & \vdots  &  \ddots & \vdots  \\
A_{p1}
\left\{
\nabla^2\varphi_1
\left\{\left[\nabla \varphi_1\right]^{-1}
\left(\nabla\varphi_1\left(x\text{\scriptsize$(1)$}\right)
-\alpha\nabla_1f\left(x\right)\right)\right\}
\right\}^{-1}
  & {\bf 0} &  \cdots & I_{n_p}
\end{array}
\right),
\end{split}
\end{equation}
where the second equality is due to the definitions
of $U_1$ and $A_{s1}$ which are given by
 \eqref{DefUi} and \eqref{Def A_ij}, respectively.
The above equality means that
\begin{equation}
\begin{split}\nonumber
&\mbox{eig}\left(D\psi_1\left(x\right)\right)
\\
&=\left\{1\right\}
\bigcup\mbox{eig}\left(
\left\{\nabla^2
\varphi_1\left(x\text{\scriptsize$(1)$}\right)
-\alpha A_{11}\right\}
\left\{
\nabla^2\varphi_1
\left\{\left[\nabla \varphi_1\right]^{-1}
\left(\nabla\varphi_1\left(x\text{\scriptsize$(1)$}\right)
-\alpha\nabla_1f\left(x\right)\right)\right\}
\right\}^{-1}
\right).
\end{split}
\end{equation}
Moreover,
\begin{equation}
\begin{split}
\nabla^2
\varphi_1\left(x\text{\scriptsize$(1)$}\right)
-\alpha A_{11}
&\succeq
\nabla^2
\varphi_1\left(x\text{\scriptsize$(1)$}\right)
-\alpha LI_{n_1}
\\
&\succ
\nabla^2
\varphi_1\left(x\text{\scriptsize$(1)$}\right)
-\frac{\mu}{L}LI_{n_1}
\\
&=\nabla^2
\varphi_1\left(x\text{\scriptsize$(1)$}\right)
-\mu I_{n_1}
\succeq{\bf 0},
\end{split}
\end{equation}
where the first inequality  holds because of
$A_{11}=\nabla_{11}^2 f(x)$,
\eqref{Lipschitz} and  Lemma 7 in
\cite{Panageas2016Gradient}; the second
 inequality is due to $\alpha < \frac{\mu}{L}$;
 the last inequality thanks to
 \eqref{Strongly Convex}.
Hence, $\nabla^2
\varphi_1\left(x\text{\scriptsize$(1)$}\right)
-\alpha A_{11}$ is an invertible matrix.
Consequently, $D\psi_1\left(x\right)$ is an
invertible matrix as well.

 {\bf (d)}  Note that we have shown $\psi_1$
is bijection, and continuously differentiable.
Since $D\psi_1\left(x\right)$
is invertible  for $\alpha<\frac{\mu}{L}$,
the inverse function theorem guarantees
that $\left[\psi_1\right]^{-1}$
is continuously differentiable. Thus,
$\psi_1$ is a diffeomorphism.

Secondly, it is obvious that similar arguments
can be applied to verify that, $\psi_{s},
s=2, \ldots, p,$ are also diffeomorphisms.
Thus, the proof is completed.
\hfill\end{proof}

\subsection{Proof of Lemma \ref{Re neq 0 Lem P}}
\begin{proof}
Let $\lambda$ be
an eigenvalue of $\left(\beta B\right)^{-1}
\left(I+t\beta \hat{B}\right)$
and $\xi$ be the corresponding eigenvector of
unit length, then $\lambda\neq0$ and
\begin{equation}\label{Eigenvaluevector0 P}
\left(\beta B\right)^{-1}
\left(I_n+t\beta \hat{B}\right)\xi=\lambda\xi,
\end{equation}
which is clearly equivalent to equation:
\begin{equation}\label{Eigenvaluevector1 P}
\left(I_n+t\beta \hat{B}\right)\xi=\lambda
\left(\beta B\right)\xi.
\end{equation}
Premultiplying both sides of the above
equality by $\xi^H$, we arrive at
\begin{equation}\label{RealneqReal P}
1+t\beta\xi^H\hat{B}\xi
=\lambda\xi^H\left(\beta B\right)\xi,
\end{equation}
or equivalently,
\begin{equation}\label{RealneqReal P1}
\lambda=
\frac{1+t\beta\xi^H\hat{B}\xi}
{\beta\xi^HB\xi}.
\end{equation}
Recalling that $0<\beta
<\frac{1}{\rho\left(B\right)}$ and $t\in\left[0,\, 1\right]$,
then Lemma \ref{Bound etaecheckBeta} implies that
$0<\mbox{Re}\left(1+\beta\xi^H\hat{B}\xi\right)<2$.
Combining with the assumptions that
$\mbox{Re}\left(\lambda\right)>0$
and $B$ is a symmetric matrix,
we have
\begin{equation}\label{xiDxigeq0}
\beta\xi^HB\xi>0.
\end{equation}
We rewrite $\tilde{B}$ defined by \eqref{DiagB} below:
 \begin{equation}\label{DiagD}
\tilde{B}=
\mbox{Diag}
\left(B_{11},B_{22},\ldots, B_{pp}\right),
\end{equation}
whose main diagonal blocks are  the same as those of $B$.
Therefore, $B$ has the following decomposition:
\begin{equation}\label{Decomp D}
B=\hat{B}+\tilde{B}+\hat{B}^T.
\end{equation}
In addition, Theorem 4.3.15 in \cite{Horn:1985:MA:5509}
means that
\begin{equation}\label{Decomp D1}
-\rho\left(B\right)\preceq
\tilde{B}\preceq\rho\left(B\right).
\end{equation}
 Hence, if $t\in \left[\frac{1}{2},\, 1\right]$, then
\begin{equation}
\begin{split}
\mbox{Re}\left(\lambda\right)-\frac{1}{2}
&=
\frac{\mbox{Re}\left(1+t\beta\xi^H\hat{B}\xi\right)}
{\beta\xi^HB\xi}-\frac{1}{2}\\
&=\frac{
2\mbox{Re}\left(1+t\beta\xi^H\hat{B}\xi\right)
-\beta\xi^HB\xi
}
{2\beta\xi^HB\xi}\\
&=\frac{
2\mbox{Re}\left(1+t\beta\xi^H\hat{B}\xi\right)
-\beta\xi^H
\left(\hat{B}+\tilde{B}+\hat{B}^T\right)\xi
}
{2\beta\xi^HB\xi}\\
&=\frac{
2+2t\beta\mbox{Re}\left(\xi^H\hat{B}\xi\right)
-2\beta
\mbox{Re}\left(\xi^H\hat{B}\xi\right)
-
\beta\xi^H\tilde{B}\xi
}
{2\beta\xi^HB\xi}\\
&=\frac{
2+2(t-1)\beta\mbox{Re}\left(\xi^H\hat{B}\xi\right)
-
\beta\xi^H\tilde{B}\xi
}
{2\beta\xi^HB\xi}\\
&\geq\frac{
2+2(t-1)\beta\rho\left(B\right)
-
\beta\rho\left(B\right)
}
{2\beta\xi^HB\xi}\\
&\geq\frac{
2-2\beta\rho\left(B\right)
}
{2\rho\left(B\right)}\\
&=\frac{
1-\beta\rho\left(B\right)
}
{\beta\rho\left(B\right)}
\\
&
>0,
\end{split}
\end{equation}
where the third equality is due to \eqref{Decomp D};
the first inequality thanks to Eqs. \eqref{leq Re ch D geq},
 \eqref{xiDxigeq0} and \eqref{Decomp D1};
the second inequality holds because
of $t\in \left[\frac{1}{2},\, 1\right]$; the last
inequality holds because of $\beta\in
\left(0, \frac{1}{\rho\left(B\right)}\right)$.
If $t\in \left[0,\, \frac{1}{2}\right]$, then
 \begin{align}
\begin{split}
\mbox{Re}\left(\lambda\right)-\frac{1}{2}
&=
\frac{\mbox{Re}\left(1+t\beta\xi^H\hat{B}\xi\right)}
{\beta\xi^HB\xi}-\frac{1}{2}\\
&=\frac{
2\mbox{Re}\left(1+t\beta\xi^H\hat{B}\xi\right)
-\beta\xi^HB\xi
}
{2\beta\xi^HB\xi}\\
&=\frac{
2+2t\beta\mbox{Re}\left(\xi^H\hat{B}\xi\right)
-\beta\xi^HB\xi
}
{2\beta\xi^HB\xi}\\
&\geq\frac{
2-2t\beta\rho\left(B\right)
-\beta\rho\left(B\right)}
{2\beta\rho\left(B\right)}
\\
&\geq\frac{
2-2\beta\rho\left(B\right)
}
{2\beta\rho\left(B\right)}
\\
&=\frac{
1-\beta\rho\left(B\right)
}
{\beta\rho\left(B\right)}
\\
&>0,
\end{split}
\end{align}
where the first inequality is due to Eqs. \eqref{leq Re ch D geq} and
\eqref{xiDxigeq0}; the second inequality holds because
of $t\in \left[0, \, \frac{1}{2}\right]$;
$\beta\in
\left(0, \frac{1}{\rho\left(B\right)}\right)$
implies the last inequality.

Thus, the proof is finished.
\hfill\end{proof}
\subsection{Proof of Lemma \ref{Key Theorem BCGD P}}
\begin{proof}
We divide the proof into  two cases.

{\bf Case 1: $B$ is an invertible matrix.} 
Therefore, we have
\begin{equation}
\left(\beta
\left(I_n+\beta \hat{B} \right)^{-1}B
\right)^{-1}
=\left(\beta B\right)^{-1}
\left(I_n+\beta \hat{B} \right),
 \end{equation}
 which implies that
\begin{equation}\label{EiginvBP}
\lambda
 \in\mbox{eig}
 \left(\left(I_n+\beta \hat{B} \right)^{-1}B
 \right)\Leftrightarrow
 \frac{1}{\lambda} \in
  \mbox{eig}\left(
  \left(\beta B\right)^{-1}\left(I_n+\beta \hat{B}
 \right)\right).
\end{equation}
For clarity of notation, we use $\sigma$ to
denote the eigenvalue of $ \left(\beta B\right)^{-1}
\left(I_n+\beta \hat{B}\right)$.
Hence, it is sufficient for us to prove that,
 for an arbitrary
$\beta\in\left(0, \frac{1}{\rho\left(B\right)}\right)$,
there is at least
one nonzero eigenvalue $\sigma$ of
$\left(\beta B\right)^{-1}
\left(I_n+\beta \hat{B}\right)$
such that
\begin{equation}\label{Contain}
\sigma\in\Xi(\beta, B).
\end{equation}
Subsequently, we will prove that relation
\eqref{Contain} is true by using
Lemma \ref{Zero Lemma} in Appendix.

We first define an analytic function with $t$ as parameter:
\begin{equation}\label{Def Xzt P}
\begin{split}
\mathcal{X}_t(z)
 &\triangleq
 \det
 \left\{
 zI_n-\left[(1-t)
 \left(\beta B\right)^{-1}+t\left(\beta B\right)^{-1}
 \left(I_n+\beta \hat{B} \right)\right]
 \right\},
  \hspace*{3mm}
  0\leq t\leq1
  \\
  &=
 \det
 \left\{
 zI_n-\left(\beta B\right)^{-1}
 \left(I_n+t \beta \hat{B} \right)
 \right\},
  \hspace*{3mm}
  0\leq t\leq1.
  \end{split}
  \end{equation}

In order to construct a closed region, we define
\begin{equation}\label{Def nu P}
\begin{split}
\nu&\triangleq
\left\|\left(\beta B\right)^{-1}\right\|+\frac{1}{\rho\left(B\right)}
\left\|\left(\beta B\right)^{-1}\hat{B}\right\|\\
&\geq \left\|\left(\beta B\right)^{-1}\|
+t\beta\|\left(\beta B\right)^{-1}\hat{B}
\right\|, \hspace*{4mm}\forall \,t \in[0, 1]\\
&\geq\left\|\left(\beta B\right)^{-1}\left(I_n+t\beta \hat{B}\right)
\right\|, \hspace*{4mm}\forall\, t \in[0, 1]
\end{split}
\end{equation}
where the first inequality holds because of
$\beta\in\left(0, \frac{1}{\rho\left(B\right)}\right)$
and $t\in [0, 1]$.
In addition, let $t=0$,
the above equation also means that
\begin{equation}\label{Def nu P1}
\nu\geq\left\|\left(\beta B\right)^{-1}\right\|
\geq \rho\left(\left(\beta B\right)^{-1}\right)
\geq \frac{1}{\beta \rho\left(B\right)}
>
\frac{1}{\beta\rho(B)}
-\frac{1}{2}
=\frac{1}{2}+\frac{1-\beta\rho(B)}{\beta\rho(B)},
\end{equation}
where the second equality is due to the definitions
of spectral norm and spectral radius; the third inequality
thanks to property of spectral radius; the last inequality
holds because of
$\beta\in\left(0, \frac{1}{\rho\left(B\right)}\right)$.

Thus given the above $\nu$ satisfying \eqref{Def nu P}
and \eqref{Def nu P1},
we can define a closed rectangle as
 \begin{equation}\label{Def D P}
 \mathscr{D}
 \triangleq
 \left\{a+bi|
 ~\frac{1}{2}\leq a \leq 2\nu,~-2\nu\leq b\leq2\nu\right\},
\end{equation}
which is a closed region in the complex plane.
Note  its boundary $\partial\mathscr{D}$ consists of a finite number
 of smooth curves. Specifically, define
\begin{equation}
\begin{split}
\gamma_1&\triangleq
\left\{a+bi|
~a=\frac{1}{2},~-2\nu\leq b\leq 2\nu\right\},\\
\gamma_2&\triangleq
\left\{a+bi|
~a=2\nu,~-2\nu\leq b\leq2 \nu\right\},\\
\gamma_3&\triangleq
\left\{a+bi|
~\frac{1}{2}\leq a\leq 2\nu,~b=2\nu\right\},\\
\gamma_4&\triangleq
\left\{a+bi|
~\frac{1}{2}\leq a\leq 2\nu,~b=-2\nu\right\},
\end{split}
\end{equation}
then
\begin{equation}\label{Boundary D P}
\begin{split}
\partial\mathscr{D}=
\gamma_1\cup\gamma_2\cup\gamma_3\cup\gamma_4.
\end{split}
\end{equation}

In order to apply Lemma \ref{Zero Lemma},
we will show that
\begin{equation}\label{Neq BoundD P}
 \mathcal{X}_t(z)\neq0,~\forall\, t\in[0,1],
~\forall\, z\in\partial\mathscr{D}.
\end{equation}
On the one hand, since the spectral norm of a
matrix is lager than or equal to its spectral
radius, the above inequality \eqref{Def nu P} yields that,
for any $t\in [0, 1]$,
every eigenvalue of $B^{-1}\left(I+t\beta \check{B}\right)$
has a magnitude less than $\nu$. Note that
for an arbitrary
$z\in \gamma_2\cup\gamma_3\cup\gamma_4$, then
$|z|\geq2\nu$.
 Consequently,
\begin{equation}\label{Neq Gamma234 P}
 \mathcal{X}_t(z)\neq0,~\forall\, t\in[0,1],
~\forall\, z\in \gamma_2\cup\gamma_3\cup\gamma_4.
\end{equation}
On the other hand, if $\sigma$ is an eigenvalue of
$\left(\beta B\right)^{-1}\left(I_n+t\beta \hat{B}
\right)$ with any $t\in\left[0,\,1\right]$,
and $\mbox{Re}\left(\sigma\right)>0$,
then  Lemma \ref{Re neq 0 Lem P} implies that,
\begin{equation}
\mbox{Re}\left(\sigma\right)>\frac{1}{2},
\end{equation}
which immediately implies that
\begin{equation}
\sigma\not\in \gamma_1.
\end{equation}
 Hence, we have
\begin{equation}\label{Neq Gamma1 P}
 \mathcal{X}_t(z)\neq0,~\forall\, t\in[0,1],
~\forall\, z\in \gamma_1.
\end{equation}
Combining \eqref{Neq Gamma234 P} and \eqref{Neq Gamma1 P},
we obtain \eqref{Neq BoundD P}.

As a result, it follows from Lemma \ref{Zero Lemma}
in Appendix,
\eqref{Def Xzt P}, \eqref{Def D P} and \eqref{Neq BoundD P}
that
$\mathcal{X}_{0}(z)
=\det
 \left\{
 zI_n-\left(\beta B\right)^{-1}
 \right\}$
 and
 $\mathcal{X}_{1}(z)
 =\det
 \left\{
 zI_n-\left(\beta B\right)^{-1}
 \left(I_n+\beta \hat{B} \right)
 \right\}$
have the  same number of zeros in $\mathscr{D}$.
Note that $\lambda_{\max}\left(B\right)>0$
implies that there is at least one  positive eigenvalue
$\frac{1}{\beta\lambda_{\max}\left(B\right)}$
of $\left(\beta B\right)^{-1}$.
Note that $\frac{1}{\beta\lambda_{\max}\left(B\right)}
\geq \frac{1}{\beta\rho\left(B\right)}>1$.
Recalling the definition
\eqref{Def nu P} of $\nu$, we know
$\left|\frac{1}
{\beta\lambda_{\max}\left(B\right)}
\right|\leq\nu$.
Thus $\frac{1}{\beta \lambda_{\max}
\left(B\right)}$  must
lie inside $\mathscr{D}$.
In other words, the number of zeros of
$\mathcal{X}_{0}(z)$ inside $\mathscr{D}$ is at least one,
which in turn shows the number of zeros of
$\mathcal{X}_{1}(z)$ is at least one as well.
Thus, there must exist at least one eigenvalue
of $\left(\beta B\right)^{-1}
\left(I_n+\beta \hat{B}\right) $ lying
inside $\mathscr{D}$.
We denote it as $\sigma$, then
$\mbox{Re}\left(\sigma\right)>\frac{1}{2}$.
Moreover, Lemma \ref{Re neq 0 Lem P} means that
$\mbox{Re}\left(\sigma\right)
\geq \frac{1}{2}
+\frac{1-\beta\rho(B)}{\beta\rho(B)}$.
Hence, $\sigma$ lies in $\Xi(\beta, B)$ defined by \eqref{Xi}
and the proof is finished in this case.
\\

{\bf Case 2: $B$ is a singular matrix.} In this
case, we will apply  perturbation  theorem based
on the results in {\bf Case 1} to
prove \eqref{Key P}.\\

Suppose the multiplicity of zero eigenvalue of $B$ is
$m$. For clarity of notation, we rewrite the
eigen decomposition \eqref{B eig decom} of $B$ below:
\begin{equation}\label{D eig decom}
B=V
\left(
  \begin{array}{cc}
    \Theta  & 0 \\
    0 & 0 \\
  \end{array}
\right)V^T=V_1\Theta V_1^T,
\end{equation}
where $\Theta=\mbox{Diag}\left(\theta_1,
\theta_2, \ldots, \theta_{n-m}\right)$,
$\theta_s$, $s=1, \ldots, n-m$,
 are the nonzero eigenvalues of
$B$ and
\begin{equation}
V=\left(
\begin{array}{cc}
    V_1 & V_2
  \end{array}
\right)
\end{equation}
is an orthogonal matrix and $V_1$ consists of
the first $(n-m)$ columns of $V$.

Denote
\begin{equation}
\delta
\triangleq\min
\left\{\left|\theta_1\right|,
\left|\theta_2\right|, \ldots,
\left|\theta_{n-m}\right|\right\}.
\end{equation}
For any $\epsilon\in\left(-\delta, \, 0,\right)$, we define
\begin{equation}\label{Def Depsilon0}
B\left(\epsilon\right)\triangleq B+\epsilon I_n,
\end{equation}
then,
\begin{equation}
\mbox{eig}\left(B\left(\epsilon\right)\right)
=\left\{\theta_1+\epsilon,
\theta_2+\epsilon, \ldots,
\theta_{n-m}+\epsilon, \epsilon\right\}\not\ni0,
\hspace*{3mm}
\forall\, \epsilon\in\left(-\delta,\, 0\right),
\end{equation}
and
\begin{equation}\label{laminleq0 P}
\lambda_{\max}\left(B\left(\epsilon\right)\right)=
\lambda_{\max}\left(B\right)+\epsilon
\geq\delta+\epsilon>0,\hspace*{3mm}
\forall\, \epsilon \in \left(-\delta,\, 0\right),
\end{equation}
where the first inequality  is due to the definition
of $\delta$ and
$\max
\left\{\theta_1,
\theta_2, \ldots,
\theta_{n-m}\right\}=\lambda_{\max}\left(B\right)>0$.

Since $B$ is defined by \eqref{Def B},
 $B\left(\epsilon\right)$  has
$p\times p$ blocks form as well.
Specifically,
\begin{equation}\label{Def Depsilon}
B\left(\epsilon\right)
=
\left(B\left(\epsilon\right)_{st}\right)_{1\leq s, \,t\leq p},
\end{equation}
and  its $(s, t)$-th block is given
\begin{equation}\label{Def Depsilon_ij}
B\left(\epsilon\right)_{st}
=
\left\{
\begin{array}{lr}
B_{st}+\epsilon I_{n_s},   & s=t,\\
B_{st},                    & s\neq t,
  \end{array}
  \right.
\end{equation}
where $n_1$, $n_2$, $\ldots$,
$n_p$ are $p$
positive integer numbers satisfying
$\sum\limits_{s=1}^{p}n_s=n$.
Similar to definition \eqref{hat B}, we denote the
strictly block upper triangular matrix
based on $B\left(\epsilon\right)$ as
 \begin{equation}\label{Def checkDepsilon}
\hat{B}\left(\epsilon\right)\triangleq
 \left(\hat{B}\left(\epsilon\right)_{st}\right)_{1\leq s,\,t\leq p}
 \end{equation}
 with $p\times p$ blocks and its $(s, t)$-th block is given by
 \begin{equation}\label{LsigmabarDepsilon}
 \begin{split}
\hat{B}\left(\epsilon\right)_{st}
&=\left\{
\begin{array}{lr}
    B\left(\epsilon\right)_{st},      & s<t,\\
    \mathbf{0},  & s\geq t,
  \end{array}
  \right.\\
  &=\left\{
\begin{array}{lr}
    B_{st},      & s<t,\\
    \mathbf{0},  & s\geq t,
  \end{array}
  \right.\\
  &=\hat{B}_{st},
  \end{split}
\end{equation}
where the second equality holds because of
\eqref{Def Depsilon_ij}; the last equality is due to
\eqref{hat Bst}.

It follows easily from Eqs. \eqref{hat B},
\eqref{hat Bst}
 \eqref{Def checkDepsilon} and
 \eqref{LsigmabarDepsilon} that
\begin{equation}\label{CheckDepsilon=D}
\hat{B}\left(\epsilon\right)=
\hat{B}.
\end{equation}
Consequently,
\begin{equation*}
\begin{split}
\beta\left(I_n+\beta \hat{B}\left(\epsilon\right)
 \right)^{-1}B\left(\epsilon\right)
 &=\beta\left(I_n+\beta \hat{B}
 \right)^{-1}B\left(\epsilon\right)
  =\beta\left(I_n+\beta \hat{B}
 \right)^{-1}\left(B+\epsilon I_n\right),
 \end{split}
 \end{equation*}
where the first equality  is due to
\eqref{CheckDepsilon=D} and the second equality holds
because of \eqref{Def Depsilon0}.
For simplicity, let
\begin{equation}
\lambda^\beta_s\hspace*{-1.5mm}
\left(\epsilon\right),\hspace*{3mm}
s=1, \ldots, n,
\end{equation}
be the eigenvalues of
$\beta\left(I_n+\beta \hat{B}\right)^{-1}
\left(B+\epsilon I_n\right)$.

Note that for any
$\epsilon\in \left(-\delta, \, 0\right)$,
$B\left(\epsilon\right)$ is invertible and
$\lambda_{\max}\left(B\left(\epsilon\right)
\right)>0$ (see \eqref{laminleq0 P}).
According to the definitions of  $B\left(\epsilon\right)$
and $\check{B}\left(\epsilon\right)$,
a similar argument in {\bf Case 1}
can be applied  with the identifications
$B\left(\epsilon\right)\sim B$,
$\hat{B}\left(\epsilon\right)\sim \hat{B}$,
$\beta \sim \beta$ and
$\rho\left(B(\epsilon)\right)\sim\rho(B)$,
to prove that, for any $\beta\in\left(0,
\frac{1}{\rho(B(\epsilon))}\right)$,
there must exist at least one eigenvalue of
$\left(I_n+\beta \hat{B}
\left(\epsilon\right)\right)^{-1}B
\left(\epsilon\right)$
which lies in $\Xi(\beta, B(\epsilon))$
defined by the following \eqref{Xiepsilon}.
Taking into account definition \eqref{Def Depsilon0}, we
have
$\rho\left(B(\epsilon)\right)\leq
\rho\left(B\right)+\epsilon$.
Hence, for any
$\epsilon\in \left(-\delta, \, 0\right)$
and
$\beta\in \left(0, \,
\frac{1}{\rho\left(B\right)+\epsilon}\right)
\subseteq
\left(0, \,
\frac{1}{\rho\left(B(\epsilon)\right)}\right)$,
 there exists at least one index denoted as
 $s(\epsilon)\in\left\{1, 2, \ldots, n\right\}$
 such that
 \begin{equation}\label{Epsi Belo Xi}
 \frac{1}
 {\lambda^\beta_{s\text{\tiny$(\epsilon)$}}
 \hspace*{-1.5mm}\left(\epsilon\right)
 }\in \Xi(\beta, B(\epsilon)),
 \end{equation}
where
\begin{equation}\label{Xiepsilon}
\Xi(\beta, B(\epsilon))
\triangleq\left\{a+bi\Big{|}a, b\in
\mathbb{R},
\frac{1}{2}+\frac{1-\beta\rho(B(\epsilon))}
{\beta\rho(B(\epsilon))}
\leq a,
i=\sqrt{-1} \right\}.
\end{equation}
Furthermore, it is well known that
the eigenvalues of a matrix $M$ are continuous functions
of the entries of $M$.
Therefore, for any $\beta\in\left(0, \,
\frac{1}{\rho\left(B\right)}\right)$,
$\lambda^\beta_s\hspace*{-1.5mm}\left(\epsilon\right)$ is a
continuous function of $\epsilon$ and
 \begin{equation}\label{Alllimit=0 P}
 \lim\limits_{\epsilon\rightarrow 0^-}
 \lambda^\beta_s\hspace*{-1.5mm}\left(\epsilon\right)=
 \lambda^\beta_s\hspace*{-1.5mm}\left(0\right)
 , \hspace*{3mm}
s=1, \ldots, n,
 \end{equation}
 where $\lambda^\beta_s\hspace*{-1.5mm}\left(0\right)$
is the eigenvalue of
$\beta\left(I_n+\beta \check{B}\right)^{-1}B$.

In what follows, we will prove that
\eqref{Key P} holds true by contradiction.

Suppose for sake of contradiction that,
 there exists a $\beta^*\in\left(0, \,
\frac{1}{\rho\left(B\right)}\right)$
such that, for any $s\in \left\{1, \ldots, n\right\}$,
if $\lambda^{\beta^*}_s\hspace*{-2mm}
\left(0\right)\neq0$, then
\begin{equation}\label{Lambda0 notIn Xi}
\frac{1}
{\lambda^{\beta^*}_s\hspace*{-2mm}
\left(0\right)
}
\notin\Xi\left(\beta^*, B(0)\right)
=\Xi\left(\beta^*, B\right),
\end{equation}
 where $\lambda^{\beta^*}_s\hspace*{-2mm}\left(0\right)$
is the eigenvalue of
$\beta^*\left(I_n+\beta^* \check{B}\right)^{-1}B$.

According to Lemma \ref{Multi Zero Lem} in Appendix and the
assumption that the multiplicity of zero eigenvalue of
$B$ is $m$, we know
that the multiplicity of eigenvalue $0$ of
$\beta\left(I_n+\beta^* \check{B}\right)^{-1}B$ is
$m$ as well. Then there are exactly $m$ eigenvalue functions of
$\epsilon$ whose limits are 0 as $\epsilon$
approaches zero from above. Without loss of generality,
we assume
\begin{equation}\label{LimiLambda=0 P}
 \lim\limits_{\epsilon\rightarrow0^+}
 \lambda^{\beta^*}_s\hspace*{-2mm}\left(\epsilon\right)=
 \lambda^{\beta^*}_s\hspace*{-2mm}\left(0\right)=0, \hspace*{3mm}
 s=1, \ldots, m,
 \end{equation}
and
\begin{equation}\label{LimiLambdaneq0 P}
 \lim\limits_{\epsilon\rightarrow0^+}
\lambda^{\beta^*}_s\hspace*{-2mm}\left(\epsilon\right)=
\lambda^{\beta^*}_s\hspace*{-2mm}\left(0\right)\neq0, \hspace*{3mm}
 s=m+1, \ldots, n.
\end{equation}

Subsequently, 
 under the assumption \eqref{Lambda0 notIn Xi},
we will prove that there
exists a
$\delta^*>0$ with $\delta^* \leq \delta$ such that, for any
$\epsilon\in \left(-\delta^*, 0\right)$, then
$\beta^*\in\left(0, \frac{1}{\rho(B)+\epsilon}\right)
\subseteq\left(0, \frac{1}{\rho(B(\epsilon))}\right)$. This would contradict
\eqref{Epsi Belo Xi} and
there does not exist any $s\in \left\{1, \ldots, n\right\}$
such that
$\frac{1}
{\lambda^{\beta^*}_s\hspace*{-2mm}
\left(\epsilon\right)}$
belongs to
$\Xi\left(\beta^*, B(\epsilon)\right)$
. The proof is given by the following
four steps.\\

{\bf Step (a):} Under the assumption
\eqref{Lambda0 notIn Xi},
we first prove that there exists a
$\delta_1^*>0$ such that, for any
$\epsilon\in \left(-\delta_1^*,\, 0\right)$,
$\beta^*\in\left(0, \frac{1}{\rho(B)+\epsilon}\right)
\subseteq\left(0, \frac{1}{\rho(B(\epsilon))}\right)$
and
there does not exist any $s\in \left\{m+1, \ldots, n\right\}$
such that
$\frac{1}
{\lambda^{\beta^*}_s\hspace*{-2mm}
\left(\epsilon\right)}$
belongs to $\Xi\left(\beta^*, B(\epsilon)\right)$.

Taking into account the definition of
$\Xi\left(\beta^*, B(\epsilon)\right)$,
 Eq. \eqref{Lambda0 notIn Xi} and $\beta^*\in\left(0, \,
\frac{1}{\rho\left(B\right)}\right)$, we
imply that, there exists a $\bar{\delta}$ such that,
for any $\epsilon\in(-\bar{\delta}, 0)$,
\begin{equation}\label{beta*in}
\beta^*\in\left(0, \frac{1}{\rho(B)+\epsilon}\right)
\subseteq\left(0, \frac{1}{\rho(B(\epsilon))}\right)
\end{equation}
and
 \begin{equation}
\mbox{Re}\left(
\frac{1}
{\lambda^{\beta^*}_s\hspace*{-2mm}
\left(0\right)}
\right)
<
\frac{1}{2}+\frac{1-\beta^*\rho(B(\epsilon))}
{\beta^*\rho(B(\epsilon))}
,
\hspace*{3mm} \forall \, s\in\left\{m+1, \ldots, n\right\}.
\end{equation}
Moreover, note that
$\frac{1}
{\lambda^{\beta^*}_s\hspace*{-2mm}
\left(\epsilon\right)}$ is a
continuous function of $\epsilon$
and \eqref{LimiLambdaneq0 P} holds. Combining with the above
inequalities,
we know that there exists a $\delta_1^*>0$ with
$\delta_1^*\leq \bar{\delta}$
such that
\begin{equation*}\label{Neigh Re Lambda m+1 P}
\begin{split}
&\left|
\frac{1}
{\lambda^{\beta^*}_s\hspace*{-2mm}
\left(\epsilon\right)}-
\frac{1}
{\lambda^{\beta^*}_s\hspace*{-2mm}
\left(0\right)}
\right|\\
&
<\frac{1}{3}
\left[
\frac{1}{2}+\frac{1-\beta^*\rho(B(\epsilon))}
{\beta^*\rho(B(\epsilon))}
-\mbox{Re}
\left(
\frac{1}
{\lambda^{\beta^*}_s\hspace*{-2mm}
\left(0\right)}
\right)\right],
\hspace*{3mm}
 \forall\, s\in \left\{m+1, \ldots, n\right\},
 \, \forall\,
  \epsilon\in \left(-\delta_1^*\, 0\right),
\end{split}
\end{equation*}
which further means that, for any
$s\in \left\{m+1, \ldots, n\right\}$ and $
  \epsilon\in \left(-\delta_1^*\, 0\right)$,
\begin{equation*}\label{Neigh Re Lambda m+1 P}
\begin{split}
\mbox{Re}\left(
\frac{1}
{\lambda^{\beta^*}_s\hspace*{-2mm}
\left(\epsilon\right)}
\right)
&<
\mbox{Re}\left(
\frac{1}
{\lambda^{\beta^*}_s\hspace*{-2mm}
\left(0\right)}
\right)
+
\frac{1}{3}
\left[\frac{1}{2}+\frac{1-\beta^*\rho(B(\epsilon))}
{\beta^*\rho(B(\epsilon))}
-\mbox{Re}
\left(
\frac{1}
{\lambda^{\beta^*}_s\hspace*{-2mm}
\left(0\right)}
\right)\right],
\\
&
=\frac{1}{2}+\frac{1-\beta^*\rho(B(\epsilon))}
{\beta^*\rho(B(\epsilon))}
-\frac{2}{3}
\left[
\frac{1}{2}+\frac{1-\beta^*\rho(B(\epsilon))}
{\beta^*\rho(B(\epsilon))}
-\mbox{Re}
\left(
\frac{1}
{\lambda^{\beta^*}_s\hspace*{-2mm}
\left(0\right)}
\right)\right]
\\
&<\frac{1}{2}+\frac{1-\beta^*\rho(B(\epsilon))}
{\beta^*\rho(B(\epsilon))}.
\end{split}
\end{equation*}
Hence, we arrive at
\begin{equation}\label{m+1notinXi}
\frac{1}
{\lambda^{\beta^*}_s\hspace*{-2mm}
\left(\epsilon\right)}\notin\Xi\left(\beta^*, B(\epsilon)\right),
\hspace*{3mm}
 \forall\, s\in \left\{m+1, \ldots, n\right\},
 \, \forall\,
  \epsilon\in \left(-\delta_1^*,\, 0\right).
\end{equation}

{\bf Step (b):} In this step,  by the same arguments
as in {\bf Step (b)} in the proof of Lemma
\ref{Key Lemma BCGD} (see \eqref{Cij}$-$\eqref{mnotinOmega}),
we can prove that there exists a
$\delta_2^*>0$ such that, for any
$\epsilon\in \left(-\delta_2^*,\, 0\right)$
and $s\in \left\{1, \ldots, m\right\}$,
\begin{equation}
\mbox{Re}\left(
\lambda^{\beta^*}_s\hspace*{-2mm}
\left(\epsilon\right)\right)
<0,
\end{equation}
or equivalently,
\begin{equation}
\mbox{Re}\left(
\frac{1}
{\lambda^{\beta^*}_s\hspace*{-2mm}
\left(\epsilon\right)}
\right)<0,
\end{equation}
 which immediately implies that
\begin{equation}\label{Lambdas notIn Xi}
\frac{1}
{\lambda^{\beta^*}_s\hspace*{-2mm}
\left(\epsilon\right)}\notin\Xi\left(\beta^*, B(\epsilon)\right),
\hspace*{3mm}
 \forall\, s\in \left\{m+1, \ldots, n\right\},
 \, \forall\,
  \epsilon\in \left(-\delta_2^*,\, 0\right).
\end{equation}

{\bf Step (c):}
Combining \eqref{m+1notinXi} and
\eqref{Lambdas notIn Xi},
we have the following conclusion
\begin{equation}\label{mnotinXi1}
\begin{split}
\frac{1}
{\lambda^{\beta^*}_s\hspace*{-2mm}
\left(\epsilon\right)}
\notin\Xi\left(\beta^*, B(\epsilon)\right),
\hspace*{3mm}
 \forall\, s\in \left\{1, \ldots, n\right\},
 \, \forall\,
  \epsilon\in \left(-\delta_3^*,\, 0\right),
\end{split}
\end{equation}
where $\delta_3^*=\min\left\{\delta_1^*,
\, \delta_2^*\right\}$.

{\bf Step (d):} 
Let
\begin{equation}\label{Def deltastar P}
\delta^*=\min\left\{\frac{1}{2}\delta,
 \,\delta_3^*\right\}.
\end{equation}
Then, for any $\epsilon\in\left(
-\delta^*, \, 0\right)$, we have
\begin{equation}\label{delta P}
\epsilon\in\left(-
\delta,\, 0\right),
\end{equation}
\begin{equation}\label{beta in 0 epsilonstar P}
\beta^*\in\left(0, \,
\frac{1}{\rho\left(B\right)
+\epsilon}\right)\subseteq
\left(0, \,
\frac{1}{\rho\left(B(\epsilon)\right)}\right)
\end{equation}
and
\begin{equation}\label{mnotinOmega12 P}
\begin{split}
\frac{1}
{\lambda_s^{\beta^*}\hspace*{-2mm}\left(\epsilon\right)
}\notin\Xi\left(\beta^*, B(\epsilon)\right),
\hspace*{3mm}
 \forall\, s\in \left\{1, \ldots, n\right\},
\end{split}
\end{equation}
where \eqref{delta P} uses the definition
 \eqref{Def deltastar P}
of $\delta^*$; \eqref{beta in 0 epsilonstar P} is due to
the definition \eqref{Def deltastar P}
of $\delta^*$ (i.e., $\delta^*\leq \delta^*_3\leq\delta_1^*
\leq\bar{\delta}$)
and \eqref{beta*in}; and
\eqref{mnotinOmega12 P} thanks to
the definition \eqref{Def deltastar P}
of $\delta^*$ and
 \eqref{mnotinXi1}.
Clearly, this contradicts \eqref{Epsi Belo Xi}.

Hence, we conclude that Eq. \eqref{Key P} holds true.
\hfill\end{proof}

\begin{lemma}\label{Rouche's Theorem}
 (Rouche's Theorem: \cite{Conway1973Functions}) Suppose $f$ and $g$ are meromorphic in a neighborhood of $\bar{B}(a; R)$ with no zeros or poles on the circle $\gamma = \{z: |z - a|= R\}$. If $Z_f$ and $Z_g$ ($P_f$ and $P_g$ ) are the number of zeros (poles) of $f$ and $g$ inside $\gamma$ counted according to their multiplicities and if
 $$|f(z)+g(z)|<|f(z)|+|g(z)|$$
 on $\gamma$, then
 $$Z_f-P_f=Z_g-P_g.$$
\end{lemma}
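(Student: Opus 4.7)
The plan is to reduce to the argument principle via the auxiliary function $h(z) = f(z)/g(z)$, using the strict inequality to control the winding number of $h(\gamma)$ around the origin. First I would observe that both $f$ and $g$ are nonzero on $\gamma$ (since they have no zeros or poles there), so $h$ is meromorphic in a neighborhood of $\bar B(a;R)$ with no zeros or poles on $\gamma$. Counting zeros and poles of $h$ inside $\gamma$ with multiplicity, every zero of $f$ contributes to $Z_h$, every pole of $g$ contributes to $Z_h$, every pole of $f$ contributes to $P_h$, and every zero of $g$ contributes to $P_h$, giving
\begin{equation*}
Z_h - P_h \;=\; (Z_f + P_g) - (P_f + Z_g) \;=\; (Z_f - P_f) - (Z_g - P_g).
\end{equation*}
Thus the claim reduces to showing $Z_h - P_h = 0$, which by the argument principle is equivalent to showing that the winding number of the closed curve $h \circ \gamma$ around the origin vanishes.

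Next, I would translate the hypothesis into a statement about the image of $\gamma$ under $h$. Dividing $|f(z)+g(z)| < |f(z)| + |g(z)|$ by $|g(z)|>0$ yields $|h(z)+1| < |h(z)| + 1$ for all $z \in \gamma$. The equation $|w+1| = |w|+1$ characterizes exactly the non-negative real axis $[0,\infty)$, so the strict inequality forces $h(\gamma) \subset \mathbb{C} \setminus [0, \infty)$. The key observation is then that the slit plane $\mathbb{C} \setminus [0, \infty)$ is simply connected and does not contain $0$, so it admits a holomorphic branch of the logarithm $L$. Composing, $L \circ h$ is a holomorphic (in particular continuous, single-valued) function in a neighborhood of $\gamma$, which is exactly the obstruction vanishing that certifies $\frac{1}{2\pi i}\int_\gamma h'(z)/h(z)\,dz = 0$.

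Finally, I would close the loop by invoking the argument principle directly: for the meromorphic $h$ with no zeros or poles on $\gamma$,
\begin{equation*}
Z_h - P_h \;=\; \frac{1}{2\pi i}\int_\gamma \frac{h'(z)}{h(z)}\,dz \;=\; 0,
\end{equation*}
and combine with the bookkeeping identity in the first paragraph to conclude $Z_f - P_f = Z_g - P_g$. The only subtle step, and the one I expect to be the main obstacle, is the geometric argument that $|w+1| < |w|+1$ excludes the ray $[0,\infty)$: it rests on the equality case of the triangle inequality, namely that $|w_1 + w_2| = |w_1| + |w_2|$ holds iff one of $w_1, w_2$ is a non-negative real multiple of the other. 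Everything else — the argument principle, the existence of a holomorphic logarithm on a simply connected domain avoiding $0$, and the multiplicity bookkeeping for $f/g$ — is standard machinery from complex analysis.
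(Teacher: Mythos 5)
The paper does not give its own proof of this lemma; it simply cites it as a known result from Conway's \emph{Functions of One Complex Variable}. Your argument is correct and is, in fact, the standard proof that appears in Conway: pass to $h=f/g$, observe that the hypothesis is equivalent to $h(\gamma)\subset\mathbb{C}\setminus[0,\infty)$ via the equality case of the triangle inequality, take a holomorphic branch of $\log$ on the simply connected slit plane to get a single-valued primitive of $h'/h$ near $\gamma$ (noting, as you implicitly do, that by continuity $h$ maps an annular neighborhood of $\gamma$ into the slit plane), and conclude $Z_h-P_h=0$ by the argument principle.
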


\begin{lemma}\label{Zero Lemma}
 (\cite{Ostrowski1958จน}) Assume
 $M,N\in\mathbb{C}^{n\times n}$ and
define
\begin{equation}
\mathcal{X}_t(z)
 \triangleq\det\left\{zI_n-\left[(1-t)M+tN\right]\right\},
  \hspace*{3mm}
  0\leq t\leq1.
  \end{equation}
Moreover, suppose that  $\mathscr{D}$ is a closed region
 and $\partial\mathscr{D}$ consists of a finite number
 of smooth curves.
 If
 \begin{equation}
 \mathcal{X}_t(z)\neq0,~\forall\, t\in[0,1],
~\forall z\in\partial\mathscr{D},
\end{equation}
 then, for any $t_1, \, t_2\in[0,1]$,
 $\mathcal{X}_{t_1}(z)$ and $\mathcal{X}_{t_2}(z)$
 have the  same number of zeros
  in $\mathscr{D}$ counted according to their multiplicities.
\end{lemma}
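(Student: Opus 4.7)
\proof
The plan is to apply Rouch\'e's Theorem (Lemma \ref{Rouche's Theorem}), or equivalently the argument principle, to show that the integer-valued function $t \mapsto N(t)$ counting the zeros of $\mathcal{X}_t$ inside $\mathscr{D}$ is locally constant on $[0,1]$ and hence globally constant.

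First, observe that for each $t \in [0,1]$, $\mathcal{X}_t(z)$ is a polynomial of degree $n$ in $z$ (hence holomorphic with no poles), and the map $(t,z) \mapsto \mathcal{X}_t(z)$ is jointly continuous on the compact set $[0,1] \times \partial\mathscr{D}$. Since by hypothesis $\mathcal{X}_t(z) \neq 0$ on this set, the continuous function $|\mathcal{X}_t(z)|$ attains a positive minimum $m > 0$ on $[0,1] \times \partial\mathscr{D}$. The plan is to exploit this uniform lower bound together with uniform continuity in $t$.

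Next, I would fix $t_0 \in [0,1]$ and, using uniform continuity of $(t,z) \mapsto \mathcal{X}_t(z)$ on the compact set $[0,1] \times \partial\mathscr{D}$, choose $\delta > 0$ so that $|t - t_0| < \delta$ implies
\begin{equation*}
|\mathcal{X}_t(z) - \mathcal{X}_{t_0}(z)| < m \leq |\mathcal{X}_{t_0}(z)|, \qquad \forall\, z \in \partial\mathscr{D}.
\end{equation*}
This strict inequality implies $|\mathcal{X}_t(z) - \mathcal{X}_{t_0}(z)| < |\mathcal{X}_{t_0}(z)| \leq |\mathcal{X}_{t_0}(z)| + |\mathcal{X}_t(z)|$ on $\partial\mathscr{D}$, so applying Rouch\'e's Theorem (with $f = \mathcal{X}_{t_0}$ and $g = -\mathcal{X}_t$, both with no poles) to each smooth component of $\partial\mathscr{D}$ (and summing) shows that $\mathcal{X}_t$ and $\mathcal{X}_{t_0}$ have the same number of zeros in $\mathscr{D}$, counted with multiplicities. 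Thus $N(t) = N(t_0)$ for all $t$ in a neighborhood of $t_0$, i.e., $N$ is locally constant on the connected set $[0,1]$, hence constant, which yields $N(t_1) = N(t_2)$.

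The only subtlety, which is the main point to address, is that $\mathscr{D}$ is allowed to have a boundary composed of several smooth curves rather than a single circle, so Rouch\'e's Theorem as stated in Lemma \ref{Rouche's Theorem} must be applied componentwise. This is handled by the standard generalization: since $\mathcal{X}_{t_0}$ and $\mathcal{X}_t$ are holomorphic on a neighborhood of the compact region $\overline{\mathscr{D}}$ with the strict Rouch\'e inequality holding on every component of $\partial\mathscr{D}$, the argument principle gives
\begin{equation*}
N(t) = \frac{1}{2\pi i} \oint_{\partial\mathscr{D}} \frac{\mathcal{X}_t'(z)}{\mathcal{X}_t(z)}\, dz,
\end{equation*}
and the integer-valued integral on the right is continuous in $t$ by the uniform nonvanishing of $\mathcal{X}_t$ on $\partial\mathscr{D}$. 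Being integer-valued and continuous on $[0,1]$, it is constant, finishing the proof.
\endproof
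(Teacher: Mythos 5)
Your proof is correct and follows essentially the same approach as the paper: fix $t_0$, use compactness and continuity of $(t,z)\mapsto\mathcal{X}_t(z)$ to get a positive lower bound for $|\mathcal{X}_{t_0}|$ on $\partial\mathscr{D}$ that dominates the perturbation $|\mathcal{X}_t-\mathcal{X}_{t_0}|$, invoke Rouch\'e's Theorem, and conclude that the integer-valued zero count is locally (hence globally) constant on the connected interval $[0,1]$. Your added remark about applying Rouch\'e component-wise on a multi-curve boundary, and the alternative argument-principle integral formulation, are useful clarifications of a point the paper leaves implicit, but the substance of the argument is the same.
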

\begin{proof}For any $t_0 \in [0,1]$, define
\begin{equation}
u(z) \triangleq \mathcal{X}_{t_{0}}(z)
\end{equation}
and
\begin{equation}
p \triangleq \min_{x \in \partial\mathscr{D}}|\mathcal{X}_{t_{0}}(z)|>0.
\end{equation}
Moreover, for any $t_{0}+\epsilon \in [0,1]$, let
\begin{equation}
v(z) \triangleq \mathcal{X}_{t_{0}+\epsilon}(z)-\mathcal{X}_{t_{0}}(z).
\end{equation}
Since $\mathcal{X}_{t_{0}}(z)$ is a
continuous function on the close set
$[0,1] \times \partial\mathscr{D}$,
the following inequality holds true
\begin{equation}
\max_{z \in \partial\mathscr{D}}|v(z)|<p.
\end{equation}
Hence $|\epsilon|$ is sufficiently small.
Therefore, two single-valued analytic
functions $u(z)$ and $v(z)$
on region $\mathscr{D}$ satisfy
\begin{equation}
|u(z)|>|v(z)|, \,\forall\, z \in \partial\mathscr{D}.
\end{equation}
According to Rouche's theorem (see Lemma
\ref{Rouche's Theorem}),
$u(z)+v(z)=\mathcal{X}_{t_{0}+\epsilon}(z)$
and $u(z)=\mathcal{X}_{t_{0}}(z)$
have the same number of zeros in $\mathscr{D}$.
Hence, given any $t$ in the $|\epsilon|$
neighborhood of $t_0$, the number of zeros
of $\mathcal{X}_{t}(z)$ in the region
$\mathscr{D}$ denoted as $N(t)$,
is a constant. Thereby, $N(t)$ is
a continuous function defined on $[0,1]$.
However, the function $N(t)$ can
only be nonnegative integers.
Therefore, $N(t)$ is a constant function
on $[0,1]$.
\hfill\end{proof}

\begin{lemma}\label{Multi Zero Lem}
Assume that $B$ and $\hat{B}$ are defined by
\eqref{Def B} and \eqref{Def bar B}, respectively,
then, for any
$\beta\in\left(0,\, \frac{1}{\rho\left(B\right)}\right)$, the multiplicity of zero eigenvalue of $\left(I_n+\beta \hat{B} \right)^{-1}B$
is the same as that of zero eigenvalue of $B$.
\end{lemma}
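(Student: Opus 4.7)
The plan is to track the order of vanishing at $\lambda = 0$ of the characteristic polynomial $p(\lambda)\triangleq\det(\lambda I_n - M)$ of $M\triangleq(I_n+\beta\hat B)^{-1}B$, and to show it equals the algebraic multiplicity $m$ of the zero eigenvalue of $B$.

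First I would observe that $C\triangleq I_n+\beta\hat B$ is unit block-triangular (its diagonal blocks equal $I_{n_s}$), so $\det C = 1$ for every $\beta$, giving $p(\lambda) = \det(C^{-1})\det(\lambda C - B) = \det(\lambda C - B)$. Next, using the orthogonal eigen-decomposition $B = V\,\mathrm{Diag}(\Theta,\mathbf{0}_m)V^T$ as in \eqref{B eig decom}, with $V=(V_1,V_2)$ and $V_2\in\mathbb{R}^{n\times m}$ spanning $\ker B$, conjugation by $V$ yields $p(\lambda) = \det(\lambda\tilde C - \mathrm{Diag}(\Theta,\mathbf{0}_m))$ where $\tilde C = V^T C V$. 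For $\lambda$ in a neighborhood of $0$ the block $\lambda\tilde C_{11}-\Theta$ stays invertible, so the Schur-complement formula produces
\begin{equation*}
p(\lambda) \;=\; \det(\lambda\tilde C_{11}-\Theta)\,\lambda^m\,\det\!\bigl(\tilde C_{22} - \lambda\,\tilde C_{21}(\lambda\tilde C_{11}-\Theta)^{-1}\tilde C_{12}\bigr).
\end{equation*}
Evaluating at $\lambda=0$ shows that the order of vanishing of $p$ at $0$ equals $m$ if and only if $\det\tilde C_{22}\neq 0$, where $\tilde C_{22} = I_m + \beta\,V_2^T\hat B\,V_2$.

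The main obstacle is establishing invertibility of $\tilde C_{22}$, equivalently showing $-1/\beta\notin\mathrm{eig}(A)$ for $A\triangleq V_2^T\hat B V_2$. I would exploit the symmetry of $B$ here: decomposing $B = \hat B + \tilde B + \hat B^T$ as in \eqref{Decomp B} and using $V_2^T B V_2 = \mathbf{0}$ (since $V_2$ spans $\ker B$ and $B$ is symmetric) gives the identity $A + A^T = -V_2^T\tilde B V_2$. For any eigenvalue $\mu$ of $A$ with unit eigenvector $v$, setting $w=V_2 v$ (so $\|w\|=1$),
\begin{equation*}
2\,\mathrm{Re}(\mu) \;=\; v^H(A+A^T)v \;=\; -\,w^H\tilde B\,w .
\end{equation*}
Horn--Johnson's Theorem~4.3.15 (already invoked in Lemma~\ref{Bound etaecheckBeta}) yields $-\rho(B)I_n\preceq\tilde B\preceq\rho(B)I_n$, so $|\mathrm{Re}(\mu)|\le\rho(B)/2$. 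Since $\beta\in(0,1/\rho(B))$ forces $-1/\beta<-\rho(B)<-\rho(B)/2$, the real number $-1/\beta$ lies outside the possible range of real parts of eigenvalues of $A$, so it cannot be an eigenvalue of $A$. Hence $\tilde C_{22}$ is invertible, the leading coefficient $\det(-\Theta)\det(\tilde C_{22})$ of $p(\lambda)/\lambda^m$ at $\lambda=0$ is nonzero, and the algebraic multiplicity of $0$ as an eigenvalue of $M$ equals $m$, as claimed. The argument is entirely symmetric in its upper/lower-triangular conventions, and therefore applies equally to the variant with $\check B$ in place of $\hat B$ that is invoked by Lemma~\ref{Key Lemma BCGD}.
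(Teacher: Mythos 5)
Your proposal is correct, and its overall skeleton coincides with the paper's: both pass from $\det\bigl((I_n+\beta\hat B)^{-1}B-\lambda I_n\bigr)$ to $\det\bigl(B-\lambda(I_n+\beta\hat B)\bigr)$ (the paper does this implicitly; you make $\det(I_n+\beta\hat B)=1$ explicit), both conjugate by the orthogonal eigenbasis of $B$, both pull out $\lambda^m$ via the Schur complement of the $\Theta$-block, and both reduce to showing the $(2,2)$ block $I_m+\beta V_2^T\hat B V_2$ is nonsingular. Where you diverge is precisely that last step: the paper simply applies Lemma~\ref{Bound etaecheckBeta} with $\eta=V_2\upsilon$ to get $\mathrm{Re}(\lambda)\geq 1-\beta\rho(B)>0$ for every eigenvalue $\lambda$ of $I_m+\beta V_2^T\hat B V_2$, whereas you exploit the constraint $V_2^TBV_2=\mathbf{0}$ (from $V_2$ spanning $\ker B$) together with $B=\hat B+\tilde B+\hat B^T$ to deduce $A+A^T=-V_2^T\tilde B V_2$ and hence the sharper bound $|\mathrm{Re}(\mu)|\leq\rho(B)/2$ on eigenvalues $\mu$ of $A=V_2^T\hat B V_2$. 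Your variant is self-contained (it does not lean on Lemma~\ref{Bound etaecheckBeta}) and the factor-of-two improvement is not needed here, but it is a cleaner way to rule out $-1/\beta\in\mbox{eig}(A)$, and it sidesteps the typo in the paper's proof where $1-\beta\rho(B)$ is misprinted as $1-\beta/\rho(B)$. Your closing remark that the argument is insensitive to the upper/lower strict-triangular convention is also worth stating, since the paper's lemma statement mixes $\hat B$ with the defining equation of $\check B$, and the lemma is invoked in both the $\check B$ form (Lemma~\ref{Key Lemma BCGD}) and the $\hat B$ form (Lemma~\ref{Key Theorem BCGD P}).
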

\begin{proof}
Without a loss of generality, suppose the
multiplicity of zero eigenvalue of $B$ is
$m$ and $B$ has an eigen decomposition in the form of
\eqref{B eig decom}. Therefore, in order to
investigate the eigenvalues of
$\left(I_n+\beta \hat{B}\right)^{-1}B$,
its characteristic polynomial is given below:
\begin{equation}\label{Multi of Zero}
\begin{split}
&\det\left\{
\left(I_n+\beta \hat{B}\right)^{-1}
 B
 -\lambda I_n\right\}
 \\
&=
\det\left\{
 B
 -\lambda
 \left(I_n+\beta \hat{B}\right)\right\}
 \\
 &=
\det\left\{
 V
\left(
  \begin{array}{cc}
    \Theta & {\bf 0} \\
    {\bf 0} & {\bf 0} \\
  \end{array}
\right)V^T
 -\lambda
 \left(I_n+\beta \hat{B}\right)\right\}
 \\
 &=
\det\left\{
 \left(
  \begin{array}{cc}
    \Theta & {\bf 0} \\
    {\bf 0} & {\bf 0} \\
  \end{array}
\right)
 -\lambda
 \left(I_n+\beta V^T\hat{B}V\right)\right\}
 \\
 &=
\det\left\{
 \left(
  \begin{array}{cc}
\Theta
-\lambda
\left(I_{n-m}+\beta \check{C}_{11}\right)
& -\lambda
\beta \check{C}_{12} \\
  -\lambda
    \beta \check{C}_{21} &
   -\lambda
    \left(I_{m}+\beta \check{C}_{22}\right) \\
  \end{array}
\right)
 \right\}
\\
 &=\det\left\{\Theta
-\lambda
\left(I_{n-m}+\beta \check{C}_{11}\right)\right\}\\
&\hspace*{4mm}\times\det\left\{
 -\lambda
    \left(I_{m}+\beta \check{C}_{22}\right) -
    \lambda^2\beta^2\check{C}_{21}
    \left[\Theta
-\lambda
\left(I_{n-m}+\beta \check{C}_{11}\right)\right]^{-1}
    \check{C}_{12}
\right\}\\
&=\lambda^m\det\left\{\Theta
-\lambda
\left(I_{n-m}+\beta \check{C}_{11}\right)\right\}\\
&\hspace*{4mm}\times\det\left\{
 -
    \left(I_{m}+\beta \check{C}_{22}\right) -
    \lambda\beta^2\check{C}_{21}
    \left[\Theta
-\lambda
\left(I_{n-m}+\beta \check{C}_{11}\right)\right]^{-1}
    \check{C}_{12}
\right\},
\end{split}
\end{equation}
where the second equality holds because of
the eigen decomposition form \eqref{B eig decom}
of $B$ and the fourth equality is due to the
definitions \eqref{Cij} of $C_{ij}$, $i, j=1, 2$.
Note that $\Theta$ is invertible, we know zero
is not a root of polynomial
$\det\left\{\Theta
-\lambda
\left(I_{n-m}+\beta \check{C}_{11}\right)\right\}$.
In addition, assume that $\lambda$ is an eigenvalue of
 $I_{m}+\beta \check{C}_{22}$ and
 $\upsilon\in\mathbb{C}^m$ is  its eigenvector of length
 one, then
\begin{equation}\label{ReLowBoC22}
\begin{split}
\mbox{Re}
\left(\lambda\right)
&=\mbox{Re}
\left(\upsilon^H\left(I_m+\beta
    \check{C}_{22}\right)
    \upsilon
    \right)\\
&=\mbox{Re}
\left(1+\beta
    \upsilon^H
    \check{C}_{22}
    \upsilon
    \right)\\
    &=\mbox{Re}
\left(
1+\beta
    \upsilon^H
    V_2^T\hat{B}V_2
    \upsilon
    \right)\\
&=\mbox{Re}
\left(
    1+\beta
   \upsilon^H
    V_2^H\hat{B}V_2
    \upsilon
    \right)\\
    &=\mbox{Re}
    \left(
    1+\beta
    \left(V_2\upsilon\right)^H
   \hat{B}V_2\upsilon
    \right)\\
    &\geq 1-\frac{\beta}{\rho\left(B\right)}
    =\frac{\rho\left(B\right)-\beta}{\rho\left(B\right)}>0,
    \end{split}
    \end{equation}
where the first equality follows from \eqref{Cij};
the second equality is because $V_1$ is a real
matrix in equation \eqref{B eig decom}; it follows
the last equality from Lemma \ref{Re neq 0 Lem} and
$\left\|V_1\upsilon\right\|=\left(V_1\upsilon\right)^H
V_1\upsilon=\upsilon^H\upsilon=1
$. The above inequality also shows that
the real part of the eigenvalues of
$I_{m}+\beta \check{C}_{22}$ is not zero. Hence, zero is
not one of its eigenvalues.

The above analysis shows that zero is not one
of the eigenvalues of $\Theta -\lambda
\big{(}I_{n-m}+\beta \check{C}_{11}\big{)}$
nor $I_m+\beta\check{C}_{22}$. Combined with the
expression \eqref{Multi of Zero}, the multiplicity of
zero eigenvalue of $\left(I_n+\beta
\hat{B} \right)^{-1}B$ is exactly $m$.
\hfill\end{proof}
\begin{lemma}\label{Phi Diff}
Suppose that $\phi(x)$ is a strongly convex twice
continuously differentiable function with parameter
$\sigma>0$, i.e., for any $y$ and $x\in\mathbb{R}^{n}$,
\begin{equation}\label{Lem Strongly Convex}
\phi\left(x\right)\geq
\phi\left(y\right)
+\left\langle x-y,\nabla \phi\left(y\right)
\right\rangle
+\frac{\sigma}{2}\left\|x-y\right\|^2,
\end{equation}
then its gradient mapping
$\nabla\phi: \mathbb{R}^{n} \to\mathbb{R}^{n}$
is a diffeomorphism.
\end{lemma}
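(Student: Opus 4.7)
The plan is to verify the three defining properties of a diffeomorphism for $\nabla\phi$: it is a bijection from $\mathbb{R}^n$ onto $\mathbb{R}^n$, it is continuously differentiable, and its inverse is continuously differentiable. Continuous differentiability of $\nabla\phi$ is immediate from the hypothesis that $\phi$ is twice continuously differentiable, so the substantive work lies in bijectivity and in producing a $C^1$ inverse.

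For injectivity, I would exploit the standard monotonicity consequence of strong convexity. Swapping the roles of $x$ and $y$ in \eqref{Lem Strongly Convex} and adding the two resulting inequalities yields
\begin{equation*}
\langle \nabla\phi(x)-\nabla\phi(y),\,x-y\rangle \;\geq\; \sigma\|x-y\|^2,
\end{equation*}
so if $\nabla\phi(x)=\nabla\phi(y)$ the right-hand side forces $x=y$. For surjectivity, I would fix an arbitrary $z\in\mathbb{R}^n$ and consider the auxiliary function $h(x)\triangleq \phi(x)-\langle z,x\rangle$. By \eqref{Lem Strongly Convex}, $h$ is itself strongly convex with parameter $\sigma$, hence coercive (the quadratic lower bound forces $h(x)\to +\infty$ as $\|x\|\to\infty$) and continuous, so $h$ attains a unique global minimizer $x^*$. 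The first-order optimality condition gives $\nabla\phi(x^*)=z$, showing $z$ lies in the image of $\nabla\phi$.

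For smoothness of the inverse, I would invoke the inverse function theorem. Strong convexity together with twice continuous differentiability implies $\nabla^2\phi(x)\succeq \sigma I_n\succ 0$ for every $x\in\mathbb{R}^n$, so the Jacobian $D(\nabla\phi)(x)=\nabla^2\phi(x)$ is invertible everywhere. The inverse function theorem then yields that $\nabla\phi$ is a local $C^1$ diffeomorphism at every point, so $(\nabla\phi)^{-1}$ (which is well defined globally by the bijectivity established above) is continuously differentiable on all of $\mathbb{R}^n$. Combining these three ingredients concludes that $\nabla\phi$ is a diffeomorphism of $\mathbb{R}^n$ onto itself.

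I do not anticipate a substantive obstacle in this proof; each step is textbook-level. The only mild subtlety is establishing surjectivity rigorously, where one must verify coercivity of $h$ to ensure a global minimizer exists, and this follows directly from the $\tfrac{\sigma}{2}\|x-y\|^2$ lower bound in \eqref{Lem Strongly Convex} applied with $y$ fixed.
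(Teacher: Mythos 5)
Your proposal is correct and follows essentially the same route as the paper: injectivity via the monotonicity inequality from strong convexity, surjectivity by minimizing $\phi(x)-\langle z,x\rangle$ and invoking first-order optimality, and smoothness of the inverse from $\nabla^2\phi\succeq\sigma I_n\succ 0$ together with the inverse function theorem. The one small improvement in your write-up is that you explicitly justify existence of the minimizer via coercivity of the strongly convex auxiliary function, a point the paper asserts without comment.
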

\begin{proof}
Since $\phi$ is a $\sigma$-strongly convex function,
it  follows easily from \eqref{Lem Strongly Convex} that
\begin{eqnarray}\label{Norm Ine}
\left\langle\nabla\phi(y)
-\nabla\phi(x),y-x\right\rangle\geq\sigma\|y-x\|^{2}.
\end{eqnarray}
We first check that $\nabla\phi$ is injective
from $\mathbb{R}^n\rightarrow \mathbb{R}^n$.
Suppose that there  exist $x$ and $y$ such that $\nabla\phi(x)
=\nabla\phi(y)$. Then we would have
\begin{eqnarray}
\|y-x\|^{2}
\leq\frac{1}{\sigma}\langle\nabla\phi(y)
-\nabla\phi(x),y-x\rangle=0,\nonumber
\end{eqnarray}
which means $x=y$.

To show the gradient map $\nabla\phi$ is surjective,
we construct  an explicit inverse function $(\nabla\phi)^{-1}$
given by
$$x_{y}=\arg\min\limits_{x}\phi(x)-\langle y,x\rangle.$$
Since $\phi$ is a $\sigma$-strongly convex function,
the function above is strongly convex with respect
to $x$. Hence, there is a unique minimizer and we denote it
 as $x_{y}$. Then by the KKT condition,
\begin{eqnarray}
\nabla\phi(x_{y})=y\nonumber.
\end{eqnarray}
Thus, $x_{y}$ is mapped to $y$ by the mapping
$\nabla\phi$. Consequently, $\nabla\phi$
is bijection. The assumptions also mean it is
continuously differentiable. \\

In addition, for any $x\in\mathbb{R}^n$,
 \begin{align}
 \begin{split}\label{DVarphiGeq0}
 D\nabla\phi(x)=\nabla^{2}\phi(x)\succeq\mu I_n\succ{\bf 0},
 \end{split}
 \end{align}
 where the last but one inequality holds
 because of $\sigma$ strong convexity of $\phi$
\citep{Nesterov2004Introductory}.
Therefore, $D\nabla\phi(x)$ is invertible, and
the inverse function theorem guarantees
$(\nabla\phi)^{-1}$ is continuously differentiable.
Thus $\nabla\phi$ is
a diffeomorphism.
\hfill\end{proof}

\bibliographystyle{abbrvnat}
\bibliography{strings,stats}

\end{document}